\theoremstyle{plain}
\newtheorem{theorem}{Theorem}[section]
\newtheorem{lemma}[theorem]{Lemma}
\newtheorem{proposition}[theorem]{Proposition}
\newtheorem{corollary}[theorem]{Corollary}
\theoremstyle{definition}
\newtheorem{definition}[theorem]{Definition}
\newtheorem{remark}[theorem]{Remark}
\theoremstyle{remark}
\mathchardef\emptyset="001F
\numberwithin{equation}{section}
\newcommand{\e}{\varepsilon}
\newcommand{\Om}{\Omega}
\newcommand{\Ga}{\Gamma}
\newcommand{\C}{{\mathbb C}}
\newcommand{\R}{{\mathbb R}}
\newcommand{\Rn}{{\R}^2}
\newcommand{\N}{\mathbb N}
\newcommand{\Z}{{\mathbb Z}}
\newcommand{\Mnn}{{\mathbb M}^{2{\times}2}_{sym}}
\newcommand{\MD}{{\mathbb M}^{2{\times}2}_D}
\newcommand{\wto}{\rightharpoonup}
\newcommand{\setmeno}{\!\setminus\!}
\renewcommand{\div}{{\rm div}}
\renewcommand{\hom}{{hom}}
\newcommand{\supp}{{\rm supp}}
\newcommand{\tr}{{\rm tr}}
\newcommand{\hn}{{\mathcal H}^1}
\newcommand{\Ln}{{\mathcal L}^2}
\newcommand{\Lone}{{\mathcal L}^1}
\newcommand{\QQ}{{\mathcal Q}}
\newcommand{\D}{{\mathcal D}}
\newcommand{\G}{{\mathcal G}}
\newcommand{\HH}{{\mathcal H}}
\newcommand{\bary}{{\rm bar}}
\newcommand{\muu}{{\boldsymbol\mu}}
\newcommand{\nuu}{{\boldsymbol\nu}}
\newcommand{\sigmaa}{{\boldsymbol\sigma}}
\newcommand{\lambdaa}{{\boldsymbol\lambda}}
\newcommand{\alphaa}{{\boldsymbol\alpha}}
\newcommand{\ee}{{\boldsymbol e}}
\newcommand{\pp}{{\boldsymbol p}}
\newcommand{\uu}{{\boldsymbol u}}
\newcommand{\ww}{{\boldsymbol w}}
\newcommand{\zz}{{\boldsymbol z}}
\newcommand{\V}{{\mathcal V}}
\newcommand{\ol}{\overline}
\newcommand{\tki}{t_k^i}
\newcommand{\tkim}{t_k^{i-1}}
\newcommand{\wki}{w_k^i}
\newcommand{\uki}{u_k^i}
\newcommand{\eki}{e_k^i}
\newcommand{\pki}{p_k^i}
\newcommand{\zki}{z_k^i}
\newcommand{\Interior}{\mathaccent'27}
\newcommand{\T}{\mathcal T}
\title[Globally stable quasistatic evolution in
 plasticity with softening]
{Globally stable quasistatic evolution in
 plasticity with softening}
\author{G.\ Dal Maso}
\author{A.\ DeSimone}
\author{M.G.\ Mora}
\author{M.\ Morini}
\address[G.~Dal Maso, A.~DeSimone, M.G.~Mora, and M.\ Morini]{SISSA, Via Beirut 2-4, 
34014 Trieste, Italy}
\email[Gianni Dal Maso]{dalmaso@sissa.it}
\email[Antonio DeSimone]{desimone@sissa.it}
\email[Maria Giovanna Mora]{mora@sissa.it}
\email[Massimiliano Morini]{morini@sissa.it}
\begin{document}
\begin{abstract}
We study a relaxed formulation  of the  quasistatic evolution problem in the context of small strain associative elastoplasticity with softening. The relaxation takes place in spaces of generalized Young measures. The notion of solution is characterized by the following properties: global stability at each time and energy balance on each time interval. 
An example developed in detail compares the solutions obtained by this method with the ones provided by a vanishing viscosity approximation, and shows that only the latter capture a decreasing branch in the stress-strain response.

\end{abstract}
\maketitle

{\small

\bigskip
\keywords{\noindent {\bf Keywords:} 
quasistatic evolution, rate independent processes,  Prandtl-Reuss plasticity, plasticity with softening, shear bands, incremental problems, relaxation, Young measures}

\subjclass{\noindent {\bf 2000 Mathematics Subject Classification:} 74C05 (28A33, 74G65, 49J45, 35Q72)}
}
\tableofcontents
\bigskip
\bigskip

\begin{section}{Introduction}

In the study of quasistatic evolution problems for rate independent systems
a classical approach is to approximate the continuous time solution by discrete 
time solutions obtained by solving incremental minimum problems 
(see the review paper \cite{Mie-review} and the references therein).

In this paper we apply this method to the study of a plasticity problem
with softening, where the new feature is given by the presence of some nonconvex energy terms. 
For a general introduction to the mathematical theory of plasticity
we refer to \cite{Han-Red}, \cite{Hill}, \cite{Kachanov}, \cite{Lub}, and \cite{Mar}.
To focus on the new difficulty, due to the lack of convexity, we consider the simplest relevant model, namely
small strain associative elastoplasticity with no applied forces, where the
evolution is driven by a time-dependent boundary condition $\ww(t)$, prescribed on a portion $\Ga_0$ of the boundary of the reference configuration~$\Om\subset\Rn$.

The unknowns of the problem are the displacement $u\colon\Om\to\Rn$, the 
elastic strain $e\colon\Om\to\Mnn$ (the set of symmetric $2{\times}2$ matrices),
the plastic strain $p\colon\Om\to\MD$ (the set of trace free symmetric $2{\times}2$ matrices), and the internal variable $z\colon\Om\to\R$. For every given time $t\in[0,T]$ they are related by the kinematic admissibility conditions: $Eu=e+p$ in $\Om$ (additive decomposition) and $u=\ww(t)$ on $\Ga_0$. The stress depends only on the elastic part $e$ through the usual linear relation $\sigma:=\C e$, where $\C$ is the elasticity tensor.

Given a sequence of subdivisions of a time interval $[0,T]$
$$
0=t_k^0<t_k^1<\dots<t_k^{k-1}<t_k^{k}=T\,,
$$
we assume that an approximate solution $(u^{i-1}_k,e^{i-1}_k,p^{i-1}_k,z^{i-1}_k)$
is known at time $t^{i-1}_k$. The approximate solution $(\uki,\eki,\pki,\zki)$
at time $\tki$ is defined as a solution of the following incremental minimum problem:
\begin{equation}\label{minintro}
\inf_{(u,e,p,z)\in {\mathcal A}(\ww(t^i_k))} \{\QQ(e)+ \HH(p-p^{i-1}_k,z-z^{i-1}_k) +\V(z)
 \}\,,
\end{equation}
where $\QQ$ is the stored elastic energy, $\HH$ is the plastic dissipation rate,
$\V$ is the softening potential, while ${\mathcal A}(\ww(t^i_k))$ is the set of 
functions $(u,e,p,z)$ such that $Eu=e+p$ in $\Om$, $u=\ww(t^i_k)$ on $\Ga_0$, 
and $z\in L^1(\Om)$.

The details of the definition of $\QQ$, $\HH$, $\V$, together with the technical assumptions which are needed for our analysis, are given in Section~\ref{notpre}. For the present discussion it is sufficient to know that $\QQ$ is a quadratic form, $\HH$ is positively homogeneous of degree one, and $\V$ is strictly concave with linear growth.

Due to the nonconvexity of the functional the infimum in \eqref{minintro}
is not attained, in general. To overcome this difficulty,
in this paper we consider a relaxed formulation of this approach (see Proposition~\ref{relax}).
To preserve the continuity of the energy terms it is convenient to cast 
the relaxed problem in the language of Young measures. 
An additional difficulty is due to the linear growth of $\HH$ and $\V$, which may cause concentration effects. 
For this reason we formulate the problem in a suitable space of generalized Young measures
(see \cite[Section~3]{DM-DeS-Mor-Mor-1}).

The next step in our analysis is the study of the convergence of the relaxed approximate solutions
as the time step $t^i_k-t^{i-1}_k\to 0$ as $k\to\infty$ (uniformly with respect to $i$).
We prove that,  up to a subsequence, these solutions converge to a solution of a quasistatic  evolution problem formulated in the framework of generalized Young measures. This is characterized by the usual conditions considered in the variational approach to rate independent evolution problems, namely global stability and energy balance (see Definition~\ref{maindef}), suitably  phrased  in the language of Young measures. The notion  of dissipation required for this purpose is quite delicate and relies on the theory developed in~\cite{DM-DeS-Mor-Mor-1}.  

We also prove that the barycentres of these Young measure solutions define a function $(\uu(t),\ee(t),\pp(t), \zz(t))$, where $(\uu(t),\ee(t),\pp(t))$ is a quasistatic evolution of a perfect plasticity problem (see \cite{DM-DeS-Mor}) corresponding to a relaxed dissipation function, denoted $p\mapsto\HH_{\rm eff}(p,0)$, which can be computed explicitly in terms of $\HH$ and $\V$. Some other qualitative properties of the solutions are investigated 
at the end of Section~\ref{sec:4}.

This result allows to compare the  globally stable solutions obtained in this paper with the solutions delivered by 
the vanishing viscosity approach of~\cite{DM-DeS-Mor-Mor-2}. In particular, we study in Section~\ref{exa} the globally stable evolution corresponding to the same data considered in~\cite[Section 7]{DM-DeS-Mor-Mor-2}. The main differences are the following. While the globally stable solution involves
generalized Young measures,  the  vanishing viscosity evolution takes place in spaces of affine functions, since the
data in the example are spatially homogeneous.  The stress 
$\sigmaa(t)$ corresponding to the vanishing viscosity solution exhibits a decreasing branch, which accounts for the softening phenomenon. On the contrary, the stress  of the globally stable solution is nondecreasing and, after a critical time, it  becomes constantly equal to the  asymptotic value of the stress of the viscosity solution.
\end{section}

\begin{section}{Notation and preliminary results}\label{notpre}

\subsection{Mathematical preliminaries}
We refer to \cite{DM-DeS-Mor-Mor-2} for the standard notation about measures, matrices and functions with bounded deformation. In particular, for every measure $\mu$
the symbols $\mu^a$ and $\mu^s$ always denote the absolutely continuous and the singular part with respect to Lebesgue measure. The former is always identified with its density. The symbol  $\|\cdot\|_2$ denotes norm in $L^2$, while  $\|\cdot\|_1$ denotes the norm in $L^1$, as well as in the space
$M_b$ of bounded Radon measures. The symbol $\langle\cdot,\cdot\rangle$ denotes a
duality pairing depending on the context.

\medskip
\noindent
{\bf Generalized Young measures.}
We refer to \cite{DM-DeS-Mor-Mor-1} for the definition and properties of generalized Young measures and of time dependent systems of generalized Young measures. 
The underlying measure $\lambda$ will always be the two-dimensional Lebesgue
measure $\Ln$.
In particular we refer to \cite[Section~6]{DM-DeS-Mor-Mor-1} for the definition of barycentre of a  generalized Young measure, and to  \cite[Section~3]{DM-DeS-Mor-Mor-1} for the notion of weak$^*$ convergence on the space 
$GY(\ol U;\Xi)$ of generalized Young measures on the closure of an open subset $U$
of $\Rn$ with values in a 
finite dimensional Hilbert space~$\Xi$.

Given $\nu\in M_b^+(\ol U)$, $p\in L^1_\nu(\ol U;\Xi)$, and
$\alpha\in L^\infty_\nu(\ol U)$,
let $\alpha\omega_p^\nu$ be the element of $M_*(\ol U{\times}\Xi{\times}\R)$
defined by
\begin{equation}\label{omega}
\langle f,\alpha\omega_p^\nu\rangle =
\int_{\ol U} \alpha(x)f(x,p(x),0)\, d\nu(x)
\end{equation}
for every $f\in C^{hom}(\ol U{\times}\Xi{\times}\R)$. Note that $\alpha\omega_p^\nu$ does not belong to $GY(\ol U;\Xi)$ since it does not satisfy the projection property (3.3) of \cite{DM-DeS-Mor-Mor-1}.

Given $p\in L^1( U;\Xi)$, let $\T_p\colon  U{\times}\Xi{\times}\R \to   U{\times}\Xi{\times}\R $ be the map defined by $\T_p(x,\xi,\eta):=(x,\xi+\eta p(x),\eta)$. 
The translation of $\mu\in GY(\ol U;\Xi)$ by $p$ is the image $\T_p(\mu)$ of $\mu$ under $\T_p$, that is,
\begin{equation}\label{Tp}
\langle f, \T_p(\mu)\rangle =\langle f(x,\xi+\eta p(x),\eta), \mu(x,\xi,\eta)\rangle
\end{equation}
for every $f\in C^{hom}(\ol U{\times}\Xi{\times}\R)$.

\begin{lemma}\label{trlem}
Let $\mu_k,\mu\in GY(\ol U;\Xi)$. Assume that $\mu_k\wto\mu$ 
weakly$^*$ in $GY(\ol U;\Xi)$. Then
\begin{equation}\label{lm:fm}
\T_p(\mu_k) \wto \T_p(\mu)
\quad \text{weakly}^* \text{ in } GY(\ol U;\Xi)
\end{equation}
for every $p\in L^1( U;\Xi)$.
\end{lemma}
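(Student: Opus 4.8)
The plan is to reduce the weak$^*$ convergence of the translated measures to the known weak$^*$ convergence of $\mu_k$ by a change of test function. By definition of weak$^*$ convergence in $GY(\ol U;\Xi)$ (see \cite[Section~3]{DM-DeS-Mor-Mor-1}), it suffices to show that $\langle f,\T_p(\mu_k)\rangle\to\langle f,\T_p(\mu)\rangle$ for every $f$ in the relevant class of admissible test functions, i.e.\ $f\in C^{hom}(\ol U{\times}\Xi{\times}\R)$ (together with, if the definition requires it, the separate convergence of the associated measures on $\ol U{\times}\Xi{\times}\R$). By formula \eqref{Tp},
$$
\langle f,\T_p(\mu_k)\rangle=\langle g, \mu_k\rangle, \qquad g(x,\xi,\eta):=f(x,\xi+\eta p(x),\eta),
$$
so the whole statement follows once we check that $g$ is itself an admissible test function, i.e.\ $g\in C^{hom}(\ol U{\times}\Xi{\times}\R)$, and then apply the hypothesis $\mu_k\wto\mu$ to $g$.

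First I would handle the case $p\in C(\ol U;\Xi)$: then $(x,\xi,\eta)\mapsto(x,\xi+\eta p(x),\eta)$ is continuous, it is positively homogeneous of degree one in $(\xi,\eta)$, and the recession/normalization behaviour in $C^{hom}$ is preserved because $|(\xi+\eta p(x),\eta)|$ and $|(\xi,\eta)|$ are comparable uniformly on the compact set $\ol U$ (with constants depending on $\|p\|_\infty$). Hence $g\in C^{hom}(\ol U{\times}\Xi{\times}\R)$, and the conclusion is immediate.

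The main obstacle is the general case $p\in L^1(U;\Xi)$, where $g$ need not be continuous in $x$, so it is not literally a legal test function. The plan here is a density/approximation argument: choose $p_n\in C(\ol U;\Xi)$ with $p_n\to p$ in $L^1(U;\Xi)$, and, after passing to a further subsequence, $p_n\to p$ $\Ln$-a.e.\ on $U$. One then needs a continuity estimate showing $\T_{p_n}(\mu)\wto\T_p(\mu)$ in $GY(\ol U;\Xi)$ as $n\to\infty$, uniformly in the sense needed to interchange limits with $k$: concretely, for a dense countable family of test functions $f$ one estimates $|\langle f,\T_{p_n}(\mu)\rangle-\langle f,\T_p(\mu)\rangle|$ by $\int |f(x,\xi+\eta p_n(x),\eta)-f(x,\xi+\eta p(x),\eta)|\,d|\mu|$, and passes to the limit using the structure of generalized Young measures (finiteness of the mass $\langle 1+|\xi|+|\eta|,\mu\rangle$, which controls the region where the integrand is large) together with dominated convergence, exploiting uniform continuity of $f$ on bounded sets and $p_n\to p$ a.e. The delicate point is that this bound must be uniform over $\mu\in\{\mu_k\}\cup\{\mu\}$; this uniformity follows because weak$^*$ convergence $\mu_k\wto\mu$ in $GY$ entails a uniform bound on the moments $\langle 1+|\xi|+|\eta|,\mu_k\rangle$. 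Granting this, a standard $\e/3$ argument — split $\T_{p}(\mu_k)-\T_p(\mu)$ through the intermediate terms $\T_{p_n}(\mu_k)$ and $\T_{p_n}(\mu)$, use the continuous case for fixed $n$ and the uniform approximation estimate to kill the other two terms — yields \eqref{lm:fm}. If the ambient definition of $GY$ in \cite{DM-DeS-Mor-Mor-1} also requires checking that $\T_p(\mu)$ genuinely lies in $GY(\ol U;\Xi)$ (projection and normalization properties), this is verified directly from \eqref{Tp}: the projection onto $\ol U$ is unchanged by $\T_p$, and the barycentre/mass conditions are preserved by the same comparability of norms used above.
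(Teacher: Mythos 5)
Your overall strategy coincides with the paper's: pass the translation onto the test function, treat continuous $p$ first, then approximate a general $p\in L^1(U;\Xi)$ by continuous functions and run a three-term splitting through $\T_{p_\e}(\mu_k)$ and $\T_{p_\e}(\mu)$, finishing with the density of Lipschitz test functions in $C^{hom}$. The difference — and the gap — lies in how you control, \emph{uniformly in $k$}, the error committed when replacing $p$ by its continuous approximation $p_\e$.

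For $f$ Lipschitz in $(\xi,\eta)$ with constant $a$, that error is bounded by $\langle a\,\eta\,|p_\e(x)-p(x)|,\mu_k(x,\xi,\eta)\rangle$. You propose to make this small uniformly in $k$ via a uniform bound on the moments $\langle 1+|\xi|+|\eta|,\mu_k\rangle$ together with dominated convergence and a.e.\ convergence of $p_\e\to p$. This does not work: a uniform moment bound does not prevent the $x$-marginal of the measure $\eta\,\mu_k$ from concentrating on the (small) set where $|p_\e-p|$ is large, or indeed on a Lebesgue-null set, in which case $\langle\eta|p_\e-p|,\mu_k\rangle$ need not be small and the integrand is not even well defined for an a.e.-defined $p$. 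The missing ingredient is the projection property (3.3) of \cite{DM-DeS-Mor-Mor-1}, which is part of the definition of $GY(\ol U;\Xi)$ and states precisely that the $\eta$-weighted $x$-marginal of every $\mu_k$ is Lebesgue measure on $U$. With it the error equals
$$
\langle a\,\eta\,|p_\e(x)-p(x)|,\mu_k\rangle=\int_U a\,|p_\e(x)-p(x)|\,dx\le a\e
$$
for \emph{every} $k$ simultaneously, with no recourse to a.e.\ convergence or dominated convergence. This same property is what makes $\T_p(\mu)$ well defined for $p\in L^1$ in the first place. Your proof becomes correct once you replace the moments/dominated-convergence step by this identity; as written, the uniformity claim is unjustified.
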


\begin{proof}
For every $\e>0$ there exists $p_\e\in C^0_0( U;\Xi)$ such that $\|p_\e-p \|_1<\e$. 
By the definition of weak$^*$ convergence
$$
\langle f(x, \xi+\eta p_\e(x),\eta),\mu_k(x,\xi,\eta)\rangle \to
\langle f(x,\xi+\eta p_\e(x),\eta),\mu(x,\xi,\eta)\rangle\,.
$$
for every $f\in C^{hom}(\ol U{\times}\Xi{\times}\R)$.
By the projection property (3.3) of \cite{DM-DeS-Mor-Mor-1} we have
$$
\begin{array}{c}
|\langle f(x, \xi+\eta p_\e(x),\eta),\mu_k(x,\xi,\eta) \rangle
-\langle f(x, \xi+\eta p(x),\eta),\mu_k(x,\xi,\eta)\rangle |
\leq
\smallskip
\\
\displaystyle
\leq \langle  a \eta |p_\e(x)-p(x)|, \mu_k(x,\xi,\eta)\rangle
= \int_U a |p_\e(x)-p(x)|\, dx \leq a \e\,,
\end{array}
$$
whenever $|f(x,\xi_1,\eta_1)-f(x,\xi_2,\eta_2)|\leq a(|\xi_1-\xi_2|+|\eta_1-\eta_2|)$,
and the same inequality holds for $\mu$. 
Since $\e$ is arbitrary, under the same hypothesis on $f$ we obtain
$$
\langle f(x, \xi+\eta p(x),\eta),\mu_k(x,\xi,\eta)\rangle \to
\langle f(x,\xi+\eta p(x),\eta),\mu(x,\xi,\eta)\rangle\,.
$$
The conclusion follows from the density result proved in \cite[Lemma~2.4]{DM-DeS-Mor-Mor-1}.
\end{proof}

\subsection{Mechanical preliminaries}
We now introduce the mechanical notions used in the paper.

\medskip
\noindent
{\bf The reference configuration.}
Throughout the paper $\Om$ is a {\it bounded connected open set\/} in $\Rn$ with 
$C^2$ {\it boundary\/}. Let $\Ga_0$ be a nonempty relatively open subset of
$\partial\Om$ with a finite number of connected components, and let
$\Ga_1:={\partial\Om\setmeno\ol\Ga_0}$.

On $\Ga_0$ we will prescribe a Dirichlet boundary condition.
This will be done by assigning a function  $w\in H^{1/2}(\Ga_0;\Rn)$, or, equivalently, a function $w\in H^1(\Om;\Rn)$, whose trace on $\Ga_0$
(also denoted by $w$) is the prescribed boundary value. 
The set $\Ga_1$ will be the traction free part of the boundary. 

\medskip

\noindent {\bf Admissible stresses and dissipation.}
Let $K$ be a closed strictly convex set in $\MD{\times}\R$ with $C^1$ boundary. For every value of the internal variable $\zeta\in \R$,
the set 
\begin{equation}\label{Kzeta}
K(\zeta):=\{\sigma\in \MD:(\sigma,\zeta)\in K\}
\end{equation}
is interpreted as the {\it elastic domain\/} and its boundary as the {\it yield surface\/} corresponding to~$\zeta$. We assume that there exist two constants $A$ and $B$, with $0<A\le B<\infty$, such that
\begin{equation}\label{rk}
\{(\sigma,\zeta)\in\MD{\times}\R: |\sigma|^2+|\zeta|^2\le A^2\}\subset
 K\subset \{(\sigma,\zeta)\in\MD{\times}\R: |\sigma|^2+|\zeta|^2\le B^2\}\,.
\end{equation}
We assume in addition that 
\begin{eqnarray}
&\label{0zeta}
(\sigma,\zeta)\in K \quad \Longrightarrow \quad (0,\zeta)\in K\,,
\\
&\label{symK}
(\sigma,\zeta)\in K \quad \Longrightarrow \quad (\sigma,-\zeta)\in K
\,.
\end{eqnarray}
Together with convexity, \eqref{symK} yields
\begin{equation}\label{sigma0}
(\sigma,\zeta)\in K \quad \Longrightarrow \quad (\sigma,0)\in K
\quad \Longleftrightarrow \quad \sigma\in K(0)\,.
\end{equation}
Let $\pi_\R \colon \MD {\times} \R \to \R$ be the projection onto $\R$.
The hypotheses on $K$ imply that there exists a constant $a_K>0$  such that 
\begin{equation}\label{prK}
\pi_\R(K) = [-a_K,a_K]\,.
\end{equation}


The {\it support function\/} 
$H\colon\MD{\times}\R\to {[0,+\infty)}$ of $K$, defined by
\begin{equation}\label{HD}
H(\xi,\theta):=\sup_{(\sigma,\zeta)\in K} \{\sigma{\,:\,}\xi+\zeta\,\theta\} \,,
\end{equation}
will play the role of the {\it dissipation density\/}.
It turns out that $H$ is convex and positively homogeneous of degree one on
$\MD{\times}\R$. In particular it satisfies the triangle inequality
\begin{equation}\label{triangle0}
H(\xi_1+\xi_2,\theta_1+\theta_2)\le H(\xi_1,\theta_1)+H(\xi_2,\theta_2)\,.
\end{equation}

Let $\Phi$ be the gauge function of $K$ according to  \cite[Section~4]{Roc}. Since $\Phi^2$ is strictly convex and differentiable, and
$\frac12H^2=(\frac12\Phi^2)^*$, by \cite[Theorem~26.3]{Roc} the function $H^2$ is strictly convex and differentiable, so that the
 set $\{(\xi,\theta)\in \MD{\times}\R: H(\xi,\theta)\le 1\}$ is strictly convex with $C^1$ boundary. The same property holds for the sets
\begin{equation}\label{strconv}
\{(\xi,\theta)\in \MD{\times}\R: H(\xi,\theta)+c\theta\le 1\}
\end{equation}
for every $c\in \R$.

{}From (\ref{rk}) it follows that
\begin{equation}\label{boundsH}
A\sqrt{|\xi|^2+\theta^2}\le H(\xi,\theta)\le B\sqrt{|\xi|^2+\theta^2}\,,
\end{equation}
 from (\ref{0zeta}) and \eqref{prK} we obtain
\begin{equation}\label{H0zeta}
H(\xi,\theta)\geq H(0,\theta) = a_K |\theta|\,,
\end{equation}
while \eqref{symK} implies
\begin{equation}\label{symH}
H(\xi,\theta)=H(\xi,-\theta)\,.
\end{equation}

It follows from \eqref{sigma0} and \eqref{HD} that
\begin{equation}\label{HD0}
H(\xi,0)=\sup_{\sigma\in K(0)} \sigma{\,:\,}\xi \,,
\end{equation}
so that $H(\cdot,0)$ is the support function of $K(0)$ in $\MD$.

Using the theory of convex functions of measures developed in \cite{Gof-Ser},
we introduce the functional $\HH\colon M_b(\ol\Om;\MD){\times}M_b(\ol\Om)\to\R$
defined by
\begin{equation}\label{calHD}
\HH(p,z):=\int_{\ol\Om} 
H({\textstyle\frac{dp}{d\lambda}(x),\frac{dz}{d\lambda}}(x))\,d\lambda(x) \,,
\end{equation}
where $\lambda\in M_b^+(\ol\Om)$ is any measure such that 
$p<<\lambda$ and
$z<<\lambda$ (the homogeneity of $H$ implies that the integral does not depend
on~$\lambda$).
Using \cite[Theorem~4]{Gof-Ser} and  \cite[Chapter~II, Lemma~5.2]{Tem} we can see that $\HH(p,z)$ coincides with the integral over $\ol\Om$ of the measure studied in 
\cite[Chapter~II, Section~4]{Tem}, hence
$\HH$ is lower semicontinuous on $M_b(\ol\Om;\MD){\times}M_b(\ol\Om)$ with respect to weak$^*$ convergence of measures.
It follows from the properties of $H$ that $\HH$ satisfies the triangle inequality, i.e., 
\begin{equation}\label{triangle}
\HH(p_1+p_2,z_1+z_2)\le \HH(p_1,z_1)+\HH(p_2,z_2)
\end{equation}
for every $p_1,p_2\in M_b(\ol\Om;\MD)$ and every $z_1,z_2\in M_b(\ol\Om)$.



\medskip

\noindent {\bf The elasticity tensor.}
Let $\C$ be the {\it elasticity tensor\/}, considered as a symmetric 
positive definite linear operator $\C\colon\Mnn\to\Mnn$. 
We assume that the orthogonal subspaces $\MD$ and $\R I$ are invariant under~$\C$.
This is equivalent to saying that there exist a 
symmetric positive definite linear 
operator $\C_D\colon\MD\to\MD$ and a constant
$\kappa>0$, called {\it modulus of compression\/}, such that
\begin{equation}
\C\xi:= \C_D\xi_D+\kappa (\tr\,\xi) I 
\end{equation}
for every $\xi\in\Mnn$. Note that when $\C$ is isotropic, we have
\begin{equation}\label{iso}
\C\xi=2\mu \xi_D + \kappa (\tr\xi)I\,,
\end{equation}
where $\mu>0$ is the {\it shear modulus\/}, so that our assumptions are satisfied.  

Let $Q\colon\Mnn\to{[0,+\infty)}$ be the quadratic form associated with $\C$, defined by
\begin{equation}\label{W}
\textstyle Q(\xi):=\frac12 \C\xi{\,:\,}\xi=
\frac12\C_D\xi_D{\,:\,}\xi_D+\frac{\kappa}2(\tr\,\xi)^2\,.
\end{equation}
It turns out that there exist two constants $\alpha_\C$ and $\beta_\C$, 
with $0<\alpha_\C\le\beta_\C<+\infty$, such that
\begin{equation}\label{boundsC}
\alpha_\C|\xi|^2\le Q(\xi)\le 
\beta_\C|\xi|^2
\end{equation}
for every $\xi\in\Mnn$.
These inequalities imply 
\begin{equation}\label{normC}
|\C\xi|\le 2\beta_\C| \xi| \,.
\end{equation}


\medskip

\noindent {\bf The softening potential.}
Let $V\colon\R\to\R$ be a function of class $C^2$, which will control the evolution of the internal variable $\zeta$, and consequently of the set $K(\zeta)$ of admissible stresses. We assume that there exist two constants $b_V>0$ and $M_V> 0$ such that for every $\theta\in \R$ and $\tilde\theta\in\R\setminus\{0\}$
\begin{eqnarray}
&
\displaystyle
\vphantom{\lim_{\theta\to-\infty}}
 -M_V\le V''(\theta)\le 0\,, 
\label{2der}
\\
&
\displaystyle
 \lim_{\theta\to-\infty}V'(\theta)=-\lim_{\theta\to+\infty}V'(\theta)=b_V\,,\label{Vsymm20}
\\
&
\displaystyle
\vphantom{\lim_{\theta\to-\infty}}
 0<b_V <a_K\,,\label{V'infty}
\\
&
\displaystyle
\vphantom{\lim_{\theta\to-\infty}}
 V^\infty(\tilde\theta)<V(\theta+\tilde\theta)-V(\theta)\,,
\label{VinftyV}
\end{eqnarray}
where
$a_K$ is the constant in~(\ref{prK}), and 
$V^\infty$
denotes the
 {\it recession function\/} of $V$, defined by
\begin{equation}\label{Vinf}
V^\infty(\theta):=\lim_{t\to+\infty}\frac{V(t\theta)}{t}=-b_V |\theta|\,.
\end{equation}
Note that \eqref{Vsymm20} is satisfied when $V$ is even, while (\ref{VinftyV}) is satisfied when $V$ is strictly concave.
{}From \eqref{VinftyV} it follows that for every $R>0$ there exists a constant $c_R>0$
such that
\begin{equation}\label{defcR}
V'(\theta)\tilde\theta -V^\infty(\tilde\theta)\geq c_R|\tilde\theta|
\end{equation}
for every $\theta,\tilde \theta\in\R$ with $|\theta|\leq R$.

{}From \eqref{H0zeta} and \eqref{V'infty} it follows that
there exists a constant $  C^K_V>0$ such that
\begin{equation}\label{gammaM}
H(\xi_2-\xi_1,\theta_2-\theta_1)+V(\theta_2)-V(\theta_1)\ge  C^K_V |\xi_2-\xi_1| +  C^K_V |\theta_2-\theta_1|
\end{equation}
for every $\xi_1,\xi_2\in \MD$ and every $\theta_1,\theta_2\in\R$ (see 
\cite[Subsection~2.2]{DM-DeS-Mor-Mor-2}).

It is convenient to introduce the function $\V^\infty\colon M_b(\ol\Om)\to\R$ defined by
$$
\V^\infty(z):=\int_{\ol\Om}V^\infty(\tfrac{dz}{d\lambda})\,d\lambda\,,
$$
where $\lambda\in M_b^+(\ol\Om)$ is any measure such that $z<<\lambda$, and
the function $\V\colon L^1(\Om)\to\R$ defined by
$$
\V(z):=\int_\Om V(z(x))\,dx\,.
$$
The definition is extended to $M_b(\ol\Om)$ by setting 
$$
\V(z):=\V(z^a)+\V^\infty(z^s)
$$
for every $z\in M_b(\ol\Om)$.
\medskip

\noindent {\bf The prescribed boundary displacements.}
For every $t\in[0,+\infty)$ we prescribe a {\it boundary displacement\/} $\ww(t)$ in the
space $H^1(\Om;\Rn)$. This choice is motivated by the fact that we do not want to
impose ``discontinuous'' boundary data, so that, if the displacement develops sharp
discontinuities, this is due to energy minimization.

We assume also that $\ww\in AC_{loc}([0,+\infty); H^1(\Om;\Rn))$, which means, by definition, that for every $T>0$ the function $\ww$ belongs to the space 
$AC([0,T]; H^1(\Om;\Rn))$ of absolutely continuous functions on $[0,T]$ with values in
$H^1(\Om;\Rn)$, so that the time derivative $\dot \ww$ belongs to 
$L^1([0,T]; H^1(\Om;\Rn))$ and its strain $E\dot \ww$ belongs to 
$L^1([0,T];L^2(\Om;\Mnn))$. For the main properties of absolutely continuous functions with values in reflexive Banach spaces we refer to \cite[Appendix]{Bre}.

\medskip
\noindent
{\bf Elastic and plastic strains.}
Given a displacement $u\in BD(\Om)$ and a 
boundary datum $w\in H^1(\Om;\Rn)$, the {\it elastic strain\/}
$e\in L^2(\Om;\Mnn)$ and the {\it plastic strain\/} $p\in M_b(\ol\Om;\MD)$
satisfy the {\it weak kinematic admissibility conditions\/}
\begin{eqnarray}
& Eu=e+p \quad \hbox{in }\Om\,, \label{900}
\\
& p=(w-u){\,\odot\,}n\,\hn \quad \hbox{on } \Ga_0\,,  \label{901}
\end{eqnarray}
where $n$ is the outward unit normal, $\odot$ denotes the symmetrized 
tensor product, and $\hn$ is the one-dimensional Hausdorff measure. 
The condition on $\Ga_0$ shows, in particular, that the prescribed boundary condition $w$ is not attained on $\Ga_0$ whenever a plastic slip occurs at the boundary.
It follows from \eqref{900} and \eqref{901} that $e=E^au-p^a$ a.e.\ in $\Om$ and $p^s=E^su$ in $\Om$. Since $\tr\,p=0$, 
it follows from (\ref{900}) that $\div\,u=\tr\, e\in L^2(\Om)$
and from (\ref{901}) that $(w-u){\,\cdot\,}n=0$ $\hn$-a.e.\ on~$\Ga_0$, where the dot denotes the scalar product in $\R^2$. 

Given $w\in H^1(\Om;\Rn)$, the set $A(w)$ of {\it admissible displacements and strains\/} for the boundary datum $w$ on $\Ga_0$ is defined by
 \begin{equation}\label{Adef}
A(w):=\{ (u,e,p)\in BD(\Om){\times}
L^2(\Om;\Mnn){\times}M_b(\ol\Om;\MD): \text{(\ref{900}), (\ref{901})
 hold}\}\,.
\end{equation}
The set $A_{reg}(w)$ of {\it regular admissible displacements and strains\/}  is defined as
 \begin{equation}\label{Areg}
A_{reg}(w):=A(w)\cap \big(W^{1,1}_{loc}(\Om;\Rn){\times}
L^2(\Om;\Mnn){\times}L^1(\Om;\MD)\big)\,.
\end{equation}
Equivalently, $(u,e,p)\in A_{reg}(w)$ if and only if $u\in W^{1,1}_{loc}(\Om;\Rn)\cap 
BD(\Om)$, $e\in L^2(\Om;\Mnn)$, $p\in L^1(\Om;\MD)$, $Eu=e+p$ a.e.\ on $\Om$, 
and $u=w$ $\hn$-a.e.\ on~$\Gamma_0$.
\medskip

\noindent{\bf The stress.}
The {\it stress\/} $\sigma\in L^2(\Om;\Mnn)$ is given by
\begin{equation}\label{sigma}
\sigma:=\C e= \C_D e_D+ \kappa \,(\tr\, e)\,I\,,
\end{equation}
and the {\it stored elastic energy\/} by
\begin{equation}\label{elenergy}
\QQ(e)=\int_{\Om}Q(e(x))\, dx={\textstyle\frac12}\langle \sigma, e\rangle \,.
\end{equation}
It is well known that $\QQ$ is lower semicontinuous on 
$L^2(\Om;\Mnn)$ with respect to weak convergence.

If $\sigma\in L^2(\Om;\Mnn)$ and $\div\,\sigma\in L^2(\Om;\Rn)$, then the trace of the normal component of $\sigma$ on $\partial\Om$, denoted by $[\sigma n]$, is  defined as the distribution 
 on $\partial\Om$ such that
\begin{equation}\label{sigmanu}
\langle [\sigma n],\psi\rangle_{\partial\Om}:=\langle \div\, \sigma,\psi 
\rangle + \langle \sigma,E\psi\rangle \qquad 
\end{equation}
for every $\psi\in H^1(\Om;\Rn)$.
It turns out that $[\sigma n]\in H^{-1/2}(\partial\Om;\Rn)$ (see, e.g., 
\cite[Theorem~1.2, Chapter~I]{Tem}). We say that $[\sigma n]=0$ on $\Ga_1$ if 
$\langle [\sigma n],\psi\rangle_{\partial\Om}=0$ for every  $\psi\in H^1(\Om;\Rn)$ with $\psi=0$ $\hn$-a.e.\ on $\Ga_0$. 

\end{section}

\begin{section}{Relaxation of the incremental problems}\label{sec:6}

In this section we study different forms of relaxation of the incremental minimum problems.

\subsection{Convex envelope of the nonelastic part}
In this subsection, given $(\xi_0,\theta_0)\in\MD{\times}\R$, 
we study the convex envelope of the function
\begin{equation}\label{HV}
F(\xi,\theta):= H(\xi-\xi_0,\theta-\theta_0)+V(\theta)\,.
\end{equation}

Setting $\tilde\xi=\xi-\xi_0$ and $\tilde\theta=\theta-\theta_0$ and subtracting the constant $V(\theta_0)$, it is enough to study the convex envelope of 
\begin{equation}\label{defG}
G(\tilde\xi,\tilde\theta):=H(\tilde\xi,\tilde\theta)+V(\tilde\theta+\theta_0)-V(\theta_0)\,.
\end{equation}
Let $G^\infty$ be the 
recession function of $G$, defined by
$$
 G^\infty(\tilde\xi,\tilde\theta):=\lim_{t\to+\infty}\frac{ G(t\tilde\xi,t\tilde\theta)}{t}\,.
$$
By the homogeneity of $H$ it follows that
\begin{equation}\label{Ginfty}
G^\infty(\tilde\xi,\tilde\theta)=H(\tilde\xi,\tilde\theta)+V^\infty(\tilde\theta)\,.
\end{equation}

\begin{lemma}\label{coGcoGinfty} For every $(\tilde\xi,\tilde\theta)\in\MD{\times}\R$
we have 
\begin{equation}\label{coF3}
{\rm co}\, G(\tilde\xi,\tilde\theta)={\rm co}\, G^\infty(\tilde\xi,\tilde\theta)\,,
\end{equation}
where ${\rm co}$ denotes the convex envelope in $\MD{\times}\R$.
In particular, ${\rm co}\, G$ does not depend on $\theta_0$ and is positively homogeneous of degree $1$ in $(\tilde\xi,\tilde\theta)$.
\end{lemma}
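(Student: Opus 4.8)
The plan is to show the two inequalities ${\rm co}\,G \le {\rm co}\,G^\infty$ and ${\rm co}\,G^\infty \le {\rm co}\,G$ separately. Recall that the convex envelope of a function $\phi$ can be computed via the Carathéodory formula
$$
{\rm co}\,\phi(\tilde\xi,\tilde\theta) = \inf\Big\{ \sum_{j=1}^{N}\lambda_j\,\phi(\tilde\xi_j,\tilde\theta_j) : \lambda_j\ge 0,\ \sum_j\lambda_j=1,\ \sum_j\lambda_j(\tilde\xi_j,\tilde\theta_j)=(\tilde\xi,\tilde\theta)\Big\},
$$
with $N\le \dim(\MD{\times}\R)+1$, and that $G^\infty\le G$ always holds by concavity of $V$ (indeed $V^\infty(\tilde\theta)=-b_V|\tilde\theta|\le V(\tilde\theta+\theta_0)-V(\theta_0)$ by \eqref{VinftyV}), so ${\rm co}\,G^\infty\le{\rm co}\,G$ is immediate. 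The whole content is the reverse inequality, together with the identification of ${\rm co}\,G$ with a $1$-homogeneous function, which is really the same statement since $G^\infty$ is $1$-homogeneous and hence so is ${\rm co}\,G^\infty$.

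First I would record the key structural fact about $G$: although $V$ itself is not $1$-homogeneous, the function $G(\tilde\xi,\tilde\theta)=H(\tilde\xi,\tilde\theta)+V(\tilde\theta+\theta_0)-V(\theta_0)$ agrees with its recession function $G^\infty$ to first order at the origin and differs from it only through the ``slowly varying'' term $V(\tilde\theta+\theta_0)-V(\theta_0)-V^\infty(\tilde\theta)$, which is bounded below by the linear-in-$|\tilde\theta|$ coercivity \eqref{defcR} but has sublinear growth at infinity (its slope tends to $0$ because $V'(\theta+\theta_0)\to \pm b_V$ and $V^\infty{}'=\mp b_V$). The mechanism behind \eqref{coF3} is that convexification ``sends mass to infinity'': to compute ${\rm co}\,G$ at a given point one is allowed to use convex combinations $\sum_j\lambda_j(\tilde\xi_j,\tilde\theta_j)=(\tilde\xi,\tilde\theta)$ in which some $(\tilde\xi_j,\tilde\theta_j)$ have arbitrarily large norm with correspondingly small weight $\lambda_j$, and along such sequences $\lambda_j G(\tilde\xi_j,\tilde\theta_j)\to \lambda_j G^\infty(\tilde\xi_j,\tilde\theta_j)$ (more precisely one scales $(\tilde\xi_j,\tilde\theta_j)=t\,(\eta_j,\tau_j)$ with $t\to+\infty$ and $\lambda_j$ fixed or slowly varying). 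So the plan is: given $\varepsilon>0$ and a near-optimal representation of ${\rm co}\,G^\infty(\tilde\xi,\tilde\theta)$ as $\sum_j\mu_j G^\infty(\eta_j,\tau_j)$ with $\sum_j\mu_j(\eta_j,\tau_j)=(\tilde\xi,\tilde\theta)$, replace each node by $(t\eta_j + c_j,\, t\tau_j + d_j)$, with weights $\mu_j/t$ plus one extra node carrying the remaining mass $1-1/t$ placed near the origin (or at the points needed to restore the barycentre), and let $t\to+\infty$. Using $1$-homogeneity of $G^\infty$, $\sum_j(\mu_j/t)G^\infty(t\eta_j,t\tau_j)=\sum_j\mu_j G^\infty(\eta_j,\tau_j)$ exactly, while $G$ on the rescaled nodes satisfies $G(t\eta_j,t\tau_j)= tG^\infty(\eta_j,\tau_j)+o(t)$ by definition of the recession function, so $(\mu_j/t)G(t\eta_j,t\tau_j)\to\mu_jG^\infty(\eta_j,\tau_j)$; and the extra low-mass correction node contributes $O(1/t)\cdot G(\text{bounded})\to 0$. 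This yields ${\rm co}\,G(\tilde\xi,\tilde\theta)\le{\rm co}\,G^\infty(\tilde\xi,\tilde\theta)+\varepsilon$ for every $\varepsilon$, hence \eqref{coF3}.

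The main obstacle is bookkeeping the barycentre constraint while sending nodes to infinity: rescaling $(\eta_j,\tau_j)\mapsto t(\eta_j,\tau_j)$ and reweighting $\mu_j\mapsto\mu_j/t$ changes $\sum_j(\mu_j/t)\,t(\eta_j,\tau_j)=\sum_j\mu_j(\eta_j,\tau_j)=(\tilde\xi,\tilde\theta)$, which is fine, but the leftover weight $1-N/t$ (or $1-1/t$ after merging) must be assigned to node(s) whose weighted contribution to the barycentre vanishes, i.e. one should place the correction mass at the origin where $G(0,0)=0$; one must check that $0$ can be written using the remaining mass, which is automatic since $(0,0)$ is itself a legitimate node. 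A cleaner route that avoids the scaling gymnastics, and which I would actually write up, is to use the two classical facts: (i) ${\rm co}\,\phi = (\phi^*)^*$, so ${\rm co}\,G=(G^*)^*$ and ${\rm co}\,G^\infty=((G^\infty)^*)^*$; and (ii) the conjugate of a sum-type function and the conjugate of a recession function are related by $(\phi^\infty)^*=\delta_{\overline{{\rm dom}\,\phi^*}}$ (the indicator of the closed effective domain of $\phi^*$). Since $G=H(\cdot)+[V(\cdot+\theta_0)-V(\theta_0)]$ with $H$ the support function of the fixed bounded set $K$ and the $V$-term having bounded gradient converging to the slopes $\pm b_V$, one computes that ${\rm dom}\,G^*$ and ${\rm dom}\,(G^\infty)^*$ have the same closure — namely the set $\{(\sigma,\zeta):(\sigma,\zeta-c)\in K \text{ for some admissible shift}\}$ intersected appropriately, which in any case is the closed convex set governing both biconjugates — so $(G^*)^*$ and $((G^\infty)^*)^*$ have the same closed convex envelope. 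Then $1$-homogeneity and $\theta_0$-independence of ${\rm co}\,G$ follow because ${\rm co}\,G={\rm co}\,G^\infty$ and $G^\infty$ is manifestly $1$-homogeneous and independent of $\theta_0$ (by \eqref{Ginfty}, since $V^\infty(\tilde\theta)=-b_V|\tilde\theta|$ does not involve $\theta_0$). I expect the delicate point in this second approach to be the careful identification of $\overline{{\rm dom}\,G^*}=\overline{{\rm dom}\,(G^\infty)^*}$, which uses precisely hypotheses \eqref{V'infty} ($b_V<a_K$, guaranteeing the $V$-perturbation does not escape the region controlled by $H$) and \eqref{Vsymm20}, \eqref{2der} on the slopes of $V$.
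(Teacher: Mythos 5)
Your first (scaling/Carath\'eodory) argument is correct and would do the job: the inequality ${\rm co}\, G^\infty\le {\rm co}\, G$ is immediate from $G\ge G^\infty$ (concavity of $V$), and for the reverse inequality the convex combination with nodes $t(\eta_j,\tau_j)$, weights $\mu_j/t$, plus the node $(0,0)$ carrying the weight $1-1/t$, preserves the barycentre exactly and, since $G(0,0)=0$ and $G(t\eta_j,t\tau_j)/t\to G^\infty(\eta_j,\tau_j)$, its value tends to $\sum_j\mu_j G^\infty(\eta_j,\tau_j)$. The paper reaches the same conclusion by a more economical route that packages your ``mass to infinity'' mechanism into two abstract facts: since ${\rm co}\, G\ge 0$ by \eqref{gammaM} and ${\rm co}\, G(0,0)=0$, convexity gives ${\rm co}\, G(\tilde\xi,\tilde\theta)\le \tfrac1t\,{\rm co}\, G(t\tilde\xi,t\tilde\theta)\to ({\rm co}\, G)^\infty(\tilde\xi,\tilde\theta)$, i.e.\ ${\rm co}\, G\le({\rm co}\, G)^\infty$; and from ${\rm co}\, G\le G$ one gets $({\rm co}\, G)^\infty\le G^\infty$, hence $({\rm co}\, G)^\infty\le{\rm co}\, G^\infty$ because $({\rm co}\, G)^\infty$ is convex. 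No explicit convex combinations are needed.

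By contrast, the conjugate-duality route that you say you would actually write up is not sound as justified. The identity $(\phi^\infty)^*=\chi_{\overline{{\rm dom}\,\phi^*}}$ is a statement about \emph{convex} $\phi$ and cannot be invoked for the nonconvex $G$ or $G^\infty$; more importantly, knowing that ${\rm dom}\, G^*$ and ${\rm dom}\,(G^\infty)^*$ have the same closure does not imply $(G^*)^*=((G^\infty)^*)^*$ --- two conjugates can share a domain and still differ as functions on it, giving different biconjugates. What is actually true, and what this route would need, is the stronger statement $G^*=(G^\infty)^*=\chi_{K_{\rm eff}}$: the inequality $G^*\le(G^\infty)^*$ follows from $G\ge G^\infty$, the inequality $G^*\ge 0$ from $G(0,0)=0$, and the fact that $G^*=+\infty$ outside $K_{\rm eff}$ requires letting $(\xi,\theta)$ run to infinity along a direction where the linear form strictly exceeds $G^\infty$ and using $G(t\xi,t\theta)/t\to G^\infty(\xi,\theta)$ --- exactly the scaling step you were trying to avoid. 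So the duality formulation only relocates the ``scaling gymnastics''; I would write up the first argument (or the paper's two-line abstract version of it) and drop the second.
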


\begin{proof} As $V$ is concave, we have 
$V(\tilde\theta+\theta_0)-V(\theta_0)\ge V^\infty(\tilde\theta)$, 
which gives
\begin{equation}\label{coF1}
G\ge G^\infty\qquad\hbox{and}\qquad
{\rm co}\, G\ge{\rm co}\, G^\infty\,.
\end{equation}
Since $G\ge 0$ by (\ref{gammaM}) and $G(0,0)=0$, we have ${\rm co}\, G(0,0)=0$, so that by convexity 
\begin{equation}\label{coF2}
{\rm co}\, G\le ({\rm co}\, G)^\infty\,,
\end{equation}
where $({\rm co}\, G)^\infty$ is the recession function of ${\rm co}\, G$, defined by
$$
({\rm co}\, G)^\infty(\tilde\xi,\tilde\theta):=\lim_{t\to+\infty}\frac{{\rm co}\, G(t\tilde\xi,t\tilde\theta)}{t}\,.
$$
On the other hand, since ${\rm co}\, G\le G$, we have $({\rm co}\, G)^\infty \le G^\infty$, which implies $({\rm co}\, G)^\infty \le {\rm co}\,G^\infty$. Therefore, (\ref{coF2}) gives
${\rm co}\, G\le {\rm co}\, G^\infty$, which, together with (\ref{coF1}), yields~\eqref{coF3}.
\end{proof}

Let us define $H_{\rm eff}:\MD{\times}\R\to\R$ by 
\begin{equation}\label{Heff}
H_{\rm eff}:={\rm co}\, G^\infty\,.
\end{equation}
By the previous lemma the convex envelope  ${\rm co}\, F$
of the function $F$ introduced in (\ref{HV}) is given by
\begin{equation}\label{coFHeff}
{\rm co}\, F(\xi,\theta)= H_{\rm eff}(\xi-\xi_0,\theta-\theta_0)+V(\theta_0)\,.
\end{equation} 

As $H_{\rm eff}$ is convex and positively homogeneous of degree $1$, it can be written in the form
\begin{equation}\label{supeff}
H_{\rm eff}(\xi,\theta)=\sup_{(\sigma,\zeta)\in K_{\rm eff}} \{ \sigma{\,:\,}\xi +\zeta\, \theta \}\,,
\end{equation}
where $K_{\rm eff}=\{(\sigma,\zeta)\in\MD{\times}\R : H_{\rm eff}^*(\sigma,\zeta)\le 0 \}$
(see, e.g., \cite[Theorem~13.2]{Roc}), and
$H_{\rm eff}^*=\chi_{K_{\rm eff}}$, where for every set $E\subset\MD{\times}\R$ the indicator function $\chi_E$ is defined by $\chi_E(\sigma,\zeta)=0$ if $(\sigma,\zeta)\in E$, $\chi_E(\sigma,\zeta)=+\infty$ otherwise.
Since $H_{\rm eff}={\rm co}\, G^\infty$, we have that $H_{\rm eff}^*=(G^\infty)^*$, so that
$$
K_{\rm eff}=\{(\sigma,\zeta)\in \MD{\times}\R: (G^\infty)^*(\sigma,\zeta)\le 0 \}\,.
$$

Since $V^\infty(\theta)=\min\{ b_V \theta, -b_V \theta\}$ 
by (\ref{Vinf}),  the function $G^\infty$ can be
expressed as the minimum of two convex functions, namely
\begin{equation}\label{co11}
G^\infty(\xi,\theta)= \min\{ H(\xi,\theta)+b_V\theta, H(\xi,\theta)-b_V\theta\}\,.
\end{equation}
Therefore
$$
(G^\infty)^*(\sigma,\zeta)= \max\{ H^*(\sigma,\zeta-b_V), H^*(\sigma,\zeta+b_V)\}\,.
$$
Since $H^*=\chi_K$, we obtain
$$
\chi_{K_{\rm eff}}= (G^\infty)^*= \max\{ \chi_{K+(0,b_V)}, \chi_{K-(0,b_V)}\}\,,
$$
which implies
\begin{equation}\label{Keff}
K_{\rm eff}= (K+(0,b_V))\cap (K-(0,b_V))\,.
\end{equation}
Using (\ref{V'infty}), \eqref{Keff}, and the strict convexity of $K$, 
it is easy to check that $K_{\rm eff}$ is a bounded closed convex set and that
\begin{equation}\label{kefint}
(0,0)\in\Interior K_{\rm eff}\subset K_{\rm eff}\subset \Interior K\,.
\end{equation}

\begin{lemma}\label{HeffG}
For every $(\xi,\theta)\in \MD{\times}\R$ we have
\begin{eqnarray}
\label{He=G}
H_{\rm eff}(\xi,\theta)=G(\xi,\theta) &
\Longleftrightarrow & (\xi,\theta)=(0,0)\,,
\\
\label{He=Ginf}
H_{\rm eff}(\xi,0)=G^\infty(\xi,0) &
\Longleftrightarrow & \xi=0\,.
\end{eqnarray}
\end{lemma}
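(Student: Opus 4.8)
The plan is to prove \eqref{He=Ginf} first, as a consequence of the strict inclusion $K_{\rm eff}\subset\Interior K$ recorded in \eqref{kefint}, and then to deduce \eqref{He=G} from it. The implications ``$\Longleftarrow$'' in both lines are immediate: $G(0,0)=H(0,0)+V(\theta_0)-V(\theta_0)=0$, while $G^\infty$ and $H_{\rm eff}$ are positively homogeneous of degree one, hence $G^\infty(0,0)=H_{\rm eff}(0,0)=0$; thus at $(\xi,\theta)=(0,0)$ all three functions take the value $0$.

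For \eqref{He=Ginf}, the core point is the strict comparison
\[
H_{\rm eff}(\xi,\theta)<H(\xi,\theta)\qquad\text{for every }(\xi,\theta)\ne(0,0).
\]
By \eqref{HD} and \eqref{supeff}, $H$ and $H_{\rm eff}$ are the support functions of $K$ and $K_{\rm eff}$, respectively. Since $K_{\rm eff}$ is bounded and closed it is compact, and being contained in the open set $\Interior K$ by \eqref{kefint}, there is $\delta>0$ with $K_{\rm eff}+\overline{B_\delta}\subset K$, $\overline{B_\delta}$ being the closed ball of radius $\delta$ in $\MD{\times}\R$. Taking support functions on both sides gives $H(\xi,\theta)\ge H_{\rm eff}(\xi,\theta)+\delta\sqrt{|\xi|^2+\theta^2}$, whence the strict comparison. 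Now $V^\infty(0)=0$, so \eqref{Ginfty} yields $G^\infty(\xi,0)=H(\xi,0)$; combining this with the strict comparison at $\theta=0$ and with the vanishing at the origin proves \eqref{He=Ginf}.

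It remains to prove ``$\Longrightarrow$'' in \eqref{He=G}. By \eqref{Heff} the convex envelope satisfies $H_{\rm eff}={\rm co}\,G^\infty\le G^\infty$, and $G^\infty\le G$ by \eqref{coF1}, so $H_{\rm eff}\le G^\infty\le G$ pointwise. If $H_{\rm eff}(\xi,\theta)=G(\xi,\theta)$ then $G^\infty(\xi,\theta)=G(\xi,\theta)$, that is, $V^\infty(\theta)=V(\theta+\theta_0)-V(\theta_0)$ by \eqref{Ginfty} and \eqref{defG}; the strict inequality \eqref{VinftyV} then forces $\theta=0$, and with $\theta=0$ we get $H_{\rm eff}(\xi,0)=G(\xi,0)=H(\xi,0)=G^\infty(\xi,0)$, so $\xi=0$ by \eqref{He=Ginf}. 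The only genuinely delicate step is the strict comparison of support functions, which rests on the strict inclusion \eqref{kefint} (itself coming from $b_V<a_K$ and the strict convexity of $K$); the rest is bookkeeping with the relations among $G$, $G^\infty$, $H_{\rm eff}$, and $H$ already established above.
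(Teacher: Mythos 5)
Your proof is correct and follows essentially the same route as the paper: the strict inequality $H_{\rm eff}(\xi,\theta)<H(\xi,\theta)$ for $(\xi,\theta)\neq(0,0)$ deduced from \eqref{HD}, \eqref{supeff}, and \eqref{kefint}, combined with the chain $H_{\rm eff}\le G^\infty\le G$ and \eqref{VinftyV} to force $\theta=0$ and then $\xi=0$. Your explicit compactness argument ($K_{\rm eff}+\overline{B_\delta}\subset K$, hence $H\ge H_{\rm eff}+\delta\sqrt{|\xi|^2+\theta^2}$) merely spells out what the paper leaves implicit in citing \eqref{kefint}, and your handling of the trivial converse implications is fine.
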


\begin{proof}
By \eqref{HD}, \eqref{supeff}, and \eqref{kefint} we have
\begin{equation}\label{He<H}
H_{\rm eff}(\xi,\theta)< H(\xi,\theta) \qquad \text{for every }
(\xi,\theta)\neq (0,0)\,.
\end{equation}
If $H_{\rm eff}(\xi,\theta)=G(\xi,\theta)$, 
by \eqref{coF1} and \eqref{Heff} we have 
$G(\xi,\theta)=G^\infty(\xi,\theta)$, which gives $V^\infty(\theta)=
V(\theta+\theta_0)-V(\theta_0)$ thanks to \eqref{defG} and \eqref{Ginfty}.
By \eqref{VinftyV} this implies $\theta=0$,
so that $H_{\rm eff}(\xi,0)=G(\xi,0)=H(\xi,0)$. By \eqref{He<H} we deduce
that $\xi=0$.
This concludes the proof of \eqref{He=G}.

On the other hand, if $H_{\rm eff}(\xi,0)=G^\infty(\xi,0)$, from
\eqref{Ginfty} 
we obtain $H_{\rm eff}(\xi,0)=H(\xi,0)$. By \eqref{He<H} we deduce $\xi=0$,
which concludes the proof of \eqref{He=Ginf}.
\end{proof}

\begin{lemma}\label{cothetaG}
For every $(\xi,\theta)\in \MD{\times}\R$ we have
\begin{equation}\label{cof}
{\rm co}\, G^\infty(\xi,\theta)={\rm co}_\theta G^\infty(\xi,\theta)\,,
\end{equation}
where ${\rm co}_\theta$ denotes the convex envelope with respect to~$\theta$.
Moreover there exist $\hat\theta\in\R$ 
and $\alpha\in[\frac12,1]$, such that
\begin{eqnarray}
&\theta=\alpha\hat\theta+(1-\alpha)(-\hat\theta)\,,\label{f1}
\\
&H_{\rm eff}(\xi,\theta)=G^\infty(\xi,\hat\theta)\,.
\label{f2}
\end{eqnarray}
\end{lemma}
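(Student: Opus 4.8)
The plan is to exploit the structure $G^\infty(\xi,\theta)=\min\{H(\xi,\theta)+b_V\theta,\ H(\xi,\theta)-b_V\theta\}$ from \eqref{co11}, together with the symmetry $H(\xi,\theta)=H(\xi,-\theta)$ from \eqref{symH}. First I would observe that \eqref{symH} and \eqref{co11} give $G^\infty(\xi,-\theta)=G^\infty(\xi,\theta)$, so $G^\infty(\xi,\cdot)$ is even in $\theta$. Since $H_{\rm eff}={\rm co}\,G^\infty$ is the full convex envelope in $(\xi,\theta)$, it is certainly $\le$ the partial convex envelope ${\rm co}_\theta G^\infty$ taken only in $\theta$ (with $\xi$ fixed); the reverse inequality is the content of \eqref{cof}. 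To get it, I would fix $\xi$ and note that $\theta\mapsto G^\infty(\xi,\theta)$ is the minimum of two convex functions, hence is convex on each half-line $\{\theta\ge 0\}$ and $\{\theta\le 0\}$; being even, its convex envelope in $\theta$ is obtained by chording across $0$, i.e.\ ${\rm co}_\theta G^\infty(\xi,\theta)$ is the lower convex envelope of the even function $\theta\mapsto G^\infty(\xi,\theta)$. The key point is that this partial envelope is \emph{already jointly convex} in $(\xi,\theta)$: for fixed $\xi$ it is convex in $\theta$ by construction, and one checks joint convexity using that $H$ is jointly convex and positively homogeneous (so that ${\rm co}_\theta$ of a jointly convex-plus-linear-in-$\theta$ min is still jointly convex — this uses that the "switch value" of $\hat\theta$ below depends on $\xi$ in a controlled way). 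Since ${\rm co}_\theta G^\infty$ is jointly convex and $\le G^\infty$, it is $\le {\rm co}\,G^\infty=H_{\rm eff}$; combined with the trivial inequality this yields \eqref{cof}.

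Next I would extract the explicit representatives $\hat\theta$ and $\alpha$. By evenness of $G^\infty(\xi,\cdot)$ and the fact that it is convex on $[0,+\infty)$, its lower convex envelope in $\theta$ coincides with $G^\infty(\xi,\cdot)$ for $|\theta|$ large and is affine (in fact of the form $\theta\mapsto c(\xi)+d(\xi)\theta$, but by symmetry actually \emph{constant}, equal to the minimum $m(\xi):=\min_{\theta}G^\infty(\xi,\theta)$) on a symmetric interval $[-\hat\theta(\xi),\hat\theta(\xi)]$ around $0$, where $\hat\theta(\xi)\ge 0$ is the smallest minimiser of $G^\infty(\xi,\cdot)$ on $[0,+\infty)$. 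Concretely: if $|\theta|\ge\hat\theta$ we take $\hat\theta:=|\theta|$ and $\alpha=1$ (or the obvious convex combination, with $\alpha$ as required once $\theta\ge 0$; for $\theta<0$ use the representative $-\hat\theta=\theta$), giving \eqref{f1}–\eqref{f2} with \eqref{cof} reducing to an identity. If $|\theta|<\hat\theta$, then $H_{\rm eff}(\xi,\theta)={\rm co}_\theta G^\infty(\xi,\theta)=m(\xi)=G^\infty(\xi,\hat\theta)=G^\infty(\xi,-\hat\theta)$, and we write $\theta=\alpha\hat\theta+(1-\alpha)(-\hat\theta)$ with $\alpha=\tfrac12+\tfrac{\theta}{2\hat\theta}\in[0,1]$. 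A sign normalization (replace $\hat\theta$ by $-\hat\theta$ if needed, using evenness) forces $\alpha\ge\tfrac12$, which is the stated range $\alpha\in[\tfrac12,1]$. One must also dispose of the degenerate case $\hat\theta=0$ (i.e.\ $G^\infty(\xi,\cdot)$ minimised only at $\theta=0$): then necessarily $\theta=0$ as well, and any $\hat\theta$ with $\alpha=\tfrac12$ works trivially; alternatively one notes that by \eqref{H0zeta}–\eqref{V'infty} the derivative of $G^\infty(\xi,\cdot)$ at $0^+$ is $\le$ something negative when $\xi\ne0$... actually the cleanest route is just: $\hat\theta$ can always be chosen $\ge 0$, and when it is $0$ we are in the case $\theta=0$.

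The main obstacle I anticipate is proving that the partial convex envelope ${\rm co}_\theta G^\infty$ is \emph{jointly} convex in $(\xi,\theta)$ — the partial envelope of a jointly convex function need not in general be jointly convex, so one genuinely needs the special form \eqref{co11}. The clean argument: ${\rm co}_\theta G^\infty(\xi,\theta)=\inf\{\alpha G^\infty(\xi,\theta_1)+(1-\alpha)G^\infty(\xi,\theta_2):\alpha\theta_1+(1-\alpha)\theta_2=\theta\}$, and substituting \eqref{co11} and using joint convexity and positive $1$-homogeneity of $H$ on $\MD\times\R$ together with the linearity of $\theta\mapsto\pm b_V\theta$, one shows the infimum defines a convex function of $(\xi,\theta)$ by a direct epigraph argument (the epigraph is a projection of a convex set). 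Once joint convexity is in hand, the sandwiching ${\rm co}\,G^\infty\le{\rm co}_\theta G^\infty\le$ (jointly convex function $\le G^\infty$) $\le{\rm co}\,G^\infty$ closes \eqref{cof}, and \eqref{f1}–\eqref{f2} then follow from the one-dimensional analysis of the even convex-on-half-lines function $G^\infty(\xi,\cdot)$ as sketched above.
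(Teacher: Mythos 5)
Your plan is correct and is essentially the functional counterpart of the paper's argument: the paper works at the level of the sublevel set $A=\{G^\infty\le 1\}=A^\ominus\cup A^\oplus$ and shows that ${\rm co}_\theta A$ is convex by characterizing it as $\{(\xi,\theta):\exists\,\theta^\oplus\ge|\theta| \text{ with } (\xi,\theta^\oplus)\in A^\oplus\}$ and invoking the convexity of $A^\oplus$; homogeneity then transfers the identity ${\rm co}\,A={\rm co}_\theta A$ back to the functions, and the pair $(\hat\theta,\alpha)$ is read off from the witness $\theta^\oplus$ exactly as in your second paragraph. The one step you should not leave at the level of ``a direct epigraph argument'' is the joint convexity of ${\rm co}_\theta G^\infty$: the naive formula ${\rm co}_\theta G^\infty(\xi,\theta)=\inf\{\alpha G^\infty(\xi,\theta_1)+(1-\alpha)G^\infty(\xi,\theta_2):\alpha\theta_1+(1-\alpha)\theta_2=\theta\}$ does \emph{not} exhibit the epigraph as the projection of a convex set, since $G^\infty$ is nonconvex and $\alpha G^\infty(\xi,\theta_1)$ is not jointly convex in $(\alpha,\xi,\theta_1)$. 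The fix is precisely the symmetry you keep invoking: using evenness of $G^\infty(\xi,\cdot)$ and its convexity on each half-line one gets the clean representation ${\rm co}_\theta G^\infty(\xi,\theta)=\inf_{s\ge|\theta|}\bigl(H(\xi,s)-b_V s\bigr)$, and now the infimand $H(\xi,s)-b_Vs$ is jointly convex in $(\xi,s)$ while the constraint $\{s\ge|\theta|\}$ is convex in $(\theta,s)$, so partial minimization does yield a jointly convex function. With that representation in place, your sandwich ${\rm co}\,G^\infty\le{\rm co}_\theta G^\infty\le{\rm co}\,G^\infty$ closes \eqref{cof}, and your one-dimensional analysis (including the sign normalization giving $\alpha\ge\tfrac12$ and the vacuity of the case $|\theta|<\hat\theta$ when $\hat\theta=0$) correctly delivers \eqref{f1}--\eqref{f2}. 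What your route buys is that it avoids passing through sublevel sets and the homogeneity argument; what the paper's route buys is that the convexity verification reduces to the convexity of the single set $A^\oplus$, with no need to compute the partial envelope explicitly.
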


\begin{proof}
Let 
$$
\begin{array}{c}
A^\ominus:=\{ (\xi,\theta)\in\MD{\times}\R: \ \theta\le 0, \ H(\xi,\theta)+b_V\theta\le 1\}\,,
\smallskip
\\
A^\oplus:=\{ (\xi,\theta)\in\MD{\times}\R: \ \theta\ge 0, \ H(\xi,\theta)-b_V\theta\le 1\}\,,
\end{array}
$$ 
and $A:=A^\ominus\cup A^\oplus=\{ G^\infty\le 1\}$ (see \eqref{co11}).
By (\ref{symH}) we have
\begin{equation}\label{Apm}
(\xi,\theta)\in A^\ominus \quad \Longleftrightarrow \quad (\xi,-\theta)\in A^\oplus\,.
\end{equation}
Since $G^\infty$ is positively homogeneous of degree $1$, we have ${\rm co}\,
A=\{ {\rm co}\, G^\infty\le 1 \}$ and ${\rm co}_\theta A\subset
\{ {\rm co}_\theta G^\infty \le 1 \}$, where ${\rm co}_\theta A$ is the smallest set
containing $A$, which is convex with respect to $\theta$, i.e., its intersections with all
lines $\{\xi=const.\}$ are convex. To prove that ${\rm co}\, A={\rm co}_\theta A$, it is
enough to show that ${\rm co}_\theta A$ is convex.
By (\ref{Apm}) we have that $(\xi,\theta)\in {\rm co}_\theta A$ if and only if there exists 
$\theta^\oplus\in\R$ such that $|\theta|\le\theta^\oplus$ and $(\xi,\theta^\oplus)\in A^\oplus$. 
Since  $A^\oplus$ is convex,
from this property it is easy to deduce that ${\rm co}_\theta A$ is convex, hence 
${\rm co}\, A={\rm co}_\theta A$. It follows that
$$
{\rm co}_\theta A\subset \{ {\rm co}_\theta G^\infty \le 1 \}\subset 
\{ {\rm co}\, G^\infty\le 1 \}= {\rm co}\, A\,.
$$
This implies that $\{ {\rm co}\, G^\infty\le 1 \}=\{ {\rm co}_\theta G^\infty \le 1 \}$. 
Since both functions ${\rm co}\, G^\infty$ and ${\rm co}_\theta G^\infty$ are positively homogeneous of degree $1$, we conclude that 
${\rm co}\, G^\infty={\rm co}_\theta G^\infty$.

By homogeneity, to prove \eqref{f1} and \eqref{f2} it is not restrictive to assume that
$H_{\rm eff}(\xi,\theta)=1$, so that $(\xi,\theta)\in {\rm co}\, A$. {}From the previous discussion it follows that  there exists 
$\theta^\oplus\in\R$ such that $|\theta|\le\theta^\oplus$ and 
$(\xi,\theta^\oplus)\in A^\oplus$. In particular we
have $H_{\rm eff}(\xi,-\theta^\oplus)\le G^\infty(\xi,-\theta^\oplus)\le 1$ and 
$H_{\rm eff}(\xi,\theta^\oplus)\le G^\infty(\xi,\theta^\oplus)\le 1$. By convexity we have
$H_{\rm eff}(\xi,-\theta^\oplus)=H_{\rm eff}(\xi,\theta^\oplus)=1$, which implies
$G^\infty(\xi,-\theta^\oplus)= G^\infty(\xi,\theta^\oplus)=1$.  To conclude the proof of 
\eqref{f1} and \eqref{f2} it is enough to take
$\hat\theta=\theta^\oplus$ if $\theta\ge0$, and $\hat\theta=-\theta^\oplus$ if 
$\theta<0$.
\end{proof}

\begin{lemma}\label{lemma00}
Let $(\xi_0,\theta_0)\in\MD{\times}\R$ with $(\xi_0,\theta_0)\neq(0,0)$.
Assume that $H_{\rm eff}(\xi_0,\theta_0)=G^\infty(\xi_0,\theta_0)$. 
Then $\theta_0\neq 0$ and the common tangent hyperplane to the graphs of 
$H_{\rm eff}$ and $G^\infty$ at the point $(\xi_0,\theta_0,G^\infty(\xi_0,\theta_0))$ 
is the graph of the linear function
$$
L(\xi,\theta):=\partial_\xi G^\infty(\xi_0,\theta_0)\xi +\partial_\theta 
G^\infty(\xi_0,\theta_0)\theta\,.
$$
Let $C:=\{(\xi,\theta)\in\MD{\times}\R: L(\xi,\theta)=G^\infty(\xi,\theta)\}$.
Then either $C=\{ (\lambda\xi_0,\lambda\theta_0): \lambda\geq0\}$
or $C=\{ (\lambda\xi_0,\lambda\theta_0): \lambda\geq0\}\cup \{ (\lambda\xi_0,-\lambda\theta_0): \lambda\geq0\}$.
\end{lemma}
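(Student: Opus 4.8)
The plan is to use the structure of $G^\infty$ as a minimum of two smooth convex functions, together with the results of Lemma~\ref{cothetaG}. First I would show $\theta_0\neq 0$: if $\theta_0=0$, then the hypothesis $H_{\rm eff}(\xi_0,0)=G^\infty(\xi_0,0)$ forces $\xi_0=0$ by \eqref{He=Ginf} in Lemma~\ref{HeffG}, contradicting $(\xi_0,\theta_0)\neq(0,0)$. So $\theta_0\neq 0$, and by \eqref{symH} and the symmetry $V^\infty(\theta)=V^\infty(-\theta)$ we have $G^\infty(\xi,\theta)=G^\infty(\xi,-\theta)$; in particular $H_{\rm eff}(\xi_0,-\theta_0)=G^\infty(\xi_0,-\theta_0)$ as well, which already explains why $C$ will contain the reflected ray when the two rays are genuinely distinct.

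Next I would identify the common tangent. Since $H_{\rm eff}={\rm co}\,G^\infty\le G^\infty$ with equality at $(\xi_0,\theta_0)$, and $H_{\rm eff}$ is convex, any affine function $L$ minorizing $G^\infty$ and touching it at $(\xi_0,\theta_0)$ also minorizes $H_{\rm eff}$ and touches it there; conversely a supporting hyperplane of the convex function $H_{\rm eff}$ at $(\xi_0,\theta_0)$ minorizes $H_{\rm eff}\le G^\infty$ and touches $G^\infty$ at that point. So the supporting hyperplanes of the two graphs at $(\xi_0,\theta_0)$ coincide. Now I must check $G^\infty$ is differentiable at $(\xi_0,\theta_0)$: assuming WLOG $\theta_0>0$, by \eqref{co11} we have $G^\infty(\xi,\theta)=\min\{H(\xi,\theta)+b_V\theta,\,H(\xi,\theta)-b_V\theta\}$, and near $\theta_0>0$ the minimum is attained strictly by the branch $H(\xi,\theta)-b_V\theta$ (strictly, because $b_V\theta_0>0$), which is differentiable since $H^2$ — hence $H$ away from the origin — is $C^1$ by the discussion after \eqref{strconv}. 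Thus $G^\infty$ is $C^1$ near $(\xi_0,\theta_0)$, the tangent hyperplane is unique and given by $L(\xi,\theta)=\partial_\xi G^\infty(\xi_0,\theta_0)\xi+\partial_\theta G^\infty(\xi_0,\theta_0)\theta$ (note $L$ is linear, not merely affine, because both functions are positively homogeneous of degree $1$, so $L(\xi_0,\theta_0)=G^\infty(\xi_0,\theta_0)$ with no constant term — by Euler's relation).

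For the contact set $C=\{L=G^\infty\}$, the key point is homogeneity: $L$ minorizes $G^\infty$ (being a supporting hyperplane of ${\rm co}\,G^\infty\le G^\infty$), and both are $1$-homogeneous, so $C$ is a cone, and it contains the ray $\{\lambda(\xi_0,\theta_0):\lambda\ge0\}$ and — by the symmetry $G^\infty(\xi,\theta)=G^\infty(\xi,-\theta)$, provided $L$ has the same symmetry — also $\{\lambda(\xi_0,-\theta_0):\lambda\ge0\}$ if $\theta_0\neq 0$. The remaining task is to rule out anything larger. Here I would argue: on the set where $L$ touches $G^\infty$, by the min-structure $L$ must touch one of the two smooth branches $H(\xi,\theta)\pm b_V\theta$; since each of those is the support function of a strictly convex set with $C^1$ boundary (namely $K\mp(0,b_V)$, cf. \eqref{strconv}–\eqref{Keff}), a linear functional equals such a support function exactly along a single ray — the ray generated by the unique point of the boundary where the outer normal is the corresponding gradient. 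So the contact with each branch is a single ray; contact with the $+$ branch lives in $\{\theta\le0\}$ up to where the branches agree and contact with the $-$ branch in $\{\theta\ge0\}$, giving at most the two rays $\{\lambda(\xi_0,\pm\theta_0):\lambda\ge0\}$ (which coincide precisely when $\theta_0=0$, excluded). This dichotomy is exactly the claimed conclusion.

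The main obstacle I anticipate is the last step: carefully showing the contact set of a linear functional with the support function of a strictly convex $C^1$ body is exactly one ray, and then correctly bookkeeping how the two branches of $G^\infty$ are glued (the locus where $H(\xi,\theta)+b_V\theta=H(\xi,\theta)-b_V\theta$ is $\{\theta=0\}$) so that no extra contact is hidden at the seam $\theta=0$ — one has to check $L$ cannot run along the seam, which follows because on $\{\theta=0\}$ we have $G^\infty(\xi,0)=H(\xi,0)$, and $L(\xi,0)=G^\infty(\xi,0)$ on a ray there would force, via \eqref{He<H} and $L\le H_{\rm eff}$, that ray to be trivial unless $\xi_0$ itself is parallel to it, i.e. $\theta_0=0$, again excluded. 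Everything else is a routine application of strict convexity, $C^1$ regularity, and Euler's homogeneity relation.
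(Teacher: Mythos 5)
Your overall strategy --- reduce to the two smooth branches $H(\xi,\theta)\mp b_V\theta$ of $G^\infty$, use that each is the support function of a translate of the strictly convex $C^1$ body $K$ so that a linear minorant touches it along at most one ray, and check separately that nothing can happen on the seam $\{\theta=0\}$ because of \eqref{He<H} --- is sound, and the parts you flag as the "main obstacle" (the one-ray contact claim and the seam) really are routine. Your treatment of $\theta_0\neq0$, of the differentiability of $G^\infty$ near $(\xi_0,\theta_0)$, and of the identification of the common tangent via Euler's identity matches the paper.

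The genuine gap is in the last step, and it is not the obstacle you single out. Your argument shows that $C\cap\{\theta>0\}$ is exactly the ray through $(\xi_0,\theta_0)$ (taking, as you may, $\theta_0>0$) and that $C\cap\{\theta<0\}$ is contained in at most one ray, generated by some contact point $(\xi_1,\theta_1)$ of $L$ with the other branch. But you never prove that this second ray, when nonempty, is the \emph{reflected} ray through $(\xi_0,-\theta_0)$: you only observe that $(\xi_0,-\theta_0)\in C$ ``provided $L$ has the same symmetry'' in $\theta$, and then assert ``giving at most the two rays $\{\lambda(\xi_0,\pm\theta_0)\}$''. A priori $(\xi_1,\theta_1)$ could be any point of $\{G^\infty=1\}\cap\{\theta<0\}$, and the precise identification $(\xi_1,\theta_1)=(\xi_0,-\theta_0)$ is exactly what the structure theorems later (e.g.\ Theorem~\ref{thm311}) require. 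This is where the paper invests its effort: it joins the two contact points by a segment $S$, shows $H_{\rm eff}=1<G^\infty$ on $S$, and uses Lemma~\ref{cothetaG} to produce for each $(\xi,\theta)\in S$ points $(\xi,\theta^\ominus)$, $(\xi,\theta^\oplus)$ of $\{G^\infty\le1\}$ with $\theta$ strictly between; linearity of $L$ forces these to be contact points too, which pins down $\xi_1=\xi_0$ and $\theta_1=-\theta_0$. Within your framework there is also a quick fix: writing $L(\xi,\theta)=\sigma_L{\,:\,}\xi+\zeta_L\theta$ and using that $G^\infty$ is even in $\theta$ (by \eqref{symH} and \eqref{Vinf}), one gets $L(\xi_0,-\theta_0)=1-2\zeta_L\theta_0\le G^\infty(\xi_0,-\theta_0)=1$, hence $\zeta_L\ge0$, while the same computation at $(\xi_1,\theta_1)$ gives $\zeta_L\le0$; so any contact in $\{\theta<0\}$ forces $\zeta_L=0$, $L$ is then even in $\theta$, and $(\xi_1,-\theta_1)$ lies on the unique positive contact ray, whence $(\xi_1,\theta_1)=(\xi_0,-\theta_0)$. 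Either way this step must be supplied; as written the proposal does not establish the stated form of $C$. (A minor slip: $H\pm b_V\theta$ is the support function of $K\pm(0,b_V)$, not of $K\mp(0,b_V)$.)
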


\begin{proof}
The inequality $\theta_0\neq0$ follows from \eqref{He=Ginf}. Therefore
$G^\infty$ is differentiable at $(\xi_0,\theta_0)$. Using the convexity of $H_{\rm eff}$
and the inequality $H_{\rm eff}\leq G^\infty$, we deduce that $H_{\rm eff}$ 
is differentiable at $(\xi_0,\theta_0)$ and its partial derivatives coincide with those of
$G^\infty$. The formula for the tangent hyperplane follows easily
from the Euler identity.

By \eqref{symH} and \eqref{Vinf} we may suppose $\theta_0>0$.
By the homogeneity of the problem it is not restrictive to assume that
$H_{\rm eff}(\xi_0,\theta_0)=G^\infty(\xi_0,\theta_0)=1$. 
Then the set $\{L=1\}$ is the common tangent hyperplane to the hypersurfaces 
$\{H_{\rm eff}=1\}$ and $\{G^\infty=1\}$ at the point $(\xi_0,\theta_0)$.
As $G^\infty(\xi,\theta)=H(\xi,\theta)-b_V\theta$ for $\theta\geq0$ and
the set $\{(\xi,\theta)\in\MD{\times}\R: H(\xi,\theta)-b_V\theta\leq1\}$ is
strictly convex by \eqref{strconv}, we deduce that $\{L=1\}\cap\{G^\infty=1\}\cap\{\theta\geq0\}=
\{(\xi_0,\theta_0)\}$. If the set $\{L=1\}\cap\{G^\infty=1\}\cap\{\theta<0\}$ is empty, then
$C=\{ (\lambda\xi_0,\lambda\theta_0): \lambda\geq0\}$ by homogeneity.

Suppose $\{L=1\}\cap\{G^\infty=1\}\cap\{\theta<0\}\neq\emptyset$.
Since $L\leq H_{\rm eff}$ by convexity,
if $(\xi_1,\theta_1)\in\{L=1\}\cap\{G^\infty=1\}\cap\{\theta<0\}$ we have 
$1=L(\xi_1,\theta_1)\leq H_{\rm eff}(\xi_1,\theta_1)\leq G^\infty(\xi_1,\theta_1)=1$.
Therefore, the same argument used for $(\xi_0,\theta_0)$ shows that
\begin{equation}\label{eq89}
\{L=1\}\cap\{G^\infty=1\}\cap\{\theta<0\}=\{(\xi_1,\theta_1)\}\,. 
\end{equation}
This implies
\begin{equation}\label{eq88}
\{L=1\}\cap\{G^\infty=1\}=\{(\xi_0,\theta_0),(\xi_1,\theta_1)\}\,.
\end{equation}

Let us prove that $\xi_1=\xi_0$ and $\theta_1=-\theta_0$.
Let $S$ be the open segment with endpoints $(\xi_0,\theta_0)$ and $(\xi_1,\theta_1)$.
As $L=1$ on the endpoints, it is $L=1$ on $S$. As $H_{\rm eff}=1$ on the endpoints,
by convexity we have $H_{\rm eff}\leq1$ on $S$. On the other hand, since
the graph of $L$ is tangent to the graph of $H_{\rm eff}$, by convexity we also have
$L\leq H_{\rm eff}$. Therefore $H_{\rm eff}=1$ on $S$.
By \eqref{eq88} we have $G^\infty\neq 1$ on $S$. As $H_{\rm eff}\leq G^\infty$,
we conclude that $H_{\rm eff}=1<G^\infty$ on~$S$.

Let us fix $(\xi,\theta)\in S$. Then $(\xi,\theta)\in\{H_{\rm eff}\leq1\}={\rm co}\,
\{G^\infty\leq1\}$. As $G^\infty(\xi,\theta)>1$,
by the previous lemma there exist $\theta^\ominus$ and 
$\theta^\oplus$ with $\theta^\ominus\leq0\leq\theta^\oplus$, such that
$\theta^\ominus<\theta<\theta^\oplus$, $(\xi,\theta^\ominus)\in \{G^\infty\leq1\}$,
and $(\xi,\theta^\oplus)\in \{G^\infty\leq1\}$. As $L\leq H_{\rm eff}$ by convexity and
$H_{\rm eff}\leq G^\infty$ by definition, we have $L(\xi,\theta^\ominus)\leq1$
and $L(\xi,\theta^\oplus)\leq1$. Since $\theta^\ominus<\theta<\theta^\oplus$ and
$L(\xi,\theta)=1$, we deduce from the linearity of $L$ that $L(\xi,\theta^\ominus)=
L(\xi,\theta^\oplus)=1$. Using again the inequality $L\leq G^\infty$, we find 
$G^\infty(\xi,\theta^\ominus)\geq1$ and $G^\infty(\xi,\theta^\oplus)\geq1$.
Since the opposite inequality follows from the definition of $\theta^\ominus$ and
$\theta^\oplus$, we also obtain $G^\infty(\xi,\theta^\ominus)=
G^\infty(\xi,\theta^\oplus)=1$. Therefore, \eqref{eq88} yields $\xi=\xi_0=\xi_1$,
$\theta^\ominus=\theta_1$, and  $\theta^\oplus=\theta_0$.
This implies that the straight line $\{(\xi_0,\theta):\theta\in\R\}$ belongs to the
hyperplane $\{L=1\}$. Since by \eqref{symH} and \eqref{Vinf} the point 
$(\xi_0,-\theta_0)$ belongs to $\{G^\infty=1\}$, we deduce that $\theta_1=-\theta_0$
by \eqref{eq89}. This concludes the proof of the equality
$\{L=1\}\cap\{G^\infty=1\}=\{(\xi_0,\theta_0),(\xi_0,-\theta_0)\}$, which implies that
$C=\{ (\lambda\xi_0,\lambda\theta_0): \lambda\geq0\}\cup \{ (\lambda\xi_0,-\lambda\theta_0): \lambda\geq0\}$ by homogeneity.
\end{proof}

\begin{lemma}\label{lemma01}
Let $(\xi_0,\theta_0)\in\MD{\times}\R$ with $(\xi_0,\theta_0)\neq(0,0)$.
Assume that there exist $\ol\theta\geq|\theta_0|$ such that
$H_{\rm eff}(\xi_0,\theta_0)=G^\infty(\xi_0,\ol\theta)$. 
Then $\ol\theta>0$.
Let $L\colon\MD{\times}\R\to\R$ be a linear function such that $L\leq H_{\rm eff}$
and $L(\xi_0,\theta_0)=H_{\rm eff}(\xi_0,\theta_0)$, and
let $C:=\{(\xi,\theta)\in\MD{\times}\R: L(\xi,\theta)=G^\infty(\xi,\theta)\}$.
Then 
$C\subset\{ (\lambda\xi_0,\lambda\ol\theta): \lambda\geq0\}\cup \{ (\lambda\xi_0,-\lambda\ol\theta): \lambda\geq0\}$.
\end{lemma}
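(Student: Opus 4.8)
The plan is to reduce the statement to Lemma~\ref{lemma00}, applied at the point $(\xi_0,\ol\theta)$ — or, in a degenerate case, at $(\xi_0,\theta_0)$ itself — after checking that the hypotheses force this point to be a contact point of $H_{\rm eff}$ and $G^\infty$ at which the given $L$ is the tangent linear function. First I would settle $\ol\theta>0$: since $\ol\theta\ge|\theta_0|\ge0$, the only alternative is $\ol\theta=0$, hence $\theta_0=0$ and $H_{\rm eff}(\xi_0,0)=G^\infty(\xi_0,0)$, which by \eqref{He=Ginf} gives $\xi_0=0$, contradicting $(\xi_0,\theta_0)\neq(0,0)$. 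By the positive $1$-homogeneity of $H_{\rm eff}$ and $G^\infty$ I then normalize so that $H_{\rm eff}(\xi_0,\theta_0)=1$, hence $G^\infty(\xi_0,\ol\theta)=1$.

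Next I would observe that $\theta\mapsto H_{\rm eff}(\xi_0,\theta)$ is convex and even in $\theta$: evenness follows from \eqref{Ginfty}, \eqref{symH}, \eqref{Vinf} and the fact that convex envelopes commute with the reflection $\theta\mapsto-\theta$ (which is a linear involution leaving $G^\infty$ invariant), so $H_{\rm eff}(\xi_0,\cdot)$ is nondecreasing in $|\theta|$. Combined with $|\theta_0|\le\ol\theta$ and $H_{\rm eff}\le G^\infty$ this yields $1=H_{\rm eff}(\xi_0,\theta_0)\le H_{\rm eff}(\xi_0,\ol\theta)\le G^\infty(\xi_0,\ol\theta)=1$, so $H_{\rm eff}(\xi_0,\ol\theta)=G^\infty(\xi_0,\ol\theta)=1$; and since $H_{\rm eff}(\xi_0,\cdot)$ is convex and equals $1$ at both $\ol\theta$ and $-\ol\theta$, it is $\le1$ on the whole interval $[-\ol\theta,\ol\theta]$.

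Now I would consider the affine function $\psi(\theta):=L(\xi_0,\theta)$: on $[-\ol\theta,\ol\theta]$ one has $\psi\le H_{\rm eff}(\xi_0,\cdot)\le1$, while $\psi(\theta_0)=1$ and $\theta_0\in[-\ol\theta,\ol\theta]$, so $\theta_0$ is a maximizer of $\psi$ on this interval. If $|\theta_0|<\ol\theta$, this maximizer is interior, hence the affine $\psi$ is constant on $[-\ol\theta,\ol\theta]$; in particular $L(\xi_0,\ol\theta)=1=G^\infty(\xi_0,\ol\theta)$, so $(\xi_0,\ol\theta)\in C$. If instead $|\theta_0|=\ol\theta$, then $\theta_0=\pm\ol\theta$ and, $G^\infty$ being even, $L(\xi_0,\theta_0)=1=G^\infty(\xi_0,\theta_0)$, so $(\xi_0,\theta_0)\in C$. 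Denote by $(\xi_*,\theta_*)$ this point ($(\xi_0,\ol\theta)$, resp.\ $(\xi_0,\theta_0)$); it satisfies $(\xi_*,\theta_*)\neq(0,0)$, $H_{\rm eff}(\xi_*,\theta_*)=G^\infty(\xi_*,\theta_*)$, $L(\xi_*,\theta_*)=H_{\rm eff}(\xi_*,\theta_*)$, and $\{(\lambda\xi_*,\lambda\theta_*):\lambda\ge0\}\cup\{(\lambda\xi_*,-\lambda\theta_*):\lambda\ge0\}$ equals the target set.

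To conclude I would apply Lemma~\ref{lemma00} at $(\xi_*,\theta_*)$: it gives that $H_{\rm eff}$ is differentiable there with $\nabla H_{\rm eff}(\xi_*,\theta_*)=\nabla G^\infty(\xi_*,\theta_*)$, and that the contact set of $G^\infty$ with the tangent $\widetilde L(\xi,\theta):=\partial_\xi G^\infty(\xi_*,\theta_*)\,\xi+\partial_\theta G^\infty(\xi_*,\theta_*)\,\theta$ is contained in $\{(\lambda\xi_*,\lambda\theta_*):\lambda\ge0\}\cup\{(\lambda\xi_*,-\lambda\theta_*):\lambda\ge0\}$. Since $L$ is linear with $L\le H_{\rm eff}$ and $L(\xi_*,\theta_*)=H_{\rm eff}(\xi_*,\theta_*)$, the subgradient inequality forces the coefficient vector of $L$ to lie in $\partial H_{\rm eff}(\xi_*,\theta_*)=\{\nabla G^\infty(\xi_*,\theta_*)\}$, whence $L=\widetilde L$ and $C=\{\widetilde L=G^\infty\}$ is contained in the asserted set. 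I expect the only delicate point to be the identity $L(\xi_0,\ol\theta)=H_{\rm eff}(\xi_0,\ol\theta)$ (equivalently $(\xi_0,\ol\theta)\in C$): it rests on the observation that the hypothesis $H_{\rm eff}(\xi_0,\theta_0)=G^\infty(\xi_0,\ol\theta)$, together with evenness and convexity, forces $H_{\rm eff}(\xi_0,\cdot)$ to be flat on the interval joining $\theta_0$ and $\ol\theta$, which pins down the affine function $L(\xi_0,\cdot)$ there; everything else is bookkeeping around Lemma~\ref{lemma00}.
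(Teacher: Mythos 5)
Your proposal is correct and follows essentially the same route as the paper: in the case $|\theta_0|<\ol\theta$ both arguments observe that the affine function $\theta\mapsto L(\xi_0,\theta)$ is dominated by the constant $G^\infty(\xi_0,\ol\theta)$ at the endpoints of $[-\ol\theta,\ol\theta]$ and attains it at the interior point $\theta_0$, hence is constant there, so that $(\xi_0,\ol\theta)$ becomes a contact point and Lemma~\ref{lemma00} applies with $\theta_0$ replaced by $\ol\theta$ (the case $|\theta_0|=\ol\theta$ being Lemma~\ref{lemma00} directly). Your extra remarks — the evenness of $H_{\rm eff}$ in $\theta$ and the identification of the given $L$ with the unique tangent via the singleton subdifferential — are correct and merely make explicit what the paper leaves implicit.
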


\begin{proof}
If $\ol\theta=|\theta_0|$, the result follows from the previous lemma.
If $\ol\theta>|\theta_0|$, 
the affine function $\theta\mapsto L(\xi_0,\theta)$ is bounded from above by 
$G^\infty(\xi_0,\ol\theta)$ at the endpoints of the interval $[-\ol\theta,\ol\theta]$ 
(recall that $L\leq H_{\rm eff}\leq G^\infty$) 
and coincides with $G^\infty(\xi_0,\ol\theta)$ at the interior
point $\theta_0$.
Therefore, $L(\xi_0,\theta)=H_{\rm eff}(\xi_0,\theta)=G^\infty(\xi_0,\ol\theta)$ 
for every $\theta\in[-\ol\theta,\ol\theta]$.
The result follows by the previous lemma with $\theta_0$ replaced by $\ol\theta$.
\end{proof}

\subsection{Relaxation with respect to weak convergence}
We begin with a result that can be easily deduced from \cite{Anz-Gia}: every 
$(u,e,p)$ of the admissible set $A(w)$ introduced in \eqref{Adef} can be
approximated by triples $(u_k,e_k,p_k)$ in the set $A_{reg}(w)$ introduced in
\eqref{Areg}, so that $u_k$ satisfies the
boundary condition $u_k=w$ $\hn$-a.e.\ on $\Ga_0$.

\begin{theorem}\label{thm:d-app}
Let $w\in H^1(\Om;\Rn)$ and let $(u,e,p)\in A(w)$. Then there exists a sequence
$(u_k,e_k,p_k)\in A_{reg}(w)$ such that $u_k\wto u$ weakly$^*$ in $BD(\Om)$, $e_k\to e$ strongly in $L^2(\Om;\Mnn)$, $p_k\wto p$ weakly$^*$ in $M_b(\ol\Om;\MD)$,  $\|p_k\|_1\to \|p\|_1$, and $\|p_k-p^a\|_1\to\|p^s\|_1$.
\end{theorem}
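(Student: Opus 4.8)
The statement is a density/approximation result: every admissible triple in $A(w)$ can be approximated by regular admissible triples with strong convergence of the elastic strain and convergence of the $L^1$ norms of the plastic strain (its full mass and its absolutely continuous part separately). The natural strategy is to combine an interior mollification argument with a careful treatment near the Dirichlet boundary $\Ga_0$, following the approximation scheme of Anzellotti--Giaquinta \cite{Anz-Gia} (as is already signalled in the text preceding the statement). The key difficulty is that $u$ is only $BD$, so $Eu$ may have a singular part both in $\Om$ (which becomes $p^s$) and on $\Ga_0$ (the term $(w-u)\odot n\,\hn$); mollifying will smear out these singular parts, and one must control the resulting regularization so that the mass $\|p_k\|_1$ does not jump and the split into $p^a$ and $p^s$ is respected in the limit.

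\textbf{Step 1: reduction to a boundary-adapted setting.} Since $w\in H^1(\Om;\Rn)$, replacing $u$ by $u-w$, $e$ by $e-Ew$, and $p$ by $p$ reduces us to the case $w=0$, i.e.\ $p=-u\odot n\,\hn$ on $\Ga_0$. (This replacement is harmless: $Ew\in L^2$, so strong $L^2$ convergence of $e_k-Ew$ is equivalent to that of $e_k$, and it does not affect the plastic part.) Using that $\partial\Om$ is $C^2$ and $\Ga_0$ has finitely many components, one can, via a partition of unity and local diffeomorphisms straightening the boundary, localize the problem: it suffices to produce the approximation in each chart, near a flat piece of $\Ga_0$, near a flat piece of $\Ga_1$, and in the interior, and then glue. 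In the interior the construction is standard mollification; near $\Ga_1$ (traction-free, no boundary condition) a reflection/mollification also suffices; the delicate chart is the one meeting $\Ga_0$.

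\textbf{Step 2: the boundary construction near $\Ga_0$.} Near $\Ga_0$, after flattening, write points as $(x',x_n)$ with $\Om$ corresponding to $\{x_n>0\}$. The Anzellotti--Giaquinta device is to dilate $u$ slightly away from the boundary: set $u_\delta(x',x_n):=u(x',x_n+\delta)$ extended suitably, which pushes the boundary trace strictly inside, then mollify at a scale $\e\ll\delta$. Dilation converts the boundary singular measure $-u\odot n\,\hn$ on $\{x_n=0\}$ into an interior contribution, and mollification at a finer scale makes everything $C^\infty$; with a diagonal choice $\e=\e(\delta)\to0$ one gets $u_k\in W^{1,1}_{loc}\cap BD$ with $u_k\to u$ weakly$^*$ in $BD$ and $e_k:=E^a u_k-p_k^a\to e$. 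The boundary condition $u_k=0$ on $\Ga_0$ is enforced by a final multiplicative cutoff that does not affect the strains in the limit. The strong $L^2$ convergence $e_k\to e$ uses that mollification converges strongly in $L^2$ and that the extra terms coming from dilation and cutoff vanish in $L^2$ because $Ew$ (here $0$) and the cutoff corrections are $L^2$-small.

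\textbf{Step 3: the norm convergences — the main obstacle.} The heart of the matter is $\|p_k\|_1\to\|p\|_1$ and $\|p_k-p^a\|_1\to\|p^s\|_1$. Lower semicontinuity of the total variation gives $\|p\|_1\le\liminf\|p_k\|_1$ automatically; the reverse inequality $\limsup\|p_k\|_1\le\|p\|_1$ is what must be arranged by the construction. For this one exploits that mollification of a measure $\mu$ satisfies $\|\mu*\rho_\e\|_{L^1}\le\|\mu\|_{M_b}$, and that the dilation step, being an isometry of the boundary pieces onto interior slabs, does not increase total mass in the limit $\delta\to0$; combining these with the partition of unity (whose overlaps can be made to contribute arbitrarily little by refining) yields $\limsup_k\|p_k\|_1\le\|p\|_{M_b}=\|p\|_1$. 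The statement about $p^a$ follows by a parallel argument applied to $p-p^a$: writing $p_k-p^a$ and noting that $p^a\in L^1$ is untouched by the relevant smoothing at the appropriate scale (one mollifies $p-p^a=p^s$, the genuinely singular object, while keeping $p^a$ fixed or approximating it separately in $L^1$), one gets $\limsup\|p_k-p^a\|_1\le\|p^s\|_{M_b}=\|p^s\|_1$, and lower semicontinuity gives the matching liminf. The technical care here — ensuring the two mass convergences hold simultaneously with the same sequence, and that the cutoff near $\Ga_0$ does not create spurious $L^1$ mass — is where the proof spends most of its effort; everything else is a routine diagonalization.
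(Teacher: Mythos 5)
Your overall strategy (reduce to the Anzellotti--Giaquinta approximation machinery, treat the interior by mollification and the Dirichlet boundary by a dilation/cutoff device) is the one the paper uses, except that the paper does not redo the construction: it invokes two results of \cite{Anz-Gia} as black boxes, namely a boundary corrector $\psi_k\in W^{1,1}(\Om;\Rn)$ with $\psi_k=w-u$ on $\Ga_0$, $\|\psi_k\|_1\le 1/k$, $\|\div\psi_k\|_2\le1/k$ and $\|E\psi_k\|_1\le\|p^s\|_{1,\Ga_0}+1/k$, followed by a strict interior approximation of $v_k:=u+\psi_k$. Measured against that, your sketch has two concrete problems. First, the claim that the final cutoff enforcing $u_k=w$ on $\Ga_0$ ``does not affect the strains in the limit'' is wrong, and dismissing it undermines exactly the norm convergences you need: the term $\nabla\varphi\odot(u_k-w)$ produced by the cutoff is precisely what must recreate, as interior $L^1$ mass concentrating at $\Ga_0$, the boundary contribution $\|(w-u)\odot n\|_{1,\Ga_0}=\|p^s\|_{1,\Ga_0}$ of $p$ --- no less (else lower semicontinuity of the total variation is violated) and no more (else $\limsup\|p_k\|_1>\|p\|_1$). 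Getting this mass exactly right (including the factor $\tfrac1{\sqrt2}$, which uses $(w-u)\cdot n=0$ on $\Ga_0$) is the whole point of the corrector $\psi_k$ with $\|E\psi_k\|_1\le\|p^s\|_{1,\Ga_0}+1/k$; your argument asserts the matching rather than arranging it.

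Second, you never say how $e_k$ and $p_k$ are actually defined from $u_k$ (the line ``$e_k:=E^au_k-p_k^a$'' is circular), and you never address the constraint $\tr p_k=0$, i.e.\ $p_k\in L^1(\Om;\MD)$, which a generic dilation-plus-cutoff construction destroys because $\div u_k$ no longer equals the trace of whatever smoothing of $e$ you choose. The paper resolves both issues at once by setting $e_k:=e_D+\tfrac12(\div u_k)I$ and $p_k:=Eu_k-e_k$: the deviatoric part of $e$ is left untouched, $p_k$ is automatically deviatoric, and $e_k\to e$ strongly in $L^2$ reduces to $\div u_k\to\div u$ strongly in $L^2$ --- which is exactly why the corrector must also satisfy $\|\div\psi_k\|_2\le1/k$, a control absent from your construction. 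Finally, the two mass convergences are then obtained in one stroke from the identity $Eu_k-E^au=\tfrac12(\div u_k-\div u)I+p_k-p^a$ together with $\limsup_k\|Eu_k-E^au\|_1\le\|p^s\|_1$ and lower semicontinuity, rather than by the separate ``mollify $p^s$ while keeping $p^a$ fixed'' argument you outline, which is not compatible with defining $p_k$ as $Eu_k-e_k$ for a single sequence $u_k$.
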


\begin{proof}
By \cite[Theorem~5.2]{Anz-Gia} for every $k$ there exists a function $\psi_k\in W^{1,1}(\Om;\Rn)$ such that $\|\psi_k\|_1\le\frac1k$, $\psi_k=w-u$ $\hn$-a.e.\ on $\Ga_0$, $\|\div\, \psi_k\|_2\le\frac1k$, and 
$$
\textstyle
\|E\psi_k\|_1 \le \frac{1}{\sqrt2}\|w-u\|_{1,\Ga_0} +\frac1k
=\|p^s\|_{1,\Ga_0}+\frac1k\,,
$$
where $\|\cdot \|_{1,\Ga_0}$ denotes the norms in $L^1_{\hn}(\Ga_0;\Rn)$
and in $M_b(\Ga_0;\MD)$.
We define $v_k:=u+\psi_k$ and we note that $v_k=w$ $\hn$-a.e.\ on $\Ga_0$.
By \cite[Theorem~5.1]{Anz-Gia} there exists a sequence $v_k^m$ in $BD(\Om)\cap W^{1,1}_{loc}(\Om;\Rn)$, with $v_k^m=v_k=w$ $\hn$-a.e.\ on $\Ga_0$,
such that $v_k^m\to v_k$ strongly in $L^1(\Om;\Rn)$, $\div\, v_k^m\to \div\, v_k$ strongly in $L^2(\Om)$, $Ev_k^m\wto Ev_k$ weakly$^*$ in $M_b(\ol\Om;\Mnn)$, and
$$
\lim_{m\to\infty}\|Ev_k^m-E^a u -E\psi_k\|_1 =
\lim_{m\to\infty}\|Ev_k^m-E^a v_k\|_1= \|E^sv_k\|_{1,\Om}=\|p^s\|_{1,\Om}\,,
$$
where $\|\cdot \|_{1,\Om}$ denotes the norm in $M_b(\Om;\MD)$.
By approximation it is clear that we can find a sequence $m_k\to\infty$ such that, setting $u_k:=v_k^{m_k}$, we have $u_k\in BD(\Om)\cap W^{1,1}_{loc}(\Om;\Rn)$, $u_k=w$ $\hn$-a.e.\ on $\Ga_0$, $u_k\wto u$ weakly$^*$ in $BD(\Om)$, $\div\, u_k\to \div\, u$ strongly in $L^2(\Om)$, and
\begin{equation}\label{ganz}
\limsup_{k\to\infty}\|Eu_k-E^a u\|_1 \le \|p^s\|_1\,.
\end{equation}
Setting $e_k:=e_D+\frac12\,\div\,u_k\, I$ and $p_k:=Eu_k-e_k$, we clearly have that $e_k\to e$ strongly in $L^2(\Om;\Mnn)$ and $p_k\wto p$ weakly$^*$ in $M_b(\ol\Om;\MD)$. Since $Eu_k-E^a u=\frac12\, (\div\, u_k-\div\, u)\, I +p_k-p^a$, from (\ref{ganz}) it follows that
$$
\limsup_{k\to\infty}\|p_k-p^a\|_1 \le \|p^s\|_1\,.
$$
By lower semicontinuity this implies that $\|p_k-p^a\|_1\to \|p^s\|_1$ and $\|p_k\|_1\to \|p\|_1$.
\end{proof}

To deal with the inner variable $z$ we need a technical lemma concerning the
approximation of measures on product spaces.

\begin{lemma}\label{lm:comp}
Let $\Xi_1$ and $\Xi_2$ be finite dimensional Hilbert spaces. Let $p_i\in M_b(\ol\Om;\Xi_i)$ for $i=1,2$ and let $p_1^k$ be a sequence in $L^1(\Om;\Xi_1)$ such that $p_1^k\wto p_1$ weakly$^*$ in $M_b(\ol\Om;\Xi_1)$ and $\|p_1^k\|_1\to \|p_1\|_1$. Then, there exists  a sequence $p_2^k$ in $L^1(\Om;\Xi_2)$ such that $p_2^k\wto p_2$ weakly$^*$ in $M_b(\ol\Om;\Xi_2)$ and $\|(p_1^k,p_2^k)\|_1\to  \|(p_1,p_2)\|_1$, where
the norms are computed in the product Hilbert structure of ${\Xi_1{\times}\Xi_2}$.
\end{lemma}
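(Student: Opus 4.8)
The plan is to look for $p_2^k$ in the form $p_2^k=q_k+\psi_k$, where $q_k$ handles the part of $p_2$ absolutely continuous with respect to $|p_1|$ and $\psi_k$ handles the part singular with respect to $|p_1|$; the two contributions will not interact in the limit because $\psi_k$ will be concentrated on sets carrying asymptotically no $|p_1^k|$-mass. First I would record that, since $p_1^k\wtos p_1$ weakly$^*$ and $\|p_1^k\|_1\to\|p_1\|_1$, the convergence is strict, hence $|p_1^k|\,\Ln\wtos|p_1|$ weakly$^*$ in $M_b(\ol\Om)$; and that, by lower semicontinuity of the total variation with respect to weak$^*$ convergence, every sequence $p_2^k\wtos p_2$ already satisfies $\liminf_k\|(p_1^k,p_2^k)\|_1\ge\|(p_1,p_2)\|_1$, so it is enough to produce $p_2^k\wtos p_2$ with $\limsup_k\|(p_1^k,p_2^k)\|_1\le\|(p_1,p_2)\|_1$. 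Writing the Lebesgue decomposition $p_2=\rho\,|p_1|+\sigma$, with $\rho\in L^1_{|p_1|}(\ol\Om;\Xi_2)$ and $\sigma$ singular with respect to $|p_1|$, the measures $(p_1,\rho|p_1|)$ and $(0,\sigma)$ are mutually singular, so a polar-decomposition computation gives $\|(p_1,p_2)\|_1=\int_{\ol\Om}\sqrt{1+|\rho|^2}\,d|p_1|+\|\sigma\|_1$.

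For the absolutely continuous part I would first assume $\rho\in C(\ol\Om;\Xi_2)$ and set $q_k:=\rho\,|p_1^k|\in L^1(\Om;\Xi_2)$. Testing against $\phi\in C(\ol\Om;\Xi_2)$, the function $x\mapsto\phi(x){\cdot}\rho(x)$ is continuous, so $|p_1^k|\,\Ln\wtos|p_1|$ gives $q_k\wtos\rho\,|p_1|$, and likewise, since $\sqrt{1+|\rho|^2}$ and $|\rho|$ are continuous and bounded on $\ol\Om$, one gets $\int_\Om|p_1^k|\sqrt{1+|\rho|^2}\,dx\to\int_{\ol\Om}\sqrt{1+|\rho|^2}\,d|p_1|$ and $\|q_k\|_1\to\|\rho|p_1|\|_1$. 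The general $\rho\in L^1_{|p_1|}$ will be recovered at the very end by a diagonal argument, approximating $\rho$ in $L^1_{|p_1|}$ by continuous functions $\rho_j$, for which $\rho_j|p_1|+\sigma\to p_2$ in norm and $\int_{\ol\Om}\sqrt{1+|\rho_j|^2}\,d|p_1|\to\int_{\ol\Om}\sqrt{1+|\rho|^2}\,d|p_1|$.

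The heart of the matter is the construction of $\psi_k$. Since $\sigma$ is singular with respect to $|p_1|$, $|\sigma|$ is concentrated on a Borel set $N$ with $|p_1|(N)=0$; by inner regularity of $|\sigma|$ there are compact sets $C_j\subset N$ with $|p_1|(C_j)=0$ and $|\sigma|(\ol\Om\setminus C_j)\to0$. For each $j$, the closed sets $\{x:\dist(x,C_j)\le\delta\}$ decrease to $C_j$ as $\delta\to0^+$, so one may fix $\delta_j>0$ with $|p_1|(\{\dist(\cdot,C_j)\le\delta_j\})<1/j$; the portmanteau theorem applied to $|p_1^k|\,\Ln\wtos|p_1|$ then gives $\limsup_k\int_{\{\dist(\cdot,C_j)\le\delta_j\}}|p_1^k|\,dx\le1/j$. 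Mollifying the measure $\sigma\lfloor C_j$ on a fine enough scale — and, near $\partial\Om$, composing with an inward deformation as in the proof of Theorem~\ref{thm:d-app} so as to stay in $L^1(\Om;\Xi_2)$ — produces $L^1$ functions supported in $\{\dist(\cdot,C_j)\le\delta_j\}$ that converge weakly$^*$ to $\sigma\lfloor C_j$ with convergence of masses. A diagonal extraction over $j$, over the mollification scale, and over $k$ (synchronised through indices $k_j$ beyond which $\int_{\{\dist(\cdot,C_j)\le\delta_j\}}|p_1^k|\,dx<2/j$) then yields $\psi_k\in L^1(\Om;\Xi_2)$ with $\psi_k\wtos\sigma$, $\|\psi_k\|_1\to\|\sigma\|_1$, and $\psi_k$ vanishing off a set $S_k$ with $\int_{S_k}|p_1^k|\,dx\to0$.

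Finally, with $p_2^k:=q_k+\psi_k\in L^1(\Om;\Xi_2)$ one has $p_2^k\wtos\rho|p_1|+\sigma=p_2$, and, splitting $\Om=(\Om\setminus S_k)\cup S_k$ and using that $\psi_k=0$ off $S_k$ and $|q_k|=|\rho|\,|p_1^k|$, the triangle inequality for the Euclidean norm on $\Xi_1{\times}\Xi_2$ gives
\[
\|(p_1^k,p_2^k)\|_1 \le \int_\Om|p_1^k|\sqrt{1+|\rho|^2}\,dx + (1+\|\rho\|_\infty)\int_{S_k}|p_1^k|\,dx + \|\psi_k\|_1 ;
\]
letting $k\to\infty$, the three terms tend to $\int_{\ol\Om}\sqrt{1+|\rho|^2}\,d|p_1|$, to $0$, and to $\|\sigma\|_1$, so $\limsup_k\|(p_1^k,p_2^k)\|_1\le\|(p_1,\rho|p_1|+\sigma)\|_1$, which together with the lower semicontinuity bound settles the case in which the $|p_1|$-density of $p_2$ is continuous; the general case then follows by the diagonal argument over $\rho_j$ indicated above. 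I expect the main obstacle to be precisely the third step — producing a recovery sequence for the $|p_1|$-singular part of $p_2$ that is asymptotically disjoint, in the $|p_1^k|$-mass sense, from the (a priori uncontrolled) supports of the $p_1^k$ — which is what forces the portmanteau estimate on closed neighbourhoods, the triple diagonalisation, and the boundary correction.
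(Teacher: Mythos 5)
Your proof is correct, and its skeleton coincides with the paper's: the same reduction to a $\limsup$ bound via lower semicontinuity, the same Lebesgue decomposition of $p_2$ into a part $\rho\,|p_1|$ absolutely continuous with respect to $|p_1|$ and a singular part, the same recovery $\rho\,|p_1^k|$ for continuous $\rho$ followed by a diagonal argument over continuous approximations of the density, and mollification for the singular part. The one genuine divergence is the step you single out as ``the heart of the matter.'' The paper does not require the approximants of the singular part to be asymptotically disjoint, in the $|p_1^k|$-mass sense, from $p_1^k$: it simply takes any $p_{22}^k\in L^1(\Om;\Xi_2)$ with $p_{22}^k\wto p_{22}$ weakly$^*$ and $\|p_{22}^k\|_1\to\|p_{22}\|_1$, and applies the pointwise triangle inequality $|(p_1^k,p_{21}^k+p_{22}^k)|\le|(p_1^k,p_{21}^k)|+|(0,p_{22}^k)|$. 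The resulting upper bound $\|(p_1,p_{21})\|_1+\|(0,p_{22})\|_1$ is not a loss, because the \emph{limit} measures $(p_1,p_{21})$ and $(0,p_{22})$ are mutually singular, so their total variations add up exactly to $\|(p_1,p_2)\|_1$. In other words, the crude triangle inequality is saturated in the limit, and no information about how the supports of the approximants interact is needed. Your machinery (compact carriers $C_j$, the portmanteau estimate on closed $\delta_j$-neighbourhoods, the triple diagonalisation, the boundary correction) is therefore sound but superfluous here; it buys nothing for this statement, though it would become relevant if one needed a finer localisation of where the total variation concentrates along the recovery sequence. The obstacle you anticipated is, in the paper's argument, not an obstacle at all.
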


\begin{proof}
First of all we observe that $|p_1^k|\wto |p_1|$ weakly$^*$ in $M_b(\ol\Om)$.
We decompose $p_2$ as
$$
p_2=p_{21}+p_{22}\,,
$$ 
with $p_{21},p_{22}\in M_b(\ol\Om;\Xi_2)$, $|p_{21}|<<|p_1|$, and 
$|p_{22}|\perp |p_1|$.

Let us construct a sequence $p^k_{21}$ in 
$L^1(\Om;\Xi_2)$ such that 
$p^k_{21}\wto p_{21}$ weakly$^*$ in $M_b(\ol\Om;\Xi_2)$ and 
$\|(p_1^k,p_{21}^k)\|_1\to  \|(p_1,p_{21})\|_1$.
As $|p_{21}|<<|p_1|$, we have $p_{21}=\psi |p_1|$ for a suitable density 
$\psi\in L^1_{|p_1|}(\ol\Om;\Xi_2)$. 
Let $\psi_m$ be a sequence in $C(\ol\Om;\Xi_2)$ which converges to $\psi$ in 
$L^1_{|p_1|}(\ol\Om;\Xi_2)$, so that
$\langle\sqrt{1+|\psi_m|^2}, |p_1|\rangle\to \langle\sqrt{1+|\psi|^2}, |p_1|\rangle
=\|(p_1,p_{21})\|_1$, as $m\to\infty$.
For every $m$ let $p_{21}^{km}:=\psi_m |p_1^k|$, so that $p_{21}^{km}\wto \psi_m |p_1|$ weakly$^*$ in $M_b(\ol\Om;\Xi_2)$ and
$\|(p_1^k,p_{21}^{km})\|_1 \to \langle\sqrt{1+|\psi_m|^2}, |p_1|\rangle$, as $k\to\infty$. 
Let ${\mathcal B}_R:=\{p\in M_b(\ol\Om;\Xi_2): \|p\|_1\le R\}$, with  $R>\|p_2\|_1$.
Since $\psi_m$ converges to $\psi$ in $L^1_{|p_1|}(\ol\Om;\Xi_2)$ we have
$\psi_m|p_1|\in {\mathcal B}_R$ for $m$ large enough.
As  $|p_1^k|\wto |p_1|$ weakly$^*$ in $M_b(\ol\Om)$, for these values of $m$ we 
also have
$p_{21}^{km}\in {\mathcal B}_R$ for $k$ large enough.
Since the weak$^*$ convergence is metrizable on ${\mathcal B}_R$, we can construct a sequence $m_k\to\infty$ such that $p^k_{21}:=p_{21}^{km_k}$ satisfies the required properties. 

Using convolutions it is easy to construct a sequence $p_{22}^k$ in $L^1(\Om;\Xi_2)$ such that $p_{22}^k\wto p_{22}$ weakly$^*$ in $M_b(\ol\Om;\Xi_2)$ and $\|p_{22}^k\|_1 \to \|p_{22}\|_1$.

Let $p_2^k:=p_{21}^k+p_{22}^k$. Then $p_2^k\wto p_2$ weakly$^*$ in $M_b(\ol\Om;\Xi_2)$. It remains to prove that 
\begin{equation}\label{Nconv}
\limsup_{k\to\infty} \|(p_1^k,p_2^k)\|_1\le \|(p_1,p_2)\|_1\,.
\end{equation}
By the triangle inequality and by the properties of $p_{21}^k$ and $p_{22}^k$, we have
\begin{eqnarray*}
& \displaystyle
\limsup_{k\to\infty}\|(p_1^k,p_2^k)\|_1\le
\lim_{k\to\infty} \|(p_1^k,p_{21}^k)\|_1 + \lim_{k\to\infty} \|(0,p_{22}^k)\|_1
=
\smallskip
\\
& = \|(p_1,p_{21})\|_1  + \|(0,p_{22})\|_1
= \|(p_1,p_2)\|_1\,,
\end{eqnarray*}
where the last equality follows from the fact that the measures $(p_1,p_{21})$ and 
$(0,p_{22})$ are mutually singular.
\end{proof}

Let $\HH_{\rm eff}: M_b(\ol\Om;\MD){\times} M_b(\ol\Om)\to \R$ be the functional defined by (\ref{calHD}) with $H$ replaced by $H_{\rm eff}$.

\begin{theorem}\label{thm:1app}
Let $e_0\in L^2(\Om;\Mnn)$, let $z_0\in M_b(\ol\Om)$, let $w\in H^1(\Om;\Rn)$, 
let $(u,e,p)\in A(w)$, and let $z\in M_b(\ol\Om)$. Then for every
$e_k\wto e$ weakly in $L^2(\Om;\Mnn)$, $p_k\wto p$ weakly$^*$ in 
$M_b(\ol\Om;\MD)$, $z_k\wto z$ weakly$^*$ in $M_b(\ol\Om)$,
we have
\begin{equation}\label{0app}
\QQ(e_0+e)+\HH_{\rm eff}(p, z)+\V(z_0)\le \liminf_{k\to\infty}
\{\QQ(e_0+e_k)+\HH(p_k,z_k) +\V(z_0+z_k)\}
\,.
\end{equation}
Moreover, there exist a sequence $(u_k,e_k,p_k)\in A_{reg}(w)$ and a sequence 
$z_k\in L^1(\Om)$ such that 
$u_k\wto u$ weakly$^*$ in $BD(\Om)$, $e_k\to e$ strongly in $L^2(\Om;\Mnn)$, $p_k\wto p$ weakly$^*$ in $M_b(\ol\Om;\MD)$, 
$z_k\wto z$ weakly$^*$ in $M_b(\ol\Om)$, and
\begin{equation}\label{1app}
\QQ(e_0+e)+\HH_{\rm eff}(p, z)+\V(z_0)\geq\limsup_{k\to\infty}
\{\QQ(e_0+e_k)+\HH(p_k,z_k) +\V(z_0+z_k)\}
\,.
\end{equation}
\end{theorem}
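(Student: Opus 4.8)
\emph{Overview.} The plan is to prove the lower bound \eqref{0app} and the upper bound \eqref{1app} separately. Throughout, note that $\QQ$, being a continuous quadratic form, is strongly continuous and sequentially weakly lower semicontinuous on $L^2$, so the $\QQ$–terms never cause trouble: for \eqref{0app} one has $\QQ(e_0+e)\le\liminf_k\QQ(e_0+e_k)$, and in the recovery sequence we shall have $e_k\to e$ strongly.

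\emph{Lower bound.} It remains to show $\HH_{\rm eff}(p,z)+\V(z_0)\le\liminf_k\{\HH(p_k,z_k)+\V(z_0+z_k)\}$. Since $H_{\rm eff}$ is the support function of the compact convex set $K_{\rm eff}$, which by \eqref{kefint} contains the origin in its interior, one has the dual representation $\HH_{\rm eff}(p,z)=\sup\{\langle\sigma,p\rangle+\langle\zeta,z\rangle:(\sigma,\zeta)\in C^0(\ol\Om;K_{\rm eff})\}$ (see \cite[Chapter~II]{Tem}). Hence it is enough to show, for each fixed $(\sigma,\zeta)\in C^0(\ol\Om;K_{\rm eff})$, that $\langle\sigma,p_k\rangle+\langle\zeta,z_k\rangle+\V(z_0)\le\HH(p_k,z_k)+\V(z_0+z_k)$, then to pass to the $\liminf$ using $p_k\wto p$, $z_k\wto z$ weakly$^*$ and the continuity of $(\sigma,\zeta)$, and finally to take the supremum over $(\sigma,\zeta)$. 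This inequality is obtained by decomposing all the measures with respect to the common dominating measure $\Ln+|p_k^s|+|z_k^s|+|z_0^s|$ and using two pointwise inequalities: by \eqref{Keff} the pair $(\sigma,\zeta\pm b_V)$ lies in $K$, so by \eqref{HD} one has $\sigma{\,:\,}\xi+\zeta\vartheta\le H(\xi,\vartheta)-b_V|\vartheta|=H(\xi,\vartheta)+V^\infty(\vartheta)$ for all $(\xi,\vartheta)$; this is combined with the concavity of $V$ (which gives $V^\infty(\vartheta)\le V(\eta+\vartheta)-V(\eta)$, cf.\ \eqref{VinftyV}) on the absolutely continuous part, and with the superadditivity of the positively $1$-homogeneous concave function $V^\infty$ (i.e.\ $V^\infty(\eta)+V^\infty(\vartheta)\le V^\infty(\eta+\vartheta)$) on the singular part.

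\emph{Upper bound: reduction.} Since the recovery $z_k$ is to be taken in $L^1(\Om)$, we have $(z_0+z_k)^s=z_0^s$, so that $\V(z_0+z_k)-\V(z_0)$ depends on $z_0$ only through $z_0^a$, and we may assume $z_0\in L^1(\Om)$; then $\V(z_0)$ is a fixed constant. Next, by Theorem~\ref{thm:d-app} together with Lemma~\ref{lm:comp}, every $(u,e,p)\in A(w)$ and $z\in M_b(\ol\Om)$ can be approximated by $(u_j,e_j,p_j)\in A_{reg}(w)$ and $z_j\in L^1(\Om)$ with $u_j\wto u$ weakly$^*$, $e_j\to e$ strongly, $p_j\wto p$, $z_j\wto z$ weakly$^*$, and $\|(p_j,z_j)\|_1\to\|(p,z)\|_1$. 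By the Reshetnyak continuity theorem this forces $\HH_{\rm eff}(p_j,z_j)\to\HH_{\rm eff}(p,z)$, so the right–hand side of \eqref{1app} is continuous along this approximation; a diagonal argument then reduces the proof of \eqref{1app} to the case $(u,e,p)\in A_{reg}(w)$ with $p,z,z_0\in L^1(\Om)$.

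\emph{Upper bound: construction.} In that case we cover $\Om$ by small cubes $Q_m$ and freeze $p,z,z_0$ to their averages $\xi_m,\vartheta_m,\eta_m$ on $Q_m$. By Lemma~\ref{cothetaG} we write $H_{\rm eff}(\xi_m,\vartheta_m)=\alpha_mG^\infty(\xi_m,\hat\vartheta_m)+(1-\alpha_m)G^\infty(\xi_m,-\hat\vartheta_m)$ with $\vartheta_m=(2\alpha_m-1)\hat\vartheta_m$ and $\alpha_m\in[\tfrac12,1]$; since $\xi_m\in\MD$, it is a symmetrised rank-one matrix, $\xi_m=\eta_m'{\,\odot\,}\nu_m$. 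Inside $Q_m$ we place many thin slabs orthogonal to $\nu_m$, of total relative measure $\delta_k\to0$, split into two families of relative measures $\alpha_m\delta_k$ and $(1-\alpha_m)\delta_k$; we choose $t_k$ with $t_k\delta_k=1$ and set $z_k:=\pm t_k\hat\vartheta_m$ on the two families of slabs and $z_k:=0$ elsewhere, so that $z_k$ has average $\vartheta_m$ on each $Q_m$, hence $z_k\wto z$. The field equal to $(t_k-1)\xi_m$ on the slabs and suitably negative between them has average $0$ on $Q_m$ and, being of the form $h_k(x{\,\cdot\,}\nu_m)\,\eta_m'{\,\odot\,}\nu_m$, it is exactly the symmetrised gradient $E\phi_k$ of an explicit, uniformly bounded displacement $\phi_k$ (uniform boundedness is ensured by using many slabs), which a standard cut-off makes compactly supported in $Q_m$ up to an $L^2$–error tending to $0$. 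Setting $u_k:=u+\phi_k$, $p_k:=p+E\phi_k$ and $e_k:=e$ we get $(u_k,e_k,p_k)\in A_{reg}(w)$ with $u_k\wto u$, $p_k\wto p$ weakly$^*$, and $e_k\to e$ strongly. A direct computation on each $Q_m$, using that $H$ is positively $1$-homogeneous and that $V(\eta_m+\,\cdot\,)$ has recession $V^\infty$, gives $\HH(p_k,z_k)+\V(z_0+z_k)\to\HH_{\rm eff}(p,z)+\V(z_0)$; together with $\QQ(e_0+e_k)\to\QQ(e_0+e)$ this yields \eqref{1app} after diagonalising in the mesh size.

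\emph{Main difficulty.} The delicate point is precisely this construction: the recovery sequence must realise the convexification $H_{\rm eff}={\rm co}\,G^\infty$ by combining an \emph{oscillation} (the two slab families, producing the convex combination of Lemma~\ref{cothetaG}) with a \emph{concentration} (a spike of height $t_k$ on a set of measure $\sim 1/t_k$, necessary because $\HH$ and $\V$ have only linear growth, so that along the sequence $\HH$ sees $H$ and $\V$ sees the recession $V^\infty$ rather than $V$), while keeping the triple $(u_k,e_k,p_k)$ kinematically admissible with $e_k\to e$; the admissibility works out because every matrix in $\MD$ is a symmetrised rank-one matrix in dimension two, so that the required concentration of $p_k$ is an exact symmetrised gradient and can be absorbed into $u_k$ without perturbing $e_k$.
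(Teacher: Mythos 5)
Your proof is correct, but both halves follow a genuinely different route from the paper's. For the lower bound \eqref{0app}, the paper combines the pointwise inequality $H_{\rm eff}={\rm co}\,G\le G$ (which gives $\HH_{\rm eff}(p_k,z_k)\le \HH(p_k,z_k)+\V(z_0+z_k)-\V(z_0)$) with the weak$^*$ lower semicontinuity of $\QQ$ and of $\HH_{\rm eff}$, the latter quoted from the Goffman--Serrin/Temam theory; you instead obtain the comparison and the semicontinuity in one stroke through the dual representation of $\HH_{\rm eff}$ over continuous $K_{\rm eff}$-valued fields, using that $(\sigma,\zeta\pm b_V)\in K$ by \eqref{Keff} and the superadditivity of $V^\infty$ on singular parts. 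This is more self-contained but equivalent. For the recovery sequence, after the same reductions (to $z_0,z\in L^1(\Om)$ and $(u,e,p)\in A_{reg}(w)$ via Theorem~\ref{thm:d-app}, Lemma~\ref{lm:comp}, and Reshetnyak continuity), the paper performs a \emph{generic double lamination}: it introduces the iterated infima $G_1,G_2$, identifies $G_2={\rm co}_{(\xi,\theta)}G_0=H_{\rm eff}$ by Carath\'eodory, and laminates twice with \emph{finite}, $\e$-near-optimal parameters, the concentration being hidden in the blow-up of those parameters as $\e\to0$. You instead exploit Lemma~\ref{cothetaG} (which the paper proves but uses only for the structure theorems): since ${\rm co}\,G^\infty={\rm co}_\theta G^\infty$ and both endpoints of the optimal $\theta$-convex combination carry the \emph{same} $\xi$, a single lamination suffices, with the concentration made explicit by spikes of height $t_k=1/\delta_k$; the jump $t_k\xi_m$ of the plastic strain is a symmetrized rank-one matrix because the dimension is two, which is exactly the algebraic fact the paper also relies on. Your version is shorter and makes transparent where oscillation and concentration each enter; the paper's is more robust, never using the special form of the convex envelope. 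Two small points to tidy up: after the cut-off you cannot literally keep $e_k:=e$, since $\nabla\varphi\odot\phi_k$ is not trace-free and must be absorbed into $e_k$ (as the paper does), which still gives $e_k\to e$ strongly because $\phi_k\to0$ uniformly when the number of slabs per cube is large; and you should record that \eqref{gammaM} yields $|\hat\vartheta_m|\le C(|\xi_m|+|\vartheta_m|)$, so that $z_k$ is bounded in $L^1(\Om)$ and the convergence $z_k\wto z$ holds weakly$^*$ in $M_b(\ol\Om)$ globally and not only cube by cube.
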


\begin{proof}
Owing to  the lower semicontinuity of $\QQ$ and 
$\HH_{\rm eff}$ (see the comments after \eqref{calHD} and \eqref{elenergy}), 
inequality \eqref{0app} follows from the inequality
$\HH_{\rm eff}(p_k, z_k)\le \HH(p_k,z_k) +\V(z_0+z_k)-\V(z_0)$, which is
a consequence of \eqref{coF3} and~\eqref{Heff}.

We observe that it is enough to prove \eqref{1app} when
$z_0$ belongs to $L^1(\Om)$ and is piecewise constant on a suitable
triangulation. Indeed, there exists a sequence $z_0^n$ of piecewise constant functions
which converge to $z_0^a$ strongly in $L^1(\Om)$. 
For every $n$ let $(u_k^n, e_k^n,p_k^n,z_k^n)$ be a sequence satisfying the second statement of the theorem as $k\to\infty$, with $z_0$ replaced by $z_0^n$. Then
\begin{equation}\label{1appV}
\QQ(e_0+e)+\HH_{\rm eff}(p, z)=\lim_{k\to\infty}
\{\QQ(e_0+e_k^n)+\HH(p_k^n,z_k^n) +\V(z_0^n+z_k^n)-\V(z_0^n)\}
\,.
\end{equation}
By \eqref{Vsymm20} and by the definition of $\V$ we have
$$
\V(z_k^n+z_0^n)-\V(z_0^n)-(\V(z_k^n+z_0)-\V(z_0))\leq
2b_V \|z_0^n-z_0^a\|_1\,.
$$
Therefore, for every $n$
\begin{equation}\label{1appleq}
\begin{array}{c}
\displaystyle
\limsup_{k\to\infty}
\{\QQ(e_0+e_k^n)+\HH(p_k^n,z_k^n) +\V(z_0+z_k^n)-\V(z_0)\}
\leq
\smallskip\\
\leq
\QQ(e_0+e)+\HH_{\rm eff}(p, z)+2b_V \|z_0^n-z_0^a\|_1
\,.
\end{array}
\end{equation}
By a standard double limit procedure it is then easy to construct a sequence $(u_k, e_k,p_k,z_k)$ satisfying the second statement of the theorem. 

Moreover, we may also assume that $(u,e,p)\in A_{reg}(w)$ and $z\in L^1(\Om)$.
Indeed, in the general case, combining Theorem~\ref{thm:d-app} with Lemma~\ref{lm:comp} we can construct a sequence $(u_m,e_m,p_m)\in A_{reg}(w)$ and a sequence $z_m\in L^1(\Om)$ such that $u_m\wto u$ weakly$^*$ in $BD(\Om)$, $e_m\to e$ strongly in $L^2(\Om;\Mnn)$, $p_m\wto p$ weakly$^*$ in $M_b(\ol\Om;\MD)$, $z_m\wto z$ weakly$^*$ in $M_b(\ol\Om)$, and $\|(p_m,z_m)\|_1\to \|(p,z)\|_1$. 
By \cite[Theorem~3]{Res} (see also \cite[Appendix]{Luc-Mod}) 
these properties imply that
$$
\QQ(e_0+e_m)+\HH_{\rm eff}(p_m,z_m) \quad \longrightarrow \quad
\QQ(e_0+e)+\HH_{\rm eff}(p, z)
$$
and the conclusion of the theorem can be obtained by a standard double limit procedure.

Let us fix a piecewise constant function $z_0\in L^1(\Om)$. Let
$$
G_0(x,\xi,\theta):=H(\xi,\theta)+V(\theta+z_0(x))-V(z_0(x))\,,
$$
let
$$
G_1(x,\xi,\theta):=\inf_{ (\lambda,\xi_1,\xi_2,\theta_1,\theta_2)\in
\Lambda}\{ \lambda G_0(x,\xi+\xi_1,\theta+\theta_1) +
(1-\lambda)G_0(x,\xi+\xi_2,\theta+\theta_2)\}\,,
$$
and let 
$$
G_2(x,\xi,\theta):=\inf_{ (\lambda,\xi_1,\xi_2,\theta_1,\theta_2)\in
\Lambda}\{ \lambda G_1(x,\xi+\xi_1,\theta+\theta_1) +
(1-\lambda)G_1(x,\xi+\xi_2,\theta+\theta_2)\}\,,
$$
where $\Lambda$ is the set of vectors $(\lambda,\xi_1,\xi_2,\theta_1,\theta_2)$
with $0\le\lambda\le1$, $\xi_1,\xi_2\in\MD$, $\theta_1,\theta_2\in\R$, 
$\lambda\xi_1+(1-\lambda)\xi_2=0$, and $\lambda\theta_1+(1-\lambda)\theta_2=0$.
As $G_0$ is globally Lipschitz continuous in $(\xi,\theta)$, uniformly with
respect to $x$, it follows that $G_1$ and $G_2$ satisfy the same property.
Moreover, $G_1$ and $G_2$ are piecewise constant in $x$, uniformly with respect to
$(\xi,\theta)$. It is easy to see that
$$
{\rm co}_{(\xi,\theta)} G_0(x,\xi,\theta)\leq G_2(x,\xi,\theta)\leq\sum_{i=1}^4\lambda_iG_0(x,\xi_i,\theta_i)
$$
whenever $(\xi,\theta)=\sum_{i=1}^4\lambda_i(\xi_i,\theta_i)$ with $\lambda_i\geq0$
and $\sum_{i=1}^4\lambda_i=1$. 
By the Carath\'eodory Theorem we conclude that $G_2={\rm co}_{(\xi,\theta)} G_0=H_{\rm eff}$.

To conclude the proof, using a standard double limit procedure, it is enough to show that for every $i=1,2$, $(u,e,p)\in A_{reg}(w)$, $z\in L^1(\Om)$, and $\eta>0$ there exist a sequence $(u_k,e_k,p_k)\in A_{reg}(w)$ and a sequence 
$z_k\in L^1(\Om)$ satisfying the properties of the second statement of the theorem
and such that 
\begin{equation}\label{1appv}
\G_i(p, z)+\eta \geq\limsup_{k\to\infty}
\G_{i-1}(p_k,z_k)
\,,
\end{equation}
where 
$$
\G_i(p,z):=\int_\Om G_i(x,p(x),z(x))\,dx
$$
for $i=0,1,2$.

Using the approximation argument introduced in \cite{MeySer} we can also assume
$z\in C^{\infty}(\Om)\cap L^1(\Om)$, $u\in C^{\infty}(\Om;\Rn)\cap BD(\Om)$,
$p\in C^{\infty}(\Om; \MD)\cap L^1(\Om;\MD)$.
Using the Lagrange interpolation on a locally finite grid composed by isosceles right triangles which becomes finer and finer near the boundary, we can replace these functions by new functions $u$, $e$, $p$, and $z$, with $(u,e,p)\in A_{reg}(w)$, such that $u$ is piecewise affine on this triangulation ${\mathcal T}$, while $e$, $p$, and $z$ are piecewise constant. 
Since $z_0$ is piecewise constant, it is not restrictive to assume that $G_i(\cdot,\xi,\theta)$ is piecewise constant on ${\mathcal T}$, so that $G_i(x,\xi,\theta)=G_{i,T}(\xi,\theta)$ for every $x\in T$ and every $T\in{\mathcal T}$. 
We may assume that every triangle $T$ of the triangulation ${\mathcal T}$ is relatively
compact in $\Om$.

Let us fix $i=1,2$ and $T\in{\mathcal T}$. Then
$$
u(x)=\xi_Tx+c_T \quad \text{for every } x\in T\,,
$$
where $\xi_T$ is a $2{\times}2$-matrix and $c_T\in\R^2$.
Moreover, we have
$$
e(x)=e_T\,, \quad p(x)=p_T\,, \quad z(x)=z_T
 \quad \text{for every } x\in T\,,
$$
where $e_T\in\Mnn$, $p_T\in\MD$, and $z_T\in\R$. Then we have $\xi_T=e_T+p_T+\omega_T$, where $\omega_T$ is a skew symmetric $2{\times}2$-matrix.

For every $\e>0$ there exists $(\lambda_T,p_T^1,p_T^2,z_T^1,z_T^2)\in\Lambda$
such that 
\begin{equation}\label{Gi-1}
G_{i,T}(p_T,z_T)+\e> \lambda_T G_{i-1,T}(p_T+p_T^1,z_T+z_T^1)+
(1- \lambda_T) G_{i-1,T}(p_T+p_T^2,z_T+z_T^2)\,.
\end{equation}
By an algebraic property of $\MD$ there exist $a_T,b_T\in\R^2$ such that
$p_T^2-p_T^1=a_T\otimes b_T+q_T$ with $q_T$ a skew symmetric $2{\times}2$-matrix.
Note that this is the only point where the dimension two is crucial.
By a standard lamination procedure with interfaces orthogonal to $b_T$ we can construct two sequences $v_T^k\in W^{1,\infty}_{loc}(\Rn;\Rn)$ and 
$z_T^k\in L^\infty_{loc}(\Rn)$ such that 
$v_T^k(0)=0$, $v_T^k\wto p_Tx$
weakly$^*$ in $W^{1,\infty}_{loc}(\Rn;\Rn)$, 
$z_T^k\wto z_T$
weakly$^*$ in $L^\infty_{loc}(\Rn)$,
$Ev_T^k=p_T+p_T^1$ and $z_T^k=z_T+z_T^1$ on $A_T^k$, 
$Ev_T^k=p_T+p_T^2$ and $z_T^k=z_T+z_T^2$ on $\Rn\setmeno A_T^k$, and $1_{A_T^k}\wto\lambda_T$
weakly$^*$ in $L^\infty_{loc}(\Rn)$.
Let us define $u_T^k(x):=
e_Tx+v_T^k(x)+\omega_Tx+c_T$. Recalling our definitions we find that $u_T^k\wto u$
weakly$^*$ in $W^{1,\infty}(T;\Rn)$ and $z_T^k\wto z$
weakly$^*$ in $L^\infty(T)$. 

For every $T\in \mathcal{T}$ and every $\delta>0$ let $T_\delta$ be the  triangle
similar to $T$ with the same centre and similarity ratio $1-\delta$, and let  $\varphi^\delta_T\in C^\infty_c(T)$ a cut-off function such that $\varphi^\delta_T=1$ on $T_\delta$  and $0\leq\varphi^\delta_T\leq1$ on $T$.
Let us fix a finite subset ${\mathcal T}'\subset{\mathcal T}$, let
\begin{eqnarray*}
&\displaystyle
\Om':=\bigcup_{T\in \mathcal {T}'}T\,, \qquad \Om'_\delta=\bigcup_{T\in \mathcal {T}'}T_\delta\,, 
\\
&\displaystyle
u_k:=\sum_{T\in{\mathcal T}'} \varphi_T^\delta u_T^k +\big(1 - \sum_{T\in{\mathcal T}'} \varphi_T^\delta )u\,,
\qquad
z_k:=\sum_{T\in{\mathcal T}'} \varphi_T^\delta z_T^k +\big(1 - \sum_{T\in{\mathcal T}'} \varphi_T^\delta )z\,.
\end{eqnarray*}
It is clear that $u_k\wto u$ weakly$^*$ in $BD(\Om)$ and $u_k=w$ $\hn$-a.e.\ on $\Gamma_0$.
We set 
\begin{eqnarray*}
&\displaystyle
p_k:=\sum_{T\in{\mathcal T}'} \varphi_T^\delta Ev^k_T+
\big(1 - \sum_{T\in{\mathcal T}'} \varphi_T^\delta )p\,,
\\
&\displaystyle
 e_k:=Eu_k-p_k=e+\sum_{T\in{\mathcal T}'}  
\nabla\varphi_T^\delta \odot(u^k_T-u)\,.
\end{eqnarray*}
It follows that $p_k\wto p$ weakly$^*$
 in $L^\infty(\Om;\MD)$ and $e_k\to e$ strongly in 
$L^{\infty}(\Om;\Mnn)$.
For every $T\in\T'$ we have $e_k=e_T$ a.e.\ on $T_\delta$, 
$p_k=p_T+p^1_T$ a.e.\ on $T_\delta \cap A_T^k$, $z_k=z_T+z^1_T$ 
a.e.\ on $T_\delta\cap A_T^k$,
$p_k=p_T+p^2_T$ a.e.\ on $T_\delta\setmeno A_T^k$, and $z_k=z_T+z^2_T$ a.e.\ on $T_\delta\setmeno A_T^k$. Therefore
\begin{eqnarray*}
&
\displaystyle \G_{i-1}(p_k,z_k)\leq \sum_{T\in \mathcal{T}'}
G_{i-1,T}(p_T+p^1_T, z_T+z_T^1)\Ln(T_\delta\cap A_T^k)+{}
\\
&
\displaystyle
{}+\sum_{T\in \mathcal{T}'}
G_{i-1,T}(p_T+p^2_T, z_T+z_T^2)\Ln(T_\delta\setmeno A_T^k)+{}
\\
&\displaystyle+\int_{\Om'\setminus\Om'_{\delta}}G_{i-1}(p_k,z_k)\, dx+\int_{\Om\setminus \Om'}G_{i-1}(p,z)\, dx\,.
\end{eqnarray*}
We observe that there exists a constant $C(\mathcal{T}')$ such that $G_{i-1}(p_k,z_k)\leq C(\mathcal{T}')$ a.e.\ on $\Om'$ for every $k$. As $1_{A_T^k}\wto\lambda_T$ weakly$^*$ in $L^{\infty}(T)$ as $k\to\infty$, using \eqref{Gi-1} we obtain
\begin{eqnarray*}
&\displaystyle\limsup_{k\to\infty}\G_{i-1}(p_k,z_k)\leq \sum_{T\in \mathcal{T}'}(G_{i,T}(p_T, z_T)+\e)\Ln(T_\delta)+\\
&\displaystyle{}+C(\mathcal{T}')\Ln(\Om'\setmeno\Om'_{\delta})+\int_{\Om\setminus \Om'}
G_{i-1}(p,z)\, dx\leq\\
&\displaystyle \leq\G_{i}(p,z)+\e\Ln(\Om)+C(\mathcal{T}')\Ln(\Om'\setmeno\Om'_{\delta})+\int_{\Om\setminus \Om'}G_{i-1}(p,z)\, dx\,,
\end{eqnarray*} 
which gives \eqref{1appv} with
$$
\eta:=\e\Ln(\Om)+C(\mathcal{T}')\Ln(\Om'\setmeno\Om'_{\delta})+\int_{\Om\setminus \Om'}G_{i-1}(p,z)\, dx\,.
$$
Passing to the limit first  as $\delta\to 0$, then as $\e\to0$, and finally as 
$\Om'\nearrow \Om$, we can make $\eta$ arbitrarily small, and this concludes the proof.
\end{proof}

\subsection{Relaxation in spaces of Young measures}

The following theorem shows the relationships between the incremental problem in $A_{reg}(\tilde w)$ with $H$ and $V$, the same problem in $A(\tilde w)$ with $H_{\rm eff}$, and a similar problem in a suitable space of generalized Young measures. The statement of the theorem uses the decomposition $\mu=\ol\mu^Y+\hat\mu^\infty$ of 
\cite[Theorem 4.3]{DM-DeS-Mor-Mor-1}, the notion of translation introduced
in \eqref{Tp}, and the
homogeneous function $\{V\}\colon \R{\times}\R\to\R$  defined by
\begin{equation}\label{Vhom}
\{V\}(\theta,\eta):=
\begin{cases}
\eta\, V(\theta/\eta) & \text{if }\eta>0\,,
\\
V^\infty(\theta) &  \text{if }\eta\le 0\,.
\end{cases}
\end{equation}

\begin{theorem}\label{lm:eqmin}
Let $w_0,\tilde w \in H^1(\Om;\Rn)$, let $(u_0,e_0,p_0)\in A(w_0)$, let $z_0\in M_b(\ol\Om)$,
let $\mu_0\in GY(\ol\Om; \MD{\times}\R)$ such that
$\bary(\mu_0)=(p_0,z_0)$. Assume that $\ol\mu_0^Y=\delta_{(\ol p_0,\ol z_0)}$ with
$\ol p_0\in L^1(\Om;\MD)$ and $\ol z_0\in L^1(\Om)$.
Then the following equalities hold:
\begin{eqnarray}
&\displaystyle
\!\!\!\!\!\!\!\!\inf_{(\tilde u,\tilde e,\tilde p)\in A_{reg}(\tilde w),\,
\tilde z\in L^1(\Om)}
\big[\QQ(e_0+\tilde e) + \HH(\tilde p,\tilde z)
+\langle \{V\}(\theta_1,\eta), \T_{(\tilde p,\tilde z)}
(\mu_0) \rangle
\big] =
\label{finf1}
\\
&\displaystyle
=\min_{(\tilde u,\tilde e,\tilde p)\in A(\tilde w),\,
\tilde z\in M_b(\ol\Om)}
\big[\QQ(e_0+\tilde e) +\HH_{\rm eff}(\tilde p,\tilde z)+
\langle \{V\}(\theta_0,\eta), \mu_0 \rangle
\big] =
\label{fmin1}
\\
&\displaystyle
=\inf_{( u, e, \muu)\in B}
\big[\QQ( e)+\langle H(\xi_1-\xi_0,\theta_1-\theta_0)+\{V\}(\theta_1,\eta),  
\muu_{t_0t_1}
\rangle\big]\,,
\label{fmin3}
\end{eqnarray}
where the measure $\T_{(\tilde p,\tilde z)}(\mu_0)$ acts on $(x,\xi_1,\theta_1,\eta)$, 
the measure $\mu_0$ acts on $(x,\xi_0,\theta_0,\eta)$, while the measure $\muu_{t_0t_1}$ acts on 
$(x,\xi_0,\theta_0,\xi_1,\theta_1,\eta)$. Here $B$ denotes the class of all triplets 
$(u, e,\muu)$, with $u\in BD(\Om)$, $e\in L^2(\Om;\Mnn)$, 
$\muu\in SGY(\{t_0,t_1\},\ol\Om;\MD{\times}\R)$,
such that $\muu_{t_0}=\mu_0$ and $(u, e, p)\in A(w_0+\tilde w)$, where 
$( p, z):=\bary(\muu_{t_1})$.
\end{theorem}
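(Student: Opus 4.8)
The plan is to prove the two equalities \eqref{finf1}${}={}$\eqref{fmin1} and \eqref{fmin1}${}={}$\eqref{fmin3} separately, and for each one to establish the two inequalities by complementary arguments: a lower bound coming from lower semicontinuity of the relaxed functionals (Theorems~\ref{thm:1app} and~\ref{thm:d-app}, together with Lemma~\ref{trlem}), and an upper bound coming from the explicit recovery sequences furnished by those same results.

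First I would treat \eqref{finf1}${}={}$\eqref{fmin1}. For the inequality $\eqref{finf1}\ge\eqref{fmin1}$, given $(\tilde u,\tilde e,\tilde p)\in A_{reg}(\tilde w)$ and $\tilde z\in L^1(\Om)$, I would unfold the translation pairing using \eqref{Tp} and \eqref{Vhom}: writing $\mu_0=\ol\mu_0^Y+\hat\mu_0^\infty$ as in \cite[Theorem~4.3]{DM-DeS-Mor-Mor-1} with $\ol\mu_0^Y=\delta_{(\ol p_0,\ol z_0)}$, the term $\langle\{V\}(\theta_1,\eta),\T_{(\tilde p,\tilde z)}(\mu_0)\rangle$ splits into a ``finite'' part $\int_\Om V(\ol z_0(x)+\tilde z(x))\,dx$ and a recession part involving $V^\infty$ on the concentration measure; comparing with $\langle\{V\}(\theta_0,\eta),\mu_0\rangle$ one gains exactly the difference $\V(z_0+\tilde z)-\V(z_0)$ up to the concentration contribution, which is handled by \eqref{VinftyV}. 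Then the reverse inequality $\HH_{\rm eff}(\tilde p,\tilde z)+\langle\{V\}(\theta_0,\eta),\mu_0\rangle\le\HH(\tilde p,\tilde z)+\langle\{V\}(\theta_1,\eta),\T_{(\tilde p,\tilde z)}(\mu_0)\rangle$ follows from \eqref{coF3}, \eqref{coFHeff}, and \eqref{Heff} applied pointwise, since $A(\tilde w)\supset A_{reg}(\tilde w)$ and $M_b(\ol\Om)\supset L^1(\Om)$ give $\eqref{fmin1}\le\eqref{finf1}$. For $\eqref{fmin1}\ge\eqref{finf1}$, given an admissible $(\tilde u,\tilde e,\tilde p)\in A(\tilde w)$ and $\tilde z\in M_b(\ol\Om)$, I would invoke the second (recovery) statement of Theorem~\ref{thm:1app} with $e_0$ there equal to $e_0$ here and $z_0$ there the barycentre density $\ol z_0$, producing $(\tilde u_k,\tilde e_k,\tilde p_k)\in A_{reg}(\tilde w)$ and $\tilde z_k\in L^1(\Om)$ along which $\QQ(e_0+\tilde e_k)+\HH(\tilde p_k,\tilde z_k)+\V(\ol z_0+\tilde z_k)$ converges to $\QQ(e_0+\tilde e)+\HH_{\rm eff}(\tilde p,\tilde z)+\V(\ol z_0)$; combining this with Lemma~\ref{trlem} (to pass the translated pairing to the limit, after approximating $\ol p_0$ by a compactly supported continuous function as in the proof of that lemma) and with the bookkeeping of the concentration part of $\mu_0$, which is unaffected by the recovery sequence, yields the matching upper bound.

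Next I would treat \eqref{fmin1}${}={}$\eqref{fmin3}. The inequality $\eqref{fmin3}\le\eqref{fmin1}$ is obtained by exhibiting, for any competitor $(\tilde u,\tilde e,\tilde p,\tilde z)$ in \eqref{fmin1}, an element $(u,e,\muu)\in B$: take $u:=u_0+\tilde u$, $e:=e_0+\tilde e$, and $\muu\in SGY(\{t_0,t_1\},\ol\Om;\MD{\times}\R)$ with $\muu_{t_0}=\mu_0$ and $\muu_{t_1}:=\T_{(\tilde p,\tilde z)}(\mu_0)$, glued into a two-time system by the natural coupling that records the pair $(\xi_0,\theta_0,\xi_1,\theta_1,\eta)$ with $(\xi_1,\theta_1)=(\xi_0+\eta\tilde p,\theta_0+\eta\tilde z)$; then $\bary(\muu_{t_1})=(p_0+\tilde p,z_0+\tilde z)$ so the admissibility $(u,e,p)\in A(w_0+\tilde w)$ holds, and the functional in \eqref{fmin3} evaluates to exactly the functional in \eqref{fmin1} because on this $\muu_{t_0t_1}$ one has $\xi_1-\xi_0=\eta\tilde p$ and $H(\eta\tilde p,\eta\tilde z)=\langle$integrated against $\muu\rangle$ reproduces $\HH_{\rm eff}(\tilde p,\tilde z)$ up to the recession terms (here one uses that $\HH_{\rm eff}$ is obtained from $H$ by the same convexification in the Young-measure variables, i.e.\ the identity $G_2=\mathrm{co}_{(\xi,\theta)}G_0=H_{\rm eff}$ established inside the proof of Theorem~\ref{thm:1app}). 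For the converse $\eqref{fmin3}\ge\eqref{fmin1}$, given $(u,e,\muu)\in B$ with $(p,z)=\bary(\muu_{t_1})$, set $\tilde e:=e-e_0$, $\tilde p:=p-p_0$, $\tilde z:=z-z_0$, $\tilde u:=u-u_0$; then $(\tilde u,\tilde e,\tilde p)\in A(\tilde w)$ and Jensen's inequality for generalized Young measures (applied to the convex, positively one-homogeneous integrand $(\xi_0,\theta_0,\xi_1,\theta_1,\eta)\mapsto H(\xi_1-\xi_0,\theta_1-\theta_0)+\{V\}(\theta_1,\eta)$, using that its partial convexification in the barycentred variables is $H_{\rm eff}(\cdot)+\{V\}(\cdot)$ by Lemma~\ref{coGcoGinfty} and \eqref{Heff}) gives $\langle H(\xi_1-\xi_0,\theta_1-\theta_0)+\{V\}(\theta_1,\eta),\muu_{t_0t_1}\rangle\ge\HH_{\rm eff}(\tilde p,\tilde z)+\langle\{V\}(\theta_0,\eta),\mu_0\rangle$, which together with $\QQ(e)=\QQ(e_0+\tilde e)$ yields the bound.

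I expect the main obstacle to be the careful tracking of the concentration (recession) parts throughout. Specifically: (i) verifying that $\langle\{V\}(\theta_1,\eta),\T_{(\tilde p,\tilde z)}(\mu_0)\rangle-\langle\{V\}(\theta_0,\eta),\mu_0\rangle$ equals precisely $\V(\ol z_0+\tilde z)-\V(\ol z_0)$ on the regular part while the singular/concentration contributions cancel — this requires knowing that translation by an $L^1$ function leaves the concentration measure $\hat\mu_0^\infty$ essentially intact at the level of the projection onto $\theta$, which is where Lemma~\ref{trlem} and the projection property (3.3) of \cite{DM-DeS-Mor-Mor-1} do the heavy lifting; and (ii) making the Jensen-type lower bound in the last step rigorous in the generalized Young measure setting, i.e.\ justifying that the relaxation of the one-homogeneous integrand in the two barycentred slots $(\xi_1-\xi_0,\theta_1)$ — holding the concentration variable $\eta$ fixed — is exactly $H_{\rm eff}+\{V\}$, which is precisely the content of Lemmas~\ref{coGcoGinfty} and~\ref{cothetaG}. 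The quadratic term $\QQ$ poses no difficulty since $e=e_0+\tilde e$ is a genuine $L^2$ function in all three formulations. Once these recession bookkeeping points are settled, the remaining arguments are the routine density and double-limit procedures already used in Theorems~\ref{thm:d-app} and~\ref{thm:1app}.
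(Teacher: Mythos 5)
Your treatment of the equality $\eqref{finf1}=\eqref{fmin1}$ and of the lower bound $\eqref{fmin3}\ge\eqref{fmin1}$ is essentially the paper's argument: the identity $\langle \{V\}(\theta_1,\eta), \T_{(\tilde p,\tilde z)}(\mu_0)\rangle-\langle \{V\}(\theta_0,\eta),\mu_0\rangle=\V(\ol z_0+\tilde z)-\V(\ol z_0)$ (the concentration part drops out because the integrand vanishes at $\eta=0$, and $\ol\mu_0^Y=\delta_{(\ol p_0,\ol z_0)}$), the recovery sequences of Theorem~\ref{thm:1app}, and the Jensen inequality for $H_{\rm eff}$ combined with $H_{\rm eff}={\rm co}\,G\le G$. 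These steps are correct.

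The gap is in your step $\eqref{fmin3}\le\eqref{fmin1}$. First, a general competitor of \eqref{fmin1} has $\tilde p\in M_b(\ol\Om;\MD)$ and $\tilde z\in M_b(\ol\Om)$, and the translation $\T_{(\tilde p,\tilde z)}$ of \eqref{Tp} is only defined for $L^1$ functions, so the element of $B$ you propose need not exist. Second, and more seriously, even for $(\tilde p,\tilde z)\in L^1$ the coupled measure $\muu_{t_0t_1}=\T^1_{(\tilde p,\tilde z)}(\mu_0)$ satisfies $\langle H(\xi_1-\xi_0,\theta_1-\theta_0),\muu_{t_0t_1}\rangle=\HH(\tilde p,\tilde z)$, not $\HH_{\rm eff}(\tilde p,\tilde z)$: a single translate cannot reproduce the convex envelope, which is attained only along the oscillating (lamination) sequences built in the proof of Theorem~\ref{thm:1app}, so your appeal to $G_2={\rm co}_{(\xi,\theta)}G_0=H_{\rm eff}$ does not apply to one fixed translate, and the functional in \eqref{fmin3} does \emph{not} evaluate to the functional in \eqref{fmin1} on your competitor. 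What your construction actually proves is $\eqref{fmin3}\le\eqref{finf1}$ (take regular competitors, for which the energies match exactly with $\HH$ in place of $\HH_{\rm eff}$), and this is precisely how the paper closes the argument, via the cycle $\eqref{finf1}\le\eqref{fmin1}\le\eqref{fmin3}\le\eqref{finf1}$. Since you have already established $\eqref{finf1}\le\eqref{fmin1}$ through the recovery sequences, replacing your direct comparison of \eqref{fmin3} with \eqref{fmin1} by the comparison with \eqref{finf1} repairs the proof with no new ideas.
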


\begin{proof} 
We start by showing that the infimum in \eqref{finf1} is less than
or equal to the minimum in \eqref{fmin1}.
Let $(\tilde u, \tilde e, \tilde p)\in A(\tilde w)$ and $\tilde z\in M_b(\ol\Om)$ be a minimizer of \eqref{fmin1}. 
By Theorem~\ref{thm:1app} there exist a sequence 
$(\tilde u_m,\tilde e_m,\tilde p_m)\in A_{reg}(\tilde w)$ 
and a sequence $\tilde z_m\in L^1(\Om)$ such that 
$\tilde e_m\to \tilde e$ strongly in $L^2(\Om;\Mnn)$, $\tilde p_m\wto \tilde p$ weakly$^*$ in $M_b(\ol\Om;\MD)$, 
$\tilde z_m\wto \tilde z$ weakly$^*$ in $M_b(\ol\Om)$, and
\begin{equation}\label{110app1}
\HH(\tilde p_m,\tilde z_m) 
+\V(\ol z_0+\tilde z_m)-\V(\ol z_0) \quad \longrightarrow \quad
\HH_{\rm eff}(\tilde p,\tilde z)\,.
\end{equation}

We claim that
$$
\HH(\tilde p_m,\tilde z_m)
+\langle \{V\}(\theta_1,\eta), \T_{(\tilde p_m,\tilde z_m)}
(\mu_0) \rangle\quad
 \longrightarrow\quad \HH_{\rm eff}(\tilde p,\tilde z)+
\langle \{V\}(\theta_0,\eta), \mu_0 \rangle\,.
$$
Indeed, using the definition of $\T_{(\tilde p_m,\tilde z_m)}$, we have
$$
\langle \{V\}(\theta_1,\eta), \T_{(\tilde p_m,\tilde z_m)}(\mu_0) \rangle -
\langle \{V\}(\theta_0,\eta), \mu_0 \rangle =
\langle \{V\}(\theta_0+\eta\tilde z_m(x),\eta)-\{V\}(\theta_0,\eta), 
\mu_0 \rangle\,.
$$
As $\{V\}(\theta_0+\eta\tilde z_m(x),\eta)-\{V\}(\theta_0,\eta)$ vanishes for $\eta=0$, 
we obtain
$$
\langle \{V\}(\theta_1,\eta), \T_{(\tilde p_m,\tilde z_m)}(\mu_0) \rangle -
\langle \{V\}(\theta_0,\eta), \mu_0 \rangle =
\langle \{V\}(\theta_0+\eta \tilde z_m(x),\eta)-\{V\}(\theta_0,\eta), 
\ol\mu^Y_0\rangle\,.
$$
By the assumption $\ol\mu^Y_0=\delta_{(\ol p_0,\ol z_0)}$ we find
\begin{equation}\label{eqVV1}
\langle \{V\}(\theta_1,\eta), \T_{(\tilde p_m,\tilde z_m)}(\mu_0) \rangle -
\langle \{V\}(\theta_0,\eta), \mu_0 \rangle =
\V(\ol z_0+\tilde z_m)-\V(\ol z_0)\,.
\end{equation}
{}From (\ref{110app1}) and (\ref{eqVV1}) we obtain the claim, which, in turn, together with the strong
convergence of $\tilde e_m$ to $\tilde e$, shows that the infimum \eqref{finf1} is less than or equal to the minimum \eqref{fmin1}.

Let $( u, e,\muu)\in B$. By the Jensen inequality for generalized Young measures
(see \cite[Theorem~6.5]{DM-DeS-Mor-Mor-1}) we have
$$
\begin{array}{c}
\HH_{\rm eff}( p-p_0, z-z_0)\le 
\langle H_{\rm eff}(\xi_1-\xi_0,\theta_1-\theta_0),  
\muu_{t_0t_1}
\rangle\le
\smallskip
\\
\le \langle H(\xi_1-\xi_0,\theta_1-\theta_0)+\{V\}(\theta_1,\eta)- \{V\}(\theta_0,\eta),  
\muu_{t_0t_1}
\rangle\,.
\end{array}
$$
Since $( u-u_0, e-e_0, p-p_0)\in A(\tilde w)$, 
the minimum \eqref{fmin1} is less than or equal to 
the infimum in \eqref{fmin3}.

On the other hand the infimum in \eqref{finf1} is greater than or equal to
the infimum in \eqref{fmin3}, since for every $(\tilde u,\tilde e,\tilde p)\in A_{reg}(\tilde w)$ and every $\tilde z\in L^1(\Om)$ we can construct a triple $( u, e,\muu)\in B$ 
by setting $ u:=u_0+\tilde u$, $ e:=e_0+\tilde e$, and 
$\muu_{t_0t_1}:=
\T^1_{(\tilde p,\tilde z)}(\mu_0)$,
where $\T^1_{(\tilde p,\tilde z)}\colon \ol\Om{\times}\MD{\times}\R{\times}\R\to
\ol\Om{\times}(\MD{\times}\R)^2{\times}\R$ is defined by
$$
\T^1_{(\tilde p,\tilde z)}(x,\xi_0,\theta_0,\eta)
:=(x,\xi_0,\theta_0,\xi_0+\eta
\tilde p(x), \theta_0+\eta \tilde z(x),\eta)\,.
$$
This concludes the proof of the theorem.
\end{proof}

\subsection{Some structure theorems}

We prove now two structure theorems for generalized Young measures 
whose action on $H+\{V\}$ equals the relaxed 
functional $\HH_{\rm eff}$ evaluated on
their barycentres.

\begin{theorem}\label{thm311}
Let $p_0\in L^1(\Om;\MD)$, $z_0\in L^1(\Om)$, 
$\mu_1\in GY(\ol\Om;\MD{\times}\R)$, let $(p_1,z_1):=\bary(\mu_1)$,
let $\lambda$ be the total variation of the measure $(p_1^s,z_1^s)$, and
let $(p_1^\lambda, z_1^\lambda)$ be the Radon-Nikodym derivative of the measure
$(p_1^s,z_1^s)$ with respect to $\lambda$. 
Assume that 
\begin{equation}\label{eq092}
\begin{array}{c}
\langle  H(\xi_1-\eta p_0(x), \theta_1-\eta z_0(x)) + 
\{V\}(\theta_1,\eta),\mu_1(x,\xi_1,\theta_1,\eta)\rangle=
\\
=\HH_{\rm eff}(p_1-p_0,z_1-z_0)+\V(z_0)\,.
\end{array}
\end{equation}
By Lemma~\ref{cothetaG} there exist $z\in L^1(\Om)$, with $z(z_1^a-z_0)\ge 0$
a.e.\ on $\Om$, and $\alpha\in L^\infty(\Om)$,
with $\tfrac12\leq\alpha\leq1$  a.e.\ on $\Om$, such that 
\begin{eqnarray}
& z_1^a=\alpha(z_0+ z) + (1-\alpha)(z_0- z)\,, \label{eq090}
\\
& \label{eq09}
H_{\rm eff}(p_1^a-p_0,z_1^a-z_0)= H(p_1^a-p_0, z)+ V^\infty(z)
\end{eqnarray}
a.e.\ in $\Om$, and there exist $z_\lambda\in L^1_\lambda(\ol\Om)$, with $z_\lambda 
z_1^\lambda\ge 0$ $\lambda$-a.e.\ on $\ol\Om$,
and $\alpha_\lambda\in L^\infty_\lambda(\ol\Om)$, with $\tfrac12\le\alpha_\lambda\le1$
$\lambda$-a.e.\ on $\ol\Om$,
such that
\begin{eqnarray}
& z_1^\lambda=\alpha_\lambda z_\lambda + (1-\alpha_\lambda)
(-z_\lambda)\,,
\\
& \label{eq09+}
H_{\rm eff}(p_1^\lambda,z_1^\lambda)= 
H(p_1^\lambda, z_\lambda)+ V^\infty(z_\lambda)
\end{eqnarray}
$\lambda$-a.e.\ in $\ol\Om$. 
Then 
$$
\mu_1=\delta_{(p_0,z_0)}+
\alpha\omega_{(p_1^a-p_0, z)}^{\Ln}+
(1-\alpha)\omega_{(p_1^a-p_0,- z)}^{\Ln}+
\alpha_\lambda\omega_{(p_1^\lambda, z_\lambda)}^{\lambda}+
(1-\alpha_\lambda)\omega_{(p_1^\lambda, -z_\lambda)}^{\lambda}\,, 
$$
that is, according to \eqref{omega},  
\begin{equation}\label{eq091}
\begin{array}{c}
\displaystyle
\langle f,\mu_1\rangle =\int_\Om f(x,p_0(x),z_0(x),1)\, dx +
\int_\Om \alpha(x)f(x,p_1^a(x)-p_0(x), z(x),0)\, dx+{}
\\
\displaystyle
{}+\int_\Om (1-\alpha(x))f(x,p_1^a(x)-p_0(x),- z(x),0)\, dx +{}
\\
\displaystyle
{}+\int_{\ol\Om} \alpha_\lambda(x)f(x,p_1^\lambda(x), z_\lambda(x),0)\, d\lambda(x)+{}
\\
\displaystyle
{}+\int_{\ol\Om} (1-\alpha_\lambda(x))f(x,p_1^\lambda(x), -z_\lambda(x),0)\, d\lambda(x)
\end{array}
\end{equation}
for every $f\in B^{hom}_{\infty,1}(\ol\Om{\times}\MD{\times}\R{\times}\R)$
(see \cite[Definition~3.14]{DM-DeS-Mor-Mor-1}).
\end{theorem}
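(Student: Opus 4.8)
The plan is to squeeze $\mu_1$ between two inequalities — a pointwise one coming from the convexification lemmas of this subsection, and the Jensen inequality for generalized Young measures — and then read off the structure from the fact that both must be equalities. All pairings are understood with Borel test functions of class $B^{hom}_{\infty,1}$, so that expressions like $H(\xi_1-\eta p_0(x),\theta_1-\eta z_0(x))$ make sense although $p_0,z_0$ are only $L^1$.

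\textbf{Step 1 (pointwise bound).} Let $g(x,\xi_1,\theta_1,\eta)$ be the integrand of \eqref{eq092} and put
$$
h(x,\xi_1,\theta_1,\eta):=H_{\rm eff}(\xi_1-\eta p_0(x),\theta_1-\eta z_0(x))+\eta\,V(z_0(x))\,.
$$
For $\eta>0$, writing $\tilde\xi:=\xi_1/\eta-p_0(x)$ and $\tilde\theta:=\theta_1/\eta-z_0(x)$ one gets $g=\eta\,G(\tilde\xi,\tilde\theta)+\eta\,V(z_0(x))$ with $G$ as in \eqref{defG} (with $\theta_0=z_0(x)$), so $g\ge h$ by $G\ge{\rm co}\,G=H_{\rm eff}$ (Lemma~\ref{coGcoGinfty}), with equality iff $(\tilde\xi,\tilde\theta)=(0,0)$, i.e.\ $(\xi_1,\theta_1)=\eta(p_0(x),z_0(x))$, by \eqref{He=G}; for $\eta=0$, $g=G^\infty(\xi_1,\theta_1)\ge{\rm co}\,G^\infty(\xi_1,\theta_1)=H_{\rm eff}(\xi_1,\theta_1)=h$, with equality iff $G^\infty(\xi_1,\theta_1)=H_{\rm eff}(\xi_1,\theta_1)$. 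Moreover $h(x,\cdot)$ is convex and positively homogeneous of degree one in $(\xi_1,\theta_1,\eta)$, and so is $g(x,\cdot)$.

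\textbf{Step 2 (the two bounds collapse).} By the Jensen inequality for generalized Young measures \cite[Theorem~6.5]{DM-DeS-Mor-Mor-1}, applied to the convex one-homogeneous $h$ and using $\bary(\mu_1)=(p_1,z_1)$ together with the projection property $\langle\eta\,\varphi(x),\mu_1\rangle=\int_\Om\varphi\,dx$,
$$
\langle h,\mu_1\rangle\ \ge\ \HH_{\rm eff}(p_1-p_0,z_1-z_0)+\V(z_0)\,.
$$
With $g\ge h$ and hypothesis \eqref{eq092} this forces $\langle g,\mu_1\rangle=\langle h,\mu_1\rangle$ and equality in Jensen; hence (a) $g=h$ $\mu_1$-a.e., and (b) equality holds in Jensen. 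Work in the decomposition $\mu_1=\ol\mu_1^Y+\hat\mu_1^\infty$ of \cite[Theorem~4.3]{DM-DeS-Mor-Mor-1}: $\ol\mu_1^Y=\{\nu_x\}$ is a Young measure carried by $\{\eta=1\}$ and $\hat\mu_1^\infty$ is described by a measure $\mu_1^\infty\in M_b^+(\ol\Om)$ together with probability measures $\beta_x$ on the unit sphere of $\MD{\times}\R$, carried by $\{\eta=0\}$.

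\textbf{Step 3 (identification).} From (a) on $\{\eta=1\}$ and Step~1, $\nu_x=\delta_{(p_0(x),z_0(x))}$, so $\ol\mu_1^Y=\delta_{(p_0,z_0)}$; subtracting its barycenter $(p_0,z_0)\Ln$ gives $(\mu_1^\infty)^a\,\bary(\beta_x)=(p_1^a-p_0,z_1^a-z_0)$ $\Ln$-a.e.\ and $(p_1^s,z_1^s)=\bary(\beta_x)\,(\mu_1^\infty)^s$ as vector measures. Inserting $\nu_x=\delta_{(p_0,z_0)}$ into (b), which then reduces to $\int_\Om(\mu_1^\infty)^a\!\int H_{\rm eff}\,d\beta_x\,dx+\int_{\ol\Om}\!\int H_{\rm eff}\,d\beta_x\,d(\mu_1^\infty)^s\le\HH_{\rm eff}(p_1-p_0,z_1-z_0)$ and, since $H_{\rm eff}$ is convex, yields in each fibre $\int H_{\rm eff}(\xi,\theta)\,d\beta_x=H_{\rm eff}(\bary(\beta_x))$, i.e.\ $\beta_x$ is carried by the contact set $\{H_{\rm eff}=\ell_x\}$ of $H_{\rm eff}$ with a supporting linear functional $\ell_x$ at $\bary(\beta_x)$. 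Since $(0,0)\in\Interior K_{\rm eff}$ forces $H_{\rm eff}>0$ off the origin, $\bary(\beta_x)$ cannot vanish; in particular $(\mu_1^\infty)^s$ and $\lambda=|\bary(\beta_x)|\,(\mu_1^\infty)^s$ are mutually absolutely continuous, and $(p_1^\lambda,z_1^\lambda)=\bary(\beta_x)/|\bary(\beta_x)|$ $\lambda$-a.e. By (a) on $\{\eta=0\}$, $\beta_x$ is also carried by $\{G^\infty=H_{\rm eff}\}$, so on the intersection of the two contact sets $G^\infty=H_{\rm eff}=\ell_x$; thus Lemma~\ref{cothetaG} applied to $\bary(\beta_x)=:(\xi_\ast,\theta_\ast)$ (producing $\ol\theta\ge|\theta_\ast|$ with $H_{\rm eff}(\xi_\ast,\theta_\ast)=G^\infty(\xi_\ast,\ol\theta)$) and then Lemma~\ref{lemma01} with $L=\ell_x$ give $\supp\beta_x\subset\{G^\infty=\ell_x\}\subset\R^+(\xi_\ast,\ol\theta)\cup\R^+(\xi_\ast,-\ol\theta)$. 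Hence $\beta_x$ is a two-point measure $\alpha(x)\,\delta_{(\xi_\ast,\ol\theta)}+(1-\alpha(x))\,\delta_{(\xi_\ast,-\ol\theta)}$; matching barycenters shows $(\xi_\ast,\ol\theta)$ is a unit vector and $\alpha(x)\in[\tfrac12,1]$.

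\textbf{Step 4 (assembly).} Set $z:=(\mu_1^\infty)^a\,\ol\theta$ (resp.\ $z_\lambda:=\tfrac{d(\mu_1^\infty)^s}{d\lambda}\,\ol\theta$) and keep $\alpha$ (resp.\ $\alpha_\lambda$) for the corresponding weight. By the degree-one scaling of the relations in Lemma~\ref{cothetaG}, the pair $(z,\alpha)$ is precisely the data that lemma attaches to $(p_1^a-p_0,z_1^a-z_0)$, so \eqref{eq090}, \eqref{eq09} and $z(z_1^a-z_0)\ge0$ hold; likewise $(z_\lambda,\alpha_\lambda)$ is attached to $(p_1^\lambda,z_1^\lambda)$, giving the singular counterpart of \eqref{eq090}, \eqref{eq09+} and $z_\lambda z_1^\lambda\ge0$; a measurable selection makes $z\in L^1(\Om)$, $z_\lambda\in L^1_\lambda(\ol\Om)$, $\alpha\in L^\infty(\Om)$, $\alpha_\lambda\in L^\infty_\lambda(\ol\Om)$. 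Finally, unwinding \eqref{omega} and using the degree-one homogeneity at $\eta=0$ (so that $(\mu_1^\infty)^a f(x,(\xi_\ast,\ol\theta),0)=f(x,p_1^a-p_0,z,0)$, and similarly for the singular part), the $\Ln$-absolutely continuous part of $\hat\mu_1^\infty$ equals $\alpha\,\omega_{(p_1^a-p_0,z)}^{\Ln}+(1-\alpha)\,\omega_{(p_1^a-p_0,-z)}^{\Ln}$ and its $\lambda$-singular part equals $\alpha_\lambda\,\omega_{(p_1^\lambda,z_\lambda)}^{\lambda}+(1-\alpha_\lambda)\,\omega_{(p_1^\lambda,-z_\lambda)}^{\lambda}$; together with $\ol\mu_1^Y=\delta_{(p_0,z_0)}$ this is \eqref{eq091}. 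The convex-analytic inputs (Lemmas~\ref{coGcoGinfty}, \ref{HeffG}, \ref{cothetaG}, \ref{lemma01}) enter almost verbatim; the main obstacle is Steps~2--3: running the equality case of the generalized-Young-measure Jensen inequality inside the $\ol\mu^Y+\hat\mu^\infty$ decomposition, keeping track of the $\{\eta=1\}$ and $\{\eta=0\}$ contributions to each fibre barycenter, legitimising pointwise Jensen for the non-probability fibre measures via homogeneity, excluding $\bary(\beta_x)=0$ so that $(\mu_1^\infty)^s\sim\lambda$, and carrying out the measurable selections that upgrade the pointwise identities to the stated $L^1$/$L^\infty$ statements.
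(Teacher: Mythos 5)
Your strategy is the paper's own: disintegrate $\mu_1$ as in \cite[Theorem~4.3, Remark~4.5]{DM-DeS-Mor-Mor-1}, force equality simultaneously in the pointwise bound $g\ge h$ and in the Jensen inequality, use Lemma~\ref{HeffG} (via the equality case of $G\ge H_{\rm eff}$) to reduce the Young part to $\delta_{(p_0,z_0)}$, and use a supporting functional of $H_{\rm eff}$ at $\bary(\beta_x)$ together with Lemmas~\ref{cothetaG} and~\ref{lemma01} to pin $\supp\beta_x$ on two rays. Your packaging --- first ``$g=h$ $\mu_1$-a.e.'', then ``$H_{\rm eff}=\ell_x$ on $\supp\beta_x$'' --- is a tidy reorganization of the fibrewise chain \eqref{342p}--\eqref{348p} in the paper, and Steps 1--2 and the first half of Step 3 are sound (including the exclusion of $\bary(\beta_x)=0$ and the resulting mutual absolute continuity of $\lambda$ and $(\mu_1^\infty)^s$).

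The step that fails as written is the last line of Step 3: ``matching barycenters shows $(\xi_\ast,\ol\theta)$ is a unit vector and $\alpha(x)\in[\frac12,1]$.'' This works only where $\xi_\ast\neq0$, since then the $\MD$-component of $\bary(\beta_x)=\alpha P+(1-\alpha)Q$, with $P,Q=(\xi_\ast,\pm\ol\theta)/|(\xi_\ast,\ol\theta)|$, forces $|(\xi_\ast,\ol\theta)|=1$. On the set where $\xi_\ast=0$ (i.e.\ $p_1^a=p_0$ but $z_1^a\neq z_0$, and the singular analogue $p_1^\lambda=0$) the $\xi$-component gives $0=0$ and the $\theta$-component gives only $\theta_\ast=(2\alpha-1)\,\mathrm{sgn}(\ol\theta)$, which determines neither $|\ol\theta|$ nor $\alpha$; without $|(\xi_\ast,\ol\theta)|=1$ the normalization in your final formula for the $\Ln$-part of $\hat\mu_1^\infty$ is off by the factor $|(\xi_\ast,\ol\theta)|$. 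To close this case you must feed in, once more, the fibre Jensen equality $\int_\Sigma H_{\rm eff}\,d\beta_x=H_{\rm eff}(\bary\beta_x)$ together with $H_{\rm eff}(0,\theta)=G^\infty(0,\theta)=(a_K-b_V)|\theta|$ from \eqref{H0zeta}; this yields $|\theta_\ast|=1$, hence $\alpha=1$, $\beta_x=\delta_{(0,\pm1)}$ and $z=z_1^a-z_0$ there --- exactly the computation the paper performs around \eqref{eq362} and, for the singular part, \eqref{star}. A second, smaller point: in Step 4 you assert that your constructed pair $(z,\alpha)$ ``is precisely the data that Lemma~\ref{cothetaG} attaches'' to $(p_1^a-p_0,z_1^a-z_0)$; this needs the (true but not free) uniqueness of the representation \eqref{eq090}--\eqref{eq09} under the constraints $z(z_1^a-z_0)\ge0$ and $\alpha\ge\frac12$ --- or you should state that you prove the conclusion for \emph{some} admissible $(z,\alpha)$, which is how the theorem is in fact invoked later. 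With these two repairs the argument is complete and coincides in substance with the paper's.
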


\begin{proof}
According to \cite[Remark~4.5]{DM-DeS-Mor-Mor-1} there exist $\lambda_1^\infty\in M_b^+(\ol\Om)$, 
a family $(\mu_1^{x,Y})_{x\in\Om}$ of probability measures on $\MD{\times}\R$, and
a family $(\mu_1^{x,\infty})_{x\in\ol\Om}$ of probability measures on 
$\Sigma:=\{(\xi,\theta)\in\MD{\times}\R: |\xi|^2+|\theta|^2=1\}$ such that
\begin{equation}\label{disint}
\begin{array}{c}
\displaystyle
\langle f,\mu_1\rangle =
\int_\Om \Big(\int_{\MD{\times}\R} 
f(x,\xi_1,\theta_1,1)\, d\mu_1^{x,Y}(\xi_1,\theta_1)\Big) dx
+ {}
\\
\displaystyle
{}+\int_{\ol\Om} \Big(\int_{\Sigma} f(x,\xi_1,\theta_1,0)\, 
d\mu_1^{x,\infty}(\xi_1,\theta_1)\Big) d\lambda_1^\infty(x)
\end{array}
\end{equation}
for every $f\in B^{hom}_{\infty,1}(\ol\Om{\times}\MD{\times}\R{\times}\R)$.
According to \cite[Remark~6.3]{DM-DeS-Mor-Mor-1}, we also have
$$
(p_1^a(x),z_1^a(x))= \int_{\MD{\times}\R}\!\!\! \!\!\! 
(\xi_1,\theta_1)\, d\mu_1^{x,Y}(\xi_1,\theta_1)+
\lambda_1^{\infty,a}(x)\int_\Sigma (\xi_1,\theta_1)\, d\mu_1^{x,\infty}(\xi_1,\theta_1)
$$
for $\Ln$-a.e.\ $x\in\Om$,
where $\lambda_1^{\infty,a}$ is the absolutely continuous part of $\lambda_1^\infty$. 
This implies
\begin{equation}\label{barac}
\begin{array}{c}
\displaystyle
(p_1^a(x)-p_0(x),z_1^a(x)-z_0(x))= \int_{\MD{\times}\R}\!\!\! \!\!\! 
 (\xi_1-p_0(x),\theta_1-z_0(x))\, d\mu_1^{x,Y}(\xi_1,\theta_1)+{}
 \\
 \displaystyle
{}+\lambda_1^{\infty,a}(x)\int_\Sigma (\xi_1,\theta_1)\, d\mu_1^{x,\infty}(\xi_1,\theta_1)\,.
\end{array}
\end{equation}
Let $\lambda_1^{\infty,s}$ be the singular part of $\lambda_1^\infty$.
By \cite[Remark~6.3]{DM-DeS-Mor-Mor-1}
we also have that $\lambda<<\lambda^{\infty,s}_1$ and
\begin{equation}\label{barsing}
(p_1^\lambda(x), z_1^\lambda(x))\frac{d\lambda}{d\lambda^{\infty,s}_1}(x)
=\int_\Sigma (\xi_1,\theta_1)\, d\mu_1^{x,\infty}(\xi_1,\theta_1)
\end{equation}
for $\lambda^{\infty,s}_1$-a.e.\ $x\in\ol\Om$.

Let $G^\infty$ be defined by \eqref{Ginfty}.
{}From \eqref{eq092} and \eqref{disint} we obtain 
\begin{equation}\label{bfor}
\begin{array}{c}
\displaystyle
\int_\Om H_{\rm eff}(p_1^a(x)-p_0(x),z_1^a(x)-z_0(x))\, dx +
 \int_{\ol\Om} H_{\rm eff}(p_1^\lambda(x),z_1^\lambda(x))\, d\lambda(x)
 =
\smallskip
\\
\displaystyle
= \int_\Om\!\!\Big(\!\int_{\MD{\times}\R} 
\!\!\!\!\big(H(\xi_1\!-\!p_0(x),\theta_1\!-\!z_0(x))+
V(\theta_1)-V(z_0(x))\big)\,d\mu_1^{x,Y}(\xi_1,\theta_1)\Big)dx +{}
\smallskip
\\
\displaystyle
{}+ \int_\Om \Big(\int_\Sigma G^\infty(\xi_1,\theta_1)
\,d\mu_1^{x,\infty}(\xi_1,\theta_1)\Big) 
\lambda_1^{\infty,a}(x)\, dx+{}
\smallskip
\\
\displaystyle
{}+ \int_{\ol\Om}\Big(\int_\Sigma G^\infty(\xi_1,\theta_1)
\,d\mu_1^{x,\infty}(\xi_1,\theta_1)\Big) 
d\lambda_1^{\infty,s}(x)\,.
\end{array}
\end{equation}
As $H_{\rm eff}={\rm co}\,G^\infty$, using the homogeneity of $H_{\rm eff}$ 
and the Jensen inequality, we
deduce from \eqref{barac} that for a.e.\ $x\in\Om$ we have
$$
\begin{array}{c}
\displaystyle
H_{\rm eff}(p_1^a(x)-p_0,z_1^a(x)-z_0(x))\leq
\int_{\MD{\times}\R} G^\infty(\xi_1-p_0(x),\theta_1-z_0(x))
\,d\mu_1^{x,Y}(\xi_1,\theta_1)+{}
\\
\displaystyle
{}+\lambda_1^{\infty,a}(x) \int_\Sigma G^\infty(\xi_1,\theta_1)
\,d\mu_1^{x,\infty}(\xi,\theta)\le
\\
\displaystyle
\le \int_{\MD{\times}\R}\big (H(\xi_1-p_0(x),\theta_1-z_0(x))+V(\theta_1)-V(z_0(x))\big)
\,d\mu_1^{x,Y}(\xi_1,\theta_1)+{}
\\
\displaystyle
{}+ \lambda_1^{\infty,a}(x) \int_\Sigma G^\infty(\xi_1,\theta_1)
\,d\mu_1^{x,\infty}(\xi_1,\theta_1)
\,,
\end{array}
$$
where the second inequality follows from \eqref{coF1}. Analogously, from \eqref{barsing}
we deduce that 
$$
H_{\rm eff}(p_1^\lambda(x), z_1^\lambda(x))\frac{d\lambda}{d\lambda^{\infty,s}_1}(x)
\le\int_\Sigma G^\infty(\xi_1,\theta_1)\, d\mu_1^{x,\infty}(\xi_1,\theta_1)
$$
for $\lambda_1^{\infty,s}$-a.e.\ $x\in \ol\Omega$.
Therefore, we deduce from \eqref{bfor} that
\begin{equation}\label{342p}
\begin{array}{c}
\displaystyle
\vphantom{ \int_{\MD{\times}\R}}
H_{\rm eff}(p_1^a(x)-p_0(x),z_1^a(x)-z_0(x))=
\\
\displaystyle
= \int_{\MD{\times}\R}\big (H(\xi_1-p_0(x),\theta_1-z_0(x))+V(\theta_1)-V(z_0(x))\big)
\,d\mu_1^{x,Y}(\xi_1,\theta_1)+{}
\\
\displaystyle
{}+ \lambda_1^{\infty,a}(x) \int_\Sigma G^\infty(\xi_1,\theta_1)
\,d\mu_1^{x,\infty}(\xi_1,\theta_1)
\end{array}
\end{equation}
for a.e.\ $x\in\Om$,
\begin{equation}\label{342psin}
H_{\rm eff}(p_1^\lambda(x), z_1^\lambda(x))
\frac{d\lambda}{d\lambda^{\infty,s}_1}(x)
=
\int_\Sigma G^\infty(\xi_1,\theta_1)\, d\mu_1^{x,\infty}(\xi_1,\theta_1)
\end{equation}
for $\lambda^{\infty,s}_1$-a.e.\ $x\in\ol\Om$.

Let us define
$A:=\{x\in\Om: p_1^a(x)=p_0(x),\, z_1^a(x)=z_0(x)\}$ and 
$A_\lambda:=\{x\in\ol\Om: p_1^\lambda(x)=0,\, z_1^\lambda(x)=0\}$.
By \eqref{342p} and \eqref{342psin} we have
\begin{eqnarray}
&\displaystyle
\hspace{-3em}\int_{\MD{\times}\R} 
\!\!\!\!\!\!\!\!\big(H(\xi_1\!-\!p_0(x),\theta_1\!-\!z_0(x))\!+\!
V(\theta_1)\!-\!V(z_0(x))\big)d\mu_1^{x,Y}(\xi_1,\theta_1)
=0
\ \text{for a.e.\ } x\in A,\label{339p}
\\
&\displaystyle
\lambda_1^{\infty,a}(x)
\int_\Sigma G^\infty(\xi_1,\theta_1)\,d\mu_1^{x,\infty}(\xi_1,\theta_1)=0
\quad \text{for a.e.\ } x\in A\,, \label{340p}
\\
&\displaystyle
\int_\Sigma G^\infty(\xi_1,\theta_1)\,d\mu_1^{x,\infty}(\xi_1,\theta_1)=0
\quad \text{for }\lambda^{\infty,s}_1\text{-a.e.\ } x\in A_\lambda\,. \label{340.5p}
\end{eqnarray}
By \eqref{gammaM} and \eqref{339p} $\mu_1^{x,Y}$ is concentrated on 
$(p_0(x),z_0(x))$,
hence
\begin{equation}\label{end1}
\mu_1^{x,Y}=\delta_{(p_0(x),z_0(x))} \quad \text{for a.e.\ } x\in A\,.
\end{equation}
Since $G^\infty$ is strictly positive on $\Sigma$ and $\mu_1^{x,\infty}$ are
probability measures, we deduce from \eqref{340p} and \eqref{340.5p} that
\begin{eqnarray}
& \label{end1.5}
\lambda_1^{\infty,a}(x)=0 \quad \text{for a.e.\ } x\in A\,,
\\
& \label{end4}
\lambda_1^{\infty,s}(A_\lambda)=0\,.
\end{eqnarray}

Let us consider now $\mu_1^{x,Y}$ for $x\in B:=\Om\setminus A$.
For every $x\in B$ let $L(x,\cdot,\cdot)\colon\MD{\times}\R\to \R$
be a linear function such that $L(x,p_1^a(x)-p_0(x),z_1^a(x)-z_0(x))=
H_{\rm eff}(p_1^a(x)-p_0(x),z_1^a(x)-z_0(x))$ and
$L(x,\xi,\theta)\leq H_{\rm eff}(\xi,\theta)$
for every $(\xi,\theta)\in \MD{\times}\R$.
Using \eqref{barac}, \eqref{342p}, and the
linearity of $L$, for a.e.\ $x\in B$ we obtain
\begin{equation}\label{346p}
\begin{array}{c}
\displaystyle
\int_{\MD{\times}\R} L(x,\xi_1-p_0(x),\theta_1-z_0(x))\,d\mu_1^{x,Y}(\xi_1,\theta_1)+{}
\\
\displaystyle
{}+\lambda_1^{\infty,a}(x) \int_\Sigma L(x,\xi_1,\theta_1)\,d\mu_1^{x,\infty}(\xi_1,\theta_1)=
\\
\displaystyle
= \int_{\MD{\times}\R}\big (H(\xi_1-p_0(x),\theta_1-z_0(x))+V(\theta_1)-V(z_0(x))\big)
\,d\mu_1^{x,Y}(\xi_1,\theta_1)+{}
\\
\displaystyle
{}+ \lambda_1^{\infty,a}(x) \int_\Sigma G^\infty(\xi_1,\theta_1)
\,d\mu_1^{x,\infty}(\xi_1,\theta_1)\,.
\end{array}
\end{equation}
Using \eqref{coF1} we find
$L(x,\xi_1-p_0(x),\theta_1-z_0(x))\leq 
H(\xi_1-p_0(x),\theta_1-z_0(x))+V(\theta_1)-V(z_0(x))$.
Therefore, equality \eqref{346p} implies that for a.e.\ $x\in B$ we have
\begin{equation}\label{347p}
L(x,\xi_1-p_0(x),\theta_1-z_0(x))= H(\xi_1-p_0(x),\theta_1-z_0(x))+
V(\theta_1)-V(z_0(x))
\end{equation}
for $\mu_1^{x,Y}$-a.e.\ $(\xi_1,\theta_1)\in \MD{\times}\R$, and
\begin{equation}\label{348p}
\lambda_1^{\infty,a}(x) L(x,\xi_1,\theta_1)=\lambda_1^{\infty,a}(x) G^\infty(\xi_1,\theta_1)  \end{equation}
for $\mu_1^{x,\infty}$-a.e.\ $(\xi_1,\theta_1)\in \Sigma$.
As $L(x,\xi_1-p_0(x),\theta_1-z_0(x))\leq H_{\rm eff}(\xi_1-p_0(x),\theta_1-z_0(x)) \leq H(\xi_1-p_0(x),\theta_1-z_0(x))+V(\theta_1)-V(z_0(x))$, we deduce 
from \eqref{347p} and Lemma~\ref{HeffG} that 
$(\xi_1,\theta_1)=(p_0(x),z_0(x))$ for $\mu_1^{x,Y}$-a.e.\ 
$(\xi_1,\theta_1)\in \MD{\times}\R$. This implies that 
$\mu_1^{x,Y}$ is concentrated on $(p_0(x),z_0(x))$. Since $\mu_1^{x,Y}$ is a probability measure, 
we conclude that 
\begin{equation}\label{end2}
\mu_1^{x,Y}=\delta_{(p_0(x),z_0(x))} \quad \text{for a.e.\ } x\in B\,.
\end{equation}

We now consider the measures $\mu_1^{x,\infty}$.
We first observe that $z(x)\neq0$ for a.e.\ $x\in B$ by \eqref{eq090}, \eqref{eq09},
and Lemma~\ref{lemma01}.
For every $x\in B$ we define 
$$
\varphi(x):=\sqrt{|p_1^a(x)-p_0(x)|^2+ z(x)^2}\quad \hbox{and} \quad
(\hat p(x), \hat z(x)):=(p_1^a(x)-p_0(x), z(x))/\varphi(x)\,.
$$
By \eqref{348p} and Lemma~\ref{lemma01} for a.e.\ $x\in B$
with $\lambda_1^{\infty,a}(x)\neq0$ we have
$$
(\xi_1,\theta_1)\in \{(\hat p(x),\hat z(x)), (\hat p(x),-\hat z(x))\}
\quad \text{for } \mu_1^{x,\infty}\text{-a.e.\ } (\xi_1,\theta_1)\in\Sigma\,,
$$
so that
\begin{equation}\label{muinfD}
\mu_1^{x,\infty}=\beta(x)\delta_{(\hat p(x),\hat z(x))}+(1-\beta(x))
\delta_{(\hat p(x),-\hat z(x))}
\end{equation}
for a suitable $\beta(x)\in[0,1]$.
Using \eqref{barac} we find that
\begin{equation}\label{pzbar}
p_1^a-p_0= \hat p\,\lambda_1^{\infty,a}\,, \qquad 
z_1^a-z_0=(2\beta-1)\hat z\,\lambda_1^{\infty,a}
\end{equation}
a.e.\ in $B$. 
Since $p_1^a-p_0= \hat p \varphi$, the first equality implies 
that 
\begin{equation}\label{end2.5}
\lambda_1^{\infty,a}=\varphi \quad \text{a.e.\ in }\{x\in B: \hat p(x)\neq0\}\,. 
\end{equation}
Since $z_1^a-z_0=(2\alpha-1)z=(2\alpha-1)\varphi\hat  z$ and $\hat z\neq0$, 
the second equality in \eqref{pzbar} implies that
$\alpha=\beta$ a.e.\ in $\{x\in B: \hat p(x)\neq0\}$. Therefore,
\begin{equation}\label{end3}
\mu_1^{x,\infty}=\alpha(x)\delta_{(\hat p(x),\hat z(x))}+
(1-\alpha(x))\delta_{(\hat p(x),-\hat z(x))} \quad \text{a.e.\ in } \{x\in B: \hat p(x)\neq0\}\,.
\end{equation}
As $H_{\rm eff}(0,\theta)=G^\infty(0,\theta)=(a_K-b_V)|\theta|$ for every $\theta\in \R$
by \eqref{H0zeta}, if $\hat p(x)=0$
we deduce from \eqref{eq09} that $z(x)=z_1^a(x)-z_0(x)$ and $\alpha(x)=1$. Then the second
equality in \eqref{pzbar} implies that 
\begin{equation}\label{eq362}
(2\beta(x)-1)\lambda^{\infty,a}_1(x)=|z(x)| \quad \text{a.e.\ in } \{x\in B: \hat p(x)=0\}\,.
\end{equation}
Therefore, from \eqref{342p} and \eqref{muinfD} we deduce that
$$
\begin{array}{c}
G^\infty(0,z(x))=H_{\rm eff}(0,z(x))=
\smallskip
\\
=
\lambda_1^{\infty,a}(x)\big(
\beta(x)G^\infty(0,\frac{z(x)}{|z(x)|})+(1-\beta(x))G^\infty(0,-\frac{z(x)}{|z(x)|})\big)=
\smallskip
\\
=
\frac{1}{2\beta(x)-1}G^\infty(0,z(x))\,,
\end{array}
$$
hence $\beta=\alpha=1$ a.e.\ in $\{x\in B: \hat p(x)=0\}$. By \eqref{eq362}
we have $\lambda_1^{\infty,a}=|z|=\varphi$ a.e.\ in $ \{x\in B: \hat p(x)=0\}$.
Using also \eqref{muinfD}, \eqref{end2.5}, and \eqref{end3}
we conclude that
\begin{equation}\label{end3t}
\begin{array}{c}
\mu_1^{x,\infty}=\alpha(x)\delta_{(\hat p(x),\hat z(x))}+
(1-\alpha(x))\delta_{(\hat p(x),-\hat z(x))} \quad \text{a.e.\ in } B\,,
\smallskip
\\
\lambda_1^{\infty,a}=\varphi\quad \text{a.e.\ in } B\,.
\end{array}
\end{equation}

Let us consider now the properties of the measure $\mu_1^{x,\infty}$
for $\lambda^{\infty,s}_1$-a.e.\ $x\in\ol\Om$. For every $x\in B_\lambda:=
\ol\Om\setminus A_\lambda$ we define 
$$
\varphi_\lambda(x):=\sqrt{|p_1^\lambda(x))|^2+ z_\lambda(x)^2}\quad \hbox{and} \quad
(\hat p_\lambda(x), \hat z_\lambda(x)):=(p_1^\lambda(x), z_\lambda(x))/
\varphi_\lambda(x)\,,
$$
and notice that 
$\varphi_\lambda(x)\geq \sqrt{|p_1^\lambda(x))|^2+ z_1^\lambda(x)^2}=1$.
As in the previous step we consider a linear function
$L_\lambda(x,\cdot,\cdot)\colon\MD{\times}\R\to \R$
such that $L_\lambda(x,p_1^\lambda(x),z_1^\lambda(x))=
H_{\rm eff}(p_1^\lambda(x),z_1^\lambda(x))$ and
$L_\lambda(x,\xi,\theta)\leq H_{\rm eff}(\xi,\theta)$
for every $(\xi,\theta)\in \MD{\times}\R$.
Using \eqref{barsing}, \eqref{342psin}, and the
linearity of $L_\lambda$, for $\lambda^{\infty,s}_1$-a.e.\ $x\in B_\lambda$ we obtain
\begin{equation}\label{346psin}
\int_\Sigma L_\lambda(x,\xi_1,\theta_1)\, d\mu_1^{x,\infty}(\xi_1,\theta_1)=
\int_\Sigma G^\infty(\xi_1,\theta_1)\, d\mu_1^{x,\infty}(\xi_1,\theta_1)\,.
\end{equation}
Since $L_\lambda(x,\xi,\theta)\leq G^\infty(\xi,\theta)$, for $\lambda^{\infty,s}_1$-a.e.\ $x\in B_\lambda$ we deduce that
$$
L_\lambda(x,\xi_1,\theta_1)=G^\infty(\xi_1,\theta_1)
$$
for $\mu_1^{x,\infty}$-a.e.\ $(\xi_1,\theta_1)\in\Sigma$.
By Lemma~\ref{lemma01} for $\lambda^{\infty,s}_1$-a.e.\ $x\in B_\lambda$
we have $z_\lambda(x)\neq0$ and
$$
(\xi_1,\theta_1)\in \{(\hat p_\lambda(x),\hat z_\lambda(x)), 
(\hat p_\lambda(x),-\hat z_\lambda(x))\}
\quad \text{for } \mu_1^{x,\infty}\text{-a.e.\ } (\xi_1,\theta_1)\in\Sigma\,,
$$
so that
\begin{equation}\label{muinfDS}
\mu_1^{x,\infty}=\beta_\lambda(x)\delta_{(\hat p_\lambda(x),\hat z_\lambda(x))}
+(1-\beta_\lambda(x))
\delta_{(\hat p_\lambda(x),-\hat z_\lambda(x))}
\end{equation}
for a suitable $\beta_\lambda(x)\in[0,1]$.
Using \eqref{barsing} we find that
\begin{equation}\label{pzbarsin}
p_1^\lambda\frac{d\lambda}{d\lambda^{\infty,s}_1}= \hat p_\lambda\,, \qquad 
z_1^\lambda\frac{d\lambda}{d\lambda^{\infty,s}_1}=(2\beta_\lambda-1)
\hat z_\lambda
\end{equation}
$\lambda^{\infty,s}_1$-a.e.\ in $B_\lambda$.
Since $p_1^\lambda= \hat p_\lambda \varphi_\lambda$, the first equality implies 
that 
\begin{equation}\label{end2.5sin}
\varphi_\lambda\frac{d\lambda}{d\lambda^{\infty,s}_1}=1 \quad 
\lambda^{\infty,s}_1\text{-a.e.\ in } \{x\in B_\lambda: \hat p_\lambda(x)\neq0\}\,. 
\end{equation}
Since $z_1^\lambda=(2\alpha_\lambda-1)z_\lambda=(2\alpha_\lambda-1)\varphi_\lambda\hat  z_\lambda$, 
the second equality in \eqref{pzbarsin} implies that
$\alpha_\lambda=\beta_\lambda$ $\lambda^{\infty,s}_1$-a.e.\ in $\{x\in B_\lambda: \hat p_\lambda(x)\neq0\}$. Therefore,
\begin{equation}\label{end3sin}
\begin{array}{c}
\mu_1^{x,\infty}=\alpha_\lambda(x)\delta_{(\hat p_\lambda(x),\hat z_\lambda(x))}+
(1-\alpha_\lambda(x))\delta_{(\hat p_\lambda(x),-\hat z_\lambda(x))}
\medskip
 \\
\lambda^{\infty,s}_1\text{-a.e.\ in } \{x\in B_\lambda: \hat p_\lambda(x)\neq0\}\,. 
\end{array}
\end{equation}
As $H_{\rm eff}(0,\theta)=G^\infty(0,\theta)$ for every $\theta\in \R$, 
if $\hat p_\lambda(x)=0$ we deduce from \eqref{eq09+} that $z_\lambda(x)=z_1^\lambda(x)$ and $\alpha_\lambda(x)=1$. Then the second
equality in \eqref{pzbarsin} implies that 
\begin{equation}\label{star}
2\beta_\lambda(x)-1=|z_\lambda(x)|\frac{d\lambda}{d\lambda^{\infty,s}_1}(x) \quad 
\lambda^{\infty,s}_1\text{-a.e.\ in } \{x\in B_\lambda: \hat p_\lambda(x)=0\}\,.
\end{equation}
Therefore, from \eqref{342psin} and \eqref{muinfDS} we deduce that
$$
\begin{array}{c}
G^\infty(0,z_\lambda(x))=H_{\rm eff}(0,z_\lambda(x))=
\smallskip
\\
=
\frac{|z_\lambda(x)|}{2\beta_\lambda(x)-1}\big(
\beta_\lambda(x)G^\infty(0,\frac{z_\lambda(x)}{|z_\lambda(x)|})+(1-\beta_\lambda(x))G^\infty(0,-\frac{z_\lambda(x)}{|z_\lambda(x)|})\big)=
\smallskip
\\
=
\frac{1}{2\beta_\lambda(x)-1}G^\infty(0,z_\lambda(x))\,,
\end{array}
$$
hence $\beta_\lambda=\alpha_\lambda=1$ $\lambda^{\infty,s}_1$-a.e.\ in $\{x\in B_\lambda: \hat p_\lambda(x)=0\}$. 
By \eqref{star} we have $\varphi_\lambda\frac{d\lambda}{d\lambda^{\infty,s}_1}=
|z_\lambda|\frac{d\lambda}{d\lambda^{\infty,s}_1}=1$
$\lambda^{\infty,s}_1$-a.e.\ in $\{x\in B_\lambda: \hat p_\lambda(x)=0\}$.
Using also \eqref{end4}, \eqref{muinfDS}, \eqref{end2.5sin}, and \eqref{end3sin}
we conclude that
\begin{equation}\label{end3sint}
\begin{array}{c}
\mu_1^{x,\infty}=\alpha_\lambda(x)\delta_{(\hat p_\lambda(x),\hat z_\lambda(x))}+
(1-\alpha_\lambda(x))\delta_{(\hat p_\lambda(x),-\hat z_\lambda(x))} \quad 
\lambda^{\infty,s}_1\text{-a.e.\ in } \ol\Om\,,
\smallskip
\\
\displaystyle\varphi_\lambda\frac{d\lambda}{d\lambda^{\infty,s}_1}=1 \quad 
\lambda^{\infty,s}_1\text{-a.e.\ in } \ol\Om\,.
\end{array}
\end{equation}
It follows that $\lambda^{\infty,s}_1<<\lambda$ and that
\begin{equation}\label{starstar}
\frac{d\lambda^{\infty,s}_1}{d\lambda}=
\varphi_\lambda \quad 
\lambda\text{-a.e.\ in } \ol\Om\,.
\end{equation}

The conclusion follows from \eqref{disint}, \eqref{end1}, \eqref{end1.5},
\eqref{end2}, \eqref{end3t}, \eqref{end3sint}, and \eqref{starstar},
using the homogeneity of $f$.
\end{proof}

To prove the next theorem we need two technical results.

\begin{lemma}\label{prop-prod}
Let $\Xi_1,\Xi_2$ be finite dimensional Hilbert spaces and let 
$\pi_i\colon\ol\Om{\times}\Xi_1{\times} \Xi_2{\times}\R\to\ol\Om{\times}\Xi_i{\times}\R$, 
$i=1,2$, be the projections defined by $\pi_i(x,\xi_1,\xi_2,\eta):=(x,\xi_i,\eta)$. 
Let $\mu\in GY(\ol\Om;\Xi_1{\times}\Xi_2)$ and let $p\in L^1(\Om;\Xi_1)$. 
Assume that $\pi_1(\mu)=\delta_p$ and let $\mu_2:=\pi_2(\mu)$. Then
\begin{equation}\label{prod}
\langle f(x,\xi_1,\xi_2,\eta),\mu(x,\xi_1,\xi_2,\eta)\rangle = 
\langle f(x,\eta p(x),\xi_2,\eta),\mu_2(x,\xi_2,\eta)\rangle
\end{equation}
for every $f\in B^{hom}_{\infty,1}(\ol\Om{\times}\Xi_1{\times} \Xi_2{\times}\R)$.
\end{lemma}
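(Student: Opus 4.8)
The plan is to follow the pattern of the proof of Lemma~\ref{trlem}: prove \eqref{prod} first for test functions $f$ that are Lipschitz in $(\xi_1,\xi_2,\eta)$ uniformly with respect to~$x$, and then extend to an arbitrary $f\in B^{hom}_{\infty,1}(\ol\Om{\times}\Xi_1{\times}\Xi_2{\times}\R)$ via the density result \cite[Lemma~2.4]{DM-DeS-Mor-Mor-1}. A preliminary reduction: since the integrand on the right-hand side of \eqref{prod} does not depend on $\xi_1$, the definition of the image measure $\mu_2=\pi_2(\mu)$ gives $\langle f(x,\eta p(x),\xi_2,\eta),\mu_2\rangle=\langle f(x,\eta p(x),\xi_2,\eta),\mu\rangle$; moreover, as is implicit in the statement, $(x,\xi_2,\eta)\mapsto f(x,\eta p(x),\xi_2,\eta)$ belongs to $B^{hom}_{\infty,1}(\ol\Om{\times}\Xi_2{\times}\R)$ thanks to the $1$-homogeneity and linear growth of $f$ together with $p\in L^1(\Om;\Xi_1)$. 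Hence it is enough to prove
\begin{equation*}
\langle f(x,\xi_1,\xi_2,\eta),\mu(x,\xi_1,\xi_2,\eta)\rangle=\langle f(x,\eta p(x),\xi_2,\eta),\mu(x,\xi_1,\xi_2,\eta)\rangle.
\end{equation*}
Conceptually this holds because the assumption $\pi_1(\mu)=\delta_p$ forces $\mu$ to be concentrated on $\{\xi_1=\eta p(x)\}$ (its oscillation part lives on $\{\xi_1=p(x)\}$ at $\eta=1$, its concentration part on $\{\xi_1=0\}$ at $\eta=0$), so replacing $\xi_1$ by $\eta p(x)$ leaves the value of $f$ unchanged $\mu$-almost everywhere.

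To make this rigorous for $f$ with Lipschitz constant $a$ in $(\xi_1,\xi_2,\eta)$, fix $\e>0$ and choose $p_\e\in C^0_0(\Om;\Xi_1)$ with $\|p_\e-p\|_1<\e$. Since, on the support of $\mu$ (where $\eta\ge0$),
\begin{equation*}
|f(x,\xi_1,\xi_2,\eta)-f(x,\eta p(x),\xi_2,\eta)|\le a|\xi_1-\eta p(x)|\le a|\xi_1-\eta p_\e(x)|+a\eta|p_\e(x)-p(x)|,
\end{equation*}
positivity of the pairing against $\mu$ gives
\begin{equation*}
\bigl|\langle f(x,\xi_1,\xi_2,\eta),\mu\rangle-\langle f(x,\eta p(x),\xi_2,\eta),\mu\rangle\bigr|\le\langle a|\xi_1-\eta p_\e(x)|,\mu\rangle+\langle a\eta|p_\e(x)-p(x)|,\mu\rangle.
\end{equation*}
By the projection property~(3.3) of \cite{DM-DeS-Mor-Mor-1} the second summand equals $a\|p_\e-p\|_1$. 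In the first summand the test function does not depend on $\xi_2$, so it equals $a\langle|\xi_1-\eta p_\e(x)|,\pi_1(\mu)\rangle=a\langle|\xi_1-\eta p_\e(x)|,\delta_p\rangle=a\int_\Om|p(x)-p_\e(x)|\,dx=a\|p-p_\e\|_1$. Hence the left-hand side is bounded by $2a\e$, and letting $\e\to0$ yields \eqref{prod} for Lipschitz~$f$.

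For general $f\in B^{hom}_{\infty,1}$ I would approximate $f$ by uniformly Lipschitz homogeneous functions as in \cite[Lemma~2.4]{DM-DeS-Mor-Mor-1}: the left-hand side of \eqref{prod} converges by the definition of this approximation, while the right-hand side converges because the substitution $\xi_1\mapsto\eta p(x)$ turns the approximating sequence into one for $(x,\xi_2,\eta)\mapsto f(x,\eta p(x),\xi_2,\eta)$ in $B^{hom}_{\infty,1}(\ol\Om{\times}\Xi_2{\times}\R)$, which may then be paired with the fixed generalized Young measure $\mu_2$. I expect this limiting step to be the only point requiring care — one has to check that both the substitution and the pairing with $\mu_2$ behave continuously along the approximating sequence — since the Lipschitz case itself is a direct transcription of the argument for Lemma~\ref{trlem}.
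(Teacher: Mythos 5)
Your proof is correct and follows essentially the same route as the paper: reduce the right-hand side to a pairing with $\mu$ via the definition of the image measure, reduce to $f$ uniformly Lipschitz in $(\xi_1,\xi_2,\eta)$ by the density argument of \cite[Lemma~2.4]{DM-DeS-Mor-Mor-1}, and then exploit $\pi_1(\mu)=\delta_p$ to kill the error term $\langle|\xi_1-\eta p(x)|,\mu\rangle$. The only difference is that your detour through $p_\e\in C^0_0$ is not needed here (unlike in Lemma~\ref{trlem}, where the measures vary): since $\mu$ is fixed, the paper pairs directly with $|\eta p(x)-\xi_1|$, which depends only on $(x,\xi_1,\eta)$, and concludes $\langle|\eta p(x)-\xi_1|,\delta_p\rangle=0$ in one step.
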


\begin{proof}
Using \cite[Definition~3.16]{DM-DeS-Mor-Mor-1} and standard arguments in measure theory, it is enough to prove
\eqref{prod} for every $f\in C^{hom}(\ol\Om{\times}\Xi_1{\times} \Xi_2{\times}\R)$.

By the definition of $\mu_2$ we have
$$
\langle f(x,\eta p(x),\xi_2,\eta),\mu_2(x,\xi_2,\eta)\rangle
=\langle f(x,\eta p(x),\xi_2,\eta),\mu(x,\xi_1,\xi_2,\eta)\rangle\,.
$$
Therefore, to prove (\ref{prod}) it is enough to show that
$$
\langle f(x,\xi_1,\xi_2,\eta) - f(x,\eta p(x),\xi_2,\eta),\mu(x,\xi_1,\xi_2,\eta)\rangle=0\,.
$$
By approximation it suffices to prove this equality when $f$ is Lipschitz continuous 
with respect to $\xi_1,\xi_2,\eta$ with a constant $L$ independent of $x$ 
(see \cite[Lemma~2.4]{DM-DeS-Mor-Mor-1}). 
In this case we have
$$
|\langle f(x,\xi_1,\xi_2,\eta) - f(x,\eta p(x),\xi_2,\eta),\mu(x,\xi_1,\xi_2,\eta)\rangle|\le
L\langle |\eta p(x)-\xi_1|,\mu(x,\xi_1,\xi_2,\eta)\rangle\,.
$$
As $\pi_1(\mu)=\delta_p$, we have
$$
\langle |\eta p(x)-\xi_1|,\mu(x,\xi_1,\xi_2,\eta)\rangle=
\langle |\eta p(x)-\xi_1|,\delta_p(x,\xi_1,\eta)\rangle=0\,,
$$
which concludes the proof.
\end{proof}

\begin{corollary}\label{cor-prod}
Let $\Xi_1$, $\Xi_2$, $\pi_1$, $\pi_2$, $\mu$, and $p$ be as in 
Lemma~\ref{prop-prod}, and let $\mu_1:=\pi_1(\mu)$ and $\mu_2:=\pi_2(\mu)$. Assume that $\ol\mu_1^Y=\delta_p$. Then
$$
\langle f(x,\xi_1,\xi_2,\eta),\ol\mu^Y(x,\xi_1,\xi_2,\eta)\rangle = 
\langle f(x,\eta p(x),\xi_2,\eta),\ol\mu_2^Y(x,\xi_2,\eta)\rangle
$$
for every $f\in B^{hom}_{\infty,1}(\ol\Om{\times}\Xi_1{\times} \Xi_2{\times}\R)$.
\end{corollary}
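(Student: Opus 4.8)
The plan is to obtain the corollary from Lemma~\ref{prop-prod} applied not to $\mu$ itself but to its Young part $\ol\mu^Y$, after checking that the decomposition $\mu=\ol\mu^Y+\hat\mu^\infty$ of \cite[Theorem~4.3]{DM-DeS-Mor-Mor-1} is compatible with the projections $\pi_1,\pi_2$. Note that this detour is necessary: the hypothesis here is only $\ol\mu_1^Y=\delta_p$, which is strictly weaker than the hypothesis $\pi_1(\mu)=\delta_p$ of Lemma~\ref{prop-prod}, so the lemma cannot be invoked for $\mu$ directly.

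First I would establish the compatibility. The map $\pi_i$ is positively homogeneous of degree one in the fibre variables and sends the locus $\{\eta=0\}$ into itself; hence $\pi_i(\ol\mu^Y)$ is again a generalized Young measure with vanishing concentration part (its fibres are the push-forwards of the fibres of $\ol\mu^Y$ under the coordinate projection $\Xi_1{\times}\Xi_2\to\Xi_i$, so no mass escapes to infinity), while $\pi_i(\hat\mu^\infty)$ is supported on $\{\eta=0\}$ and is therefore a concentration measure. From $\mu_i=\pi_i(\mu)=\pi_i(\ol\mu^Y)+\pi_i(\hat\mu^\infty)$ and the uniqueness of the decomposition in \cite[Theorem~4.3]{DM-DeS-Mor-Mor-1} it follows that $\pi_i(\ol\mu^Y)=\ol\mu_i^Y$ for $i=1,2$. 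In particular $\ol\mu^Y\in GY(\ol\Om;\Xi_1{\times}\Xi_2)$, $\pi_2(\ol\mu^Y)=\ol\mu_2^Y$, and, by the hypothesis $\ol\mu_1^Y=\delta_p$, also $\pi_1(\ol\mu^Y)=\delta_p$.

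It then suffices to invoke Lemma~\ref{prop-prod} with $\mu$ replaced by $\ol\mu^Y$ and $\mu_2$ by $\pi_2(\ol\mu^Y)=\ol\mu_2^Y$: since $\pi_1(\ol\mu^Y)=\delta_p$, formula \eqref{prod} yields
$$
\langle f(x,\xi_1,\xi_2,\eta),\ol\mu^Y(x,\xi_1,\xi_2,\eta)\rangle=
\langle f(x,\eta p(x),\xi_2,\eta),\ol\mu_2^Y(x,\xi_2,\eta)\rangle
$$
for every $f\in B^{hom}_{\infty,1}(\ol\Om{\times}\Xi_1{\times}\Xi_2{\times}\R)$, which is precisely the claim.

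I expect the only genuinely delicate point to be the commutation of $\pi_i$ with the canonical splitting into Young and concentration parts. Should the abstract uniqueness argument feel too terse, I would instead argue directly on the disintegration \eqref{disint}: the hypothesis $\ol\mu_1^Y=\delta_p$ means that, for a.e.\ $x$, the Young fibre $\mu^{x,Y}$ is carried by $\{\xi_1=p(x)\}$; one then repeats verbatim the estimate in the proof of Lemma~\ref{prop-prod}, now with $\ol\mu^Y$ in place of $\mu$, using that a degree-one homogeneous $f$ vanishes at the origin so that the part of $\mu^{x,\infty}$ projecting onto $0\in\Xi_i$ is irrelevant. This reproduces the identity above and concludes the argument.
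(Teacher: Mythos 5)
Your proposal is correct and takes essentially the same approach as the paper: apply Lemma~\ref{prop-prod} to $\ol\mu^Y$ after observing that $\pi_i(\ol\mu^Y)=\ol\mu_i^Y$. The only difference is that the paper outsources this commutation of the Young/concentration decomposition with the projections to \cite[Lemma~4.8]{DM-DeS-Mor-Mor-1}, while you re-derive it from the uniqueness of the decomposition (correctly noting that mass of $\hat\mu^\infty$ projecting onto the origin of $\Xi_i$ is invisible to degree-one homogeneous test functions).
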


\begin{proof}
It is enough to apply Lemma~\ref{prop-prod} to $\ol\mu^Y$, using  \cite[Lemma~4.8]{DM-DeS-Mor-Mor-1}.
\end{proof}

\begin{theorem}\label{lmstruc}
Let $\muu\in SGY(\{t_0,t_1\},\ol\Om; \MD{\times}\R)$, let $(p_0,z_0):=\bary(\muu_{t_0})$, and
let $( p_1, z_1):=\bary(\muu_{t_1})$.
Assume that 
\begin{equation}\label{eq092dP}
\begin{array}{c}
\langle  H(\xi_1-\xi_0, \theta_1-\theta_0) + \{V\}(\theta_1,\eta)
- \{V\}(\theta_0,\eta),\muu_{t_0t_1}(x,\xi_0,\theta_0,\xi_1,\theta_1,\eta)\rangle=
\smallskip
\\
=\HH_{\rm eff}( p_1-p_0, z_1-z_0)
\end{array}
\end{equation}
and that
\begin{equation}\label{eq092d+P}
\ol\muu_{t_0}^Y=\delta_{(\ol p_0,\ol z_0)}
\end{equation}
with $\ol p_0\in L^1(\Om;\MD)$ and $\ol z_0\in L^1(\Om)$.
Then $\ol\muu_{t_1}^Y=\delta_{(\ol p_0,\ol z_0)}$.
\end{theorem}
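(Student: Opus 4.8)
The plan is to show that the minimality built into \eqref{eq092dP} is an equality case of a barely–holding chain of inequalities, and that reading off this equality on the ``Young part'' of $\muu_{t_0t_1}$ forces the regular part of its $t_1$--slice to coincide with that of its $t_0$--slice; the transfer between the two slices will be effected through Corollary~\ref{cor-prod}.

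First I would set
\[
\Psi(\xi_0,\theta_0,\xi_1,\theta_1,\eta):=H(\xi_1-\xi_0,\theta_1-\theta_0)+\{V\}(\theta_1,\eta)-\{V\}(\theta_0,\eta)
\]
and record the pointwise bounds
\[
\Psi(\xi_0,\theta_0,\xi_1,\theta_1,\eta)\ \ge\ G^\infty(\xi_1-\xi_0,\theta_1-\theta_0)\ \ge\ H_{\rm eff}(\xi_1-\xi_0,\theta_1-\theta_0)
\]
on $\ol\Om{\times}(\MD{\times}\R)^2{\times}\R$. The second inequality is $G^\infty\ge{\rm co}\,G^\infty=H_{\rm eff}$ (see \eqref{Heff}); the first one amounts, by \eqref{Ginfty}, to $\{V\}(\theta_1,\eta)-\{V\}(\theta_0,\eta)\ge V^\infty(\theta_1-\theta_0)$, which for $\eta>0$ follows after rescaling from $|V'|\le b_V$ (a consequence of \eqref{2der} and \eqref{Vsymm20}) and for $\eta\le0$ is the reverse triangle inequality, cf.\ \eqref{Vinf}. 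I would also note that for $\eta=1$ one has $\Psi=H(\xi_1-\xi_0,\theta_1-\theta_0)+V(\theta_1)-V(\theta_0)$.

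Next, pairing this chain with $\muu_{t_0t_1}$ and invoking the Jensen inequality for generalized Young measures exactly as in the proof of Theorem~\ref{lm:eqmin}, namely $\langle H_{\rm eff}(\xi_1-\xi_0,\theta_1-\theta_0),\muu_{t_0t_1}\rangle\ge\HH_{\rm eff}(p_1-p_0,z_1-z_0)$ (see \cite[Theorem~6.5]{DM-DeS-Mor-Mor-1}), one gets
\[
\HH_{\rm eff}(p_1-p_0,z_1-z_0)\ \le\ \langle H_{\rm eff}(\xi_1-\xi_0,\theta_1-\theta_0),\muu_{t_0t_1}\rangle\ \le\ \langle\Psi,\muu_{t_0t_1}\rangle\,,
\]
while $\langle\Psi,\muu_{t_0t_1}\rangle=\HH_{\rm eff}(p_1-p_0,z_1-z_0)$ by \eqref{eq092dP}. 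Hence all inequalities are equalities, so, the continuous positively homogeneous functions $\Psi-G^\infty(\xi_1-\xi_0,\theta_1-\theta_0)$ and $G^\infty(\xi_1-\xi_0,\theta_1-\theta_0)-H_{\rm eff}(\xi_1-\xi_0,\theta_1-\theta_0)$ being nonnegative, they vanish $\muu_{t_0t_1}$--almost everywhere; in particular, writing $\muu_{t_0t_1}=\ol\muu_{t_0t_1}^Y+\hat\muu_{t_0t_1}^\infty$ as in \cite[Theorem~4.3]{DM-DeS-Mor-Mor-1}, their pairings with $\ol\muu_{t_0t_1}^Y$ vanish as well.

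Finally I would transfer this information to $\muu_{t_1}$. Since $\ol\muu_{t_0}^Y=\delta_{(\ol p_0,\ol z_0)}$ by \eqref{eq092d+P}, with $(\ol p_0,\ol z_0)\in L^1(\Om;\MD{\times}\R)$, Corollary~\ref{cor-prod} applied to $\mu=\muu_{t_0t_1}$, with first Hilbert factor the $t_0$--copy of $\MD{\times}\R$ (so that $\mu_1=\muu_{t_0}$, $\mu_2=\muu_{t_1}$) and $p=(\ol p_0,\ol z_0)$, yields
\[
\langle g(x,\xi_0,\theta_0,\xi_1,\theta_1,\eta),\ol\muu_{t_0t_1}^Y\rangle=\langle g(x,\eta\ol p_0(x),\eta\ol z_0(x),\xi_1,\theta_1,\eta),\ol\muu_{t_1}^Y\rangle
\]
for every $g\in B^{hom}_{\infty,1}$. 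Choosing $g=\Psi-G^\infty(\xi_1-\xi_0,\theta_1-\theta_0)$ and, afterwards, $g=G^\infty(\xi_1-\xi_0,\theta_1-\theta_0)-H_{\rm eff}(\xi_1-\xi_0,\theta_1-\theta_0)$, and recalling that $\ol\muu_{t_1}^Y$ acts (as in \cite[Remark~4.5]{DM-DeS-Mor-Mor-1}) through probability measures $\muu_{t_1}^{x,Y}$ on $\MD{\times}\R$ at $\eta=1$, I would obtain that for $\Ln$--a.e.\ $x$ and $\muu_{t_1}^{x,Y}$--a.e.\ $(\xi_1,\theta_1)$,
\[
H(\xi_1-\ol p_0(x),\theta_1-\ol z_0(x))+V(\theta_1)-V(\ol z_0(x))=G^\infty(\xi_1-\ol p_0(x),\theta_1-\ol z_0(x))\,,
\]
that is $V(\theta_1)-V(\ol z_0(x))=V^\infty(\theta_1-\ol z_0(x))$ by \eqref{Ginfty}, which forces $\theta_1=\ol z_0(x)$ by \eqref{VinftyV}; and then $G^\infty(\xi_1-\ol p_0(x),0)=H_{\rm eff}(\xi_1-\ol p_0(x),0)$, which forces $\xi_1=\ol p_0(x)$ by \eqref{He=Ginf}. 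Thus $\muu_{t_1}^{x,Y}=\delta_{(\ol p_0(x),\ol z_0(x))}$ for a.e.\ $x$, i.e.\ $\ol\muu_{t_1}^Y=\delta_{(\ol p_0,\ol z_0)}$. The main obstacle I expect is exactly this last bookkeeping: arranging the chain so that the extremality in \eqref{eq092dP}, combined with Corollary~\ref{cor-prod}, descends to pointwise identities that pin down the Young part of $\muu_{t_1}$; the remaining ingredients—the elementary bounds on $V$, Lemma~\ref{HeffG} through \eqref{He=Ginf}, and Jensen's inequality—are routine.
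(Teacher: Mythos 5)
Your argument is correct, and it reaches the conclusion by the same underlying mechanism as the paper --- extremality in \eqref{eq092dP} forces the Young part of the $t_1$-slice to sit at the unique contact point --- but the bookkeeping is organized quite differently. The paper first pushes $\muu_{t_0t_1}$ forward to difference coordinates, obtaining a measure $\nu$ with barycentre $(p_1-p_0,z_1-z_0)$; it applies Corollary~\ref{cor-prod} only to the $\{V\}$-increment on the Young part and concavity on the infinity part, upgrades the resulting inequality to an equality by Jensen, invokes \cite[Lemma~6.7]{DM-DeS-Mor-Mor-1} to localize $\supp\,\ol\nu^Y$ at the origin, and finally transfers back to $\ol\muu_{t_1}^Y$ through \eqref{yy23}. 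You instead keep the two slices together: you observe that the two nonnegative gaps $\Psi-G^\infty$ and $G^\infty-H_{\rm eff}$ have vanishing pairing with the (positive) Young part of $\muu_{t_0t_1}$, push this through Corollary~\ref{cor-prod} in one stroke, and read off pointwise identities on the disintegration $\muu_{t_1}^{x,Y}$, which \eqref{VinftyV} and \eqref{He=Ginf} then collapse to a Dirac mass. This is shorter and avoids both the difference-coordinate measure and the external support lemma; what it costs is only the (easily checked) admissibility of the gap functions as elements of $B^{hom}_{\infty,1}$ for Corollary~\ref{cor-prod} --- the same class in which the paper already pairs $\{V\}$ with these measures --- and the remark that the two exceptional $\muu_{t_1}^{x,Y}$-null sets can be intersected before applying \eqref{He=Ginf}. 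Both of these you handle correctly, so I see no gap.
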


\begin{proof}
If $p_0=\ol p_0\in L^1(\Om;\MD)$, $z_0=\ol z_0\in L^1(\Om)$, and $\muu_{t_0}=\delta_{(p_0,z_0)}$, then \eqref{eq092dP} implies \eqref{eq092} 
by Lemma~\ref{prop-prod} with $\mu_1:=\muu_{t_1}$, and the conclusion follows from
Theorem~\ref{thm311}. 

We consider now the general case. 
Let $\phi(x,\xi_0,\theta_0,\xi_1,\theta_1,\eta):=(x,\xi_0,\theta_0, \xi_1-\xi_0,\theta_1-\theta_0,\eta)$ and 
let $\pi(x,\xi_0,\theta_0,\tilde\xi,\tilde\theta,\eta):=
(x, \tilde\xi, \tilde\theta,\eta)$. 
We define $\nu:=(\pi\circ\phi)(\muu_{t_0t_1})$ and observe that
\begin{equation}\label{bnutd}
\bary(\nu)=(p_1 -p_0, z_1 -z_0)\,.
\end{equation}
By (\ref{eq092dP}) we have
\begin{equation}\label{eq9111d}
\langle H(\tilde\xi, \tilde \theta)+\{V\}(\theta_0+\tilde\theta,\eta)-\{V\}(\theta_0,\eta), \phi(\muu_{t_0t_1}) 
\rangle = 
\HH_{\rm eff}(p_1 -p_0, z_1 -z_0)\,,
\end{equation}
where the measure $\phi(\muu_{t_0t_1})$ acts on the variables 
$(x,\xi_0,\theta_0,\tilde\xi,\tilde\theta,\eta)$.
Moreover, since $\nu=\pi(\phi(\muu_{t_0t_1}))$, we have that 
\begin{equation}\label{Hpp1d}
\langle H(\tilde\xi, \tilde \theta),\phi(\muu_{t_0t_1}) \rangle = 
\langle H(\tilde\xi, \tilde \theta), \nu \rangle\,,
\end{equation}
where the measure $\nu$ acts on the variables $(x,\tilde\xi,\tilde\theta,\eta)$.
We consider the decomposition
\begin{eqnarray}
&\nonumber
\langle \{V\}(\theta_0+\tilde\theta,\eta)-
\{V\}(\theta_0,\eta), \phi(\muu_{t_0t_1}) \rangle = 
\langle \{V\}(\theta_0+\tilde\theta,\eta)-
\{V\}(\theta_0,\eta), \ol{\phi(\muu_{t_0t_1})}{}^Y \rangle+{}
\smallskip 
\\
\label{Vpp1d}
&{}+ \langle \{V\}(\theta_0+\tilde\theta,\eta)-
\{V\}(\theta_0,\eta), \widehat{\phi(\muu_{t_0t_1})}{}^\infty \rangle
\end{eqnarray}
given by \cite[Theorem~4.3]{DM-DeS-Mor-Mor-1}.
As $\muu_{t_0}$ is the image of the measure $\phi(\muu_{t_0t_1})$ under the map 
$(x,\xi_0,\theta_0,\tilde\xi,\tilde\theta,\eta)
\mapsto(x,\xi_0,\theta_0,\eta)$, 
by \eqref{eq092d+P} we can apply Corollary~\ref{cor-prod} and we obtain
\begin{equation}\label{Vpp2d}
\langle \{V\}(\theta_0+\tilde\theta,\eta)-
\{V\}(\theta_0,\eta), \ol{\phi(\muu_{t_0t_1})}{}^Y \rangle=
\langle \{V\}(\eta \ol z_0(x)+\tilde\theta,\eta)-
\{V\}(\eta \ol z_0(x),\eta), \ol \nu^Y\rangle\,.
\end{equation}
Since by concavity $\{V\}(\theta_0+\tilde\theta,\eta)
-\{V\}(\theta_0,\eta)\ge V^\infty(\tilde\theta)$, we have
\begin{equation}\label{Vpp3d}
\begin{array}{c}
\langle \{V\}(\theta_0+\tilde\theta,\eta)-
\{V\}(\theta_0,\eta), \widehat{\phi(\muu_{t_0t_1})}{}^\infty \rangle \ge
\langle V^\infty(\tilde\theta),  \widehat{\phi(\muu_{t_0t_1})}{}^\infty \rangle = 
\smallskip
\\
=\langle V^\infty(\tilde\theta),\pi(\widehat{\phi(\muu_{t_0t_1})}{}^\infty)\rangle
= \langle V^\infty(\tilde\theta),\hat\nu^\infty\rangle =
\smallskip
\\
=\langle \{V\}(\eta \ol z_0(x)+\tilde\theta,\eta)-
\{V\}(\eta \ol z_0(x),\eta), \hat\nu^\infty \rangle
\,,
\end{array}
\end{equation}
where the second equality follows from \cite[Lemma 4.8]{DM-DeS-Mor-Mor-1},
taking into account that $\nu=\pi(\phi(\muu_{t_0t_1}))$.
By (\ref{eq9111d})--(\ref{Vpp3d}) we obtain
$$
\HH_{\rm eff}(p_1 -p_0, z_1 -z_0)\ge
\langle H(\tilde\xi, \tilde \theta)+\{V\}(\eta \ol z_0(x)+\tilde\theta,\eta)
-\{V\}(\eta \ol z_0(x),\eta), \nu\rangle\,.
$$
By the Jensen inequality for generalized Young measures
\cite[Theorem~6.5]{DM-DeS-Mor-Mor-1} we deduce from \eqref{bnutd} that
\begin{equation}\label{Vpp4d}
\HH_{\rm eff}(p_1 -p_0, z_1 -z_0) =
\langle H(\tilde\xi, \tilde \theta)+\{V\}(\eta \ol z_0(x)+\tilde\theta,\eta)
-\{V\}(\eta \ol z_0(x),\eta), \nu\rangle\,.
\end{equation}

Let us fix $x\in\Om$ and let $G\colon \MD{\times}\R{\times}\R\to\R$ be
the function defined by
$$
G(\tilde\xi,\tilde\theta,\eta):=
H(\tilde\xi, \tilde \theta)+\{V\}(\eta \ol z_0(x)+\tilde\theta,\eta)-\{V\}(\eta \ol z_0(x),\eta)\,.
$$
It follows from (\ref{coF3}) and \eqref{Heff} that the function 
$H_{\rm eff}(\tilde\xi,\tilde\theta)$ is the convex envelope of $G(\tilde\xi,\tilde\theta,\eta)$
with respect to $(\tilde\xi,\tilde\theta,\eta)$. 
Moreover, by (\ref{He=G}) we deduce that for every $\eta>0$ 
the equality
$$
H_{\rm eff}(\tilde\xi,\tilde\theta)=H(\tilde\xi, \tilde \theta)+\{V\}(\eta \ol z_0(x)+\tilde\theta,\eta)-\{V\}(\eta \ol z_0(x),\eta)
$$ 
holds if and only if $(\tilde\xi,\tilde\theta)=(0,0)$. 
Therefore \eqref{bnutd}, (\ref{Vpp4d}), and
\cite[Lemma~6.7]{DM-DeS-Mor-Mor-1} imply that
$$
\supp\,\nu\subset\{(x,0,0,\eta):x\in\ol\Om\,,\
\eta\ge 0\}\cup(\ol\Om{\times}\MD{\times}\R{\times}\{0\})\,,
$$
and, in particular,
$$
\supp\,\ol\nu^Y\subset\{(x,0,0,\eta): x\in\ol\Om\,,\ \eta\ge 0\}\,,
$$
hence $\ol\nu^Y=\delta_{(0,0)}$. 

{}From the definitions of $\nu$ and from
 \cite[Lemma~4.8]{DM-DeS-Mor-Mor-1}
it follows that
\begin{equation}\label{yy23}
\begin{array}{c}\displaystyle
\langle f,\delta_{(\ol p_0,\ol z_0)}\rangle =
\int_\Om f(x,\ol p_0(x),\ol z_0(x),1)\, dx 
= \langle f(x,\tilde\xi+\eta \ol p_0(x),\tilde\theta+\eta \ol z_0(x),\eta), \ol\nu^Y\rangle =
\smallskip \\
=
\langle f(x,\xi_1 -\xi_0+\eta \ol p_0(x), \theta_1 -\theta_0+\eta \ol z_0(x),\eta), \ol{\muu}{}^Y_{t_0t_1}\rangle
\end{array}
\end{equation}
for every 
$f\in C^{hom}(\ol\Om{\times}\MD{\times}\R{\times}\R)$.
By \eqref{eq092d+P} we can apply Corollary 
\ref{cor-prod} and we obtain that the last term in the previous formula equals
$\langle f(x,\xi_1, \theta_1 ,\eta), \ol{\muu}{}^Y_{t_1}\rangle$.
Therefore, $\ol{\muu}{}^Y_{t_1}=\delta_{(\ol p_0,\ol z_0)}$.
\end{proof}
\end{section}

\begin{section}{Globally stable quasistatic evolution for Young measures}\label{sec:4}

\subsection{Definitions and main result}
We begin with the definition of the set of {\it admissible triples} in the Young measure formulation, with boundary datum $\ww$ on $\Gamma_0$.

\begin{definition}\label{newD}
Given a set $\Theta\subset\R$ and a map $\ww\colon \Theta\to H^1(\Om;\Rn)$, 
we define $AY(\Theta,\ww)$ as the set of all triples $(\uu,\ee,\muu)$ with 
$\uu\colon\Theta\to BD(\Om)$, $\ee\colon\Theta\to L^2(\Om;\Mnn)$, 
$\muu\in SGY(\Theta, \ol\Om;\MD{\times}\R)$,
with the following property: for every finite sequence $t_1,\dots,t_m$ in $\Theta$, 
with $t_1<\dots<t_m$, and every $i=1,\dots,m$
there exist a sequence $(u_k^i,e_k^i,p_k^i)\in A(\ww(t_i))$ and a sequence 
$z_k^i\in M_b(\ol\Om)$ such that 
\begin{eqnarray*}
& u_k^i\wto \uu(t_i) \quad \text{weakly}^* \text{ in } BD(\Om)\,,
\\
& e_k^i\to \ee(t_i) \quad \text{strongly in } L^2(\Om;\Mnn)\,,
\end{eqnarray*}
and
\begin{equation}\label{apprmeas}
\delta_{((p^1_k,z^1_k),\dots, (p^m_k,z^m_k))} \wto
\muu_{t_1\dots t_m} \quad \text{weakly}^* \text{ in } 
GY(\ol\Om;(\MD{\times}\R)^m)\,.
\end{equation}
\end{definition}

\begin{remark}\label{rmk:bound}
Since the weak$^*$ convergence in $GY(\ol\Om;(\MD{\times}\R)^m)$ implies
the convergence of the norms $\|\cdot\|_*$ (see
\cite[Remark~3.12]{DM-DeS-Mor-Mor-1}), it is not restrictive to assume that
$$
\|\eki \|_2\le \|\ee(t_i)\|_2+1\,,\quad
\|(\pki,\zki)\|_*\le \|\muu_{t_i}\|_*+1
$$
for every $i$ and $k$.
As $(\uki,\eki,\pki)\in A(\ww(t_i))$, 
there exists a constant $C_i$, depending only on $\ww$, $t_i$, $ \|\ee(t_i)\|_2$,
$\|\muu_{t_i}\|_*$, such that
$$
\|\uki\|_1+\|E\uki\|_1\le C_i
$$
for every $k$.
\end{remark}

\begin{remark}\label{rmk:barAYn}
It follows from \cite[Remark~6.4]{DM-DeS-Mor-Mor-1} that
$$
\begin{array}{c}
\pki \wto \pp(t_i) \quad \text{weakly}^* \text{ in } M_b(\ol\Om;\MD)\,,
\smallskip
\\
\zki \wto \zz(t_i) \quad \text{weakly}^* \text{ in } M_b(\ol\Om)\,,
\end{array}
$$
where $(\pp(t_i),\zz(t_i)):=\bary(\muu_{t_i})$.
As $(\uki,\eki,\pki)\in A(\ww(t_i))$,
by \cite[Lemma~2.1]{DM-DeS-Mor} we conclude that 
\begin{equation}\label{baryti}
(\uu(t_i),\ee(t_i),\pp(t_i))\in A(\ww(t_i))\,.
\end{equation}
\end{remark}

\begin{remark}\label{rmk:closAY}
The inequalities proved in Remark~\ref{rmk:bound} allow to use the metrizability
of the weak$^*$ topology on bounded subsets of the dual of a separable Banach space
and to prove that the set $AY(\Theta,\ww)$ satisfies
the following closure property: 
if $\uu\colon\Theta\to BD(\Om)$, $\ee\colon\Theta\to L^2(\Om;\Mnn)$,
$\muu\in SGY(\Theta, \ol\Om;\MD{\times}\R)$, and
$(\uu_k,\ee_k,\muu_k)$ is a sequence in $AY(\Theta,\ww)$ 
such that
\begin{eqnarray}
& \uu_k(t)\wto \uu(t) \quad \text{weakly}^* \text{ in } BD(\Om)\,,
\label{eqrmk1}
\\
& \ee_k(t)\to \ee(t) \quad \text{strongly in } L^2(\Om;\Mnn)
\label{eqrmk2}
\end{eqnarray}
for every $t\in\Theta$, and 
\begin{equation}\label{eqrmk3}
(\muu_k)_{t_1\dots t_m} \wto
\muu_{t_1\dots t_m} \quad \text{weakly}^* \text{ in } 
GY(\ol\Om;(\MD{\times}\R)^m)
\end{equation}
for every finite sequence $t_1,\dots,t_m$ in $\Theta$, 
with $t_1<\dots<t_m$, then
$(\uu,\ee,\muu)\in AY(\Theta,\ww)$.

More in general, if $\uu\colon\Theta\to BD(\Om)$, $\ee\colon\Theta\to L^2(\Om;\Mnn)$,
$\muu\in SGY(\Theta, \ol\Om;\MD{\times}\R)$, and
$(\uu_k,\ee_k,\muu_k)$ is a sequence in $AY(\Theta,\ww_k)$  
such that \eqref{eqrmk1}--\eqref{eqrmk3} hold and $\ww_k(t)\to \ww(t)$
strongly in $H^1(\Om;\Rn)$
for every $t\in\Theta$, then $(\uu,\ee,\muu)\in AY(\Theta,\ww)$.
This follows from the closure property, observing that $(\uu_k-\ww_k+\ww,\ee_k-E\ww_k+E\ww,\muu_k)$
belongs to $AY(\Theta,\ww)$.
\end{remark}

\begin{remark}\label{rmk:primo}
Using Theorem~\ref{thm:d-app}, Lemma~\ref{lm:comp}, and \cite[Theorem~3]{Res} 
(see also \cite[Appendix]{Luc-Mod}), it is easy to see that the definition does not change
if we replace $A(\ww(t_i))$ by $A_{reg}(\ww(t_i))$.
\end{remark}

Given $\muu\in SGY([0,+\infty),\ol\Om; \MD{\times}\R)$, its dissipation $\D_{\!H}(\muu;a,b)$ on the time interval $[a,b]\subset [0,+\infty)$ is defined as 
\begin{equation}\label{nudiss}
\sup \sum_{i=1}^k 
\langle H(\xi_i-\xi_{i-1},\theta_i-\theta_{i-1}), 
\muu_{t_0t_1\dots t_k}(x,\xi_0,\theta_0,\dots,\xi_k,\theta_k,\eta) \rangle\,,
\end{equation}
where the supremum is taken over all finite families $t_0,t_1,\dots,t_k$ such that 
$a=t_0<t_1<\dots<t_k=b$.
As in the case of the variation ${\rm Var}(\muu;a,b)$ considered in 
\cite[Section 8]{DM-DeS-Mor-Mor-1}, we have
\begin{equation}\label{nudiss2}
\D_{\!H}(\muu;a,b) = 
\sup \sum_{i=1}^k \langle H(\xi_i-\xi_{i-1},\theta_i-\theta_{i-1}),
\muu_{t_{i-1}t_i}(x,\xi_{i-1},\theta_{i-1},\xi_i,\theta_i,\eta)\rangle\,,
\end{equation}
where the supremum is taken over all finite families $t_0,t_1,\dots,t_k$ such that 
$a=t_0<t_1<\dots<t_k=b$.

In the following definition we use the homogeneous function $\{V\}$ defined by \eqref{Vhom} and the notion of weakly$^*$ left-continuous system of generalized Young measures introduced in \cite[Definition~7.6]{DM-DeS-Mor-Mor-1}.

\begin{definition}\label{maindef}
Given $\ww\in AC_{loc}([0,+\infty);H^1(\Om;\Rn))$, a {\it globally stable quasistatic evolution of Young measures\/} with boundary datum $\ww$ is a triple 
$(\uu,\ee,\muu)\in AY([0,+\infty), \ww)$, 
with $\uu$, $\ee$, $\muu$ weakly$^*$ left-continuous,
such that the following conditions are satisfied:
\begin{itemize}
\smallskip
\item[(ev1)] {\it global stability:\/} for every $t\in[0,+\infty)$
we have
\begin{eqnarray}
& \QQ(\ee(t))+\langle \{V\}(\theta,\eta), \muu_t(x, \xi,\theta,\eta)\rangle
\le \nonumber \\
& \le \QQ(\ee(t)+\tilde e) + \HH(\tilde p,\tilde z) +
\langle \{V\}(\theta+\eta\, \tilde z(x),\eta), 
\muu_t(x, \xi, \theta,\eta)\rangle \nonumber
\end{eqnarray}
for every $(\tilde u,\tilde e,\tilde p)\in A_{reg}(0)$ and every $\tilde z\in L^1(\Om)$;
\smallskip
\item[(ev2)] {\it energy balance:\/} for every $T\in (0,+\infty)$ we have
${\rm Var}(\muu;0,T)<+\infty$ and
\begin{eqnarray}
& \QQ(\ee(T)) + \D_{\!H}(\muu;0,T) +
 \langle \{V\}(\theta,\eta), \muu_T(x, \xi, \theta, \eta)\rangle
= \nonumber \\
& \displaystyle
 = \QQ(\ee(0)) + \langle \{V\}(\theta,\eta), \muu_0(x, \xi, \theta, \eta)\rangle +
\int_0^T \langle\sigmaa(t), E\dot \ww(t)\rangle\, dt\,,\nonumber
\end{eqnarray}
where $\sigmaa(t):=\C \ee(t)$.
\end{itemize}
\end{definition}

We are now in a position to state the main theorem of the paper.

\begin{theorem}\label{young-main} 
Let $\ww\in AC_{loc}([0+\infty);H^1(\Om;\Rn))$, $(u_0,e_0,p_0)\in A(\ww(0))$, and 
$z_0\in  M_b(\ol\Om)$. Assume that
\begin{equation}\label{initstab}
\QQ(e_0)+ \V(z_0) \le
 \QQ(e_0+\tilde e)+\HH(\tilde p, \tilde z)
+\V(z_0+\tilde z)
\end{equation}
for every $(\tilde u,\tilde e,\tilde p)\in A_{reg}(0)$ and every $\tilde z\in  L^1(\Om)$.
Then there exists a globally stable quasistatic evolution of Young measures 
$(\uu,\ee,\muu)$ with boundary datum $\ww$ such that $\uu(0)=u_0$,
$\ee(0)=e_0$, and $\muu_0=\delta_{(p_0,z_0)}$.
\end{theorem}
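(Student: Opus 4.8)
The plan is to construct discrete-in-time solutions by iterating the relaxed incremental minimum problem of Section~\ref{sec:6}, and then to pass to the limit as the time step vanishes, keeping throughout the measures in the structured class handled by Theorems~\ref{thm311} and~\ref{lmstruc}. \emph{Discretization.} Fix $T>0$ and, for each $k$, a subdivision $0=t_k^0<t_k^1<\dots<t_k^k=T$ of vanishing fineness. Set $u_k^0:=u_0$, $e_k^0:=e_0$, $\muu_k^0:=\delta_{(p_0,z_0)}$, so that $\ol{(\muu_k^0)}{}^Y=\delta_{(p_0^a,z_0^a)}$. Assuming inductively that $(u_k^{i-1},e_k^{i-1},p_k^{i-1})\in A(\ww(t_k^{i-1}))$ (with $(p_k^{i-1},z_k^{i-1}):=\bary(\muu_k^{i-1})$) and $\ol{(\muu_k^{i-1})}{}^Y=\delta_{(p_0^a,z_0^a)}$, I apply Theorem~\ref{lm:eqmin} with $w_0=\ww(t_k^{i-1})$, $\tilde w=\ww(t_k^i)-\ww(t_k^{i-1})$, $\mu_0=\muu_k^{i-1}$: the minimum in \eqref{fmin1} is attained, by lower semicontinuity of $\QQ$ and $\HH_{\rm eff}$ and the coercivity from \eqref{boundsC} and \eqref{kefint} (the term $\langle\{V\}(\theta_0,\eta),\mu_0\rangle$ being constant). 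Taking a minimizing sequence $(\tilde u_n,\tilde e_n,\tilde p_n,\tilde z_n)$ for \eqref{finf1}, whose value equals the minimum of \eqref{fmin3}, one checks that the two-time translates $\T^1_{(\tilde p_n,\tilde z_n)}(\muu_k^{i-1})$ are bounded in $SGY(\{t_k^{i-1},t_k^i\},\ol\Om;\MD{\times}\R)$ and that a weak$^*$ limit $\muu_k^{(i)}$ is a minimizer of \eqref{fmin3}; strict convexity of $\QQ$ gives $e_k^{i-1}+\tilde e_n\to e_k^i$ strongly and $u_k^{i-1}+\tilde u_n\wto u_k^i$ in $BD(\Om)$, with $(u_k^i,e_k^i,p_k^i)\in A(\ww(t_k^i))$ by \cite[Lemma~2.1]{DM-DeS-Mor}. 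Setting $\muu_k^i:=(\muu_k^{(i)})_{t_k^i}=\lim_n\T_{(\tilde p_n,\tilde z_n)}(\muu_k^{i-1})$, minimality forces equality in the Jensen inequalities used in the proof of Theorem~\ref{lm:eqmin}, i.e.\ \eqref{eq092dP}, so Theorem~\ref{lmstruc} yields $\ol{(\muu_k^i)}{}^Y=\delta_{(p_0^a,z_0^a)}$ and the induction closes; finally I glue $\muu_k^{(1)},\dots,\muu_k^{(k)}$ into a single $\muu_k\in SGY(\{t_k^0,\dots,t_k^k\},\ol\Om;\MD{\times}\R)$ with $(\muu_k)_{t_k^{i-1}t_k^i}=\muu_k^{(i)}$.

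\emph{Discrete stability, a priori bounds, compactness.} Comparing the minimizer at step $i$ with the competitor $(\ww(t_k^i)-\ww(t_k^{i-1}),E(\ww(t_k^i)-\ww(t_k^{i-1})),0,0)$ and summing over $i$, a telescoping estimate using the absolute continuity of $\ww$ gives, for the left-continuous step interpolant $e_k(\cdot)$ and every $N\le k$,
\[
\QQ(e_k^N)+\sum_{i=1}^N\langle H(\xi_i-\xi_{i-1},\theta_i-\theta_{i-1}),(\muu_k)_{t_k^{i-1}t_k^i}\rangle+\langle\{V\}(\theta,\eta),\muu_k^N\rangle\le\QQ(e_0)+\V(z_0)+\int_0^{t_k^N}\!\langle\C e_k(s),E\dot\ww(s)\rangle\,ds+\rho_k,
\]
with $\rho_k\to0$. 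By a Gr\"onwall argument and \eqref{boundsC} this bounds $\|e_k^i\|_2$ uniformly, hence by \eqref{nudiss2}, \eqref{boundsH}, \eqref{H0zeta} it bounds the dissipation, $\|\muu_k^i\|_*$, ${\rm Var}(\muu_k;0,T)$, and (through $(u_k^i,e_k^i,p_k^i)\in A(\ww(t_k^i))$) $\|u_k^i\|_{BD}$, all uniformly in $k$. On the other hand, perturbing the minimizing sequence for \eqref{finf1} by an arbitrary $(\hat u,\hat e,\hat p)\in A_{reg}(0)$, $\hat z\in L^1(\Om)$, using the triangle inequality \eqref{triangle}, the identity $\T_{(\tilde p+\hat p,\tilde z+\hat z)}=\T_{(\hat p,\hat z)}\circ\T_{(\tilde p,\tilde z)}$, Lemma~\ref{trlem}, and cancelling the finite quantity $\HH(\tilde p_n,\tilde z_n)$, yields the discrete stability
\[
\QQ(e_k^i)+\langle\{V\}(\theta,\eta),\muu_k^i\rangle\le\QQ(e_k^i+\hat e)+\HH(\hat p,\hat z)+\langle\{V\}(\theta+\eta\hat z(x),\eta),\muu_k^i\rangle.
\]
By the compactness (Helly-type selection) and lower semicontinuity results for systems of generalized Young measures of bounded variation \cite[Section~8]{DM-DeS-Mor-Mor-1}, a diagonal argument over $T\to+\infty$ and a countable dense set of times produces a subsequence and a weakly$^*$ left-continuous $\muu\in SGY([0,+\infty),\ol\Om;\MD{\times}\R)$ with $\muu_0=\delta_{(p_0,z_0)}$, ${\rm Var}(\muu;0,T)<+\infty$, and functions $\uu(t)$, $\ee(t)$ with $u_k(t)\wto\uu(t)$ weakly$^*$ in $BD(\Om)$, $e_k(t)\wto\ee(t)$ weakly in $L^2(\Om;\Mnn)$, and $(\muu_k)_{s_1\dots s_m}\wto\muu_{s_1\dots s_m}$ weakly$^*$ for all $s_1<\dots<s_m$. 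By Remarks~\ref{rmk:closAY} and~\ref{rmk:primo}, $(\uu,\ee,\muu)\in AY([0,+\infty),\ww)$, and $\uu$, $\ee$ may be taken weakly$^*$ left-continuous.

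\emph{Passage to the limit in (ev1) and (ev2).} For (ev1) at a fixed $t$, I write the discrete stability at the partition point $t_k^{i(k)}\to t$, subtract $\QQ(e_k^{i(k)})$, use $\QQ(e_k^{i(k)}+\hat e)-\QQ(e_k^{i(k)})=\langle\C e_k^{i(k)},\hat e\rangle+\QQ(\hat e)$, and pass to the limit: $\C e_k^{i(k)}\wto\C\ee(t)$, while the two $\{V\}$-pairings converge by Lemma~\ref{trlem} and the continuity of $\langle\{V\},\cdot\rangle$ on $GY$; this gives exactly (ev1) — note that no strong convergence of $e_k$ is needed, since $\QQ(e_k^{i(k)})$ cancels — and general $t$ follows by weak$^*$ left-continuity. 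For (ev2), the inequality $\le$ is obtained by passing to the limit in the displayed discrete energy inequality, using weak lower semicontinuity of $\QQ$, lower semicontinuity of $\D_{\!H}$ and of $\langle\{V\},\cdot\rangle$ along $\muu_k$ \cite[Section~8]{DM-DeS-Mor-Mor-1}, and $\int_0^T\langle\C e_k(s),E\dot\ww(s)\rangle\,ds\to\int_0^T\langle\sigmaa(s),E\dot\ww(s)\rangle\,ds$ by dominated convergence. The opposite inequality follows from (ev1) by the classical Riemann-sum argument of the theory of rate-independent processes: for a partition $0=s_0<\dots<s_N=T$ one tests (ev1) at each $s_{j-1}$ with a competitor built from the state at $s_j$, the $M_b$-valued increment of the plastic strain being replaced, via Theorem~\ref{thm:d-app} and Lemma~\ref{lm:comp}, by $L^1$ functions with convergent total variations so that Reshetnyak's continuity theorem \cite[Theorem~3]{Res} applies, and then sums, refines, and invokes the definition \eqref{nudiss2} of $\D_{\!H}$ and the absolute continuity of $\ww$. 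Combining the two inequalities gives (ev2); applying it on $[0,\e]$ and letting $\e\to0$, together with the initial stability \eqref{initstab}, rules out a jump at $t=0$, so $\uu(0)=u_0$, $\ee(0)=e_0$, $\muu_0=\delta_{(p_0,z_0)}$.

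\emph{Main obstacle.} The crux is keeping the discrete and limit Young measures in the structured form for which Theorems~\ref{thm311} and~\ref{lmstruc} apply, so that $\ol{\muu_t}{}^Y$ stays frozen at $\delta_{(p_0^a,z_0^a)}$: this is what makes the discrete stability available in the unrelaxed form (with $\HH$, via the translation argument above) and what lets the lower energy inequality be closed. The accompanying technical burden — realizing the incremental solutions as a consistent multi-time system of generalized Young measures, the Helly compactness and lower semicontinuity of $\D_{\!H}$ from \cite[Section~8]{DM-DeS-Mor-Mor-1}, and the construction of admissible $L^1$ competitors approximating the measure-valued increments in the lower energy estimate — is where the proof is heaviest.
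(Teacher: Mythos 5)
Your overall architecture coincides with the paper's: relaxed incremental minimization in spaces of generalized Young measures, Theorem~\ref{lmstruc} to keep $\ol{(\muu_k^i)}{}^Y$ frozen at $\delta_{(p_0^a,z_0^a)}$, the Helly compactness of \cite[Section~8]{DM-DeS-Mor-Mor-1}, and passage to the limit in the discrete stability and in the upper energy estimate. Those parts are sound, modulo one technical point you only gesture at: the incremental step must produce the full multi-time joint measures $\nuu_{t_k^0\dots t_k^i}$ (not just the consecutive pairs you construct), since both $\D_{\!H}$ and the Helly theorem act on the whole compatible system; the paper builds this into the very definition of the admissible class $A_k^i$ via the translation maps $\T^i_{(\tilde p,\tilde z)}$ applied to the previous joint measure.

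The genuine gap is the lower energy inequality in (ev2). You propose to obtain it from (ev1) by the classical Riemann-sum argument, ``testing (ev1) at $s_{j-1}$ with a competitor built from the state at $s_j$''. But (ev1) only admits competitors of the form $\T_{(\tilde p,\tilde z)}(\muu_{s_{j-1}})$ with $(\tilde p,\tilde z)$ integrable functions, and $\muu_{s_j}$ is in general \emph{not} such a translate of $\muu_{s_{j-1}}$. Concretely: (a) the term $\langle\{V\}(\theta+\eta\tilde z(x),\eta),\muu_{s_{j-1}}\rangle$ on the right of (ev1) does not telescope against $\langle\{V\}(\theta,\eta),\muu_{s_j}\rangle$ on the left of the next step; (b) to make $\HH(\tilde p_n,\tilde z_n)+\langle\{V\}(\theta+\eta\tilde z_n,\eta)-\{V\}(\theta,\eta),\muu_{s_{j-1}}\rangle$ converge to $\HH_{\rm eff}(\pp(s_j)-\pp(s_{j-1}),\zz(s_j)-\zz(s_{j-1}))$ you need not only Reshetnyak but the relaxation identity for the density $F(x,\tilde\xi,\tilde\theta)=\int[H(\tilde\xi,\tilde\theta)+V(\theta+\tilde\theta)-V(\theta)]\,d\mu_{s_{j-1}}^{x,Y}$, i.e.\ knowledge of the Young part $\ol\muu_{s_{j-1}}^Y$ of the \emph{limit} measure, which at this stage is not available (weak$^*$ limits of the $\muu_k$ need not have frozen Young part; this is only proved in Theorem~\ref{thm4.18}); and (c) even granting this, the resulting sum $\sum_j\HH_{\rm eff}(\Delta\pp_j,\Delta\zz_j)$ must still be dominated by $\D_{\!H}(\muu;0,T)+\langle\{V\},\muu_T\rangle-\langle\{V\},\muu_0\rangle$ via the Jensen chain \eqref{diss300}, which you never invoke. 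The paper sidesteps all three difficulties by proving at the discrete level a strictly stronger minimality property with respect to arbitrary Young-measure competitors (Lemmas~\ref{secondproblem} and~\ref{min100}); its limit \eqref{secpb12} can be tested directly with $\muu_{\hat t}$ itself, so that both the dissipation and the $\{V\}$ terms telescope exactly. To close your argument you need either that stronger discrete stability, or the detour through Theorem~\ref{thm:keff} (showing the barycentre is a quasistatic evolution for $\xi\mapsto H_{\rm eff}(\xi,0)$ and combining its energy balance with \eqref{diss300}); as written, neither is present.
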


\subsection{The incremental minimum problems}
The proof of Theorem \ref{young-main} will be obtained by time discretization, using an implicit Euler scheme.
Let us fix a sequence of subdivisions $(\tki)^{}_{i\geq 0}$ of the half-line $[0,+\infty)$, with
\begin{eqnarray}
& 0=t_k^0<t_k^1<\dots<t_k^{i-1}<t_k^{i}\to+\infty\quad\hbox{as } i\to\infty\,,
\label{subdiv5}\\
&\displaystyle
\tau_k:=\sup_i (\tki-\tkim)\to 0\quad\hbox{as } k\to\infty\,.
\label{fine5}
\end{eqnarray}
For every $k$ let $w_k^i:=\ww(t_k^i)$ for $i\ge 0$ and let
$\tilde w_k^i:=\ww(t_k^i)-\ww(t_k^{i\!-\!1})$ for $i\ge 1$. 

We define $u^i_k\in BD(\Om)$, $e^i_k\in L^2(\Om;\Mnn)$, and 
$\muu^i_k\in SGY(\{{t^0_k,\dots, t^i_k}\},{\ol\Om }; \MD{\times}\R)$ by induction on $i$. 
We set $u^0_k:=u_0$, $e^0_k:=e_0$, $\muu^0_k:=\delta_{(p_0,z_0)}$, 
and for $i\ge 1$ we define $(u^i_k,e^i_k,\muu^i_k)$ as a minimizer 
(see Lemma~\ref{existinc} below)
of the functional
\begin{equation}\label{functinc}
\QQ(e)+\langle H(\xi_i -\xi_{i\!-\!1}, \theta_i -\theta_{i\!-\!1})+ \{V\}(\theta_i,\eta), 
\nuu_{t^{i\!-\!1}_k t^i_k}(x, \xi_{i\!-\!1}, \theta_{i\!-\!1}, \xi_i,\theta_i,\eta)\rangle 
\end{equation}
over the set $A^i_k$ of all triplets $(u,e,\nuu)$ with
$u\in BD(\Om)$, $e\in L^2(\Om;\Mnn)$, and
$\nuu\in SGY(\{{t^0_k,\dots,t^i_k}\},\ol\Om;\MD{\times}\R)$,  
with the following property:
there exist a sequence\break 
$(\tilde u_m,\tilde e_m,\tilde p_m)\in A_{reg}(\tilde
w^i_k)$ and
a sequence $\tilde z_m\in L^1(\Om)$ such that
\begin{eqnarray*}
& u_k^{i\!-\!1}+\tilde u_m \wto u \quad \text{weakly}^* \text{ in } BD(\Om)\,,
\\
& e_k^{i\!-\!1}+\tilde e_m \to e \quad \text{strongly in } L^2(\Om;\Mnn)\,,
\\
&
\T^i_{(\tilde p_m, \tilde z_m)}((\muu_k^{i\!-\!1})_{t_k^0\dots t_k^{i\!-\!1}}) \wto \nuu_{t_k^0\dots t_k^i} \quad \text{weakly}^* \text{ in } GY(\ol\Om; (\MD{\times}\R)^{i\!+\!1})\,,
\end{eqnarray*}
where $\T^i_{(\tilde p,\tilde z)}\colon \ol\Om{\times}(\MD{\times}\R)^i{\times}\R\to
\ol\Om{\times}(\MD{\times}\R)^{i+1}{\times}\R$ is defined by
$$
\T^i_{(\tilde p,\tilde z)}(x,\xi_0,\theta_0,\dots,\xi_{i\!-\!1}, \theta_{i\!-\!1},\eta)
:=(x,\xi_0,\theta_0,\dots,\xi_{i\!-\!1}, \theta_{i\!-\!1},\xi_{i\!-\!1}+\eta
\tilde p(x), \theta_{i\!-\!1}+\eta \tilde z(x),\eta)\,.
$$

We note that if $(u,e,\nuu)\in A^i_k$, then
\begin{eqnarray}
&\nuu_{t^0_k\dots t^{i\!-\!1}_k}=(\muu^{i\!-\!1}_k)_{t^0_k\dots t^{i\!-\!1}_k}\,,
\label{compatib2}
\\
&(u,e,p)\in A(\wki)\,,
\label{AY}
\end{eqnarray}
where $(p,z):=\bary(\nuu_{\tki})$. Then we define
$(p^i_k,z^i_k):=\bary((\muu^i_k)_{t^i_k})$. 

\begin{remark}\label{rmk:rmk}
The following equalities hold:
\begin{eqnarray*}
&\displaystyle 
\inf_{(u, e,\nuu)\in A_k^i} \big[
\QQ(e)+\langle H(\xi_i -\xi_{i\!-\!1}, \theta_i -\theta_{i\!-\!1})+ \{V\}(\theta_i,\eta), 
\nuu_{t^{i\!-\!1}_k t^i_k}\rangle 
\big]=
\\
&\displaystyle
=\!\!\!\!\!\!\!\!\inf_{\genfrac{}{}{0pt}2{\scriptstyle (\tilde u,\tilde e,\tilde p)\in 
A_{reg}(\tilde w_k^i)}
{\scriptstyle \tilde z\in L^1(\Om)}}
\!\!\big[\QQ(e_k^{i\!-\!1}+\tilde e) 
+\langle H(\xi_i-\xi_{i\!-\!1},\theta_i-\theta_{i\!-\!1})+\{V\}(\theta_i,\eta),
 \T^i_{(\tilde p,\tilde z)}
((\muu_k^{i\!-\!1})_{t^0_k\dots t^{i\!-\!1}_k}) \rangle
\big]\!=
\\
&\displaystyle
\!\!\!\!\!\!\!\!=\inf_{\genfrac{}{}{0pt}2{\scriptstyle (\tilde u,\tilde e,\tilde p)\in 
A_{reg}(\tilde w_k^i)}
{\scriptstyle \tilde z\in L^1(\Om)}}
\big[\QQ(e_k^{i\!-\!1}+\tilde e) + \HH(\tilde p,\tilde z)
+\langle \{V\}(\theta_i,\eta), \T_{(\tilde p,\tilde z)}
((\muu_k^{i\!-\!1})_{t^{i\!-\!1}_k}) \rangle
\big]=
\\
&\displaystyle
=\inf_{(u, e, \nuu)\in B_k^i} \big[
\QQ(e)+\langle H(\xi_i -\xi_{i\!-\!1}, \theta_i -\theta_{i\!-\!1})+ \{V\}(\theta_i,\eta),  
\nuu_{t^{i\!-\!1}_k t^i_k}\rangle
\big]\,,
\end{eqnarray*}
where $B_k^i$ is the class of all triplets $(u, e,\nuu)$, with $u\in BD(\Om)$, $e\in L^2(\Om;\Mnn)$, 
$\nuu\in SGY(\{t_k^0,\dots,\tki \},\ol\Om;\MD{\times}\R)$,
such that $\nuu_{t_k^0\dots t^{i\!-\!1}_k}=(\muu^{i\!-\!1}_k)_{t^0_k\dots t^{i\!-\!1}_k}$ and 
$(u,e, p)\in A(w_k^i)$, where 
$(p,z):=\bary(\nuu_{t_k^i})$.
The first two equalities follow from the definition of $A_k^i$ and the continuity properties of the functional \eqref{functinc}. On the other hand the infimum in the last line is greater than or equal to the infimum in the previous line by Theorem~\ref{lm:eqmin}, and is less than or equal to the infimum in the first line, since $A_k^i\subset B_k^i$ by 
\eqref{compatib2} and~\eqref{AY}.
\end{remark}

The existence of a minimizer $(u^i_k,e^i_k,\muu^i_k)$
to \eqref{functinc} is guaranteed by the following lemma.

\begin{lemma}\label{existinc}
For every $i$ the functional (\ref{functinc}) has a minimizer on $A^i_k$, every minimizer $(u^i_k, e^i_k, \muu^i_k)$ satisfies $\ol{(\muu_k^i)}_{t_k^i}^Y=\delta_{(p_0^a,z_0^a)}$,
and
\begin{equation}\label{eqbmin}
\QQ(e^i_k)+\HH_{\rm eff}(p^i_k -p^{i\!-\!1}_k, z^i_k -z^{i\!-\!1}_k) \le
\QQ(e)+\HH_{\rm eff}(p -p^{i\!-\!1}_k, z -z^{i\!-\!1}_k) 
\end{equation}
for every $(u,e,p)\in A(\ww(t^i_k))$ and every $z\in M_b(\ol\Om)$. 
\end{lemma}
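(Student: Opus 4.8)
The plan is to argue by induction on $i$, the base case $i=0$ being immediate: $\muu^0_k=\delta_{(p_0,z_0)}$, so $\ol{(\muu^0_k)}^Y_{t^0_k}=\delta_{(p^a_0,z^a_0)}$ with $p^a_0\in L^1(\Om;\MD)$ and $z^a_0\in L^1(\Om)$. Assume the statement up to index $i-1$; in particular $\ol{(\muu^{i-1}_k)}^Y_{t^{i-1}_k}=\delta_{(p^a_0,z^a_0)}$, so Theorem~\ref{lm:eqmin} applies with $\mu_0=(\muu^{i-1}_k)_{t^{i-1}_k}$. Combined with the chain of equalities in Remark~\ref{rmk:rmk}, this identifies the infimum $m^i_k$ of \eqref{functinc} over $A^i_k$ with $\min_{(\tilde u,\tilde e,\tilde p)\in A(\tilde w^i_k),\,\tilde z\in M_b(\ol\Om)}\big[\QQ(e^{i-1}_k+\tilde e)+\HH_{\rm eff}(\tilde p,\tilde z)\big]+c$, where $c:=\langle\{V\}(\theta_0,\eta),(\muu^{i-1}_k)_{t^{i-1}_k}\rangle$; this is a genuine $\min$ (it is the minimum in \eqref{fmin1}, and attainment also follows directly from the coercivity granted by \eqref{boundsC} and \eqref{kefint}, the weak$^*$ closedness of $A(\tilde w^i_k)$, and the lower semicontinuity of $\QQ$ and $\HH_{\rm eff}$).

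For existence, I would take a minimizer $(\tilde u^*,\tilde e^*,\tilde p^*,\tilde z^*)$ of this effective problem and feed it into the recovery part of Theorem~\ref{thm:1app} with $e_0:=e^{i-1}_k$ and $z_0:=z^a_0$, obtaining $(\tilde u_m,\tilde e_m,\tilde p_m)\in A_{reg}(\tilde w^i_k)$ and $\tilde z_m\in L^1(\Om)$ with $\tilde e_m\to\tilde e^*$ strongly in $L^2$, $\tilde u_m\wto\tilde u^*$ weakly$^*$ in $BD(\Om)$, $\tilde p_m\wto\tilde p^*$, $\tilde z_m\wto\tilde z^*$ weakly$^*$, and $\limsup_m[\QQ(e^{i-1}_k+\tilde e_m)+\HH(\tilde p_m,\tilde z_m)+\V(z^a_0+\tilde z_m)-\V(z^a_0)]\le\QQ(e^{i-1}_k+\tilde e^*)+\HH_{\rm eff}(\tilde p^*,\tilde z^*)$. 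Rewriting $\V(z^a_0+\tilde z_m)-\V(z^a_0)$ as $\langle\{V\}(\theta_1,\eta),\T_{(\tilde p_m,\tilde z_m)}((\muu^{i-1}_k)_{t^{i-1}_k})\rangle-c$ by the identity \eqref{eqVV1}, and using again Remark~\ref{rmk:rmk}, the competitors $(u^{i-1}_k+\tilde u_m,\,e^{i-1}_k+\tilde e_m,\,\T^i_{(\tilde p_m,\tilde z_m)}((\muu^{i-1}_k)_{t^0_k\dots t^{i-1}_k}))$ lie in $A^i_k$ and their values of \eqref{functinc} satisfy $\limsup_m[\cdot]\le m^i_k$. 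Passing to a subsequence — legitimate by the bounds of Remark~\ref{rmk:bound} and the metrizability used in Remark~\ref{rmk:closAY} — so that the triples converge in weak$^*$ $BD(\Om)$, strongly in $L^2$ to $e^{i-1}_k+\tilde e^*$, and weakly$^*$ in $GY(\ol\Om;(\MD{\times}\R)^{i+1})$, the limit $(u^i_k,e^i_k,\muu^i_k)$ belongs to $A^i_k$ by definition, and the lower semicontinuity of \eqref{functinc} forces its value to equal $m^i_k$. Hence it is a minimizer.

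Now let $(u^i_k,e^i_k,\muu^i_k)$ be any minimizer and put $(p^i_k,z^i_k):=\bary((\muu^i_k)_{t^i_k})$. By \eqref{compatib2} we have $(\muu^i_k)_{t^{i-1}_k}=(\muu^{i-1}_k)_{t^{i-1}_k}$, so $\langle\{V\}(\theta_{i-1},\eta),(\muu^i_k)_{t^{i-1}_k}\rangle=c$ and thus $m^i_k=\QQ(e^i_k)+\langle H(\xi_i-\xi_{i-1},\theta_i-\theta_{i-1})+\{V\}(\theta_i,\eta)-\{V\}(\theta_{i-1},\eta),(\muu^i_k)_{t^{i-1}_k t^i_k}\rangle+c$; the Jensen inequality for generalized Young measures (\cite[Theorem~6.5]{DM-DeS-Mor-Mor-1}), exactly as in the proof of Theorem~\ref{lm:eqmin}, bounds the bracket below by $\HH_{\rm eff}(p^i_k-p^{i-1}_k,z^i_k-z^{i-1}_k)$, whence $m^i_k\ge\QQ(e^i_k)+\HH_{\rm eff}(p^i_k-p^{i-1}_k,z^i_k-z^{i-1}_k)+c$. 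Conversely, after the change of variables $e=e^{i-1}_k+\tilde e$, $p=p^{i-1}_k+\tilde p$ (using $(u^{i-1}_k,e^{i-1}_k,p^{i-1}_k)\in A(\ww(t^{i-1}_k))$, cf.\ \eqref{AY} and the initial data), the effective minimum from the first paragraph is $\min_{(u,e,p)\in A(\ww(t^i_k)),\,z\in M_b(\ol\Om)}[\QQ(e)+\HH_{\rm eff}(p-p^{i-1}_k,z-z^{i-1}_k)]+c=m^i_k$; since $(u^i_k,e^i_k,p^i_k)\in A(\ww(t^i_k))$ this forces all the inequalities to be equalities. The equality of the first two gives \eqref{eqbmin}; the equality in the Jensen step gives $\langle H(\xi_i-\xi_{i-1},\theta_i-\theta_{i-1})+\{V\}(\theta_i,\eta)-\{V\}(\theta_{i-1},\eta),(\muu^i_k)_{t^{i-1}_k t^i_k}\rangle=\HH_{\rm eff}(p^i_k-p^{i-1}_k,z^i_k-z^{i-1}_k)$, i.e.\ hypothesis \eqref{eq092dP} of Theorem~\ref{lmstruc}; and $\ol{(\muu^i_k)}^Y_{t^{i-1}_k}=\ol{(\muu^{i-1}_k)}^Y_{t^{i-1}_k}=\delta_{(p^a_0,z^a_0)}$ is \eqref{eq092d+P}. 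Theorem~\ref{lmstruc} then yields $\ol{(\muu^i_k)}^Y_{t^i_k}=\delta_{(p^a_0,z^a_0)}$, closing the induction.

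The main obstacle is the existence part, and precisely the production of a minimizer lying in $A^i_k$: since $A^i_k$ requires the approximating elastic strains to converge \emph{strongly} in $L^2$ and the approximating Young measures to be genuine translations $\T^i_{(\tilde p,\tilde z)}$ of $\muu^{i-1}_k$, one cannot pass to weak limits along an arbitrary minimizing sequence. The resolution — descending to the effective problem via Theorem~\ref{lm:eqmin}, using that \emph{its} minimum is attained, and lifting that minimizer back through the constructive part of Theorem~\ref{thm:1app} together with the identity \eqref{eqVV1} — is the essential point; everything else is bookkeeping with Jensen's inequality, the compatibility \eqref{compatib2}, and Theorem~\ref{lmstruc}.
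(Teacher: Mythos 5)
Your proof is correct, and everything after the existence step coincides with the paper's argument: the identification of the infimum over $A^i_k$ with the effective minimum via Remark~\ref{rmk:rmk} and Theorem~\ref{lm:eqmin}, the Jensen lower bound $m^i_k\ge\QQ(e^i_k)+\HH_{\rm eff}(p^i_k-p^{i-1}_k,z^i_k-z^{i-1}_k)+c$, the forcing of equalities by testing with $(u^i_k,e^i_k,p^i_k,z^i_k)$ itself, and the appeal to Theorem~\ref{lmstruc} to get $\ol{(\muu^i_k)}^Y_{t^i_k}=\delta_{(p_0^a,z_0^a)}$ are exactly the steps around \eqref{rest} in the paper. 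Where you genuinely diverge is in producing the minimizer. The paper runs the direct method inside $A^i_k$: it takes a minimizing sequence of translated measures, extracts weak limits, observes that the limit lies in the larger class $B^i_k$, and then upgrades the weak $L^2$ convergence of the elastic strains to strong convergence by contradiction --- if $\QQ$ dropped in the limit, the value at the limit point of $B^i_k$ would fall strictly below $\inf_{B^i_k}=\inf_{A^i_k}$, which Remark~\ref{rmk:rmk} forbids; strong convergence then places the limit back in $A^i_k$. You instead solve the relaxed problem first (its attainment is indeed the tacit, standard premise behind the ``min'' in \eqref{fmin1}, which the paper also uses without proof) and lift its minimizer into $A^i_k$ through the recovery sequence of Theorem~\ref{thm:1app}, for which strong $L^2$ convergence of the elastic strains comes for free. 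The two resolutions rest on the same relaxation identity and are logically interchangeable here; the paper's version yields the extra information that canonical minimizing sequences themselves converge strongly, while yours is slightly more economical and makes the ``no relaxation gap'' mechanism explicit. The only cosmetic omissions are the citation of \cite[Remark~7.9]{DM-DeS-Mor-Mor-1} to turn the subsequential weak$^*$ limit of the translated measures into a compatible system, and the (routine) $\|\cdot\|_*$-bound on those translated measures needed for weak$^*$ compactness; neither is a gap.
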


\begin{proof}
The lemma will be proved by induction on $i$.
Assume that $\muu_k^{i\!-\!1}$ is defined and
$\ol{(\muu_k^{i\!-\!1})}_{t_k^{i\!-\!1}}^Y=\delta_{(p_0^a,z_0^a)}$.
We shall prove that the functional (\ref{functinc}) has a minimizer $(\uki,\eki,\muu_k^i)$
in $A^i_k$ and
\begin{equation}\label{assind}
\ol{(\muu_k^i)}_{t_k^i}^Y=\delta_{(p_0^a,z_0^a)}\,.
\end{equation}

Thanks to Remark~\ref{rmk:rmk} there exists a minimizing sequence
$(u_m,e_m,\nuu^m)$ in $A^i_k$ with $\nuu^m_{t_k^0\dots t_k^i}=\T^i_{(\tilde p_m,\tilde z_m)}((\muu^{i\!-\!1}_k)_{t_k^0\dots t_k^{i\!-\!1}})$ and $(u_m,e_m,\tilde p_m)\in A_{reg}(\tilde w^i_k)$. 
By (\ref{gammaM}) we have
$$
H(\xi_i\!-\!\xi_{i\!-\!1}, \theta_i \!-\!\theta_{i\!-\!1})+\{V\}(\theta_i,\eta)
\ge C^K_V |\xi_i\!-\!\xi_{i\!-\!1}| +  C^K_V |\theta_i\!-\!\theta_{i\!-\!1}| +  
\{V\}(\theta_{i\!-\!1},\eta)\,,
$$
hence by the compatibility condition~(7.2) of \cite{DM-DeS-Mor-Mor-1}
the sequence
\begin{eqnarray*}
&\QQ(e^m)+C^K_V\langle |\xi_i-\xi_{i\!-\!1}|+ |\theta_i-\theta_{i\!-\!1}|,
\nuu^m_{t^{i\!-\!1}_k t^i_k}(x, \xi_{i\!-\!1}, \theta_{i\!-\!1}, \xi_i,\theta_i,\eta)\rangle+
\\
&{}+\langle \{V\}(\theta_{i\!-\!1},\eta), 
\nuu^m_{t^{i\!-\!1}_k}(x, \xi_{i\!-\!1}, \theta_{i\!-\!1},\eta)\rangle 
\end{eqnarray*}
is bounded uniformly with respect to $m$. 
By (\ref{compatib2}) we have $\nuu^m_{t^{i\!-\!1}_k}=(\muu^{i\!-\!1}_k)_{t^{i\!-\!1}_k}$, 
so that
\begin{equation}\label{bound20}
\QQ(e^m)+C^K_V\langle |\xi_i-\xi_{i\!-\!1}|+ |\theta_i -\theta_{i\!-\!1}|,
\nuu^m_{t^{i\!-\!1}_k t^i_k}(x, \xi_{i\!-\!1}, \theta_{i\!-\!1}, \xi_i,\theta_i,\eta)\rangle
\end{equation}
is bounded uniformly with respect to $m$. 
Since by \cite[Remark~2.9 and~(7.2)]{DM-DeS-Mor-Mor-1}
\begin{eqnarray*}
&\|\nuu^m_{ t^i_k}\|_*\le \langle |\xi_i|+ |\theta_i|,
\nuu^m_{ t^i_k}(x, \xi_i,\theta_i,\eta)\rangle=
\\
&=
\langle  |\xi_i|+ |\theta_i|,
\nuu^m_{t^{i\!-\!1}_k t^i_k}(x, \xi_{i\!-\!1}, \theta_{i\!-\!1}, \xi_i,\theta_i,\eta)\rangle \le
\\
&
\le \langle |\xi_i\!-\!\xi_{i\!-\!1}|+ |\theta_i \!-\!\theta_{i\!-\!1}|,
\nuu^m_{t^{i\!-\!1}_k t^i_k}(x, \xi_{i\!-\!1}, \theta_{i\!-\!1}, \xi_i,\theta_i,\eta)\rangle
+
\\
&{}+
\langle |\xi_{i\!-\!1}|+ |\theta_{i\!-\!1}|,
\nuu^m_{t^{i\!-\!1}_k t^i_k}(x, \xi_{i\!-\!1}, \theta_{i\!-\!1}, \xi_i,\theta_i,\eta)\rangle=
\\
&
=  \langle |\xi_i\!-\!\xi_{i\!-\!1}|+ |\theta_i \!-\!\theta_{i\!-\!1}|,
\nuu^m_{t^{i\!-\!1}_k t^i_k}
(x, \xi_{i\!-\!1}, \theta_{i\!-\!1}, \xi_i,\theta_i,\eta)\rangle
+
\\
&{}+
\langle |\xi_{i\!-\!1}|+ |\theta_{i\!-\!1}|, (\muu^{i\!-\!1}_k)_{t^{i\!-\!1}_k}
(x, \xi_{i\!-\!1}, \theta_{i\!-\!1}, \eta)\rangle\,,
\end{eqnarray*}
it follows from (\ref{boundsC}) and (\ref{bound20}) and that $e^m$ 
is bounded in $L^2(\Om;\Mnn)$ and 
$\nuu^m_{ t^i_k}$ is bounded in $GY({\ol\Om };\MD{\times}\R)$. 
Using (\ref{compatib2}) and \cite[Lemma~7.8]{DM-DeS-Mor-Mor-1} we obtain also that 
$\nuu^m_{t^0_k\dots t^i_k}$ is bounded in $GY({\ol\Om };(\MD{\times}\R)^{i\!+\!1})$.

Passing to a subsequence, we may assume that $e_m\wto e$ weakly in 
$L^2(\Om;\Mnn)$ and $\nuu^m_{t^0_k \dots t^i_k}\wto \nu_{0\dots i}$ 
weakly$^*$ in  $GY({\ol\Om };(\MD{\times}\R)^{i\!+\!1})$. 
Let $\nuu\in SGY(\{{t^0_k,\dots,t^i_k}\},\ol\Om;\MD{\times}\R)$ be the system 
associated with $\nu_{0\dots i}$ according to 
\cite[Remark 7.9]{DM-DeS-Mor-Mor-1} and let $(p,z):=\bary(\nuu_{\tki})$. 
Note that $(p_k^{i-1}+\tilde p_m,z_k^{i-1}+\tilde z_m)=\bary(\nuu^m_{ t^i_k})
\wto (p,z)$ weakly$^*$ in $M_b(\ol\Om;\MD){\times}M_b(\ol\Om)$
by \cite[Remark~6.4]{DM-DeS-Mor-Mor-1}.
Since $(u_m,e_m,\nuu^m)\in A^i_k$ we have $\|Eu^m\|_1\le \|e^m\|_1+
\|\bary(\nuu^m_{ t^i_k})\|_1$ and $\|\ww(t^i_k)-u^m\|_{1,\Gamma_0}\le 
\|\bary(\nuu^m_{ t^i_k})\|_1$. 
By \cite[Proposition~2.4 and Remark~2.5]{Tem} it follows that $u^m$ is bounded in
$BD(\Om)$. Therefore, passing to a subsequence, we may assume that $u^m\wto u$ weakly$^*$ in $BD(\Om)$. 
By \cite[Lemma~2.1]{DM-DeS-Mor} it follows that $(u,e,p)\in A(w_k^i)$, hence
$(u,e,\nuu)\in B_k^i$.

We claim that
\begin{equation}\label{estrong}
e^m\to e \qquad \text{strongly in } L^2(\Om;\Mnn)\,.
\end{equation}
Indeed, if not, then we can find a subsequence (not relabelled) such that
\begin{equation}\label{ABstrong}
\QQ(e)<\lim_m \QQ(e_m)\,.
\end{equation}
Since the other term in (\ref{functinc}) is continuous with respect to the 
weak$^*$ convergence of $\nuu_{t_k^{i\!-\!1}t_k^i}^m$ to $\nuu_{t_k^{i\!-\!1}t_k^i}$, \eqref{ABstrong}
would imply that
$$
\begin{array}{c}
\QQ(e)+ \langle H(\xi_i -\xi_{i\!-\!1}, \theta_i -\theta_{i\!-\!1})+ \{V\}(\theta_i,\eta), 
\nuu_{t^{i\!-\!1}_k t^i_k}(x, \xi_{i\!-\!1}, \theta_{i\!-\!1}, \xi_i,\theta_i,\eta)\rangle <
\smallskip
\\
\displaystyle
< \inf_{(\hat u,\hat e,\hat\nuu)\in A_k^i} \big[
\QQ(\hat e)+\langle H(\xi_i -\xi_{i\!-\!1}, \theta_i -\theta_{i\!-\!1})+ \{V\}(\theta_i,\eta), 
\hat \nuu_{t^{i\!-\!1}_k t^i_k}\rangle 
\big]\,,
\end{array}
$$
which contradicts the equalities in Remark~\ref{rmk:rmk}, since $(u,e,\nuu)\in B_k^i$. Therefore, \eqref{estrong} is proved.

We deduce from \eqref{estrong} that $(u,e,\nuu)\in A_k^i$ and that it is a minimizer of (\ref{functinc}) in $A^i_k$. From now on we set $(\uki,\eki,\muu_k^i):=(u,e,\nuu)$.
By Remark~\ref{rmk:rmk} and Theorem~\ref{lm:eqmin} we obtain
\begin{equation}\label{rest}
\begin{array}{c}
\displaystyle
\min_{(\tilde u,\tilde e,\tilde p)\in A(\tilde w_k^i),\,
\tilde z\in M_b(\ol\Om)}
\big[\QQ(e_k^{i\!-\!1} +\tilde e) +\HH_{\rm eff}(\tilde p,\tilde z)+
\langle \{V\}(\theta_{i\!-\!1},\eta), (\muu_k^{i\!-\!1})_{t^{i\!-\!1}_k} \rangle
\big] =
\smallskip
\\
\displaystyle
=\QQ(\eki)+\langle H(\xi_i-\xi_{i\!-\!1},\theta_i-\theta_{i\!-\!1})+\{V\}(\theta_i,\eta),  
(\muu_k^i)_{t^{i\!-\!1}_k t^i_k} \rangle \geq
\medskip
\\
\displaystyle
\geq \QQ(\eki)+\HH_{\rm eff}(\pki-p_k^{i\!-\!1},\zki-z_k^{i\!-\!1})+ 
\langle \{V\}(\theta_{i\!-\!1},\eta), (\muu_k^{i\!-\!1})_{t^{i\!-\!1}_k} \rangle\,,
\end{array}
\end{equation}
where the last inequality follows from Jensen inequality.
Since $(\uki-u_k^{i\!-\!1},\eki-e_k^{i\!-\!1},\pki-p_k^{i\!-\!1})\in A(\tilde w_k^i)$, we deduce that
the previous inequalities are in fact equalities. Theorem~\ref{lmstruc} now yields \eqref{assind}.
Finally, \eqref{eqbmin} easily follows from \eqref{rest}.
\end{proof}

\begin{corollary}\label{corbarymin}
For every $i$ and $k$ we have
$$
\QQ(e^i_k)+\HH_{\rm eff}(p^i_k -p^{i\!-\!1}_k, 0) \le
\QQ(e)+\HH_{\rm eff}(p -p^{i\!-\!1}_k,0) 
$$
for every $(u,e,p)\in A(\ww(t^i_k))$.
\end{corollary}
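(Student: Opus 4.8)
The plan is to read this off directly from inequality \eqref{eqbmin} of Lemma~\ref{existinc}. Fix a competitor $(u,e,p)\in A(\ww(t^i_k))$. In \eqref{eqbmin} the admissible test data are such a triple together with an arbitrary measure $z\in M_b(\ol\Om)$; I would choose $z:=z^{i-1}_k$, which is legitimate because $z^{i-1}_k=\bary((\muu^{i-1}_k)_{t^{i-1}_k})$ is a bounded Radon measure on $\ol\Om$. With this choice the second argument $z-z^{i-1}_k$ on the right-hand side of \eqref{eqbmin} is the zero measure, so the right-hand side reduces exactly to $\QQ(e)+\HH_{\rm eff}(p-p^{i-1}_k,0)$.

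It then remains only to absorb the term $\HH_{\rm eff}(p^i_k-p^{i-1}_k,z^i_k-z^{i-1}_k)$ appearing on the left-hand side of \eqref{eqbmin}, i.e.\ to prove the monotonicity estimate $\HH_{\rm eff}(q,z)\ge\HH_{\rm eff}(q,0)$ for all $q\in M_b(\ol\Om;\MD)$ and $z\in M_b(\ol\Om)$. This I would reduce to the pointwise inequality $H_{\rm eff}(\xi,\theta)\ge H_{\rm eff}(\xi,0)$ for every $(\xi,\theta)\in\MD{\times}\R$: integrating it against a common dominating measure $\lambda$ for $q$ and $z$ and using the definition \eqref{calHD} of $\HH_{\rm eff}$ (with $H$ replaced by $H_{\rm eff}$) yields the measure-theoretic version. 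The pointwise inequality in turn follows from two facts about $H_{\rm eff}$ already available in the excerpt: it is symmetric under $\theta\mapsto-\theta$ — either because $K_{\rm eff}=(K+(0,b_V))\cap(K-(0,b_V))$ is invariant under $(\sigma,\zeta)\mapsto(\sigma,-\zeta)$ by \eqref{Keff} and \eqref{symK} while $H_{\rm eff}$ is its support function by \eqref{supeff}, or because $H_{\rm eff}={\rm co}\,G^\infty$ with $G^\infty(\xi,\theta)=H(\xi,\theta)+V^\infty(\theta)$ even in $\theta$ by \eqref{symH} and \eqref{Vinf} — and it is convex, so that
\[
H_{\rm eff}(\xi,0)=H_{\rm eff}\big(\tfrac12(\xi,\theta)+\tfrac12(\xi,-\theta)\big)\le\tfrac12H_{\rm eff}(\xi,\theta)+\tfrac12H_{\rm eff}(\xi,-\theta)=H_{\rm eff}(\xi,\theta).
\]

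Combining the two steps gives $\QQ(e^i_k)+\HH_{\rm eff}(p^i_k-p^{i-1}_k,0)\le\QQ(e^i_k)+\HH_{\rm eff}(p^i_k-p^{i-1}_k,z^i_k-z^{i-1}_k)\le\QQ(e)+\HH_{\rm eff}(p-p^{i-1}_k,0)$, which is the assertion. I do not expect any real obstacle here: the statement is essentially a specialization of \eqref{eqbmin}, and the only point requiring an argument is the elementary monotonicity $\HH_{\rm eff}(\cdot,\theta)\ge\HH_{\rm eff}(\cdot,0)$, which rests solely on the symmetry and convexity of $H_{\rm eff}$ recalled above.
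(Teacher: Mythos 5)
Your proposal is correct and coincides with the paper's own proof: take $z=z^{i-1}_k$ in \eqref{eqbmin} and use $H_{\rm eff}(\xi,\theta)\ge H_{\rm eff}(\xi,0)$, which the paper likewise derives from the convexity and evenness in $\theta$ of $H_{\rm eff}$. Your additional justifications of the evenness (via the symmetry of $K_{\rm eff}$ or of $G^\infty$) and of the passage from the pointwise inequality to the functional $\HH_{\rm eff}$ merely spell out details the paper leaves implicit.
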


\begin{proof}
It is enough to take $z=z^{i\!-\!1}_k$ in (\ref{eqbmin}) and to use the inequality 
$H_{\rm eff}(\xi,\theta)\ge H_{\rm eff}(\xi,0)$, which follows from the fact that
 $\theta\mapsto H_{\rm eff}(\xi,\theta)$ is convex and even.
\end{proof}

The following theorem shows that the incremental problems can be considered as a
relaxed version of incremental problems defined on functions.
For different approaches to the relaxation problem in the context of rate-independent processes we refer to \cite{Mie-Ort} and \cite{Mie-Rou-Ste}.

\begin{proposition}\label{relax}
Let us fix $k$. Let $(u^{m}, e^{m}, p^{m})$ be a sequence in $A_{reg}(\ww(0))$ and let  $z^{m}$ be a sequence in $L^1(\Om)$. For every $i\ge 1$ let us consider two sequences (with respect to the index $m$) 
$(\tilde u^{i,m}, \tilde e^{i,m}, \tilde p^{i,m})
\in A_{reg}(\tilde w_k^i)$ and $\tilde z^{i,m}\in L^1(\Om)$.
For every multiindex $m_0\dots m_i$ with $i+1$ components
we define
 $$
\begin{array}{c}
\displaystyle
u^{m_0\dots m_i}:=u^{m_0}+\sum_{j=1}^i \tilde u^{j,m_j}\,, \qquad
e^{m_0\dots m_i}:=e^{m_0}+\sum_{j=1}^i \tilde e^{j,m_j}\,,
\smallskip
\\
\displaystyle
p^{m_0\dots m_i}:=p^{m_0}+\sum_{j=1}^i \tilde p^{j,m_j}\,, \qquad
z^{m_0\dots m_i}:=z^{m_0}+\sum_{j=1}^i \tilde z^{j,m_j}\,.
\end{array}
$$
Note that 
$$
(u^{m_0\dots m_i},e^{m_0\dots m_i},p^{m_0\dots m_i})\in A_{reg}(w_k^i)\,.
$$
Moreover, we define 
$\mu^{m_0\dots m_i}\in GY(\ol\Om;(\MD{\times}\R)^{i+1})$  by
$$
\mu^{m_0\dots m_i}:=\delta_{((p^{m_0},z^{m_0}),(p^{m_0m_1},z^{m_0m_1}),\dots, 
(p^{m_0\dots m_i},z^{m_0\dots m_i}))}\,.
$$
Suppose that there exist $\hat e^i\in L^2(\Om;\Mnn)$  and 
$\hat\muu^i\in SGY(\{t^0_k,\dots, t^i_k\}, \ol\Om; \MD{\times}\R)$ such that for every $i\ge 0$
\begin{eqnarray}
&\displaystyle
\lim_{m_i\to\infty}\dots \lim_{m_0\to\infty} e ^{m_0\dots m_i} = \hat e^i
\,,\label{eapprx}
\\
&\displaystyle
\lim_{m_i\to\infty}\dots \lim_{m_0\to\infty}\mu^{m_0\dots m_i}=
\hat\muu^i_{t_k^0\dots t_k^i}\,,
\label{muapprx}
\end{eqnarray}
where  in the former formula
all  limits are with respect to  weak convergence in $L^2(\Om;\Mnn)$, while in the latter they are taken in the 
weak$^*$ convergence in $GY(\ol\Om;(\MD{\times}\R)^{i\!+\!1})$. 
Then for every $i\ge 1$
\begin{equation}\label{geq}
\begin{array}{c}
\displaystyle
\liminf_{m_i\to\infty}\dots \liminf_{m_0\to\infty} \big[
\QQ(e^{m_0\dots m_i}) + \HH(\tilde p^{i, m_i},\tilde z^{i, m_i}) 
+\V(z^{m_0\dots m_i})
\big] \geq
\smallskip
\\
\displaystyle
\geq 
\QQ(\hat e^{i})  +
\langle H(\xi_i-\xi_{i\!-\!1}, \theta_i-\theta_{i\!-\!1})+\{V\}(\theta_i,\eta),
 \hat\muu^i_{t^{i\!-\!1}_kt^i_k}\rangle
\,,
\end{array}
\end{equation}
where $ \hat\muu^i_{t^{i\!-\!1}_kt^i_k}$ acts on the variable 
$(x,\xi_{i\!-\!1},\theta_{i\!-\!1}, \xi_i,\theta_i,\eta)$.

Conversely, if $\hat e^i$ and $\hat\muu^i$ coincide with the function $e^i_k$ and the 
measure $\muu^i_k$ obtained in the incremental construction, then there exist two sequences $(u^{m}, e^{m}, p^{m})\in A_{reg}(\ww(0))$, $z^{m}\in L^1(\Om)$ and 
for every $i\ge 1$ two  sequences  
$(\tilde u^{i,m}, \tilde e^{i,m}, \tilde p^{i,m})
\in A_{reg}(\tilde w_k^i)$ and $\tilde z^{i,m}\in L^1(\Om)$ such that for every $i\ge 0$ \eqref{eapprx} holds with respect to strong convergence and  \eqref{muapprx} holds with respect to weak$^*$ convergence, 
while
\begin{equation}\label{recovery}
\begin{array}{c}
\displaystyle
\lim_{m_i\to\infty}\dots \lim_{m_0\to\infty} \big[
\QQ(e^{m_0\dots m_i}) + \HH(\tilde p^{i, m_i},\tilde z^{i, m_i}) 
+\V(z^{m_0\dots m_i})
\big] =
\smallskip
\\
\displaystyle
=
\QQ( e_k^{i})  +
\langle H(\xi_i-\xi_{i\!-\!1}, \theta_i-\theta_{i\!-\!1})+\{V\}(\theta_i,\eta),
 (\muu^i_k)_{t^{i\!-\!1}_kt^i_k}\rangle
\end{array}
\end{equation}
for every $i\ge 1$.
\end{proposition}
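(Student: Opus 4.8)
The plan is to reduce the whole statement to two elementary observations. First, since the Dirac system $\mu^{m_0\dots m_i}$ is carried by $\eta\equiv1$ and satisfies $p^{m_0\dots m_i}=p^{m_0\dots m_{i-1}}+\tilde p^{i,m_i}$, $z^{m_0\dots m_i}=z^{m_0\dots m_{i-1}}+\tilde z^{i,m_i}$, one has
$$
\mu^{m_0\dots m_i}=\T^i_{(\tilde p^{i,m_i},\tilde z^{i,m_i})}(\mu^{m_0\dots m_{i-1}})\,;
$$
second, since all the objects in play are $L^1$ functions and $\{V\}(\theta,1)=V(\theta)$,
$$
\QQ(e^{m_0\dots m_i})+\HH(\tilde p^{i,m_i},\tilde z^{i,m_i})+\V(z^{m_0\dots m_i})=\QQ(e^{m_0\dots m_i})+\langle H(\xi_i-\xi_{i-1},\theta_i-\theta_{i-1})+\{V\}(\theta_i,\eta),\ \mu^{m_0\dots m_i}_{t^{i-1}_kt^i_k}\rangle\,.
$$
Here one uses that $H(\xi_i-\xi_{i-1},\theta_i-\theta_{i-1})+\{V\}(\theta_i,\eta)$ belongs to $C^{hom}(\ol\Om{\times}(\MD{\times}\R)^2{\times}\R)$ — the map $\{V\}$ is positively homogeneous of degree one and extends continuously across $\eta=0$ by \eqref{2der}, \eqref{Vsymm20}, \eqref{Vinf} — so that the pairing against $H+\{V\}$, composed with the projection onto the $(i{-}1,i)$-blocks, is continuous for the weak$^*$ convergence in $GY$, while $\QQ$ is strongly continuous and weakly lower semicontinuous on $L^2(\Om;\Mnn)$. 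Granting this, the lower bound \eqref{geq} is immediate: by the second observation its left-hand side equals $\liminf_{m_i}\dots\liminf_{m_0}[\QQ(e^{m_0\dots m_i})+\langle H+\{V\},\mu^{m_0\dots m_i}_{t^{i-1}_kt^i_k}\rangle]$; by \eqref{muapprx} and the continuity just recalled the second summand admits the iterated limit $\langle H+\{V\},\hat\muu^i_{t^{i-1}_kt^i_k}\rangle$, while $\liminf_{m_i}\dots\liminf_{m_0}\QQ(e^{m_0\dots m_i})\ge\QQ(\hat e^i)$ follows from \eqref{eapprx} by applying the weak lower semicontinuity of $\QQ$ one index at a time (each inner limit exists by hypothesis). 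Adding the two estimates gives \eqref{geq}.

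For the recovery sequences I argue by induction on $i$. For $i=0$, Theorem~\ref{thm:d-app} and Lemma~\ref{lm:comp} provide $(u^m,e^m,p^m)\in A_{reg}(\ww(0))$ and $z^m\in L^1(\Om)$ with $e^m\to e_0$ strongly, $(p^m,z^m)\wto(p_0,z_0)$ weakly$^*$ in $M_b$ and $\|(p^m,z^m)\|_1\to\|(p_0,z_0)\|_1$; by \cite[Theorem~3]{Res} (see also \cite[Appendix]{Luc-Mod}), as in the proof of Theorem~\ref{thm:1app}, these imply $\mu^m=\delta_{(p^m,z^m)}\wto\delta_{(p_0,z_0)}=\muu^0_k$ weakly$^*$ in $GY(\ol\Om;\MD{\times}\R)$, which is \eqref{muapprx} for $i=0$. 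Assume now that the sequences have been built for all levels $\le i-1$, with $\hat e^j=e^j_k$ and $\hat\muu^j=\muu^j_k$. Since $(u^i_k,e^i_k,\muu^i_k)$ minimizes \eqref{functinc} on $A^i_k$, the argument in the proof of Lemma~\ref{existinc} produces $(\bar u_m,\bar e_m,\bar p_m)\in A_{reg}(\tilde w_k^i)$ and $\bar z_m\in L^1(\Om)$ realizing $(u^i_k,e^i_k,\muu^i_k)$ in the sense of the definition of $A^i_k$ — that is, $e^{i-1}_k+\bar e_m\to e^i_k$ strongly and $\T^i_{(\bar p_m,\bar z_m)}((\muu^{i-1}_k)_{t^0_k\dots t^{i-1}_k})\wto(\muu^i_k)_{t^0_k\dots t^i_k}$ weakly$^*$ — and, $\QQ$ being strongly continuous and the Young-measure term in \eqref{functinc} weak$^*$-continuous, along which the value of \eqref{functinc} tends to its infimum:
$$
\QQ(e^{i-1}_k+\bar e_m)+\langle H+\{V\},[\T^i_{(\bar p_m,\bar z_m)}((\muu^{i-1}_k)_{t^0_k\dots t^{i-1}_k})]_{t^{i-1}_kt^i_k}\rangle\ \longrightarrow\ \QQ(e^i_k)+\langle H+\{V\},(\muu^i_k)_{t^{i-1}_kt^i_k}\rangle\,.
$$
Set $\tilde u^{i,m}:=\bar u_m$, $\tilde e^{i,m}:=\bar e_m$, $\tilde p^{i,m}:=\bar p_m$, $\tilde z^{i,m}:=\bar z_m$. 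Then $e^{m_0\dots m_i}=e^{m_0\dots m_{i-1}}+\bar e_{m_i}$ and the inductive hypothesis give \eqref{eapprx} with strong convergence at level $i$; the identity $\mu^{m_0\dots m_i}=\T^i_{(\bar p_{m_i},\bar z_{m_i})}(\mu^{m_0\dots m_{i-1}})$, the inductive hypothesis, the continuity of the block-translation $\T^i$ for weak$^*$ convergence in $GY$ (Lemma~\ref{trlem}, applied to the appended block), and the weak$^*$ convergence displayed above give \eqref{muapprx} at level $i$; finally, substituting these into the second observation and letting first $m_{i-1},\dots,m_0$ and then $m_i$ tend to $\infty$ gives \eqref{recovery}.

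The only genuinely delicate part is the handling of the nested limits: at each stage one must invoke the existence of the relevant inner limit (which is exactly what the standing hypotheses \eqref{eapprx}--\eqref{muapprx} provide) in order to apply, one index at a time, the lower semicontinuity of $\QQ$, the continuity of the pairing against $H+\{V\}$, and the continuity of the translations $\T^i$; and, for the converse, one must recognize that the minimizing sequence of the incremental problem constructed in Lemma~\ref{existinc} — being built from iterated translations of $(\muu^{i-1}_k)_{t^0_k\dots t^{i-1}_k}$ — matches the Dirac construction of the statement exactly through the first observation, and that it realizes the infimum of \eqref{functinc} precisely because the $e$-variable there converges strongly in $L^2(\Om;\Mnn)$ and $\QQ$ is strongly continuous.
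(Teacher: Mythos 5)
Your proposal is correct and follows essentially the same route as the paper: the lower bound comes from the weak lower semicontinuity of $\QQ$ together with the weak$^*$ continuity of the pairing against $H(\xi_i-\xi_{i\!-\!1},\theta_i-\theta_{i\!-\!1})+\{V\}(\theta_i,\eta)$ (after identifying the classical functional with this pairing on the Dirac systems), and the recovery sequences are obtained exactly as in the paper from the approximating sequences furnished by membership of the minimizer in $A^i_k$, the translation identity $\mu^{m_0\dots m_i}=\T^i_{(\tilde p^{i,m_i},\tilde z^{i,m_i})}(\mu^{m_0\dots m_{i\!-\!1}})$, induction on $i$, and Lemma~\ref{trlem}. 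The only cosmetic difference is that you cite ``the argument in the proof of Lemma~\ref{existinc}'' where the paper simply invokes the definition of $A^i_k$; the content is the same.
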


\begin{proof}
Inequality \eqref{geq} follows from \eqref{eapprx} and \eqref{muapprx} by the lower 
semicontinuity of $\QQ$ in the weak topology of $L^2(\Om;\Mnn)$ and the continuity 
of the duality product in the weak$^*$ topology of $GY(\ol\Om; (\MD{\times}\R)^2)$.

By Theorem~\ref{thm:d-app} and Lemma~\ref{lm:comp} there  exist a sequence 
$(u^{m},e^{m},p^{m})\in A_{reg}(\ww(0))$ and a sequence $z^{m}\in L^1(\Om)$ 
such that $u^{m}\wto u_0$ weakly$^*$ in $BD(\Om)$, $e^{m}\to e_0$ strongly in 
$L^2(\Om;\Mnn)$ $p^{m}\wto p_0$ weakly$^*$ in $M_b(\ol\Om;\MD)$, 
$z^{m}\wto z_0$ weakly$^*$ in $M_b(\ol\Om)$, and
 $\|(p^{m},z^{m})\|_1\to \|(p_0,z_0)\|_1$. Using
 \cite[Theorem~3]{Res} (see also \cite[Appendix]{Luc-Mod}) we obtain that
 $ \delta_{(p^{m},z^{m})}\wto \delta_{(p_0,z_0)}$ weakly$^*$ in
 $GY(\ol\Om;\MD{\times}\R)$.

For every $i\ge1$, by definition of $A_k^i$ there exist a sequence 
$(\tilde u^{i,m}, \tilde e^{i,m}, \tilde p^{i,m})\in A_{reg}(\tilde w_k^i)$ and a sequence 
$\tilde z^{i,m}\in L^1(\Om)$ such that
\begin{eqnarray}
& u_k^{i\!-\!1}+\tilde u^{i,m} \wto u_k^i \quad \text{weakly}^* \text{ in } BD(\Om)\,,\nonumber
\\
& e_k^{i\!-\!1}+\tilde e^{i,m} \to e_k^i \quad \text{strongly in } L^2(\Om;\Mnn)\,,\label{strong}
\\
&
\T^i_{(\tilde p^{i,m}, \tilde z^{i,m})}((\muu^{i\!-\!1}_k)_{t_k^0\dots t_k^{i\!-\!1}}) \wto (\muu^i_k)_{t_k^0\dots t_k^i} \quad \text{weakly}^* \text{ in } GY(\ol\Om; (\MD{\times}\R)^{i\!+\!1})\,.\label{aikdef}
\end{eqnarray}
Condition \eqref{eapprx} is trivially satisfied thanks to \eqref{strong}.
To prove \eqref{muapprx} we observe that for every $i\ge 1$
$$
\mu^{m_0\dots m_i}=\T^i_{(\tilde p^{i,m_i}, \tilde z^{i,m_i})}(\mu^{m_0\dots 
m_{i\!-\!1}})\,.
$$
We now proceed by induction on $i$. Equality \eqref{muapprx} for $i=0$ is true by construction.
Assume that \eqref{muapprx} holds for $i-1$. Then by Lemma~\ref{trlem}
$$
\lim_{m_{i\!-\!1}\to\infty}\dots \lim_{m_0\to\infty}\mu^{m_0\dots m_i}=
\T^i_{(\tilde p^{i,m_i}, \tilde z^{i,m_i})}((\muu_k^{i\!-\!1})_{t^0_k\dots t_k^{i\!-\!1}})
$$
The conclusion for $i$ follows from \eqref{aikdef}.
\end{proof}

\subsection{Further minimality properties}
We now prove that the solutions of the incremental problems satisfy some
additional minimality conditions.

\begin{lemma}\label{secondproblem}
For every $i$ and $ k$ and every  $t>t^i_k$ we have
\begin{equation}\label{secpb}
\begin{array}{c}
\QQ(e^i_k) + \langle \{V\}(\theta_i,\eta), (\muu^i_k)_{t^i_k}(x, \xi_i,\theta_i,\eta)\rangle \le
\smallskip
\\
\le \QQ(e) + \langle H(\xi - \xi_i,\theta- \theta_i), \nuu_{t^i_k  t}
(x,  \xi_i,\theta_i,\xi,\theta,\eta)\rangle+
\langle \{V\}(\theta,\eta),  \nuu_t(x, \xi, \theta,\eta)\rangle
\end{array}
\end{equation}
for every $(u,e, \nuu)\in BD(\Om){\times} L^2(\Om;\Mnn){\times} 
SGY(\{{t^0_k,\dots,t^i_k,t}\},\ol\Om;\MD{\times}\R)$ such that
\begin{eqnarray}
&\nuu_{t^0_k\dots t^i_k}=(\muu^i_k)_{t^0_k\dots t^i_k}\,,
\label{compatib2*}
\\
& (u,e,p)\in A(\ww(t^i_k))\,,
\label{AY*}
\end{eqnarray}
where $(p,z):=\bary(\nuu_t)$.
\end{lemma}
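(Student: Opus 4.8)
The plan is to read the right-hand side of \eqref{secpb} (after moving the $\{V\}$-term from the left to the right) as the value, on a particular competitor, of an incremental-type minimisation with \emph{zero} boundary increment, to apply Theorem~\ref{lm:eqmin} to rewrite this value in the $\HH_{\rm eff}$-language, and finally to bound it from below by means of the $\HH_{\rm eff}$-minimality of $(e^i_k,p^i_k)$ recorded in Corollary~\ref{corbarymin}.

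Fix $i$, $k$, and $t>t^i_k$, and apply Theorem~\ref{lm:eqmin} with $t_0:=t^i_k$, $t_1:=t$, $w_0:=\ww(t^i_k)$, $\tilde w:=0$, $(u_0,e_0,p_0):=(u^i_k,e^i_k,p^i_k)$, $z_0:=z^i_k$, and $\mu_0:=(\muu^i_k)_{t^i_k}$. Its hypotheses hold: $(u^i_k,e^i_k,p^i_k)\in A(\ww(t^i_k))$ by \eqref{AY}, $\bary(\mu_0)=(p^i_k,z^i_k)$ by definition, and $\ol\mu_0^Y=\delta_{(p_0^a,z_0^a)}$ with $p_0^a\in L^1(\Om;\MD)$ and $z_0^a\in L^1(\Om)$ by Lemma~\ref{existinc}. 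Let $(u,e,\nuu)$ be any competitor in \eqref{secpb}. Restricting the system $\nuu$ to the two times $\{t^i_k,t\}$ produces $\muu\in SGY(\{t_0,t_1\},\ol\Om;\MD{\times}\R)$ with $\muu_{t_0}=\nuu_{t^i_k}=(\muu^i_k)_{t^i_k}=\mu_0$ (by \eqref{compatib2*}) and, by \eqref{AY*}, $(u,e,p)\in A(w_0+\tilde w)$ with $(p,z)=\bary(\nuu_t)=\bary(\muu_{t_1})$; hence $(u,e,\muu)\in B$. Using the projection property to rewrite $\langle\{V\}(\theta,\eta),\nuu_t\rangle=\langle\{V\}(\theta_1,\eta),\muu_{t_0t_1}\rangle$, the right-hand side of \eqref{secpb} equals the functional \eqref{fmin3} evaluated at $(u,e,\muu)$, so it is at least the infimum in \eqref{fmin3}, which by Theorem~\ref{lm:eqmin} equals
\[
\min_{(\tilde u,\tilde e,\tilde p)\in A(0),\,\tilde z\in M_b(\ol\Om)}\big[\QQ(e^i_k+\tilde e)+\HH_{\rm eff}(\tilde p,\tilde z)\big]+\langle\{V\}(\theta_0,\eta),\mu_0\rangle .
\]

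Next I would bound this minimum below by $\QQ(e^i_k)+\langle\{V\}(\theta_0,\eta),\mu_0\rangle$. Given $(\tilde u,\tilde e,\tilde p)\in A(0)$ and $\tilde z\in M_b(\ol\Om)$: since $\theta\mapsto H_{\rm eff}(\xi,\theta)$ is convex and even, $\HH_{\rm eff}(\tilde p,\tilde z)\ge\HH_{\rm eff}(\tilde p,0)$; since $A(\cdot)$ is additive, $(u^i_k+\tilde u,e^i_k+\tilde e,p^i_k+\tilde p)\in A(\ww(t^i_k))$, so Corollary~\ref{corbarymin} together with the triangle inequality for $\HH_{\rm eff}$ (valid because $H_{\rm eff}$ is convex and positively $1$-homogeneous, exactly as \eqref{triangle} follows from \eqref{triangle0}) gives
\[
\QQ(e^i_k)+\HH_{\rm eff}(p^i_k-p^{i-1}_k,0)\le\QQ(e^i_k+\tilde e)+\HH_{\rm eff}(p^i_k-p^{i-1}_k,0)+\HH_{\rm eff}(\tilde p,0).
\]
Cancelling the finite term $\HH_{\rm eff}(p^i_k-p^{i-1}_k,0)$ yields $\QQ(e^i_k)\le\QQ(e^i_k+\tilde e)+\HH_{\rm eff}(\tilde p,\tilde z)$. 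Since $\langle\{V\}(\theta_0,\eta),\mu_0\rangle=\langle\{V\}(\theta_i,\eta),(\muu^i_k)_{t^i_k}\rangle$ is precisely the second term on the left-hand side of \eqref{secpb}, chaining the inequalities proves \eqref{secpb}.

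The only delicate points are bookkeeping: identifying the many-time competitors of \eqref{secpb} with elements of the two-time class $B$ of Theorem~\ref{lm:eqmin} via restriction of systems of generalized Young measures, and checking the $\ol\mu_0^Y=\delta$ hypothesis of that theorem; both are immediate from the consistency of marginals and from Lemma~\ref{existinc}. I do not expect a substantial obstacle, since the genuine content is the $\HH_{\rm eff}$-minimality already established in Corollary~\ref{corbarymin}, and the rest is a translation of it into the Young-measure functional through Theorem~\ref{lm:eqmin}.
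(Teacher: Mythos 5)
Your proof is correct, but it follows a genuinely different route from the paper's. The paper proves \eqref{secpb} by working directly at the level of the incremental minimum problem: it builds from $\nuu$ a competitor $\tilde\nuu\in SGY(\{t^0_k,\dots,t^i_k\},\ol\Om;\MD{\times}\R)$ by substituting the time $t$ for $t^i_k$ (via the projection $\pi^{t^0_k\dots t^i_k t}_{t^0_k\dots t^{i-1}_k t}$), checks that $(u,e,\tilde\nuu)$ lies in the class $B^i_k$ of Remark~\ref{rmk:rmk}, invokes the minimality of $(u^i_k,e^i_k,\muu^i_k)$ there, and then absorbs the residual dissipation term $\langle H(\xi_i-\xi_{i-1},\theta_i-\theta_{i-1}),\nuu_{t^{i-1}_k t^i_k t}\rangle$ using the triangle inequality \eqref{triangle0} on the three-time joint measure. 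You instead pass through the relaxed formulation: Theorem~\ref{lm:eqmin} with $\tilde w=0$ (whose hypothesis $\ol\mu_0^Y=\delta$ you correctly extract from Lemma~\ref{existinc}) converts the right-hand side of \eqref{secpb}, via the Jensen inequality, into the $\HH_{\rm eff}$-minimum \eqref{fmin1}, which you then bound below by the barycentric minimality of Corollary~\ref{corbarymin} together with the subadditivity and evenness of $H_{\rm eff}$. Both arguments ultimately rest on the same minimality of the incremental solution, and all your individual steps (restriction of the system to $\{t^i_k,t\}$, the projection property for the $\{V\}$-term, additivity of $A(\cdot)$, cancellation of the finite term $\HH_{\rm eff}(p^i_k-p^{i-1}_k,0)$) are sound; in fact you only need the easy Jensen direction of Theorem~\ref{lm:eqmin}, so the $\ol\mu_0^Y=\delta$ hypothesis is not essential to your argument. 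What the paper's route buys is that it never leaves the Young-measure functional and so avoids re-invoking the relaxation theorem; what yours buys is a cleaner conceptual reading, making explicit that the content of the lemma is the stability at time $t^i_k$ already encoded in the effective (barycentric) minimality.
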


\begin{proof}
Let us fix $(u, e, \nuu)$ as in the statement of the lemma, 
and let $\tilde\nuu$ be the system in $SGY(\{t^0_k\dots t^i_k\},\ol\Om;\MD{\times}\R)$
associated with 
$\pi^{t^0_k\dots t^i_k  t}_{t^0_k\dots t^{i\!-\!1}_k t}(\nuu_{t^0_k\dots t^i_k t})\in 
GY(\ol\Om;(\MD{\times}\R)^{i+1})$ according to
\cite[Remark 7.9]{DM-DeS-Mor-Mor-1}.
Since $\muu^i_k$ satisfies (\ref{compatib2}), by (\ref{compatib2*}) and (\ref{AY*}) the triplet  $( u, e,\tilde \nuu)$ satisfies (\ref{compatib2}) and (\ref{AY}), hence 
$(u, e,\tilde \nuu)$ belongs to the set $B^i_k$ defined in Remark~\ref{rmk:rmk}. 
By minimality we have
$$
\begin{array}{c}
\QQ(e^i_k)+\langle H(\xi_i -\xi_{i\!-\!1}, \theta_i -\theta_{i\!-\!1}),
(\muu^i_k)_{t^{i\!-\!1}_k t^i_k}(x, \xi_{i\!-\!1}, \theta_{i\!-\!1}, \xi_i,\theta_i,\eta)\rangle
+{}
\smallskip
\\
{}+ \langle \{V\}(\theta_i,\eta), 
(\muu^i_k)_{t^i_k}(x, \xi_i,\theta_i,\eta)\rangle \le
\smallskip
\\
\le \QQ( e) + \langle H(\xi_i -\xi_{i\!-\!1}, \theta_i -\theta_{i\!-\!1}),
\tilde \nuu_{t^{i\!-\!1}_k t^i_k}(x, \xi_{i\!-\!1}, \theta_{i\!-\!1}, \xi_i,\theta_i,\eta)\rangle
+{}
\smallskip
\\
{}+ \langle \{V\}(\theta_i,\eta), 
\tilde\nuu_{t^i_k}(x, \xi_i,\theta_i,\eta)\rangle\,.
\end{array}
$$
Since $(\muu^i_k)_{t^{i\!-\!1}_k t^i_k}=\nuu_{t^{i\!-\!1}_k t^i_k}$, $\tilde \nuu_{t^{i\!-\!1}_k t^i_k}= \nuu_{t^{i\!-\!1}_k t}$, and $\tilde\nuu_{t^i_k}=\nuu_t$, we get
$$
\begin{array}{c}
\QQ(e^i_k)+\langle H(\xi_i -\xi_{i\!-\!1}, \theta_i -\theta_{i\!-\!1}),
 \nuu_{t^{i\!-\!1}_k t^i_k}(x, \xi_{i\!-\!1}, \theta_{i\!-\!1}, \xi_i,\theta_i,\eta)\rangle
+{}
\smallskip
\\
{}+ \langle \{V\}(\theta_i,\eta), 
(\muu^i_k)_{t^i_k}(x, \xi_i,\theta_i,\eta)\rangle \le
\smallskip
\\
\le \QQ(e) + \langle H(\xi -\xi_{i\!-\!1}, \theta -\theta_{i\!-\!1}),
 \nuu_{t^{i\!-\!1}_k t}(x, \xi_{i\!-\!1}, \theta_{i\!-\!1}, \xi, \theta,\eta)\rangle
+{}
\smallskip
\\
{}+ \langle \{V\}(\theta,\eta), 
 \nuu_t(x, \xi,\theta,\eta)\rangle\,.
\end{array}
$$
{}From the compatibility condition~(7.2) of \cite{DM-DeS-Mor-Mor-1} we obtain
$$
\begin{array}{c}
\QQ(e^i_k)+\langle H(\xi_i -\xi_{i\!-\!1}, \theta_i -\theta_{i\!-\!1}),
\nuu_{t^{i\!-\!1}_k t^i_k  t}(x, \xi_{i\!-\!1}, \theta_{i\!-\!1}, \xi_i, \theta_i, \xi, \theta, \eta)\rangle
+{}
\smallskip
\\
{}+ \langle \{V\}(\theta_i,\eta), 
(\muu^i_k)_{t^i_k}(x, \xi_i,\theta_i,\eta)\rangle \le
\smallskip
\\
\le \QQ(e) + \langle H(\xi -\xi_{i\!-\!1}, \theta -\theta_{i\!-\!1}),
\nuu_{t^{i\!-\!1}_k  t^i_k t}(x, \xi_{i\!-\!1}, \theta_{i\!-\!1}, 
\xi_i, \theta_i, \xi, \theta,\eta)\rangle
+{}
\smallskip
\\
{}+ \langle \{V\}(\theta,\eta), 
 \nuu_t(x,\xi,\theta,\eta)\rangle\,.
\end{array}
$$
By the triangle inequality (\ref{triangle0}) we deduce that
$$
\begin{array}{c}
\QQ(e^i_k)+
\langle \{V\}(\theta_i,\eta), 
(\muu^i_k)_{t^i_k}(x, \xi_i,\theta_i,\eta)\rangle \le
\smallskip
\\
\le \QQ(e) + \langle H(\xi -\xi_i, \theta -\theta_i),
\nuu_{t^{i\!-\!1}_k  t^i_k  t}(x, \xi_{i\!-\!1}, \theta_{i\!-\!1}, \xi_i, \theta_i, \xi, \theta,\eta)\rangle
+{}
\smallskip
\\
{}+ \langle \{V\}(\theta,\eta), 
\nuu_t(x, \xi,\theta,\eta)\rangle\,,
\end{array}
$$
which gives (\ref{secpb}) by  the compatibility condition~(7.2) of 
\cite{DM-DeS-Mor-Mor-1}.
\end{proof}

{}For every $i$ and $k$ we set $\sigma_k^i:=\C e_k^i$ and for every $t\in[0,+\infty)$ we
consider the piecewise constant interpolations defined by
\begin{equation}\label{ukt2}
\begin{array}{c} 
\uu_k(t):=\uki\,, \quad \ee_k(t):=\eki\,, \quad\pp_k(t):=p^i_k, \quad\zz_k(t):=z^i_k,
\smallskip 
\\
\sigmaa_k(t):=\sigma_k^i\,, 
\quad \ww_k(t):=w_k^i \,, \quad [t]_k:=\tki\,,
\vspace{.1cm}\\
\end{array}
\end{equation}
for $t\in[\tki,t_k^{i\!+\!1})$. We define also $\muu_k$ as the unique
system in $SGY([0,+\infty),\ol\Om;\MD{\times}\R)$ whose restrictions
to the time intervals $[0,t^i_k]$ coincide with the
piecewise constant interpolations of $\muu^i_k\in SGY(\{{t^0_k,\dots,t^i_k}\},\ol\Om;\MD{\times}\R)$ introduced in \cite[Definition 7.10]{DM-DeS-Mor-Mor-1}.
As $(\pp_k(t),\zz_k(t))=\bary((\muu_k)^{}_t)$, we have also
\begin{equation}\label{interp10}
(\uu_k(t),\ee_k(t),\pp_k(t))\in A(\ww_k(t))
\end{equation}
for every $t\in[0,+\infty)$.

\begin{lemma}\label{min100}
Let $t, \hat t\in[0,+\infty)$ with $t<\hat t$. Then
\begin{equation}\label{secpb2}
\begin{array}{c}
\QQ(\ee_k(t)) + \langle \{V\}(\theta,\eta), (\muu_k)^{}_t(x, \xi,\theta,\eta)\rangle \le
\smallskip
\\
\le \QQ(\ee_k(\hat t)-E\ww_k(\hat t)+E\ww_k(t)) +
\langle H(\hat \xi - \xi, \hat \theta- \theta), (\muu_k)_{t \hat t}
(x,  \xi,\theta,\hat \xi,\hat \theta,\eta)\rangle+{}
\\
{}+
\langle \{V\}(\hat \theta,\eta), (\muu_k)_{\hat t}(x, \hat \xi, \hat \theta,\eta)\rangle
\end{array}
\end{equation}
for every~$k$.
\end{lemma}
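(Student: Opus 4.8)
The plan is to reduce Lemma~\ref{min100} to Lemma~\ref{secondproblem} applied at the node associated with $t$, with the free time parameter taken to be the node associated with $\hat t$, and then to translate everything back through the definition \eqref{ukt2} of the piecewise constant interpolations and \cite[Definition~7.10]{DM-DeS-Mor-Mor-1}. First I would fix $t<\hat t$ and let $i$, $j$ be the indices with $t\in[\tki,t^{i+1}_k)$ and $\hat t\in[t^j_k,t^{j+1}_k)$; since $t<\hat t$ one has $i\le j$. By \eqref{ukt2} and the definition of the interpolated system, $\ee_k(t)=e^i_k$, $\ww_k(t)=\ww(t^i_k)$, $(\muu_k)_t=(\muu^i_k)_{\tki}$, $\ee_k(\hat t)=e^j_k$, $\ww_k(\hat t)=\ww(t^j_k)$, $(\muu_k)_{\hat t}=(\muu^j_k)_{t^j_k}$ (using \eqref{compatib2} to pass from $\muu^{j+1}_k$ back to $\muu^j_k$), and $(\muu_k)_{t\hat t}=(\muu^j_k)_{\tki\, t^j_k}$.

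I would first dispose of the trivial case $i=j$: then $t$ and $\hat t$ lie in the same subinterval, so $\ee_k(t)=\ee_k(\hat t)$, $E\ww_k(t)=E\ww_k(\hat t)$, $(\muu_k)_t=(\muu_k)_{\hat t}=(\muu^i_k)_{\tki}$, and $(\muu_k)_{t\hat t}$ is the image of $(\muu^i_k)_{\tki}$ under the diagonal embedding $(\xi,\theta)\mapsto(\xi,\theta,\xi,\theta)$; since $H(0,0)=0$, inequality \eqref{secpb2} holds with equality.

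For the main case $i<j$ I would apply Lemma~\ref{secondproblem} with the given index $i$ and with its free time equal to $t^j_k>\tki$, testing it with the triple $(u,e,\nuu)$ defined by
$$
u:=u^j_k-\ww(t^j_k)+\ww(\tki),\qquad e:=e^j_k-E\ww(t^j_k)+E\ww(\tki),
$$
and $\nuu\in SGY(\{t^0_k,\dots,\tki,t^j_k\},\ol\Om;\MD{\times}\R)$ the restriction of $\muu^j_k$ to these times. Iterating the consistency relation \eqref{compatib2} gives $(\muu^j_k)_{t^0_k\dots \tki}=(\muu^i_k)_{t^0_k\dots \tki}$, so $\nuu$ satisfies \eqref{compatib2*}; and since $\bary(\nuu_{t^j_k})=(p^j_k,z^j_k)$ with $(u^j_k,e^j_k,p^j_k)\in A(\ww(t^j_k))$ by \eqref{AY}, a direct check of \eqref{900}--\eqref{901} shows $(u,e,p)\in A(\ww(\tki))$, i.e.\ \eqref{AY*} holds. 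Lemma~\ref{secondproblem} then yields
$$
\QQ(e^i_k)+\langle\{V\}(\theta_i,\eta),(\muu^i_k)_{\tki}\rangle\le\QQ(e)+\langle H(\xi-\xi_i,\theta-\theta_i),\nuu_{\tki\, t^j_k}\rangle+\langle\{V\}(\theta,\eta),\nuu_{t^j_k}\rangle,
$$
and substituting $\QQ(e)=\QQ(\ee_k(\hat t)-E\ww_k(\hat t)+E\ww_k(t))$, $\nuu_{\tki\, t^j_k}=(\muu_k)_{t\hat t}$, $\nuu_{t^j_k}=(\muu_k)_{\hat t}$, $(\muu^i_k)_{\tki}=(\muu_k)_t$, and renaming the integration variables $(\xi_i,\theta_i)$, $(\xi,\theta)$ as $(\xi,\theta)$, $(\hat\xi,\hat\theta)$ gives exactly \eqref{secpb2}.

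The only delicate point — and it is essentially bookkeeping — is matching the one-time and two-time marginals of the interpolated system $\muu_k$ on $[0,+\infty)$ with those of the discrete systems $\muu^i_k$, $\muu^j_k$ at the nodes $\tki$, $t^j_k$; this rests on the explicit form of the interpolation in \cite[Definition~7.10]{DM-DeS-Mor-Mor-1} together with the compatibility \eqref{compatib2} relating $\muu^i_k$ for different $i$. Once these identifications are pinned down, the statement follows immediately from Lemma~\ref{secondproblem}, and the manipulation is the same gauge shift already used to pass from the minimality of the incremental problem to Lemma~\ref{secondproblem}.
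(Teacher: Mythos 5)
Your proof is correct and is essentially the paper's argument: the paper likewise tests Lemma~\ref{secondproblem} with the gauge-shifted triple $u:=\uu_k(\hat t)-\ww_k(\hat t)+\ww_k(t)$, $e:=\ee_k(\hat t)-E\ww_k(\hat t)+E\ww_k(t)$ and the system $(\muu_k)_{t^0_k\dots t^i_k\hat t}$, with $i$ the greatest index such that $t^i_k\le t$. The only (immaterial) difference is that the paper applies the lemma directly with free time $\hat t$ rather than first passing to the node $t^j_k$, which also makes your separate treatment of the case $i=j$ unnecessary.
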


\begin{proof}
Let $u:= \uu_k( \hat t)-\ww_k(\hat  t)+\ww_k(t)$ and 
$e:=\ee_k( \hat t)-E\ww_k(\hat  t)+E\ww_k(t)$, and let $i$ be the greatest index such that $t^i_k\le t$. Since the triplet $(u,e, (\muu_k)_{t^0_k\dots t^i_k \hat t})$ satisfies (\ref{compatib2*}) and (\ref{AY*}), the result follows from Lemma~\ref{secondproblem}.
\end{proof}

\begin{lemma}\label{min101}
Let $t\in[0,+\infty)$. Then
\begin{equation}\label{secpb3}
\begin{array}{c}
\QQ(\ee_k(t)) + \langle \{V\}(\theta,\eta), (\muu_k)^{}_t(x, \xi,\theta,\eta)\rangle \le
\smallskip
\\
\le \QQ(\ee_k(t)+\tilde e) + \HH(\tilde p, \tilde z) +
\langle \{V\}(\theta+\eta \tilde z(x),\eta), (\muu_k)^{}_t(x, \xi, \theta,\eta)\rangle
\end{array}
\end{equation}
for every $(\tilde u,\tilde e,\tilde p)\in A_{reg}(0)$ and every $\tilde z\in L^1(\Om)$.
\end{lemma}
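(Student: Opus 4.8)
The plan is to localize in time. Fix $t\in[0,+\infty)$ and let $i$ be the greatest index with $\tki\le t$, so that $\ee_k(t)=\eki$ and, by the definition of $\muu_k$, $(\muu_k)_t=(\muu^i_k)_{\tki}$; thus it suffices to prove \eqref{secpb3} with $\ee_k(t)$ and $(\muu_k)_t$ replaced by $\eki$ and $(\muu^i_k)_{\tki}$. The case $i=0$ is immediate: there $\eki=e_0$ and $(\muu^i_k)_{\tki}=\delta_{(p_0,z_0)}$, and since $\langle\{V\}(\theta,\eta),\delta_{(p_0,z_0)}\rangle=\V(z_0)$ and, for $\tilde z\in L^1(\Om)$, $\langle\{V\}(\theta+\eta\tilde z(x),\eta),\delta_{(p_0,z_0)}\rangle=\V(z_0+\tilde z)$ --- the translation by $\eta\tilde z$ only acts on the $Y$-part, where $\eta=1$, and leaves the recession (singular) part untouched --- the required inequality is exactly the initial stability hypothesis \eqref{initstab}.

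So assume $i\ge1$ and fix $(\tilde u,\tilde e,\tilde p)\in A_{reg}(0)$ and $\tilde z\in L^1(\Om)$. I would build a competitor $(u',e',\nuu')$ in the class $B^i_k$ of Remark~\ref{rmk:rmk} by translating the last fibre of the minimizer: set $u':=\uki+\tilde u$, $e':=\eki+\tilde e$, and let $\nuu'$ be the system whose joint marginal $\nuu'_{t^0_k\dots t^i_k}$ is the image of $(\muu^i_k)_{t^0_k\dots t^i_k}$ under the map $(x,\xi_0,\theta_0,\dots,\xi_i,\theta_i,\eta)\mapsto(x,\xi_0,\theta_0,\dots,\xi_{i-1},\theta_{i-1},\xi_i+\eta\tilde p(x),\theta_i+\eta\tilde z(x),\eta)$. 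Arguing as in Theorem~\ref{lm:eqmin} (where the translations $\T^1_{(\tilde p,\tilde z)}$ are used to produce elements of $B$), $\nuu'\in SGY(\{t^0_k,\dots,t^i_k\},\ol\Om;\MD{\times}\R)$; since only the last coordinate block is moved, $\nuu'_{t^0_k\dots t^{i-1}_k}=(\muu^i_k)_{t^0_k\dots t^{i-1}_k}=(\muu^{i-1}_k)_{t^0_k\dots t^{i-1}_k}$ by \eqref{compatib2}, while $\nuu'_{t^i_k}=\T_{(\tilde p,\tilde z)}((\muu^i_k)_{t^i_k})$, whose barycentre is $(\pki+\tilde p,\zki+\tilde z)$. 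Combining $(\uki,\eki,\pki)\in A(\wki)$, which holds by \eqref{AY}, with $(\tilde u,\tilde e,\tilde p)\in A_{reg}(0)$, we get $(u',e',\pki+\tilde p)\in A(\wki)$, hence $(u',e',\nuu')\in B^i_k$.

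By Remark~\ref{rmk:rmk} the triple $(\uki,\eki,\muu^i_k)$ minimizes the functional \eqref{functinc} over $B^i_k$. Testing with $(u',e',\nuu')$ and pulling the test function back through the translation that defines $\nuu'$ yields the inequality $\QQ(\eki)+\langle H(\xi_i-\xi_{i-1},\theta_i-\theta_{i-1})+\{V\}(\theta_i,\eta),(\muu^i_k)_{t^{i-1}_k t^i_k}\rangle\le\QQ(\eki+\tilde e)+\langle H(\xi_i+\eta\tilde p(x)-\xi_{i-1},\theta_i+\eta\tilde z(x)-\theta_{i-1})+\{V\}(\theta_i+\eta\tilde z(x),\eta),(\muu^i_k)_{t^{i-1}_k t^i_k}\rangle$, all terms being finite. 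Now I would use the triangle inequality \eqref{triangle0} together with the positive homogeneity of $H$ to bound $H(\xi_i+\eta\tilde p(x)-\xi_{i-1},\theta_i+\eta\tilde z(x)-\theta_{i-1})\le H(\xi_i-\xi_{i-1},\theta_i-\theta_{i-1})+\eta H(\tilde p(x),\tilde z(x))$; the projection property (3.3) of \cite{DM-DeS-Mor-Mor-1} (exactly as in the proof of Lemma~\ref{trlem}) gives $\langle\eta H(\tilde p(x),\tilde z(x)),(\muu^i_k)_{t^i_k}\rangle=\int_\Om H(\tilde p(x),\tilde z(x))\,dx=\HH(\tilde p,\tilde z)$; and, since $\{V\}(\theta_i,\eta)$ and $\{V\}(\theta_i+\eta\tilde z(x),\eta)$ depend only on the variables attached to the time $t^i_k$, integrating them against $(\muu^i_k)_{t^{i-1}_k t^i_k}$ equals integrating them against its marginal $(\muu^i_k)_{t^i_k}$. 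Cancelling the common finite term $\langle H(\xi_i-\xi_{i-1},\theta_i-\theta_{i-1}),(\muu^i_k)_{t^{i-1}_k t^i_k}\rangle$ then leaves precisely \eqref{secpb3}.

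The point requiring care is the verification that the translated system $\nuu'$ is genuinely an element of $SGY(\{t^0_k,\dots,t^i_k\},\ol\Om;\MD{\times}\R)$, i.e.\ that translating a single fibre by an $L^1$ amount preserves the compatibility condition (7.2) of \cite{DM-DeS-Mor-Mor-1}; this is the same bookkeeping already performed for $\T^1_{(\tilde p,\tilde z)}$ in Theorem~\ref{lm:eqmin} and for $\T_p$ in Lemma~\ref{trlem}, the relevant fact being that the translation leaves unchanged the projection of the measure onto the $(x,\eta)$ variables. Should one wish to bypass this, the same inequality follows by applying the chain of equalities in Remark~\ref{rmk:rmk} to a minimizing sequence $(\hat u_m,\hat e_m,\hat p_m)\in A_{reg}(\tilde w_k^i)$, $\hat z_m\in L^1(\Om)$ for the incremental minimum --- which exists by the definition of $A^i_k$ because $(\uki,\eki,\muu^i_k)\in A^i_k$ --- adding $(\tilde u,\tilde e,\tilde p)$ and $\tilde z$ to it to form a competitor, and passing to the limit $m\to\infty$ using the strong $L^2$ convergence of $e_k^{i-1}+\hat e_m$, the weak$^*$ convergence $\T_{(\hat p_m,\hat z_m)}((\muu^{i-1}_k)_{t^{i-1}_k})\wto(\muu^i_k)_{t^i_k}$, the triangle inequalities \eqref{triangle} and \eqref{triangle0}, and the finiteness of the incremental minimum (which licenses the final cancellation).
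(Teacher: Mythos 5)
Your argument is correct and is essentially the paper's: the paper packages the same computation into Lemma~\ref{secondproblem}, applied to the competitor obtained by appending an auxiliary time $\hat t>t^i_k$ whose joint fibre is $\T^{i+1}_{(\tilde p,\tilde z)}((\muu^i_k)_{t^0_k\dots t^i_k})$, whereas you translate the fibre at $t^i_k$ itself and redo inline the same three ingredients (minimality over $B^i_k$ from Remark~\ref{rmk:rmk}, the triangle inequality \eqref{triangle0}, and the projection property identifying $\langle \eta H(\tilde p(x),\tilde z(x)),\cdot\rangle$ with $\HH(\tilde p,\tilde z)$). Your separate treatment of $i=0$ via \eqref{initstab} is also correct, and arguably tidier than the paper's appeal to Lemma~\ref{secondproblem} at $i=0$, which tacitly rests on the same initial stability assumption.
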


\begin{proof}
Let us fix $(\tilde u,\tilde e,\tilde p)\in A_{reg}(0)$ and $\tilde z\in L^1(\Om)$.
Let $i$ be the greatest index such that $t^i_k\le t$ and let $\hat t>t^i_k$.
We set $\hat u:= u^i_k+ \tilde u=\uu_k(t)+ \tilde u$, $\hat e:= e^i_k+ \tilde e= \ee_k(t)+ \tilde e$, 
$\nu_{t^0_k\dots t^i_k \hat t}:=\T^{i\!+\!1}_{(\tilde p,\tilde z)}((\muu^i_k)_{t^0_k\dots t^i_k})= 
\T^{i\!+\!1}_{(\tilde p,\tilde z)}((\muu_k)_{t^0_k\dots t^i_k})$, and we define $\nuu\in SGY(\{t^0_k,\dots, t^i_k ,\hat t\},\ol\Om;\MD{\times}\R)$
as the system associated with $\nu_{t^0_k\dots t^i_k \hat t}$ according to \cite[Remark~7.9]{DM-DeS-Mor-Mor-1}.
Since the triplet $(\hat u,\hat e, \nuu)$ satisfies (\ref{compatib2*}) and (\ref{AY*}), the
conclusion follows from Lemma~\ref{secondproblem}.
\end{proof}

\subsection{Energy estimates}
We now prove some energy estimates for the solutions of the incremental minimum problems.

\begin{lemma}\label{lm:141}
For every $T>0$ there exists a sequence $\omega_k^T\to 0^+$ such that 
\begin{equation}\label{inc-ineq-10}
\begin{array}{c}
\displaystyle
\vphantom{\int_0^t}
\QQ(\ee_k(t))+\D_H(\muu_k;0,t)+
\langle \{V\}(\theta,\eta), (\muu_k)_t(x,\xi,\theta,\eta) \rangle
\le
\\
\displaystyle
\le \QQ(e_0)+\V(z_0)
+\int_0^{[t]_k}\langle\sigmaa_k(t), E\dot \ww(t)\rangle\, dt +\omega_k^T
\end{array}
\end{equation}
for every $k$ and every $t\in[0,T]$.
\end{lemma}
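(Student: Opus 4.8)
The plan is to obtain \eqref{inc-ineq-10} by summing one-step inequalities coming from the minimality of the incremental solutions. At each step $i$ I test the minimiser $(\uki,\eki,\muu_k^i)$ against the competitor that increments the boundary datum and leaves the plastic strain and the internal variable untouched, namely $(\tilde u,\tilde e,\tilde p,\tilde z)=(\tilde w_k^i,E\tilde w_k^i,0,0)$, which belongs to $A_{reg}(\tilde w_k^i){\times}L^1(\Om)$.

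First I fix $T>0$, $t\in[0,T]$, and let $N$ be the largest index with $\tki\le t$ (so $[t]_k=\tki$ with $i=N$). For each $i=1,\dots,N$, inserting the above competitor into the second expression of Remark~\ref{rmk:rmk} — which, by minimality, equals $\QQ(\eki)+\langle H(\xi_i-\xi_{i-1},\theta_i-\theta_{i-1})+\{V\}(\theta_i,\eta),(\muu_k^i)_{\tkim\tki}\rangle$ — and using $\HH(0,0)=0$, $\T_{(0,0)}=\mathrm{id}$, and the projection property to pass to the $\tki$-marginal in the $\{V\}$-term, I get
\[
\QQ(\eki)+\langle H(\xi_i{-}\xi_{i-1},\theta_i{-}\theta_{i-1}),(\muu_k^i)_{\tkim\tki}\rangle+\langle\{V\}(\theta_i,\eta),(\muu_k^i)_{\tki}\rangle\le \QQ(e_k^{i-1}{+}E\tilde w_k^i)+\langle\{V\}(\theta_{i-1},\eta),(\muu_k^{i-1})_{\tkim}\rangle .
\]
Then I expand $\QQ(e_k^{i-1}+E\tilde w_k^i)=\QQ(e_k^{i-1})+\langle\sigma_k^{i-1},E\tilde w_k^i\rangle+\QQ(E\tilde w_k^i)$ using the quadratic structure of $\QQ$, and write $E\tilde w_k^i=E\ww(\tki)-E\ww(\tkim)=\int_{\tkim}^{\tki}E\dot\ww(s)\,ds$; by \eqref{ukt2} the cross term becomes $\int_{\tkim}^{\tki}\langle\sigmaa_k(s),E\dot\ww(s)\rangle\,ds$.

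Next I sum over $i=1,\dots,N$. The $\QQ$-terms and the $\{V\}$-terms telescope, leaving on the left $\QQ(\ee_k(t))$ and $\langle\{V\}(\theta,\eta),(\muu_k)_t\rangle$ and on the right $\QQ(e_0)$ and $\langle\{V\}(\theta_0,\eta),\muu_0\rangle=\V(z_0)$ (by the definition of $\muu_0=\delta_{(p_0,z_0)}$ and of $\V$ on $M_b(\ol\Om)$); the cross terms add up to $\int_0^{[t]_k}\langle\sigmaa_k(s),E\dot\ww(s)\rangle\,ds$. For the $H$-terms I use $(\muu_k^i)_{\tkim\tki}=(\muu_k)_{\tkim\tki}$ — which follows from the compatibility \eqref{compatib2} and the definition of the piecewise constant interpolation of $\muu_k^i$ — together with the fact that for such an interpolation $\sum_{i=1}^N\langle H(\xi_i-\xi_{i-1},\theta_i-\theta_{i-1}),(\muu_k)_{\tkim\tki}\rangle=\D_{\!H}(\muu_k;0,\tki)=\D_{\!H}(\muu_k;0,t)$: refining a subdivision at non-nodal points, where $\muu_k$ is constant, contributes nothing to the supremum in \eqref{nudiss2}, so the nodal partition is optimal, exactly as for ${\rm Var}$ in \cite[Section~8]{DM-DeS-Mor-Mor-1}. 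Collecting everything gives \eqref{inc-ineq-10} with $\omega_k^T:=\sum_{i:\,\tki\le T}\QQ(E\ww(\tki)-E\ww(\tkim))$; since $\QQ\ge0$ and $\tki=[t]_k\le T$, the remainder $\sum_{i=1}^N\QQ(E\tilde w_k^i)$ is dominated by this $\omega_k^T$, which is independent of $t$.

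No step here is deep; the one genuine estimate — the point I expect to need the most care — is $\omega_k^T\to0^+$. By \eqref{boundsC} and the Bochner inequality, $\QQ(E\tilde w_k^i)\le\beta_\C\|E\tilde w_k^i\|_2^2\le\beta_\C\big(\int_{\tkim}^{\tki}\|E\dot\ww(s)\|_2\,ds\big)^2$; bounding one of the two factors by $\rho_k(T):=\sup\{\int_{\tkim}^{\tki}\|E\dot\ww(s)\|_2\,ds:\tki\le T+\tau_k\}$ and summing the other, $\omega_k^T\le\beta_\C\,\rho_k(T)\int_0^{T+1}\|E\dot\ww(s)\|_2\,ds$ for $k$ large. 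Since $\ww\in AC_{loc}([0,+\infty);H^1(\Om;\Rn))$ yields $E\dot\ww\in L^1_{loc}([0,+\infty);L^2(\Om;\Mnn))$ and $\tau_k\to0$ by \eqref{fine5}, absolute continuity of the Lebesgue integral forces $\rho_k(T)\to0$, hence $\omega_k^T\to0^+$, completing the argument.
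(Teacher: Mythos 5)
Your proof is correct and follows essentially the same route as the paper: the competitor $(\tilde w_k^i,E\tilde w_k^i,0,0)$ inserted via Remark~\ref{rmk:rmk} is exactly the paper's competitor $(\hat u,\hat e,\hat\muu)$ with $\hat\mu_{t^0_k\dots t^r_k}=\pi^{(r)}((\muu^{r-1}_k)_{t^0_k\dots t^{r-1}_k})$, since $\T^i_{(0,0)}=\pi^{(i)}$, and the subsequent expansion of $\QQ$, the telescoping, the identification of the nodal sum with $\D_H(\muu_k;0,t)$, and the absolute-continuity estimate for $\omega_k^T$ all match the paper's argument.
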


\begin{proof}
Let us fix $T>0$ and $t\in[0,T]$. Arguing as in \cite[Remark~8.5]{DM-DeS-Mor-Mor-1}, we can prove that
$$
\D_H(\muu_k;0,t)=\sum_{r=1}^i
 \langle H(\xi_r-\xi_{r\!-\!1}, \theta_r-\theta_{r\!-\!1}),
 (\muu_k)_{t^{r\!-\!1}_k t^r_k}(x,\xi_{r\!-\!1},\theta_{r\!-\!1},\xi_r,\theta_r,\eta)\rangle\,,
$$
where $i$ is the largest integer such that $\tki\le t$. 
Therefore, using the definition of piecewise constant interpolation of
a generalized Young measures,  
we have to show that there exists a sequence $\omega_k^T\to 0^+$ such that
\begin{equation}\label{inc-ineq-102}
\begin{array}{c}
\displaystyle
\vphantom{\int_0^t}
\QQ(e_k^i)+
\sum_{r=1}^i  \langle H(\xi_r-\xi_{r\!-\!1}, \theta_r-\theta_{r\!-\!1}),
 (\muu^r_k)_{t^{r\!-\!1}_k t^r_k}(x,\xi_{r\!-\!1},\theta_{r\!-\!1},\xi_r,\theta_r,\eta)\rangle
+{}
\\
{}+
\langle \{V\}(\theta,\eta), (\muu^i_k)_{t^i_k}(x,\xi,\theta,\eta) \rangle
\le
\\
\displaystyle
\le \QQ(e_0)+\V(z_0)
+\int_0^{\tki}\langle\sigmaa_k(t), E\dot \ww(t)\rangle\, dt +\omega_k^T
\end{array}
\end{equation}
for every $k$ and every $i$ with $\tki\le T$.

Fix an integer $r$ with $1\le r\le i$ and let 
$\pi^{(r)}\colon \ol\Om{\times}(\MD{\times}\R)^r{\times}\R \to
\ol\Om{\times}(\MD{\times}\R)^{r+1}\allowbreak{\times}\R$ be the map defined by 
$$
\pi^{(r)}(x,(\xi_0,\theta_0),\dots, (\xi_{r\!-\!1},\theta_{r\!-\!1}),\eta)=(x,(\xi_0,\theta_0),\dots, (\xi_{r\!-\!1},\theta_{r\!-\!1}),(\xi_{r\!-\!1}, 
\theta_{r\!-\!1}),\eta)\,.
$$
Let $\hat u:=u_k^{r-1}-w_k^{r-1}+w_k^r$, $\hat e:=e_k^{r-1}-Ew_k^{r-1}+Ew_k^r$, and
$\hat\mu_{t^0_k\dots t^r_k}:= \pi^{(r)}((\muu^{r\!-\!1}_k)_{t^0_k\dots t^{r\!-\!1}_k})$. Let $\hat \muu \in SGY(\{t^0_k,\dots, t^r_k\},\ol\Om;\MD{\times}\R)$ be the system associated with $\hat\mu_{t^0_k\dots t^r_k}$ by \cite[Remark~7.9]{DM-DeS-Mor-Mor-1}.
It is easy to check that
$$
\hat \muu_{t^0_k\dots t^{r\!-\!1}_k}=(\muu^{r\!-\!1}_k)_{t^0_k\dots t^{r\!-\!1}_k}\,.
$$
Let $(\hat p,\hat z):= \bary(\hat \muu_{t^r_k})$. Since $\hat p=p_k^{r-1}$, we find that
$(\hat u,\hat e,\hat p)\in A(w_k^r)$, hence $(\hat u,\hat e,\hat\muu)$ belongs to the class
$B_k^r$, introduced in Remark~\ref{rmk:rmk}.
By minimality we have
\begin{equation}\label{b1001}
\begin{array}{c}
\QQ(e_k^r)+\langle H(\xi_r -\xi_{r\!-\!1}, \theta_r -\theta_{r\!-\!1})+\{V\}(\theta_r,\eta), (\muu^r_k)_{t^{r\!-\!1}_k t^r_k}(x, \xi_{r\!-\!1}, \theta_{r\!-\!1}, \xi_r,\theta_r,\eta)\rangle  \le \vspace{.1cm}\\
\le 
\QQ(\hat e) + \langle H(\xi_r -\xi_{r\!-\!1}, \theta_r -\theta_{r\!-\!1})+\{V\}(\theta_r,\eta), \hat \muu_{t^{r\!-\!1}_k t^r_k}(x, \xi_{r\!-\!1}, \theta_{r\!-\!1}, \xi_r,\theta_r,\eta)\rangle\,.
\end{array}
\end{equation}
As $\hat \muu_{t^{r\!-\!1}_k t^r_k}=
\big(\pi^{t^0_k\dots t^r_k}_{t^{r\!-\!1}_k t^r_k}\circ \pi^{(r)}\big)
 \big((\muu^{r\!-\!1}_k)_{t^0_k\dots t^{r\!-\!1}_k}\big)$ and
$$
\big(\pi^{t^0_k\dots t^r_k}_{t^{r\!-\!1}_k t^r_k}\circ \pi^{(r)}\big)
(x,(\xi_0,\theta_0),\dots, (\xi_{r\!-\!1},\theta_{r\!-\!1}),\eta)=
(x,(\xi_{r\!-\!1},\theta_{r\!-\!1}),(\xi_{r\!-\!1}, 
\theta_{r\!-\!1}),\eta)\,,
$$
we have
$$
\langle H(\xi_r -\xi_{r\!-\!1}, \theta_r -\theta_{r\!-\!1}), \hat \muu_{t^{r\!-\!1}_k t^r_k}(x, \xi_{r\!-\!1}, \theta_{r\!-\!1}, \xi_r,\theta_r,\eta)\rangle=0
$$
and
$$
\langle \{V\}(\theta_r,\eta), \hat \muu_{t^{r\!-\!1}_k t^r_k}(x, \xi_{r\!-\!1}, \theta_{r\!-\!1}, \xi_r,\theta_r,\eta)\rangle = \langle \{V\}(\theta_{r\!-\!1},\eta), (\muu^{r\!-\!1}_k)_{t^{r\!-\!1}_k}(x, \xi_{r\!-\!1}, \theta_{r\!-\!1},\eta)\rangle\,.
$$
Therefore (\ref{b1001}) gives, thanks to the compatibility condition~(7.2) of
\cite{DM-DeS-Mor-Mor-1},
\begin{equation}\label{b1001*}
\begin{array}{c}
\QQ(e_k^r)+\langle H(\xi_r -\xi_{r\!-\!1}, \theta_r -\theta_{r\!-\!1}), (\muu^r_k)_{t^{r\!-\!1}_k t^r_k}(x, \xi_{r\!-\!1}, \theta_{r\!-\!1}, \xi_r,\theta_r,\eta)\rangle +
\vspace{.1cm}\\
{}+ \langle \{V\}(\theta_r,\eta), (\muu^r_k)_{ t^r_k}(x, \xi_r,\theta_r,\eta)\rangle  \le \vspace{.1cm}\\
\le 
\QQ(e_k^{r-1}+Ew_k^r -Ew_k^{r-1})
 + \langle \{V\}(\theta_{r\!-\!1},\eta), (\muu^{r\!-\!1}_k)_{t^{r\!-\!1}_k}(x, \xi_{r\!-\!1}, \theta_{r\!-\!1},\eta)\rangle
 \,,
\end{array}
\end{equation}
where the quadratic form in the right-hand side can be developed as
\begin{equation}\label{b1002}
\QQ(e_k^{r-1}+Ew_k^r-Ew_k^{r-1})
=\QQ(e_k^{r-1})+\langle \sigma_k^{r-1} , Ew_k^r- 
Ew_k^{r-1} \rangle 
+ \QQ(Ew_k^r-Ew_k^{r-1})\,.
\end{equation}
{}From the absolute continuity of $w$ with respect to $t$ we obtain
$$
w_k^r-w_k^{r-1}=\int_{t_k^{r-1}}^{t_k^r}\dot \ww(t)\, dt\,,
$$
where we use a Bochner integral of a function with values in $H^1(\Om;\Rn)$.
This implies that
\begin{equation}\label{b1003}
Ew_k^r- Ew_k^{r-1}=\int_{t_k^{r-1}}^{t_k^r} E\dot \ww(t)\, dt\, ,
\end{equation}
where we use a Bochner integral of a function with values in $L^2(\Om;\Mnn)$.
By (\ref{boundsC}) and (\ref{b1003}) we get
\begin{equation}\label{b1003.5}
\QQ(Ew_k^r-Ew_k^{r-1})\le \beta_{\C}\Big(\int_{t_k^{r-1}}^{t_k^r} 
\|E\dot \ww(t)\|_2\, dt\Big)^2. 
\end{equation}
By (\ref{b1001*})--(\ref{b1003.5}) we obtain
\begin{equation}\label{b1004}
\begin{array}{c}
\displaystyle\vphantom{\int_{t_k^{r-1}}^{t_k^r}}
\QQ(e_k^r) +\langle H(\xi_r -\xi_{r\!-\!1}, \theta_r -\theta_{r\!-\!1}), (\muu^r_k)_{t^{r\!-\!1}_k t^r_k}(x, \xi_{r\!-\!1}, \theta_{r\!-\!1}, \xi_r,\theta_r,\eta)\rangle +
\vspace{.1cm}
\\
\displaystyle\vphantom{\int_{t_k^{r-1}}^{t_k^r}}
{}+ \langle \{V\}(\theta_r,\eta), (\muu^r_k)_{ t^r_k}(x, \xi_r,\theta_r,\eta)\rangle  \le \vspace{.1cm}
\\
\displaystyle\vphantom{\int_{t_k^{r-1}}^{t_k^r}}
\le \QQ(e_k^{r-1})+ \langle \{V\}(\theta_{r\!-\!1},\eta),
 (\muu^{r\!-\!1}_k)_{t^{r\!-\!1}_k}(x, \xi_{r\!-\!1}, \theta_{r\!-\!1},\eta)\rangle
+ {} \vspace{.1cm}
\\
\displaystyle
{}+\int_{t_k^{r-1}}^{t_k^r} \langle \sigma_k^{r-1} , E\dot \ww(t)\rangle \, dt +\beta_{\C}\Big(\int_{t_k^{r-1}}^{t_k^r}\|E\dot \ww(t)\|_2\, dt 
\Big)^2
 \le \vspace{.1cm} 
\\
\displaystyle\vphantom{\int_{t_k^{r-1}}^{t_k^r}}
\le \QQ(e_k^{r-1})+ \langle \{V\}(\theta_{r\!-\!1},\eta),
 (\muu^{r\!-\!1}_k)_{t^{r\!-\!1}_k}(x, \xi_{r\!-\!1}, \theta_{r\!-\!1},\eta)\rangle
+ {} \vspace{.1cm}
\\
\displaystyle
{}+\int_{t_k^{r-1}}^{t_k^r} \langle \sigma_k^{r-1} , E\dot \ww(t)\rangle \, dt 
+\rho_k^T\int_{t_k^{r-1}}^{t_k^r}\|E\dot \ww(t)\|_2\, dt\,, 
\end{array}
\end{equation}
where
$$
\rho_k^T:=\max_{t_k^{r}\le T}
\beta_{\C}\int_{t_k^{r-1}}^{t_k^r}\|E\dot 
\ww(t)\|_2\, dt \ \to \ 0
$$
by the absolute continuity of the integral.
Iterating now inequality (\ref{b1004}) for $1\le r\le i$, we get 
(\ref{inc-ineq-102}) 
with $\omega_k^T:=\rho_k^T\int_0^T \|E\dot \ww(t)\|_2\, dt$.
\end{proof}

\subsection{Proof of the main theorem}
Let us fix a sequence of subdivisions $(\tki)_{i\geq0}$ of the 
half-line $[0,+\infty)$ satisfying (\ref{subdiv5}) and (\ref{fine5}). For every $k$ let 
$(\uki,\eki,\muu^i_k)$,
$i=1,\ldots,k$, be defined inductively as minimizers of the functional (\ref{functinc}) on 
the sets $A^i_k$, with
$(u_k^0,e_k^0, \muu_k^0)=(u_0,e_0,\delta_{(p_0,z_0)})$, and let 
$\uu_k(t)$, $\ee_k(t)$, $\sigmaa_k(t)$, $\ww_k(t)$, and $[t]_k$ be defined by 
(\ref{ukt2}) and 
let $\muu_k$ be the unique system in $SGY([0,+\infty);\ol\Om;\MD{\times}\R)$ whose restrictions
to the intervals $[0,\tki]$ coincide with the
piecewise constant interpolations of 
$\muu^i_k$ (see \cite[Definition~7.10]{DM-DeS-Mor-Mor-1}).
Using Lemma~\ref{trlem} and the definition of $A^i_k$ we can prove by induction on $i$
that $(\uu_k,\ee_k,\muu_k)\in AY(\{t_k^0,t^1_k,\dots,t_k^i\},\ww_k)$
for every $i$ and $k$. This implies that
\begin{equation}\label{b1035}
(\uu_k,\ee_k,\muu_k)\in AY([0,+\infty),\ww_k)
\end{equation}
for every $k$.

Let us prove that for every $T>0$ there exists a constant $C_T$, independent of $k$, such that
\begin{equation}\label{b1005}
 \sup_{t\in[0,T]}\|\ee_k(t)\|_2\leq C_T\,, \qquad 
{\rm Var}(\muu_k;0,T)\leq C_T\,.
\end{equation}
By (\ref{gammaM}) we have
\begin{equation}\label{bdiss10}
\begin{array}{c}
\D_H(\muu_k;0,t)+ \langle \{V\}(\theta,\eta), (\muu_k)^{}_t(x, \xi,\theta,\eta)\rangle \ge \smallskip
\\
\ge \langle H(\xi -\xi_0, \theta -\theta_0)+ \{V\}(\theta,\eta), 
(\muu_k)^{}_{0 t}(x, \xi_0, \theta_0, \xi,\theta,\eta)\rangle  \ge 
\smallskip
\\
\ge \langle C^K_V |\xi-\xi_0|+  C^K_V|\theta-\theta_0|+\{V\}(\theta_0,\eta),
(\muu_k)^{}_{0 t}(x, \xi_0, \theta_0, \xi,\theta,\eta)\rangle=
\smallskip
\\
= C^K_V \langle |\xi-\xi_0|+ |\theta-\theta_0|,
(\muu_k)^{}_{0 t}(x, \xi_0, \theta_0, \xi,\theta,\eta)\rangle + \V(z_0)
\,,
\end{array}
\end{equation}
where the last equality follows from the fact that $(\muu_k)^{}_0=\delta_{(p_0,z_0)}$.
{}From (\ref{boundsC}), (\ref{normC}), (\ref{inc-ineq-10}), and (\ref{bdiss10}) we deduce that
$$
\alpha_{\C}\|\ee_k(t)\|^2_2 \le 
\beta_{\C}\|e_0\|^2_2  + 2\beta_\C\, \sup_{t\in[0,T]}\|\ee_k(t)\|_2\int_0^T  \|E\dot \ww(t)\|_2 \, dt +\omega_k^T
$$
for every $k$ and every $t\in[0,T]$. The first estimate in (\ref{b1005}) can be obtained now by using the Cauchy inequality.

By (\ref{inc-ineq-10}) and the first inequality in (\ref{b1005}) we have that 
\begin{equation}\label{10024}
 \D_H(\muu_k;0,t)+ \langle \{V\}(\theta,\eta), (\muu_k)^{}_t(x, \xi,\theta,\eta)\rangle
\end{equation}
is bounded uniformly with respect to $k$ and $t\in[0,T]$. By (\ref{bdiss10}) this implies the boundedness of
\begin{equation}\label{10024*}
 \langle |\xi-\xi_0|+ |\theta-\theta_0|,
(\muu_k)^{}_{0 t}(x, \xi_0, \theta_0, \xi,\theta,\eta)\rangle \,.
\end{equation}
By the compatibility condition~(7.2) of \cite{DM-DeS-Mor-Mor-1} and by the equality
$(\muu_k)^{}_0=\delta_{(p_0,z_0)}$ we have
$$
\langle |\xi_0|+ |\theta_0|,
(\muu_k)^{}_{0 t}(x, \xi_0, \theta_0, \xi,\theta,\eta)\rangle=\|p_0\|_1+\|z_0\|_1\,,
$$
which, together with the boundedness of (\ref{10024*}), gives that
$\langle |\xi|+ |\theta|,
(\muu_k)^{}_{t}(x, \xi,\theta,\eta)\rangle$ is bounded.
This implies that
$\langle \{V\}(\theta,\eta), (\muu_k)^{}_t(x, \xi,\theta,\eta)\rangle$ is bounded too, so that 
(\ref{boundsH}) and the boundedness of (\ref{10024}) yield the second estimate in (\ref{b1005}).

By the Helly Theorem for compatible systems of generalized Young measures proved in
\cite[Theorem~8.10]{DM-DeS-Mor-Mor-1} 
there exist a subsequence, still denoted $\muu_k$, a set $\Theta\subset[0,+\infty)$,
containing $0$ and
with $[0,+\infty)\setmeno\Theta$ at most countable, and a left continuous
$\muu\!\in \!SGY\!([0,+\infty),\ol\Om;\MD{\times}\R)\!$, with
\begin{equation}\label{boundvar10}
 \sup_{t\in[0,T]} \|\muu_t\|_*<+\infty\,,\qquad {\rm Var}(\muu;0,T)<+\infty
\end{equation}
for every $T>0$, such that
\begin{equation}\label{convergence10}
(\muu_k)^{}_{t_1\dots t_m}\wto \muu^{}_{t_1\dots t_m}\quad \hbox{weakly}^* \hbox{ in }GY(\ol\Om;(\MD{\times}\R)^m)
\end{equation}
for every finite sequence $t_1,\dots, t_m$ in $\Theta$ with ${t_1<\dots <t_m}$. 

Let $\pp_k(t)\in M_b(\ol\Om;\MD)$, $\zz_k(t)\in M_b(\ol\Om)$, $\pp(t)\in M_b(\ol\Om;\MD)$, and $\zz(t)\in M_b(\ol\Om)$ be the measures defined by
\begin{equation}\label{pzbarnu10}
(\pp_k(t),\zz_k(t)):=\bary((\muu_k)^{}_t)\qquad\hbox{and}\qquad 
(\pp(t),\zz(t)):=\bary(\muu_t)\,.
\end{equation}
By (\ref{convergence10}) and by \cite[Remark~6.4]{DM-DeS-Mor-Mor-1} we have
\begin{equation}\label{pktop}
\pp_k(t)\wto \pp(t)\quad\hbox{weakly}^* \hbox{ in } M_b(\ol\Om;\MD)
\end{equation}
for every $t\in\Theta$.

By Corollary~\ref{corbarymin} the sequence $(\uu_k(t),\ee_k(t),\pp_k(t))$ 
coincides with the discrete-time approximation of the quasistatic evolution 
corresponding to the function $\xi\mapsto H_{\rm eff}(\xi,0)$ according to \cite[Definition~4.2]{DM-DeS-Mor}. 
Using \cite[Theorems~4.5, 4.8, 5.2]{DM-DeS-Mor} we obtain that there exist 
a subsequence, still denoted $(\uu_k,\ee_k,\pp_k)$, a continuous function
$t\mapsto (\uu(t),\ee(t))$ from $[0,+\infty)$ into $BD(\Om){\times}L^2(\Om;\Mnn)$
and an extension of $t\mapsto \pp(t)$ to $[0,+\infty)$, still denoted by the same symbol, such that 
$t\mapsto (\uu(t), \ee(t), \pp(t))$ is a quasistatic evolution of the problem corresponding to the function 
$\xi\mapsto H_{\rm eff}(\xi,0)$, and
\begin{eqnarray}
& \ee_k(t) \to \ee(t) \quad \hbox{strongly in } L^2(\Om;\Mnn)\,,
\\
& \uu_k(t) \wto \uu(t) \quad \hbox{weakly}^*\hbox{ in } BD(\Om)\,,
\end{eqnarray}
for every $t\in [0,+\infty)$.

By Remark~\ref{rmk:closAY} and by \eqref{b1035} the triple $(\uu,\ee,\muu)$ belongs to $AY(\Theta,\ww)$.
By the left continuity of $\uu$, $\ee$,
$\muu$ we have also $(\uu,\ee,\muu)\in AY([0,+\infty),\ww)$.

Let us fix $(\tilde u,\tilde e,\tilde p)\in A_{reg}(0)$ and $\tilde z\in L^1(\Om)$.
Passing to the limit in (\ref{secpb3}) thanks to Lemma~\ref{trlem}
we obtain that (ev1) is satisfied for every $t\in \Theta$. By left continuity the same inequality holds for every $t\in[0,+\infty)$.

By (\ref{b1005}) and by the weak$^*$ lower semicontinuity of the dissipation 
we can pass to the limit in 
(\ref{inc-ineq-10}) and we obtain
\begin{equation}\label{en-ineq-20}
\begin{array}{c}
\displaystyle
\vphantom{\int_0^t}
\QQ(\ee(T))+\D_{\!H}(\muu;0,T)+
\langle \{V\}(\theta,\eta), \muu_T(x,\xi,\theta,\eta) \rangle
\le
\\
\displaystyle
\le \QQ(e_0)+\V(z_0)
+\int_0^T \langle\sigmaa(t), E\dot \ww(t)\rangle\, dt\,,
\end{array}
\end{equation}
 for every $T\in \Theta$. By  left continuity the same inequality holds for every 
 $T\in[0,+\infty)$.

Passing to the limit in (\ref{secpb2}), we obtain
\begin{equation}\label{secpb12}
\begin{array}{c}
 \QQ(\ee(t)) + \langle \{V\}(\theta,\eta), \muu_t(x, \xi,\theta,\eta)\rangle \le
\smallskip
\\
\le \QQ(\ee(\hat t))-\langle\sigmaa(\hat t), E\ww(\hat t)-E\ww(t)\rangle 
+ \QQ(E\ww(\hat t)-E\ww(t))+{}
\smallskip
\\
{}+ \langle H(\hat\xi - \xi, \hat\theta- \theta), \muu_{t \hat t}
(x,  \xi,\theta,\hat\xi,\hat\theta,\eta)\rangle+
\langle \{V\}(\hat\theta,\eta), \muu_{\hat t}(x, \hat \xi, \hat \theta,\eta)\rangle
\end{array}
\end{equation}
for every $t,\hat t\in\Theta$ with $t<\hat t$. By left continuity the same inequality holds for every $t,\hat t\in[0,+\infty)$ with $t<\hat t$.

Using this inequality, we want to prove that
\begin{equation}\label{en-ineq-21}
\begin{array}{c}
\displaystyle
\vphantom{\int_0^t}
\QQ(\ee(T))+\D_{\!H}(\muu;0,T)+
\langle \{V\}(\theta,\eta), \muu_T(x,\xi,\theta,\eta) \rangle
\ge
\\
\displaystyle
\ge \QQ(e_0)+\V(z_0)
+\int_0^T \langle\sigmaa(t), E\dot \ww(t)\rangle\, dt\,,
\end{array}
\end{equation}
for every $T\in(0,+\infty)$.

Let us fix $T\in(0,+\infty)$ and a sequence of subdivisions 
$(s_k^i)_{0\le i\le k}$ of $[0,T]$ with
\begin{equation}\label{sh101}
\begin{array}{c}
0=s_k^0<s_k^1<\dots<s_k^{k-1}<s_k^{k}=T\,, \smallskip
\\
\displaystyle \lim_{k\to\infty}\,\max_{1\le i\le k}(s_k^i-s_k^{i-1})=0\,.
\end{array}
\end{equation} 
For every $i=1,\dots,k$ we apply (\ref{secpb12}) with times $s_k^{i\!-\!1}$ and $s_k^i$, and we obtain
\begin{eqnarray}
&\displaystyle
\QQ(\ee(s_k^{i\!-\!1})) + \langle \{V\}(\theta_{i\!-\!1},\eta),
 \muu_{s_k^{i\!-\!1}}(x, \xi_{i\!-\!1},
\theta_{i\!-\!1},\eta)\rangle \le
\smallskip
\nonumber
\\
&\displaystyle
\vphantom{\muu_{s_k^{i\!-\!1}}}
\le \QQ(\ee(s_k^i))-\langle\sigmaa(s_k^i), E\ww(s_k^i)-E\ww(s_k^{i\!-\!1})\rangle 
+ \QQ(E\ww(s_k^i)-E\ww(s_k^{i\!-\!1}))+{}
\smallskip
\label{secpb120}
\\
&\displaystyle
{}+ \langle H(\xi_i - \xi_{i\!-\!1}, \theta_i - \theta_{i\!-\!1}), \muu_{s_k^{i\!-\!1} s_k^i}
(x,  \xi_{i\!-\!1},\theta_{i\!-\!1}, \xi_i, \theta_i,\eta)\rangle+
\langle \{V\}(\theta_i,\eta), \muu_{s_k^i}(x, \xi_i, \theta_i,\eta)\rangle\,.
\nonumber
\end{eqnarray}
We notice that
\begin{equation}\label{Hold101}
\begin{array}{c}
\displaystyle\langle\sigmaa(s_k^i), E\ww(s_k^i)-E\ww(s_k^{i\!-\!1})\rangle = 
\int_{s_k^{i\!-\!1}}^{s_k^i}\langle\sigmaa(s_k^i), E\dot \ww(s)\rangle\,ds\,,
\smallskip
\\
\displaystyle
 \QQ(E\ww(s_k^i)-E\ww(s_k^{i\!-\!1}))
\le\beta_\C \Big(\int_{s_k^{i\!-\!1}}^{s_k^i}\|E\dot \ww(s)\|_2\, ds\Big)^2\,,
\smallskip
\\
\displaystyle
\langle H(\xi_i - \xi_{i\!-\!1}, \theta_i - \theta_{i\!-\!1}), \muu_{s_k^{i\!-\!1} s_k^i}
(x,  \xi_{i\!-\!1},\theta_{i\!-\!1}, \xi_i, \theta_i,\eta)\rangle\le \D_{\!H}(\muu;s_k^{i\!-\!1}, s_k^i)\,.
\end{array}
\end{equation}
On $[0,T]$ we define the piecewise constant function 
${\overline\sigmaa}_k(s):=\sigmaa(s_k^i)$, where $i$ is the smallest index such that $s\le s_k^i$. Summing the inequalities (\ref{secpb120}) for $1\le i\le k$, we obtain
\begin{equation}\label{ct-10044}
\begin{array}{c}
\displaystyle
\vphantom{\int_0^T}
\QQ(\ee(T))+\D_{\!H}(\muu;0,T)+
\langle \{V\}(\theta,\eta), \muu_t(x,\xi,\theta,\eta) \rangle +
\rho_k\int_0^T\|E\dot \ww(s)\|_2\, ds
\ge
\\
\displaystyle
\ge \QQ(e_0)+\V(z_0)
+\int_0^T \langle\ol\sigmaa_k(s), E\dot \ww(s)\rangle\, ds\,,
\end{array}
\end{equation}
where 
$$
\rho_k:=\sup_{1\le i\le k}\beta_\C\int_{s_k^{i\!-\!1}}^{s_k^i}\|E\dot \ww(s)\|_2\, ds\,.
$$
Now conditions (\ref{sh101}) and the continuity of $\sigmaa$ guarantee that 
$\rho_k\to 0$ and that ${\overline \sigmaa}_k\to\sigmaa$ strongly in 
$L^2([0,T];L^2(\Om;\Mnn))$. Hence, taking the limit as $k\to\infty$ in (\ref{ct-10044}), we obtain inequality (\ref{en-ineq-21}), which, together with  (\ref{en-ineq-20}), gives~(ev2).

\subsection{Some properties of the solutions}
We conclude this section by proving some qualitative properties of the Young measure solutions to the evolution problem.

\begin{theorem}\label{thm:keff}
Let $(\uu,\ee,\muu)$ be a globally stable quasistatic evolution of Young measures. 
For every $t\in [0,+\infty)$ let $(\pp(t),\zz(t)):=\bary(\muu_t)$. 
Then $(\uu,\ee,\pp)$ is a quasistatic evolution corresponding to the function 
$\xi\mapsto H_{\rm eff}(\xi,0)$ according to \cite[Definition~4.2]{DM-DeS-Mor}. 
\end{theorem}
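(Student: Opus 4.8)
The plan is to verify, for the triple $(\uu,\ee,\pp)$, the three conditions defining a quasistatic evolution relative to the dissipation density $\xi\mapsto H_{\rm eff}(\xi,0)$ in the sense of \cite[Definition~4.2]{DM-DeS-Mor}: kinematic admissibility at each time, global stability with respect to $\HH_{\rm eff}(\cdot,0)$, and energy balance. Admissibility is immediate: since $(\uu,\ee,\muu)\in AY([0,+\infty),\ww)$ by the very definition of a globally stable quasistatic evolution of Young measures, Remark~\ref{rmk:barAYn} gives $(\uu(t),\ee(t),\pp(t))\in A(\ww(t))$ for every $t$.

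The main point is global stability, i.e.\ $\QQ(\ee(t))\le\QQ(\ee(t)+\hat e)+\HH_{\rm eff}(\hat p,0)$ for all $(\hat u,\hat e,\hat p)\in A(0)$. The difficulty is that, unlike the solution constructed in Theorem~\ref{young-main}, a general stable evolution need not satisfy $\ol\muu_t^Y=\delta_{(\cdot,\cdot)}$, so one cannot directly reduce (ev1) to the stability \eqref{initstab}. I would circumvent this by integrating out the $\theta$-variable: disintegrating $\muu_t$ as in \cite[Remark~4.5]{DM-DeS-Mor-Mor-1}, let $\rho_t^x$ be the marginal in $\theta$ of the Young part $\mu_t^{x,Y}$ and set $\ol V_t(x,s):=\int_\R V(\theta+s)\,d\rho_t^x(\theta)$ and $\ol\V_t(z):=\int_\Om\ol V_t(x,z(x))\,dx$. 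Since the concentration (``$\eta=0$'') part of $\muu_t$ only sees $\{V\}(\cdot,0)=V^\infty$, which is unaffected by the shift $\theta\mapsto\theta+\eta\tilde z(x)$, condition (ev1) takes the form $\QQ(\ee(t))+\ol\V_t(0)\le\QQ(\ee(t)+\tilde e)+\HH(\tilde p,\tilde z)+\ol\V_t(\tilde z)$ for all $(\tilde u,\tilde e,\tilde p)\in A_{reg}(0)$, $\tilde z\in L^1(\Om)$. One checks, for a.e.\ $x$ and uniformly in $x$, that $\ol V_t(x,\cdot)$ satisfies the standing hypotheses on $V$ (concavity, the bounds on the second derivative, the limits of the first derivative at $\pm\infty$; here the finiteness of the moments of $\muu_t$ is used so that $\rho_t^x$ has finite first moment) and, crucially, that its recession function is again $\ol V_t^\infty(x,\cdot)=-b_V|\cdot|=V^\infty$. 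Consequently, by Lemma~\ref{coGcoGinfty}, the convex envelope of $(\tilde\xi,\tilde\theta)\mapsto H(\tilde\xi,\tilde\theta)+\ol V_t(x,\tilde\theta)-\ol V_t(x,0)$ is again exactly $H_{\rm eff}$, independently of $x$ and $t$. Running the recovery-sequence construction of Theorem~\ref{thm:1app} with the $x$-measurable potential $\ol V_t(x,\cdot)$ in place of $V(z_0(x)+\cdot)$, for each $(\hat u,\hat e,\hat p)\in A(0)$ one produces $(\tilde u_k,\tilde e_k,\tilde p_k)\in A_{reg}(0)$ and $\tilde z_k\in L^1(\Om)$ with $\tilde e_k\to\hat e$ strongly in $L^2(\Om;\Mnn)$ and $\HH(\tilde p_k,\tilde z_k)+\ol\V_t(\tilde z_k)-\ol\V_t(0)\to\HH_{\rm eff}(\hat p,0)$; inserting these into (ev1) and letting $k\to\infty$ gives the stability inequality, for all $(\hat u,\hat e,\hat p)\in A(0)$ and a fortiori for $A_{reg}(0)$.

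For the energy balance I would first prove ``$\le$''. Fix $0=t_0<\dots<t_k=t$. As in the Jensen step of the proof of Lemma~\ref{existinc} (use \cite[Theorem~6.5]{DM-DeS-Mor-Mor-1} and $H_{\rm eff}={\rm co}(H+V^\infty)$), one has $\langle H(\xi_i-\xi_{i-1},\theta_i-\theta_{i-1})+\{V\}(\theta_i,\eta)-\{V\}(\theta_{i-1},\eta),\muu_{t_{i-1}t_i}\rangle\ge\HH_{\rm eff}(\pp(t_i)-\pp(t_{i-1}),\zz(t_i)-\zz(t_{i-1}))\ge\HH_{\rm eff}(\pp(t_i)-\pp(t_{i-1}),0)$, the last inequality since $\theta\mapsto H_{\rm eff}(\xi,\theta)$ is convex and even. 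Summing over $i$, telescoping the $\{V\}$-terms via the projection property, and taking the supremum over partitions, one gets $\D_H(\muu;0,t)+\langle\{V\}(\theta,\eta),\muu_t\rangle-\langle\{V\}(\theta,\eta),\muu_0\rangle\ge\D_{\rm eff}(\pp;0,t)$, where $\D_{\rm eff}$ denotes the total dissipation of $\pp$ induced by the density $H_{\rm eff}(\cdot,0)$; combining with (ev2) yields $\QQ(\ee(t))+\D_{\rm eff}(\pp;0,t)\le\QQ(\ee(0))+\int_0^t\langle\sigmaa(s),E\dot\ww(s)\rangle\,ds$, and in particular $\D_{\rm eff}(\pp;0,t)<+\infty$ (the bounds behind (ev2) control $\ee$, hence $\sigmaa$, on bounded intervals). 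The converse inequality is the standard consequence of global stability: along a partition one tests the stability inequality at $s_{j-1}$ with the natural competitor $\big(\uu(s_j)-\uu(s_{j-1})-(\ww(s_j)-\ww(s_{j-1})),\,\ee(s_j)-\ee(s_{j-1})-(E\ww(s_j)-E\ww(s_{j-1})),\,\pp(s_j)-\pp(s_{j-1})\big)\in A(0)$, expands $\QQ$, sums, and passes to the limit using the absolute continuity of $\ww$; since $\D_{\rm eff}<+\infty$ forces $\pp\in BV$ with values in $M_b(\ol\Om;\MD)$, this argument runs exactly as in \cite{DM-DeS-Mor} and also provides the continuity of $t\mapsto\ee(t)$. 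Together with ``$\le$'' this gives the energy balance, and the three conditions are established.

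I expect the global stability step to be the genuine obstacle — specifically, the observation that $\ol\muu_t^Y$ cannot be assumed atomic and that one must pass to the averaged potential $\ol V_t(x,\cdot)$, whose recession (hence the relaxed functional $\HH_{\rm eff}$) is unchanged — together with the routine but not automatic verification that the lamination/relaxation argument of Theorem~\ref{thm:1app} is insensitive to replacing $V(z_0(x)+\cdot)$ by the $x$-measurable $\ol V_t(x,\cdot)$, which is dealt with by the same piecewise-constant-in-$x$ approximation used there.
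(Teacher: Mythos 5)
Your proposal has the same overall architecture as the paper's proof — admissibility via Remark~\ref{rmk:barAYn}; the dissipation comparison $\D_{\!H_{\rm eff}}((\pp,0);0,T)\le \D_{\!H}(\muu;0,T)+\langle \{V\},\muu_T\rangle-\langle\{V\},\muu_0\rangle$ obtained by exactly the same partition/Jensen/evenness argument; and the full energy balance recovered from stability plus the upper energy estimate, which you redo by the Riemann-sum argument while the paper simply cites \cite[Theorem~4.7]{DM-DeS-Mor} — but on the central stability step you take a genuinely different route. The paper also introduces the averaged integrand $F(x,\tilde\xi,\tilde\theta)=\int [H(\tilde\xi,\tilde\theta)+V(\theta+\tilde\theta)-V(\theta)]\,d\mu_t^{x,Y}(\xi,\theta)$, i.e.\ your $H+\ol V_t(x,\cdot)-\ol V_t(x,0)$, and proves ${\rm co}\,F=H_{\rm eff}$ just as you do; however, it does not rerun Theorem~\ref{thm:1app} for this $x$-measurable potential. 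It applies Theorem~\ref{thm:1app} with the original $V$ and $z_0=0$ to get $(\tilde p_k,\tilde z_k)$ with $\HH(\tilde p_k,\tilde z_k)+\V(\tilde z_k)\to\HH_{\rm eff}(\tilde p,0)+\V(0)$, then uses the structure Theorem~\ref{thm311} to identify the limit $\nu$ of $\delta_{(\tilde p_k,\tilde z_k)}$ explicitly, and from that structure (plus $F^\infty=H+V^\infty$) concludes that the \emph{same} sequence is a recovery sequence for the averaged integrand, which is what (ev1) needs. Your route instead asserts that the proof of Theorem~\ref{thm:1app} extends to $\ol V_t(x,\cdot)$; note this is not covered by the theorem as stated, since $\ol V_t(x,\cdot)$ is an average of translates of $V$, not a translate $V(z_0(x)+\cdot)$. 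The extension is plausible (uniform $b_V$-Lipschitz bound, concavity, unchanged recession; the piecewise-constant-in-$x$ reduction must now be done on the disintegration, e.g.\ approximating $x\mapsto\mu_t^{x,Y}$-marginals by simple functions in the Wasserstein-$1$ sense, which controls the error uniformly in the shift), but it is exactly the re-derivation that the paper's structure-theorem detour is designed to avoid: the paper buys its recovery sequence from already-proved results at the cost of invoking Theorem~\ref{thm311}, while your version is conceptually more direct but owes this extra verification.
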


\begin{proof}
Let us fix $t\in[0,+\infty)$. We want to prove that
\begin{equation}\label{minim300}
\QQ(\ee(t)) \le \QQ(\ee(t)+\tilde e)+ \HH_{\rm eff}(\tilde p,0)\,.
\end{equation}
for every $(\tilde u, \tilde e, \tilde p)\in A(0)$. 
Let us fix  $(\tilde u, \tilde e, \tilde p)$, let $(\mu^Y_t,\mu^\infty_t)$ be the pair 
of measures associated with $\muu_t$ by \cite[Theorem~4.3]{DM-DeS-Mor-Mor-1},
let $(\mu_t^{x,Y})_{x\in\Om}$ be the disintegration of $\mu^Y_t$
considered in \cite[Remark~4.5]{DM-DeS-Mor-Mor-1}, let  
$$
F(x,\tilde \xi,\tilde \theta):= \int_{\MD{\times}\R}[H(\tilde \xi,\tilde \theta) + V(\theta+\tilde \theta)-  V(\theta)]\,d\mu_t^{x,Y}(\xi,\theta)\,,
$$
let $F^\infty$ be the recession function of $F$  with respect to $(\tilde \xi,\tilde \theta)$,
and let ${\rm co}\,F$ be the convex envelope of $F$ with respect to $(\tilde \xi,\tilde \theta)$. It is easy to see that $F^\infty=H+V^\infty$. We claim that 
\begin{equation}\label{coFx}
{\rm co}\,F=H_{\rm eff}\,.
\end{equation}
Indeed, as $V$ is concave, we have $V(\theta+ \tilde\theta)-V(\theta)\ge V^\infty(\tilde\theta)$, which gives
$$
F(x,\tilde \xi,\tilde \theta)\ge  \int_{\MD{\times}\R}[H(\tilde \xi,\tilde \theta) + V^\infty(\tilde \theta)]\,d\mu_t^{x,Y}(\xi,\theta)=H(\tilde \xi,\tilde \theta) + V^\infty(\tilde \theta)\ge H_{\rm eff}(\tilde \xi,\tilde \theta)\,,
$$
where the intermediate equality follows from the fact that $\mu_t^{x,Y}$ is a probability measure. This implies ${\rm co}\,F\ge H_{\rm eff}$. The opposite inequality can be obtained arguing as in the proof of~(\ref{coF3}).

By Theorem~\ref{thm:1app} there exist a sequence 
$(\tilde u_k, \tilde e_k, \tilde p_k)\in A(0)$ and a sequence $\tilde z_k\in L^1(\Om)$ 
such that $ \tilde p_k\in L^1(\Om;\MD)$, $\tilde e_k\to \tilde e$ strongly in  $L^2(\Om;\Mnn)$, $\tilde p_k\wto \tilde p$ weakly$^*$ in $M_b(\ol\Om;\MD)$, $\tilde z_k\wto 0$ 
weakly$^*$ in $M_b(\ol\Om)$, and
$$
\HH(\tilde p_k, \tilde z_k)+\V(\tilde z_k) \quad\longrightarrow  \quad
\HH_{\rm eff}(\tilde p,0) +\V(0)\,.
$$
Passing to a subsequence, we may assume that $\delta_{(\tilde p_k, \tilde z_k)}$
converges weakly$^*$ to some $\nu \in GY(\ol\Om;\MD{\times}\R)$. We note that 
$\bary(\nu)=(\tilde p,0)$ and that 
$$
\langle H(\tilde \xi,\tilde\theta) +\{V\}(\tilde\theta,\eta),\nu(x,\tilde \xi,\tilde\theta,\eta)\rangle
=\HH_{\rm eff}(\tilde p,0) +\V(0)\,.
$$
By Theorem~\ref{thm311} we deduce that
\begin{equation}\label{677}
\nu=\delta_{(0,0)}+
\tfrac12 \omega_{(\tilde p^a, z)}^{\Ln}+
\tfrac12 \omega_{(\tilde p^a,- z)}^{\Ln}+
\tfrac12 \omega_{(\tilde p^\lambda, z_\lambda)}^{\lambda}+
\tfrac12 \omega_{(\tilde p^\lambda, -z_\lambda)}^{\lambda}\,, 
\end{equation}
where $\lambda:=|\tilde p^s|$ and $\tilde p^\lambda$ is the Radon-Nikodym derivative of
$\tilde p^s$ with respect to $|\tilde p^s|$, while $z$ and $z_\lambda$ are two nonnegative functions such that
\begin{equation}\label{678}
\begin{array}{c}
H_{\rm eff}(\tilde p^a,0)= H(\tilde p^a, z)+ V^\infty(z)\quad
\text{a.e.\ in } \Om\,
\smallskip\\
H_{\rm eff}(\tilde p^\lambda,0)= 
H(\tilde p^\lambda, z_\lambda)+ V^\infty(z_\lambda)
\quad \lambda\text{-a.e.\ in }\ol\Om
\,.
\end{array}
\end{equation}
As $\delta_{(\tilde p_k, \tilde z_k)}\wto\nu$ weakly$^*$ in $GY(\ol\Om;\MD{\times}\R)$,
we have
$$
 \int_\Om F(x,\tilde p_k, \tilde z_k)\,dx \quad\longrightarrow  \quad
 \langle \{F\},\nu\rangle\,,
$$
where 
$$
\{F\}(\tilde \xi,\tilde\theta,\eta):=
\begin{cases}
\eta F(\tilde \xi/\eta,\tilde\theta/\eta)&\text{if }\eta>0\,,
\\
F^\infty(\tilde \xi,\tilde\theta)&\text{if }\eta\le 0\,.
\end{cases}
$$
As $F^\infty=H+V^\infty$, using \eqref{677} and \eqref{678} we obtain
$\langle \{F\},\nu\rangle=\HH_{\rm eff}(\tilde p,0)$, hence using also the strong
convergence of $\tilde e_k$ to $\tilde e$, we deduce
$$
\QQ(\ee(t)+\tilde e_k)+ \int_\Om F(x,\tilde p_k, \tilde z_k)\,dx \quad\longrightarrow  \quad\QQ(\ee(t)+\tilde e) + \HH_{\rm eff}(\tilde p,0)\,.
$$
{}From the definition of $F(x,\tilde \xi, \tilde \theta)$ this is equivalent to saying that
$$
\QQ(\ee(t)+\tilde e_k)+  \HH(\tilde p_k,\tilde z_k)+ \langle \{V\}(\theta + \eta \tilde z_k(x),\eta)-  \{V\}(\theta,\eta), \muu_t (x,\xi,\theta,\eta)\rangle
$$
converges to
$\QQ(\ee(t)+\tilde e) + \HH_{\rm eff}(\tilde p,0)$. Since
$$
\QQ(\ee(t))\le \QQ(\ee(t)+\tilde e_k)+  \HH(\tilde p_k,\tilde z_k)+ \langle \{V\}(\theta + \eta \tilde z_k(x),\eta)-  \{V\}(\theta,\eta), \muu_t (x,\xi,\theta,\eta)\rangle
$$
by (ev1), we obtain (\ref{minim300}) by passing to the limit as $k\to\infty$.

Thanks to \cite[Theorem~4.7]{DM-DeS-Mor}, to conclude the proof of the theorem it is enough to show that
\begin{equation}\label{enineq300}
\QQ(\ee(T)) +\D_{\!H_{\rm eff}}((\pp,0);0,T) \le \QQ(e_0)+ \int_0^T\langle \sigmaa(t),E\dot \ww(t)\rangle\,dt
\end{equation}
for every $T\in(0,+\infty)$, where $\D_{\!H_{\rm eff}}((\pp,0);0,T)$ is defined as in
\cite[Section~4]{DM-DeS-Mor}.
By (ev2) it suffices to prove that
\begin{equation}\label{diss300}
\begin{array}{c}
\D_{\!H_{\rm eff}}((\pp,0);0,T) \le \D_{\!H}(\muu;0,T)+ \langle \{V\}(\theta ,\eta),
\muu_T (x,\xi,\theta,\eta)\rangle-{}
\smallskip
\\
{}- \langle \{V\}(\theta ,\eta),
\muu_0 (x,\xi,\theta,\eta)\rangle\,.
\end{array}
\end{equation}
To show this we fix any subdivision $(t_i)_{0\le i\le k}$ of the interval $[0,T]$. 
{}From the definition of $\D_{\!H}$ we obtain, using the compatibility 
condition~(7.2) of \cite{DM-DeS-Mor-Mor-1},
$$
\begin{array}{c}
\D_{\!H}(\muu;0,T)+ \langle \{V\}(\theta ,\eta),\muu_T (x,\xi,\theta,\eta)\rangle-  
\langle \{V\}(\theta ,\eta),\muu_0 (x,\xi,\theta,\eta)\rangle \ge
 \\
\displaystyle \ge \sum_{i=1}^k\langle H(\xi_i-\xi_{i-1},\theta_i-\theta_{i-1})+ 
\{V\}(\theta_i ,\eta)-\{V\}(\theta_{i-1} ,\eta), \muu_{t_{i\!-\!1}t_i} (x,\xi_{i-1},\theta_{i-1},\xi_i,\theta_i,\eta)\rangle\ge \vspace{-5pt}
\\ 
\displaystyle \ge  \sum_{i=1}^k\langle H_{\rm eff}(\xi_i-\xi_{i-1},\theta_i-\theta_{i-1}),  \muu_{t_{i\!-\!1}t_i} (x,\xi_{i-1},\theta_{i-1},\xi_i,\theta_i,\eta)\ge \vspace{-5pt} 
\\
\displaystyle \ge  \sum_{i=1}^k \HH_{\rm eff}(\pp(t_i)-\pp(t_{i-1}),\zz(t_i)-\zz(t_{i-1}))\, ,
\end{array}
$$
where the last inequality follows from the Jensen inequality.
Recalling that $H_{\rm eff}(\xi,\theta)\ge H_{\rm eff}(\xi,0)$ for every $\xi$ and $\theta$, from the arbitrariness of the subdivision we obtain (\ref{diss300}).
\end{proof}

Every globally stable quasistatic evolution of Young measures is absolutely
continuous with respect to time, as made precise by the following theorem.

\begin{theorem}\label{ACmuu}
Let $(\uu,\ee,\muu)$ be a globally stable quasistatic evolution of Young measures.
Then for every $T\in[0,+\infty)$ the functions $\uu$ and $\ee$ are absolutely continuous
on $[0,T]$ with values in $BD(\Om)$ and $L^2(\Om;\Mnn)$, respectively, while
$\muu$ is absolutely continuous on $[0,T]$ according to \cite[Definition~10.1]{DM-DeS-Mor-Mor-1}.
\end{theorem}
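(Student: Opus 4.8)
The plan is to treat the two assertions separately. The absolute continuity of $\uu$ and $\ee$ comes directly from Theorem~\ref{thm:keff}: since $(\uu,\ee,\pp)$ is a quasistatic evolution corresponding to $\xi\mapsto H_{\rm eff}(\xi,0)$, the regularity theory for perfectly plastic quasistatic evolutions of \cite{DM-DeS-Mor} (absolute continuity in time of solutions with absolutely continuous data) gives that $t\mapsto\uu(t)$ is absolutely continuous on $[0,T]$ with values in $BD(\Om)$ and $t\mapsto\ee(t)$ is absolutely continuous on $[0,T]$ with values in $L^2(\Om;\Mnn)$. Since $\ee$ is then bounded on $[0,T]$ and $\QQ$ is Lipschitz on bounded subsets of $L^2(\Om;\Mnn)$ by \eqref{normC}, the function $t\mapsto\QQ(\ee(t))$ is absolutely continuous on $[0,T]$; and since $\sigmaa=\C\ee$ is bounded on $[0,T]$ and $\|E\dot\ww\|_2\in L^1([0,T])$, the function $t\mapsto\int_0^t\langle\sigmaa(\tau),E\dot\ww(\tau)\rangle\,d\tau$ is absolutely continuous as well.

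The substance of the proof is the absolute continuity of $\muu$. Here I would introduce the combined energy--dissipation function
\[
Q(t):=\D_{\!H}(\muu;0,t)+\langle\{V\}(\theta,\eta),\muu_t(x,\xi,\theta,\eta)\rangle\,,\qquad t\in[0,T]\,.
\]
Writing the energy balance (ev2) on the interval $[0,t]$ shows that $Q(t)=\QQ(\ee(0))+\langle\{V\}(\theta,\eta),\muu_0\rangle+\int_0^t\langle\sigmaa,E\dot\ww\rangle\,d\tau-\QQ(\ee(t))$, so by the first paragraph $Q$ is absolutely continuous on $[0,T]$. The key step is a monotonicity estimate for $Q$. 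Homogenising the coercivity \eqref{gammaM} (using it on rescaled arguments and multiplying through by $\eta$ when $\eta>0$, and passing to recession functions when $\eta\le0$) one obtains, for all arguments,
\[
H(\xi_1-\xi_0,\theta_1-\theta_0)+\{V\}(\theta_1,\eta)-\{V\}(\theta_0,\eta)\ \ge\ C^K_V\bigl(|\xi_1-\xi_0|+|\theta_1-\theta_0|\bigr)\,.
\]
Using the trivial subdivision $\{s,t\}$, the bound $\langle H(\xi_1-\xi_0,\theta_1-\theta_0),\muu_{st}\rangle\le\D_{\!H}(\muu;s,t)$, the additivity of $\D_{\!H}$ (proved as for ${\rm Var}$, via \eqref{triangle0}), and the marginal properties of $\muu_{st}$, one gets for $s<t$
\[
C^K_V\,\langle|\xi_1-\xi_0|+|\theta_1-\theta_0|,\muu_{st}\rangle\ \le\ Q(t)-Q(s)\,.
\]
In particular $Q$ is nondecreasing; and since $\theta\mapsto\{V\}(\theta,\eta)$ is $b_V$-Lipschitz (because $|V'|\le b_V$ by \eqref{2der} and \eqref{Vsymm20}), the increments of $t\mapsto\langle\{V\}(\theta,\eta),\muu_t\rangle$, hence those of $t\mapsto\D_{\!H}(\muu;0,t)=Q(t)-\langle\{V\}(\theta,\eta),\muu_t\rangle$, are bounded by $(1+b_V/C^K_V)\bigl(Q(t)-Q(s)\bigr)$. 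Being dominated by the increments of the absolutely continuous function $Q$, these functions are absolutely continuous on $[0,T]$; and finally, by the first inequality in \eqref{boundsH}, ${\rm Var}(\muu;s,t)\le A^{-1}\D_{\!H}(\muu;s,t)$, so $t\mapsto{\rm Var}(\muu;0,t)$ is absolutely continuous, which is precisely the absolute continuity of $\muu$ in the sense of \cite[Definition~10.1]{DM-DeS-Mor-Mor-1}.

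I expect the monotonicity estimate for $Q$ to be the main obstacle: although $t\mapsto\langle\{V\}(\theta,\eta),\muu_t\rangle$ need not be monotone on its own, its sum with the $H$-dissipation is, and establishing this requires the homogeneous form of \eqref{gammaM}, with the cases $\eta>0$ and $\eta\le0$ handled separately (in the latter one combines the reverse triangle inequality for $V^\infty$ with the recession version of \eqref{gammaM}). A secondary point requiring care is matching the bound on $\D_{\!H}$ with the precise definitions of ${\rm Var}(\muu;\cdot,\cdot)$ in \cite[Section~8]{DM-DeS-Mor-Mor-1} and of absolute continuity in \cite[Definition~10.1]{DM-DeS-Mor-Mor-1}; the remaining reasoning is the routine fact that a nondecreasing function whose increments are controlled by those of an absolutely continuous function is itself absolutely continuous.
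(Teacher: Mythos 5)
Your proof is correct and follows essentially the same route as the paper: the first claim is quoted from Theorem~\ref{thm:keff} together with \cite[Theorem~5.2]{DM-DeS-Mor}, and the absolute continuity of $\muu$ is obtained from the homogenised form of \eqref{gammaM} combined with the energy balance (ev2), which controls the dissipation increments by the increments of $t\mapsto\QQ(\ee(t))$ and of $t\mapsto\int_0^t|\langle\sigmaa,E\dot\ww\rangle|\,ds$. The only difference is presentational: you package the estimate through the auxiliary nondecreasing function $Q$, whereas the paper bounds $C^K_V\,{\rm Var}(\muu;t_1,t_2)$ directly by $|\QQ(\ee(t_1))-\QQ(\ee(t_2))|+\int_{t_1}^{t_2}|\langle\sigmaa(t),E\dot\ww(t)\rangle|\,dt$.
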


\begin{proof}
The assertion on $\uu$ and $\ee$ follows from
Theorem~\ref{thm:keff} and \cite[Theorem~5.2]{DM-DeS-Mor}.

By \eqref{gammaM} we have 
$$
C^K_V{\rm Var}(\muu;t_1,t_2)\!\leq\! \D_{\!H}(\muu;t_1,t_2)\!+
\!\langle \{V\}(\theta_2,\eta), \muu_{t_2}(x, \xi_2, \theta_2, \eta)\rangle\!-\!
\langle \{V\}(\theta_1,\eta), \muu_{t_1}(x, \xi_1, \theta_1, \eta)\rangle\,.
$$
It follows from the energy balance (ev2) that the right-hand side of the previous inequality is equal to 
$$
\QQ(\ee(t_1))- \QQ(\ee(t_2)) +
\int_{t_1}^{t_2} \langle\sigmaa(t), E\dot \ww(t)\rangle\, dt\,,
$$
therefore
$$
\begin{array}{c}
C^K_V\langle |\xi_2-\xi_1|+|\theta_2-\theta_1|, \muu_{t_1t_2}(x,\xi_1, \theta_1, \xi_2, \theta_2, \eta)\rangle
\leq 
\smallskip
\\
\displaystyle
\leq
|\QQ(\ee(t_1))- \QQ(\ee(t_2))| +
\int_{t_1}^{t_2} |\langle\sigmaa(t), E\dot \ww(t)\rangle|\, dt\,.
\end{array}
$$
Since the functions $t\mapsto \QQ(\ee(t))$ and $t\mapsto \int_0^t |\langle\sigmaa(s), E\dot \ww(s)\rangle|\, ds$ are absolutely continuous, we conclude that
$\muu$ satisfies \cite[Definition~10.1]{DM-DeS-Mor-Mor-1}.
\end{proof}

Owing to the previous theorem, if $(\uu,\ee,\muu)$ is a globally stable
quasistatic evolution of Young measures, then $\muu$ has a weak$^*$ derivative
$\dot\muu_t$ at a.e.\ time $t\in[0+\infty)$ in the sense of 
\cite[Definition~9.4]{DM-DeS-Mor-Mor-1}.
The next theorem deals with the structure of $\dot\muu_t$ and shows that the finite part $\ol\muu^Y_t$ of
$\muu_t$ does not evolve. 

\begin{theorem}\label{thm4.18}
Let $\ol p_0\in L^1(\Om;\MD)$,  $\ol z_0\in L^1(\Om)$,  
$\ww\in AC_{loc}([0,+\infty); H^1(\Om;\Rn))$,
let $(\uu,\ee,\muu)$ be a globally stable quasistatic evolution of Young measures
with boundary datum $\ww$ such that $\ol\muu_0^Y=\delta_{(\ol p_0,\ol z_0)}$,
and let $(\pp(t),\zz(t)):=\bary(\muu_t)$.
Denote  the total variation of the measure $(\dot\pp^s(t),\dot\zz^s(t))$ by $\lambdaa(t)$, and
let $(\dot\pp^\lambdaa(t), \dot\zz^\lambdaa(t))$ be the Radon-Nikodym derivative
of the measure
$(\dot\pp^s(t),\dot\zz^s(t))$ with respect to $\lambdaa(t)$. 
By Lemma~\ref{cothetaG} for a.e.\ $t\in[0,+\infty)$ there exist $\hat\zz(t)\in L^1(\Om)$, 
with $\hat\zz(t)\,\dot\zz^a(t)\ge 0$ a.e.\ on $\Om$, and $\alphaa(t)\in L^\infty(\Om)$,
with $0\leq\alphaa(t)\leq1$  a.e.\ on $\Om$, such that 
\begin{equation}\label{eqthm418}
\begin{array}{c}
\dot\zz^a(t)=\alphaa(t)\hat\zz(t) + (1-\alphaa(t))(-\hat\zz(t))\,, 
\smallskip
\\ 
H_{\rm eff}(\dot \pp^a(t),\dot \zz^a(t))= H(\dot \pp^a(t),\hat \zz(t))+ V^\infty(\hat\zz(t))
\end{array}
\end{equation}
a.e.\ in $\Om$, and there exist $\hat\zz_\lambdaa(t)\in L^1_{\lambdaa(t)}(\ol\Om)$, with 
$\hat\zz_\lambdaa(t)\,\dot\zz^\lambdaa(t)\ge 0$ $\lambdaa(t)$-a.e.\ on $\ol\Om$,
and $\alphaa_\lambdaa(t)\in L^\infty_{\lambdaa(t)}(\ol\Om)$, with $0\le\alphaa_\lambdaa(t)\le1$
$\lambdaa(t)$-a.e.\ on $\ol\Om$,
such that
\begin{equation}\label{eqthm418b}
\begin{array}{c}
\dot\zz^\lambdaa(t)=\alphaa_\lambdaa(t) \hat\zz_\lambdaa(t) + (1-\alphaa_\lambdaa(t))
(-\hat\zz_\lambdaa(t))\,,
\smallskip
\\ 
H_{\rm eff}(\dot\pp^\lambdaa(t), \dot\zz^\lambdaa(t))= 
H(\dot\pp^\lambdaa(t), \hat\zz_\lambdaa(t))+ V^\infty(\hat\zz_\lambdaa(t))
\end{array}
\end{equation}
$\lambdaa(t)$-a.e.\ in $\ol\Om$. 
Then 
\begin{equation}\label{mupunto}
\begin{array}{c}
\dot\muu_t=\delta_{(0,0)}+
\alphaa(t)\omega_{(\dot\pp^a(t), \hat\zz(t))}^{\Ln}+
(1-\alphaa(t))\omega_{(\dot\pp^a(t), -\hat\zz(t))}^{\Ln}+{}
\smallskip\\
{}+
\alphaa_\lambdaa(t)\omega_{(\dot\pp^\lambdaa(t), \hat\zz_\lambdaa(t))}^{\lambdaa(t)}+
(1-\alphaa_\lambdaa(t))\omega_{(\dot\pp^\lambdaa(t), -\hat\zz_\lambdaa(t))}^{\lambdaa(t)}
\end{array}
\end{equation}
and 
\begin{equation}\label{heffzpunto}
\HH_{\rm eff}(\dot\pp(t),\dot\zz(t))=\HH_{\rm eff}(\dot\pp(t),0)
\end{equation}
for a.e.\ $t\in[0,+\infty)$. Moreover,
\begin{equation}\label{weq1}
\ol\muu_t^Y=\delta_{(\ol p_0,\ol z_0)} 
\end{equation}
for every $t\in [0,+\infty)$.
\end{theorem}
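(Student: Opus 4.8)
The plan is to differentiate the energy balance in time and to identify, for a.e.\ $t$, the weak$^*$ derivative $\dot\muu_t$ as a generalised Young measure to which Theorem~\ref{thm311} applies. First I would record that, by Theorem~\ref{ACmuu}, $\muu$ is absolutely continuous on every $[0,T]$, so by the differential calculus for absolutely continuous systems of generalised Young measures of \cite[Sections~9--10]{DM-DeS-Mor-Mor-1} the derivative $\dot\muu_t\in GY(\ol\Om;\MD{\times}\R)$ exists for a.e.\ $t$, with $\bary(\dot\muu_t)=(\dot\pp(t),\dot\zz(t))$, and the paths $\pp,\zz$ are absolutely continuous as well. Next I would invoke Theorem~\ref{thm:keff}: $(\uu,\ee,\pp)$ is a quasistatic evolution corresponding to $\xi\mapsto H_{\rm eff}(\xi,0)$, hence (\cite[Section~4]{DM-DeS-Mor}) it satisfies its own energy balance $\QQ(\ee(T))+\D_{\!H_{\rm eff}}((\pp,0);0,T)=\QQ(e_0)+\int_0^T\langle\sigmaa(t),E\dot\ww(t)\rangle\,dt$ together with $\D_{\!H_{\rm eff}}((\pp,0);0,T)=\int_0^T\HH_{\rm eff}(\dot\pp(t),0)\,dt$.

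Subtracting this energy balance from (ev2) would give, for every $T>0$,
\[
\D_{\!H}(\muu;0,T)+\langle\{V\}(\theta,\eta),\muu_T\rangle-\langle\{V\}(\theta,\eta),\muu_0\rangle=\int_0^T\HH_{\rm eff}(\dot\pp(t),0)\,dt.
\]
Because $\muu$ is absolutely continuous, the left-hand side is a time integral whose integrand, by the calculus of \cite[Sections~9--10]{DM-DeS-Mor-Mor-1}, should equal $\langle H(\xi_1,\theta_1)+\{V\}(\theta_1,\eta),\dot\muu_t\rangle-\V(0)$ --- the $H$-part being the infinitesimal $H$-dissipation rate of $\muu$ at $t$, and the remainder, up to the additive constant $\V(0)=\langle\{V\}(0,\eta),\dot\muu_t\rangle$, being $\tfrac{d}{dt}\langle\{V\},\muu_t\rangle$. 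On the other hand I would verify the pointwise lower bound $\langle H(\xi_1,\theta_1)+\{V\}(\theta_1,\eta),\dot\muu_t\rangle-\V(0)\ge\HH_{\rm eff}(\dot\pp(t),\dot\zz(t))\ge\HH_{\rm eff}(\dot\pp(t),0)$: the first inequality from $H_{\rm eff}(\xi_1,\theta_1)\le H(\xi_1,\theta_1)+\{V\}(\theta_1,\eta)-\{V\}(0,\eta)$ and the Jensen inequality for generalised Young measures \cite[Theorem~6.5]{DM-DeS-Mor-Mor-1}, exactly as in the proof of Theorem~\ref{lm:eqmin}; the second because $\theta\mapsto H_{\rm eff}(\xi,\theta)$ is convex and even. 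Comparing with the integral identity then forces equality a.e.\ in both places: the equality $\HH_{\rm eff}(\dot\pp(t),\dot\zz(t))=\HH_{\rm eff}(\dot\pp(t),0)$ is \eqref{heffzpunto}, and what remains is $\langle H(\xi_1,\theta_1)+\{V\}(\theta_1,\eta),\dot\muu_t\rangle=\HH_{\rm eff}(\dot\pp(t),\dot\zz(t))+\V(0)$ for a.e.\ $t$.

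This last identity is precisely hypothesis~\eqref{eq092} of Theorem~\ref{thm311} for $\mu_1:=\dot\muu_t$, $p_0=0$, $z_0=0$, $p_1=\dot\pp(t)$, $z_1=\dot\zz(t)$; there the singular measure $\lambda$ becomes $\lambdaa(t)$, the total variation of $(\dot\pp^s(t),\dot\zz^s(t))$, and the functions $\hat\zz(t),\alphaa(t),\hat\zz_\lambdaa(t),\alphaa_\lambdaa(t)$ are exactly those produced by Lemma~\ref{cothetaG} in the statement of that theorem. Applying Theorem~\ref{thm311} pointwise in $t$ would therefore yield the representation~\eqref{mupunto}. I would then read off from \eqref{mupunto} that $\ol{\dot\muu_t}^Y=\delta_{(0,0)}$ for a.e.\ $t$: the finite part of the velocity is a Dirac mass at the origin, of barycentre $(0,0)$. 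By the fundamental theorem of calculus for absolutely continuous systems (\cite[Sections~9--10]{DM-DeS-Mor-Mor-1}), the finite part of $\muu_t$ is then transported by $t\mapsto\bary(\ol{\dot\muu_t}^Y)=(0,0)$ without spreading, hence it does not change; combined with $\ol\muu_0^Y=\delta_{(\ol p_0,\ol z_0)}$ and the weak$^*$ left-continuity of $\muu$, this gives $\ol\muu_t^Y=\delta_{(\ol p_0,\ol z_0)}$ for every $t$, i.e.\ \eqref{weq1}.

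I expect the main obstacle to be the second step: expressing the $H$-dissipation rate and the time derivative of the softening term of $\muu$ through $\dot\muu_t$ so as to recognise the integrand as $\langle H(\xi_1,\theta_1)+\{V\}(\theta_1,\eta),\dot\muu_t\rangle$ up to the constant $\V(0)$, and then upgrading the integral equality to the pointwise one via the reverse Jensen inequality. Both rely on the differential calculus for absolutely continuous systems of generalised Young measures of \cite[Sections~9--10]{DM-DeS-Mor-Mor-1}, in particular on the integral representations, along such systems, of the $H$-dissipation and of the variation. The closing step --- deducing that the finite part is frozen from $\ol{\dot\muu_t}^Y=\delta_{(0,0)}$ --- is the continuous-time counterpart of Theorem~\ref{lmstruc} and leans on the same calculus.
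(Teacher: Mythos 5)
Your overall skeleton is the paper's: differentiate the two energy balances, compare the resulting rate identity with the Jensen lower bound, apply Theorem~\ref{thm311} to $\dot\muu_t$ with $p_0=0$, $z_0=0$, and then argue that the finite part is frozen. But two of your steps have genuine gaps. First, the identity you propose to ``verify'', namely $\tfrac{d}{dt}\langle\{V\},\muu_t\rangle=\langle\{V\}(\theta_1,\eta),\dot\muu_t\rangle-\V(0)$, is false for a general absolutely continuous system: since $\{V\}$ is not linear, the difference quotient $\tfrac{1}{t_2-t_1}\langle\{V\}(\theta_2,\eta)-\{V\}(\theta_1,\eta),\muu_{t_1t_2}\rangle$ only dominates $\langle V^\infty(\tfrac{\theta_2-\theta_1}{t_2-t_1}),\muu_{t_1t_2}\rangle$ by concavity, and the system $\muu^1$ of Remark~\ref{rm:mupunto} shows the inequality can be strict in the limit. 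What the differentiation argument actually delivers is $\langle H+V^\infty,\dot\muu_t\rangle=\HH_{\rm eff}(\dot\pp(t),\dot\zz(t))$ (the paper's \eqref{fundeq}), which is strictly weaker than hypothesis \eqref{eq092} of Theorem~\ref{thm311}: upgrading $V^\infty$ to $\{V\}$ requires knowing that the finite part of $\dot\muu_t$ carries no mass at $\theta\neq0$, because by \eqref{VinftyV} that is exactly where $\{V\}(\theta,\eta)>V^\infty(\theta)+\{V\}(0,\eta)$. The paper obtains this through \eqref{vhder}, \eqref{yyy1}, \eqref{stop1}, \eqref{DC} and \eqref{stop2}, i.e.\ by first proving $\pi_\R(\ol\muu_t^Y)=\delta_{\ol z_0}$ using the hypothesis $\ol\muu_0^Y=\delta_{(\ol p_0,\ol z_0)}$ and Corollary~\ref{cor-prod}; this entire block is absent from your plan, and without it Theorem~\ref{thm311} cannot be invoked.

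Second, your closing step is precisely the fallacy that Remark~\ref{rm:mupunto} is there to warn against. From \eqref{mupunto} you know $\ol{(\dot\muu_t)}{}^Y=\delta_{(0,0)}$, but this does \emph{not} imply that $\ol\muu_t^Y$ is constant in time: the system $\muu^1$ of that remark satisfies $\ol{(\dot\muu{}^1_t)}^Y=\delta_0$ while its finite part evolves. The point is that $\ol{(\dot\muu_t)}{}^Y$ (finite part of the derivative) is not the same object as $\dot{\ol\muu}{}^Y_t$ (derivative of the finite-part system); only the weaker identity \eqref{DC} relates them. What must be shown is $\dot{\ol\muu}{}^Y_t=\delta_{(0,0)}$, i.e.\ that the concentration part $\nu_1^\infty$ in the decomposition \eqref{dec.} vanishes, and the paper needs a substantial separate argument for this (the contradiction argument with the test functions $1_{A'}(x)|\xi|$ and $1_{A_M}(x)(|\xi|-M^2|\theta|)^+$, exploiting the positivity of $\nu_2^\infty$ and the fact that $\hat\zz\neq0$ wherever $\dot\pp^a\neq0$ by Lemma~\ref{lemma01}). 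Only after \eqref{CS} is established does \cite[Theorem~10.4]{DM-DeS-Mor-Mor-1} give ${\rm Var}(\ol\muu^Y;0,t)=0$ and hence \eqref{weq1}.
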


\begin{proof}
As the system $\muu$ is absolutely continuous with respect to time by Theorem~\ref{ACmuu}, 
we have by \cite[Theorem~10.4]{DM-DeS-Mor-Mor-1} that
for every $t_1,t_2\in[0,+\infty)$ with $t_1<t_2$
\begin{equation}\label{en2eq}
\D_{\!H}(\muu;t_1,t_2)= \int_{t_1}^{t_2}\langle H(\xi,\theta),\dot \muu_t(x,\xi,\theta,\eta)
\rangle\, dt\,,
\end{equation}
where $\dot\muu_t$ is the weak$^*$ derivative of $\muu$ at time $t$
in the sense of \cite[Definition~9.4]{DM-DeS-Mor-Mor-1}.
By \cite[Remark~9.6]{DM-DeS-Mor-Mor-1} we also have that 
the maps $t\mapsto \pp(t)$ and $t\mapsto \zz(t)$ are absolutely continuous
and $(\dot\pp(t),\dot\zz(t))=\bary(\dot\muu_t)$
for a.e.\ $t\in[0,+\infty)$.

From Theorem~\ref{thm:keff} and \cite[Proposition~5.6]{DM-DeS-Mor} 
it follows that
$$
\QQ(\ee(t_2))+\int_{t_1}^{t_2}\HH_{\rm eff}(\dot \pp(t),0)\, dt =
\QQ(\ee(t_1))+\int_{t_1}^{t_2} \langle\sigmaa(t), E\dot\ww(t)\rangle\, dt\,.
$$
By (ev2) and \eqref{en2eq} we deduce that for every $t_1<t_2$
\begin{equation}\label{balance}
\int_{t_1}^{t_2}\langle H(\xi,\theta),\dot \muu_t
\rangle\, dt
+\langle \{V\}(\theta_2,\eta)-\{V\}(\theta_1,\eta), \muu_{t_1t_2}\rangle
=  \int_{t_1}^{t_2}\HH_{\rm eff}(\dot \pp(t),0)\, dt\,,
\end{equation}
where the measure $\dot\muu_t$ acts on $(x,\xi,\theta,\eta)$, while
$\muu_{t_1t_2}$ acts on $(x,\xi_1,\theta_1,\xi_2,\theta_2,\eta)$.
By concavity we have 
$\{V\}(\theta_2,\eta)-\{V\}(\theta_1,\eta)\geq V^\infty(\theta_2-\theta_1)$, so that
\eqref{balance} yields
\begin{equation}\label{balanceinf}
\int_{t_1}^{t_2}\langle H(\xi,\theta),\dot \muu_t
\rangle\, dt
+\langle V^\infty(\theta_2-\theta_1), \muu_{t_1t_2}\rangle
\leq  \int_{t_1}^{t_2}\HH_{\rm eff}(\dot \pp(t),0)\, dt\,.
\end{equation}
Dividing by $t_2-t_1$ and letting $t_2\to t_1=t$ we obtain
\begin{equation}\label{balancecon}
\langle H(\xi,\theta)+V^\infty(\theta),\dot \muu_t (x,\xi,\theta,\eta) \rangle
\leq  \HH_{\rm eff}(\dot \pp(t),0) \leq  \HH_{\rm eff}(\dot \pp(t),\dot\zz(t))
\end{equation}
for a.e.\ $t\in[0,+\infty)$.
Using the Jensen inequality for generalized Young measures 
\cite[Theorem~6.5]{DM-DeS-Mor-Mor-1} we conclude that 
\begin{equation}\label{fundeq}
\langle H(\xi,\theta)+V^\infty(\theta), \dot\muu_t(x,\xi,\theta,\eta) \rangle =
\HH_{\rm eff}(\dot \pp(t),0)=
\HH_{\rm eff}(\dot \pp(t),\dot \zz(t))
\end{equation}
for a.e.\ $t\in[0,+\infty)$, which shows, in particular, \eqref{heffzpunto}. By taking the derivative of \eqref{balance} 
we obtain from \eqref{fundeq}
\begin{equation}\label{vhder}
\lim_{t_2\to t_1^+}\langle \tfrac{1}{t_2-t_1}(\{V\}(\theta_2,\eta)-\{V\}(\theta_1,\eta))
-V^\infty(\tfrac{\theta_2-\theta_1}{t_2-t_1}), \muu_{t_1t_2}\rangle
=0\,.
\end{equation}

As $-V^\infty$ is convex and positively homogeneous, by 
\cite[Theorem~10.4]{DM-DeS-Mor-Mor-1} we have that for every $t\in [0,+\infty)$
$$
\langle V^\infty(\theta_1-\theta_0),\muu_{0t}\rangle
\ge \int_0^t \langle  V^\infty,\dot \muu_s\rangle\,ds\,,
$$
where $\muu_{0t}$ acts on the variables $(x,\xi_0,\theta_0,\xi_1,\theta_1,\eta)$.
Since $s\mapsto  \langle  \{V\}, \muu_s\rangle$ is absolutely continuous
and by \eqref{vhder} 
$$
\langle  V^\infty,\dot \muu_s\rangle =\tfrac{d}{ds} \langle  \{V\}, \muu_s\rangle
$$
for a.e.\ $s\in[0,+\infty)$,
we deduce that
\begin{equation}\label{yyy1}
\langle V^\infty(\theta_1-\theta_0),\muu_{0t}\rangle
\ge 
 \langle  \{V\}(\theta_1,\eta)- \{V\}(\theta_0,\eta), \muu_{0t}\rangle\,.
\end{equation}

Let $\psi\colon\ol\Om{\times}(\MD{\times}\R)^2{\times}\R\to
\Om{\times}\R{\times}\R$ be defined by
$$
\psi(x,\xi_0,\theta_0,\xi_1,\theta_1,\eta):=(x,\theta_1-\theta_0,\eta)\,,
$$
and let $\nu:=\psi(\muu_{0t})$. By repeating the arguments used in the proof of 
\eqref{Vpp1d}--\eqref{Vpp3d} we obtain
$$
 \langle  \{V\}(\theta_1,\eta)- \{V\}(\theta_0,\eta), \muu_{0t}\rangle
 \ge
  \langle  \{V\}(\eta\ol z_0(x)+ \tilde\theta,\eta)- \{V\}(\eta\ol z_0(x),\eta), \nu\rangle
 \,,
$$
where $\nu$ acts on $(x,\tilde\theta,\eta)$. This inequality, together with \eqref{yyy1}, yields
$$
 \langle V^\infty, \nu\rangle
 \ge
  \langle  \{V\}(\eta\ol z_0(x)+ \tilde\theta,\eta)- \{V\}(\eta\ol z_0(x),\eta), \nu\rangle
 \,,
$$
By \eqref{VinftyV} it follows that $\ol \nu^Y=\delta_0$. Arguing as in the proof of 
\eqref{yy23}, we conclude that 
\begin{equation}\label{stop1}
\pi_\R(\ol\muu_t^Y)=\delta_{\ol z_0}\,.
\end{equation}
where
$\pi_\R\colon\ol\Om{\times}\MD{\times}\R{\times}\R\to\ol\Om{\times}\R{\times}\R$
is defined by $\pi_\R(x,\xi,\theta,\eta):=(x,\theta,\eta)$.

As the system $\muu$ is absolutely continuous with respect to time, 
we also have that the system
$\ol\muu^Y$ given by $\ol\muu^Y_{t_1\dots t_m}:=\ol{(\muu_{t_1\dots t_m})}^Y$ is
absolutely continuous, so that its weak$^*$ derivative $\dot{\ol\muu}{}^Y_t$ exists 
for a.e.\ $t\in[0,+\infty)$. We note that in general the weak$^*$ derivative $\dot{\ol\muu}{}^Y_t$ of 
$\ol\muu^Y$ does not coincide with the finite part $\ol{(\dot\muu_t)}{}^Y$  of $\dot\muu_t$.
Nevertheless the following identity holds
\begin{equation}\label{DC}
\ol{(\dot{\ol\muu}{}^{Y^{\vphantom{A}}}_t)}^Y=\ol{(\dot\muu_t)}{}^Y\,.
\end{equation}
To see this we observe that at every time $t$ where both $\dot\muu_t$ and $\dot{\ol\muu}{}^Y_t$ 
are defined, there also 
exists the weak$^*$ limit of the difference quotients $q_{ts}(\hat\muu^{\infty}_{ts})$ as $s\to t^+$ (see \cite[Definition~9.1]{DM-DeS-Mor-Mor-1}), which is an element of 
$M_*^+(\ol\Om{\times}\MD{\times}\R{\times}\R)$ supported on $\{\eta=0\}$ (see \cite[Definition~2.8]{DM-DeS-Mor-Mor-1}).
The equality \eqref{DC} follows now from the identity 
$$
\dot\muu_t =\dot{\ol\muu}{}^Y_t+\lim_{s\to t^+}q_{ts}(\hat\muu^{\infty}_{ts})\,.
$$

Using \eqref{stop1} and Corollary~\ref{cor-prod}
it is easy to see that $\psi(\ol\muu^Y_{t s})=\delta_0$ 
for every $s>t$, which, in turn, implies 
\begin{equation}\label{stop2}
\pi_{\R}(\dot{\ol\muu}{}_t^Y)=\delta_0\,.
\end{equation}
By \eqref{DC}, \eqref{stop2}, and \cite[Lemma~4.8]{DM-DeS-Mor-Mor-1} we deduce that $\pi_{\R}(\ol{(\dot\muu_t)}{}^Y)=\delta_0$.
It follows that
$$
\langle \{V\}(\theta,\eta), \dot\muu_t(x,\xi,\theta,\eta) \rangle=
\langle V^\infty(\theta), \dot\muu_t(x,\xi,\theta,\eta) \rangle+\V(0)\,,
$$
hence by \eqref{fundeq} we have 
\begin{equation}
\langle H(\xi,\theta)+\{V\}(\theta,\eta), \dot\muu_t(x,\xi,\theta,\eta) \rangle =
\HH_{\rm eff}(\dot \pp(t),\dot \zz(t))+ \V(0)
\end{equation}
for a.e.\ $t\in[0,+\infty)$.
Identity \eqref{mupunto} is now a consequence of Theorem~\ref{thm311} (applied
with $p_0=0$ and $z_0=0$).

We now claim that for almost every $t\in[0,+\infty)$ 
\begin{equation}\label{CS}
 \dot{\ol\muu}{}^Y_t=\delta_{(0,0)}\,.
\end{equation}
Let us fix $t\in[0,+\infty)$ such that $\dot\muu_t$ and $\dot{\ol\muu}{}^Y_t$ exist. 
By \eqref{mupunto} and \eqref{DC} there exists $
\nu_1^{\infty}\in M_*^+(\ol\Om{\times}\MD{\times}\R{\times}\R)$, with support 
contained in $\{\eta=0\}$, such that 
\begin{equation}\label{dec.}
\dot{\ol\muu}{}^Y_t=\delta_{(0,0)}+\nu_1^{\infty}\,.
\end{equation}

By \eqref{dec.}, \eqref{stop2}, and Lemma~\ref{prop-prod} we infer that
\begin{eqnarray}
& \langle f(x,\xi,\theta,\eta), \nu_1^\infty(x,\xi,\theta,\eta)\rangle
=\langle f(x,\xi,0,\eta), \nu_1^\infty(x,\xi,\theta,\eta)\rangle\,,
\label{eqprj}
\\
& \displaystyle
\label{decmuY}
\langle f,  \dot{\ol\muu}{}_t^Y \rangle =\int_\Om f(x,0,0,1)\, dx +
\langle f(x,\xi,0,\eta), \nu_1^\infty(x,\xi,\theta,\eta)\rangle
\end{eqnarray}
for every $f\in B_{\infty,1}^{hom}(\ol\Om{\times}\MD{\times}\R{\times}\R)$.
Let $\nu_2^{\infty}\in M_*^+(\ol\Om{\times}\MD{\times}\R{\times}\R)$
be the weak$^*$ limit of the difference quotients $q_{ts}(\hat\muu^{\infty}_{ts})$
as $s\to t^+$.
From \eqref{mupunto} and \eqref{decmuY} it follows that
\begin{equation}\label{stop3}
\begin{array}{c}
\displaystyle
\langle f, \nu_2^\infty\rangle =
\int_\Om\alphaa f(x,\dot\pp^a,\hat\zz,0)\, dx +
\int_\Om(1-\alphaa)f(x,\dot\pp^a, -\hat\zz,0)\,dx+{}
\smallskip\\
\displaystyle
{}+
\int_{\ol\Om}\alphaa_\lambdaa f(x,\dot\pp^\lambdaa, \hat\zz_\lambdaa,0)\,d\lambdaa +
\int_{\ol\Om}(1-\alphaa_\lambdaa)f(x,\dot\pp^\lambdaa, -\hat\zz_\lambdaa,0)\,d\lambdaa-{}
\smallskip\\
\displaystyle\vphantom{\int}
{}-\langle f(x,\xi,0,\eta), \nu_1^\infty(x,\xi,\theta,\eta)\rangle
\end{array}
\end{equation}
for every $f\in B_{\infty,1}^{hom}(\ol\Om{\times}\MD{\times}\R{\times}\R)$.
In the previous formula and in the remaining part of the proof the dependence upon time
is omitted, since $t$ is fixed.

We shall prove that $\nu_1^\infty=0$, so that claim \eqref{CS} will follow from 
the decomposition \eqref{dec.}. 
According to \cite[Remark~4.5]{DM-DeS-Mor-Mor-1}, there exist
$\pi^\infty\in M_b^+(\ol\Om)$ and a family $(\nu_1^{x,\infty})_{x\in\ol\Om}$
of probability measures on $\Sigma:=\{\xi\in\MD:\,|\xi|=1\}$ such that
\begin{equation}\label{kin}
\langle f(x,\xi,0,\eta), \nu_1^\infty(x,\xi,\theta,\eta)\rangle=
\int_{\ol\Om}\Big(\int_\Sigma f(x,\xi,0,0)\,d\nu_1^{x,\infty}(\xi)\Big)
\, d\pi^\infty(x)
\end{equation}
for every $f\in B_{\infty,1}^{hom}(\ol\Om{\times}\MD{\times}\R{\times}\R)$.
We have to prove that $\pi^\infty=0$.
Let us consider the Lebesgue decomposition $\pi^\infty=\pi^{\infty,a}+\pi^{\infty,s}$.

We first prove that $\pi^{\infty,a}=0$.
We argue by contradiction. Assume that there exists a Borel set $A$, with
$\Ln(A)>0$ and $\lambdaa(A)=0$, such that
$\pi^{\infty,a}(x)>0$ for every $x\in A$. 

For every Borel set $A'\subset A$ let $f(x,\xi,\theta,\eta):=
1_{A'}(x)|\xi|$. Since $\nu_2^\infty$ is positive,
by \eqref{stop3} and \eqref{kin} we deduce that
$$
\int_{A'}|\dot\pp^a|\, dx\geq \int_{A'}\pi^{\infty,a}\, dx
$$
for every Borel set $A'\subset A$. Therefore, $|\dot\pp^a|>0$ a.e.\ on $A$
and there exists $h\in L^\infty(A)$,
with $0< h\leq1$, such that $\pi^{\infty,a}=h|\dot\pp^a|$ on $A$.

Since $|\dot \pp^a|>0$ a.e.\ on $A$, by \eqref{eqthm418} and Lemma~\ref{lemma01}
we have that $|\hat \zz|>0$ a.e.\ on $A$. Hence, there exists $M>0$ such that
the set $A_M:=\{x\in A:\, |\dot \pp^a(x)|<M,\, |\hat\zz(x)|>\frac1M\}$ has positive Lebesgue
measure. Let us consider the function $f(x,\xi,\theta,\eta):=1_{A_M}(x)(|\xi|-M^2|\theta|)^+$.
Using \eqref{stop3}, \eqref{kin}, the fact that $\pi^{\infty,a}=h|\dot\pp^a|$
on $A$, and the positivity of $\nu_2^\infty$, we obtain
$$
\int_{A_M} (|\dot\pp^a|-M^2|\hat\zz|)^+\,dx\geq \int_{A_M} h|\dot\pp^a|\,dx\,,
$$
which gives the contradiction, since the left-hand side vanishes, while the right-hand
side is strictly positive. Therefore, $\pi^{\infty,a}=0$.

It remains to prove that $\pi^{\infty,s}=0$. We argue by contradiction.
Assume that there exists a Borel set $A$ with $\Ln(A)=0$ and $\pi^{\infty,s}(A)>0$.

For every Borel set $A'\subset A$ let $f(x,\xi,\theta,\eta):=
1_{A'}(x)|\xi|$. Since $\nu_2^\infty$ is positive,
by \eqref{stop3} and \eqref{kin} we deduce that
$$
\int_{A'}|\dot\pp^\lambdaa|\, d\lambdaa\geq \pi^{\infty,s}(A')
$$
for every Borel set $A'\subset A$. 
Therefore, there exists $h_\lambdaa\in L^\infty_\lambdaa(A)$,
with $0\leq h_\lambdaa\leq1$, such that $\pi^{\infty,s}=h_\lambdaa|\dot\pp^\lambdaa|
\lambdaa$ on $A$.
Taking $A$ smaller if needed, we may assume that 
$\lambdaa(A)>0$, $h_\lambdaa>0$
and $|\dot\pp^\lambdaa|>0$ $\lambdaa$-a.e.\ on $A$.
We can now argue as before with $\Ln$ replaced by $\lambdaa$ and
we obtain a contradiction, which implies $\pi^{\infty,s}=0$.

This concludes the proof of the fact that $\nu_1^\infty=0$ and, in turn, of \eqref{CS}
by \eqref{dec.}.

By \cite[Theorem~10.4]{DM-DeS-Mor-Mor-1} identity \eqref{CS} yields
$$
{\rm Var}(\ol\muu^Y;0,t)=\int_0^t \langle \sqrt{|\xi|^2+\theta^2},\dot{\ol\muu}{}^Y_t(x,\xi,\theta,\eta)
\rangle\,dt
=0
$$
for every $t\in[0,+\infty)$. By Corollary~\ref{cor-prod} we deduce that
$$
\begin{array}{c}
\langle|\xi_1-\eta\ol p_0(x)|+|\theta_1-\eta\ol z_0(x)|,\ol\muu^Y_t(x,\xi_1,\theta_1,\eta)\rangle=
\smallskip
\\
=\langle|\xi_1-\xi_0|+|\theta_1-\theta_0|,\ol\muu^Y_{0t}(x,\xi_0,\theta_0,\xi_1,\theta_1,\eta)\rangle
\leq {\rm Var}(\ol\muu^Y;0,t)=0\,,
\end{array}
$$
which easily implies the \eqref{weq1}.
\end{proof}

\begin{remark}\label{rm:mupunto}
We remark that in the previous proof we could not deduce \eqref{weq1} from \eqref{mupunto} 
simply by ``integration'' with respect to time. In fact,
for a system $\muu$ of generalized Young measures it is not true in general that the knowledge of $\dot \muu_t$ 
and of $\muu_0$ is enough to identify $\muu_t$, as the following example shows. 

Let $U:=(0,1)$ and for every $t\in [0,+\infty)$ let $\pp(t)\in L^1(U)$ be the characteristic function $1_{(0,t)}$. We now consider the two systems $\muu^1$, $\muu^2\in SGY([0,+\infty),U;\R)$ defined by
$$
\muu^1_{t_1\dots t_m}:=\delta_{(\pp(t_1),\dots,\pp(t_m))}\quad\text{and}\quad \muu^2_{t_1\dots t_m}:=\delta_{(0,\dots,0)}+\omega^{\Lone}_{(\pp(t_1),\dots,\pp(t_m))}
$$
for every finite sequence $0\leq t_1<t_2<\cdots<t_m$. Note that for every $0\leq t<t_1$ 
$$
\langle |\theta_1-\theta|, \muu^1_{tt_1}\rangle=\langle |\theta_1-\theta|, \muu^2_{tt_1}\rangle=
\int_0^1|\pp(t_1)-\pp(t)|\,dx=t_1-t\,,
$$
which shows that both $\muu^1$ and $\muu^2$ are absolutely continuous. Moreover, for every 
$f\in C^{hom}(U{\times}\R{\times}\R)$ and every $0\leq t<t_1$ we have
\begin{eqnarray*}
&\displaystyle \langle f(x,\tfrac{\theta_1-\theta}{t_1-t},\eta), \muu^1_{tt_1}(x,\theta,\theta_1,\eta)\rangle=
\int_0^1 f(x, \tfrac{\pp(t_1)-\pp(t)}{t_1-t}, 1)\, dx=\\
&\displaystyle=\int_0^{t} f(x, 0,1)\, dx +\tfrac{1}{t_1-t}\int_{t}^{t_1}f(x, 1, t_1-t)\, dx +
\int_{t_1}^1f(x,0,1)\, dx
\end{eqnarray*}
and 
\begin{eqnarray*}
&\displaystyle\langle f(x,\tfrac{\theta_1-\theta}{t_1-t},\eta), \muu^2_{tt_1}(x,\theta,\theta_1,\eta)\rangle=
\int_0^1 f(x, 0,1)\, dx+\int_0^1 f(x, \tfrac{\pp(t_1)-\pp(t)}{t_1-t}, 0)\, dx\\
&\displaystyle=\int_0^1 f(x, 0,1)\, dx+\tfrac{1}{t_1-t}\int_{t}^{t_1}f(x, 1, 0)\, dx\,.
\end{eqnarray*}
It follows that 
$$
\begin{array}{c}
\displaystyle
\lim_{t_1\to t^+} \langle f(x,\tfrac{\theta_1-\theta}{t_1-t},\eta), \muu^1_{tt_1}(x,\theta,\theta_1,\eta)\rangle=
\lim_{t_1\to t^+} \langle f(x,\tfrac{\theta_1-\theta}{t_1-t},\eta), \muu^2_{tt_1}(x,\theta,\theta_1,\eta)\rangle=
\smallskip\\
\displaystyle
=\int_0^1 f(x, 0,1)\, dx+ f(t, 1, 0)\,,
\end{array}
$$
which yields
$$
\dot\muu^1_t=\dot\muu^2_t=\delta_0+\omega_1^{\delta_t}
$$
for every $t\in[0,+\infty)$.
We point out that, although $\ol{(\dot\muu{}^1_t)}^Y=\delta_0$,
the finite part of $\muu^1_t$, which coincides with $\muu^1_t$ itself,
evolves in time.
\end{remark}

\end{section}

\begin{section}{An example}\label{exa}

In this section we
assume that  $\C$ is isotropic, which implies that
$$
\C\xi=2\mu\xi_D+\kappa(\tr\,\xi)I
$$ 
for some constants $\mu>0$ and $\kappa>0$.
We also assume that 
\begin{equation}\label{K-sphere}
\begin{array}{c}
K:=\{(\sigma,\zeta)\in\MD{\times}\R: |\sigma|^2+\zeta^2\le 1\}\,,
\smallskip
\\
V(\theta):=\frac12-\frac12\sqrt{1+\theta^2}\,, \qquad \Ga_0:=\partial\Om\,, 
\qquad\Ga_1:=\emptyset\,.
\end{array}
\end{equation}
Let us fix a constant $\theta_0>0$ and a $2{\times}2$ matrix $\xi_0$ with $\tr\, \xi_0 = 0$. 
We assume that the symmetric part $\xi_0^s$ of $\xi_0$ is different from $0$. We will examine the globally stable quasistatic evolution corresponding to the boundary datum
$$
\ww(t,x):=t\xi_0x\,,
$$
and to the initial conditions 
$$
u_0(x)=0\,,\quad e_0(x)=0\,, \quad p_0(x)=0\,, \quad
z_0(x)=\theta_0\,.
$$

\begin{theorem}\label{thm:ex1}
Assume that $\C$, $K$, $V$, $\Gamma_0$, $\Gamma_1$, $\theta_0$, $\xi_0$, $\xi_0^s$, $\ww$, $u_0$, $e_0$, $p_0$, and $z_0$ satisfy the conditions considered at the beginning of this section. Let
\begin{equation}\label{t0}
t_0:=\frac{\sqrt3}{4\mu|\xi_0^s|}\,,
\end{equation}
and let $\uu$, $\ee$, $\pp$, $\zz_\infty$ be defined by
$$
\begin{array}{cc}
\uu(t,x) :=  t\xi_0x\,,
\quad
&
\quad
\ee(t,x) := \alpha(t) \xi_0^s\,,
\smallskip
\\
\pp(t,x) := \beta(t)  \xi_0^s\,,
\qquad
&
\quad
 \zz_\infty(t,x) := \frac{1}{\sqrt3} |\pp(t,x)|\,,
\end{array}
$$
where
$$
\alpha(t) := 
\begin{cases} t & \text{for } t\in [0,t_0]\,,
\\
 t_0 &  \text{for } t\in [t_0,+\infty)\,,
 \end{cases}
 \qquad
\beta(t):=  \begin{cases}0& \text{for } t\in [0,t_0]\,,
 \\
 t-t_0 & \text{for } t\in [t_0,+\infty)\,,
 \end{cases}
$$
and let $\muu\in SGY([0,+\infty),\ol\Om;\MD{\times}\R)$
be the system defined by
$$
\begin{array}{c}
\muu_{t_1\dots t_m}:=\delta_{((p_0,z_0),\dots,(p_0,z_0))}
+\tfrac12 \omega^{\Ln}_{((\pp(t_1),\zz_\infty(t_1)),\dots, (\pp(t_m),\zz_\infty(t_m)))} +{}
\smallskip\\
{}+\tfrac12 \omega^{\Ln}_{((\pp(t_1),-\zz_\infty(t_1)),\dots, (\pp(t_m),-\zz_\infty(t_m)))}
\end{array}
$$
for every finite sequence $t_1,\dots,t_m$ with $0\leq t_1<\dots<t_m$.
Then $(\uu,\ee,\muu)$ is a globally stable quasistatic evolution of Young measures
with boundary datum $\ww$ and initial condition $\muu_0=\delta_{(p_0,z_0)}$.
\end{theorem}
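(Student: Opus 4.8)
First I would record that for the present choice of $K$ and $V$ all the standing hypotheses of Section~\ref{notpre} hold, with $A=B=1$, $a_K=1$, $b_V=\frac12$, $M_V=\frac12$, and that $H(\xi,\theta)=\sqrt{|\xi|^2+\theta^2}$, $V^\infty(\theta)=-\frac12|\theta|$, $G^\infty(\xi,\theta)=\sqrt{|\xi|^2+\theta^2}-\frac12|\theta|$. The computation that drives everything is \eqref{Keff}: here $K_{\rm eff}=(K+(0,\frac12))\cap(K-(0,\frac12))=\{(\sigma,\zeta):|\zeta|\le\frac12,\ |\sigma|^2\le 1-(|\zeta|+\tfrac12)^2\}$, so by \eqref{HD0}-type reasoning $K_{\rm eff}(0)$ is the closed ball of radius $\frac{\sqrt3}{2}$ in $\MD$ and $H_{\rm eff}(\xi,0)=\frac{\sqrt3}{2}|\xi|$. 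Moreover $H(\xi,\tfrac1{\sqrt3}|\xi|)+V^\infty(\tfrac1{\sqrt3}|\xi|)=\frac{\sqrt3}{2}|\xi|=H_{\rm eff}(\xi,0)$, which is exactly the splitting $0=\frac12\cdot\frac1{\sqrt3}|\xi|+\frac12\cdot(-\frac1{\sqrt3}|\xi|)$ of Lemma~\ref{cothetaG} with $\alpha=\frac12$; this is why the half-weights and the value $\zz_\infty(t)=\frac1{\sqrt3}|\pp(t)|$ appear in $\muu_t$. Finally, since $\xi_0^s$ is trace free, $\sigmaa(t)=\C\ee(t)=2\mu\alpha(t)\xi_0^s$, so $|\sigmaa(t)|=2\mu\alpha(t)|\xi_0^s|\le 2\mu t_0|\xi_0^s|=\frac{\sqrt3}{2}$, i.e.\ $\sigmaa(t)\in K_{\rm eff}(0)$ for all $t$, and $(\uu(t),\ee(t),\pp(t))\in A(\ww(t))$ because $E\uu(t)=t\xi_0$, $\ee(t)+\pp(t)=(\alpha(t)+\beta(t))\xi_0^s=t\xi_0^s$ and $\uu(t)=\ww(t)$ on $\Ga_0=\partial\Om$.

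\textbf{Admissibility in $AY$ and left-continuity.}
Since $\alpha$ and $\beta$ are Lipschitz, $\uu$, $\ee$, $\pp$, $\zz_\infty$ are Lipschitz in $t$ and $\muu$ is weakly$^*$ continuous, in particular weakly$^*$ left-continuous. For $(\uu,\ee,\muu)\in AY([0,+\infty),\ww)$ the point is that, for every finite family $t_1<\dots<t_m$, the measure $\muu_{t_1\dots t_m}$ has precisely the bi-laminate form $\delta+\frac12\omega+\frac12\omega$ produced by the recovery construction in Theorem~\ref{thm:1app} and Theorem~\ref{lm:eqmin}: one takes $u^i_k$ piecewise affine equal to $\ww(t_i)$ on $\Ga_0$ with twist $\beta(t_i)\xi_0^s$ in the plastic variable, $e^i_k\to\ee(t_i)$, and $z^i_k$ a sawtooth oscillating between $\theta_0+\ell_k$ and $\theta_0-\ell_k$ with $\ell_k\to\infty$ on a triangulation refining toward $\partial\Om$, so that \eqref{apprmeas} holds with the stated limit. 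This step is routine given Section~\ref{sec:6} and I would only indicate it.

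\textbf{Global stability (ev1).}
Evaluating the Young-measure terms: on the $\delta$-part ($\eta=1$) one gets $V(\theta_0)\Ln(\Om)$, and on the two $\omega$-parts ($\eta=0$) one gets $\frac12V^\infty(\zz_\infty(t))\Ln(\Om)$ each, since $\{V\}(\theta+\eta\tilde z,\eta)=V^\infty(\pm\zz_\infty(t))$ there independently of $\tilde z$. Hence $\langle\{V\}(\theta,\eta),\muu_t\rangle=(V(\theta_0)+V^\infty(\zz_\infty(t)))\Ln(\Om)$ and $\langle\{V\}(\theta+\eta\tilde z(x),\eta),\muu_t\rangle=\V(z_0+\tilde z)+V^\infty(\zz_\infty(t))\Ln(\Om)$, so the $\zz_\infty$ terms cancel and (ev1) reduces to $\QQ(\ee(t))+\V(z_0)\le\QQ(\ee(t)+\tilde e)+\HH(\tilde p,\tilde z)+\V(z_0+\tilde z)$. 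I would then chain: the pointwise inequality $H_{\rm eff}(\xi,\theta)\le H(\xi,\theta)+V(z_0+\theta)-V(z_0)$ (Lemma~\ref{coGcoGinfty}, $z_0$ constant) gives $\HH(\tilde p,\tilde z)+\V(z_0+\tilde z)-\V(z_0)\ge\HH_{\rm eff}(\tilde p,\tilde z)\ge\HH_{\rm eff}(\tilde p,0)=\int_\Om\frac{\sqrt3}{2}|\tilde p|\,dx$ (convexity and evenness of $\theta\mapsto H_{\rm eff}(\xi,\theta)$) $\ge\langle\sigmaa(t),\tilde p\rangle$ (support function of $K_{\rm eff}(0)\ni\sigmaa(t)$) $=-\langle\sigmaa(t),\tilde e\rangle$ (integration by parts, using $\div\sigmaa(t)=0$, $E\tilde u=\tilde e+\tilde p$, $\tilde u=0$ on $\partial\Om$). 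Since $\QQ(\ee(t)+\tilde e)=\QQ(\ee(t))+\langle\sigmaa(t),\tilde e\rangle+\QQ(\tilde e)$, the right-hand side is $\ge\QQ(\ee(t))+\QQ(\tilde e)+\V(z_0)\ge\QQ(\ee(t))+\V(z_0)$, which is (ev1).

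\textbf{Energy balance (ev2).}
Write $L:=\Ln(\Om)$, $m:=|\xi_0^s|$, so $t_0=\frac{\sqrt3}{4\mu m}$. One computes $\QQ(\ee(T))=\alpha(T)^2\mu m^2L$, $\QQ(\ee(0))=0$, $\langle\{V\},\muu_T\rangle-\langle\{V\},\muu_0\rangle=V^\infty(\zz_\infty(T))L=-\frac1{2\sqrt3}\beta(T)mL$, and $\int_0^T\langle\sigmaa(t),E\dot\ww(t)\rangle\,dt=2\mu m^2L\int_0^T\alpha(t)\,dt$. For the dissipation, on $\muu_{t_{i-1}t_i}$ the $\delta$-part contributes $H(0,0)=0$ and the two $\omega$-parts, using $H(\xi,\theta)=H(\xi,-\theta)$, give $H(\pp(t_i)-\pp(t_{i-1}),\zz_\infty(t_i)-\zz_\infty(t_{i-1}))L$; since $t\mapsto(\pp(t),\zz_\infty(t))$ runs along the ray $(\beta(t)\xi_0^s,\frac1{\sqrt3}\beta(t)m)$ with $\beta$ nondecreasing, every sum telescopes and $\D_H(\muu;0,T)=\sqrt{m^2+\frac13m^2}\,\beta(T)L=\frac2{\sqrt3}\beta(T)mL$ (the same computation with $\sqrt{|\cdot|^2+|\cdot|^2}$ shows ${\rm Var}(\muu;0,T)<+\infty$). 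Collecting $\frac2{\sqrt3}-\frac1{2\sqrt3}=\frac{\sqrt3}{2}$, (ev2) becomes $\alpha(T)^2\mu m^2+\frac{\sqrt3}{2}\beta(T)m=2\mu m^2\int_0^T\alpha(t)\,dt$, which I would check on $[0,t_0]$ (both sides $\mu m^2T^2$) and on $[t_0,+\infty)$, where after subtracting it reduces to $\frac{\sqrt3}{2}m=2\mu m^2t_0$, i.e.\ exactly \eqref{t0}.

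\textbf{Main obstacle.}
The genuinely technical point is the $AY$-membership: exhibiting recovery sequences that realize the bi-laminate generalized Young measure \emph{jointly at all times} and verifying \eqref{apprmeas}; the rest is elementary once the identity $H_{\rm eff}(\xi,0)=\frac{\sqrt3}{2}|\xi|=H(\xi,\tfrac1{\sqrt3}|\xi|)+V^\infty(\tfrac1{\sqrt3}|\xi|)$ and the bound $\sigmaa(t)\in K_{\rm eff}(0)$ are in hand, with only mild care needed in the bookkeeping of $\D_H$ and ${\rm Var}$ for systems of generalized Young measures.
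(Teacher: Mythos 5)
Your computational core is correct and follows the same strategy as the paper: the identification $K_{\rm eff}=(K+(0,\tfrac12))\cap(K-(0,\tfrac12))$, the formula $H_{\rm eff}(\xi,0)=\tfrac{\sqrt3}{2}|\xi|$, the splitting identity $H(\xi,\tfrac1{\sqrt3}|\xi|)+V^\infty(\tfrac1{\sqrt3}|\xi|)=\tfrac{\sqrt3}{2}|\xi|$ explaining the half-weights, and the (ev2) bookkeeping (including $\D_{\!H}(\muu;0,T)=\tfrac2{\sqrt3}\beta(T)|\xi_0^s|\Ln(\Om)$ and the cancellation $\tfrac2{\sqrt3}-\tfrac1{2\sqrt3}=\tfrac{\sqrt3}2$) all match the paper's proof. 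For (ev1) you take a mildly different route: the paper invokes condition (b) of \cite[Theorem~6.1]{DM-DeS-Mor} to conclude that $(\uu,\ee,\pp)$ is a quasistatic evolution for $\xi\mapsto H_{\rm eff}(\xi,0)$ and reads off the minimality $\QQ(\ee(t))\le\QQ(\ee(t)+\tilde e)+\HH_{\rm eff}(\tilde p,0)$, whereas you re-derive this minimality by hand from $\sigmaa(t)\in K_{\rm eff}(0)$ and integration by parts. That is a correct unpacking of the cited result and nothing is lost.

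The genuine gap is the $AY$-membership, which you dismiss as ``routine given Section~\ref{sec:6}'' but which occupies more than half of the paper's proof and cannot be read off from Theorem~\ref{thm:1app} (that theorem only produces energy convergence at a single time step, not the identification of the joint generalized Young measure $\muu_{t_1\dots t_m}$ at all times simultaneously). Two features of the required construction are missing from your sketch. First, the $\omega^{\Ln}$ terms sit at $\eta=0$, i.e.\ they are concentrations, not oscillations: the approximants $(p_k,z_k)$ must equal $(0,\theta_0)$ outside strips whose total measure is $O(1/k)$ and take values of order $k$ on those strips, with $k\,1_{B_k}\wto\tfrac12$ and $k\,1_{C_k}\wto\tfrac12$; a ``sawtooth between $\theta_0\pm\ell_k$'' on sets of comparable measure would instead produce an oscillation measure with $\eta=1$ and the wrong limit. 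Second, the oscillating plastic strain must be the symmetrized gradient of an admissible displacement: this forces the decomposition $\xi_0^s=a\odot b+q$ with $q$ skew (the two-dimensional algebraic fact used throughout the paper), a laminate with interfaces orthogonal to $b$, and a cut-off $\varphi_k$ near $\partial\Om$ with $|\nabla\varphi_k|\le\sqrt k$ to restore $u_k=\ww(t_i)$ on $\Ga_0$ without destroying the strong convergence of $e_k$. Until these are written out and the remainder terms estimated, the claim $(\uu,\ee,\muu)\in AY([0,+\infty),\ww)$ — and hence the theorem — is not proved.
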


\begin{proof}
By \eqref{Keff} and \eqref{K-sphere} we have
$$
K_{\rm eff}=\{(\sigma,\zeta)\in\MD{\times}\R :
 |\sigma|\le \tfrac{\sqrt 3}2 ,\ |\zeta|\le \sqrt{1-|\sigma|^2}-\tfrac12\}\,.
$$
By \eqref{supeff} we have
$$
H_{\rm eff}(\xi,0)= \tfrac{\sqrt 3}2|\xi| \quad \hbox{for every }\xi \in \MD\,.
$$
Thanks to \eqref{t0} the function $(\uu,\ee,\pp)$ satisfies condition (b) of \cite[Theorem~6.1]{DM-DeS-Mor}
for the dissipation function 
$\xi\mapsto H_{\rm eff}(\xi,0)$, therefore it is a quasistatic evolution for the same dissipation function  according to 
\cite[Definition~4.2]{DM-DeS-Mor}. It follows, in particular, that
\begin{equation}\label{kkk}
\QQ(\ee(t))\le \QQ(\ee(t)+\tilde e)+ \HH_{\rm eff}(\tilde p,0)
\end{equation}
for every $t\in[0,+\infty)$ and every $(\tilde u,\tilde e,\tilde p)\in A_{reg}(0)$. 
Since $H_{\rm eff}$ is convex and even, we have $\HH_{\rm eff}(\tilde p,0)\le 
\HH_{\rm eff}(\tilde p,\tilde z)$ for every $\tilde z\in L^1(\Om)$. 
{}From \eqref{coF3} and \eqref{kkk} it follows that
\begin{eqnarray*}
& \displaystyle
\QQ(\ee(t))\le \QQ(\ee(t)+\tilde e)+ \HH(\tilde p,\tilde z)+\V(\tilde z + z_0)- \V(z_0)=
\\
& \displaystyle
=\QQ(\ee(t)+\tilde e) + \HH(\tilde p,\tilde z) +
\langle \{V\}(\theta+\eta\, \tilde z(x),\eta)- \{V\}(\theta,\eta), 
\muu_t(x, \xi, \theta,\eta)\rangle\,,
\end{eqnarray*}
which gives the global stability condition (ev1) of Definition~\ref{maindef}.

Let us prove the energy balance (ev2). Since for $0\le t_1<t_2$ we have
$$
\langle\sqrt{ |\xi_2-\xi_1|^2 +  |\theta_2-\theta_1|^2} ,
\muu_{t_1t_2}(x,\xi_1,\theta_1,\xi_2,\theta_2,\eta)\rangle=
\frac{2}{\sqrt3}\|\pp(t_2)-\pp(t_1)\|_1\,,
$$
we deduce that
$$
 \D_{\!H}(\muu;0,T)=\frac{2}{\sqrt3}\int_0^T\|\dot \pp(t)\|_1\,dt\,.
$$

Therefore, if $T\in (0,t_0]$ condition (ev2) is satisfied, since
$$
\begin{array}{c}
  \displaystyle
\vphantom{ \int_0^T}
 \QQ(\ee(T))=\mu T^2|\xi_0^s|^2\Ln(\Om)\,, \qquad  \QQ(\ee(0))=0\,, \qquad  \D_{\!H}(\muu;0,T)=0\,,
 \\
    \displaystyle
\vphantom{ \int_0^T}
  \langle \{V\}(\theta,\eta), \muu_T(x, \xi, \theta, \eta)\rangle=\V(z_0)=\langle \{V\}(\theta,\eta), \muu_0(x, \xi,
\theta, \eta)\rangle\,, 
  \\
  \displaystyle
 \int_0^T \langle\sigmaa(t), E\dot \ww(t)\rangle\, dt =  \int_0^T
 2\mu t |\xi_0^s|^2\Ln(\Om)\,dt=\mu T^2|\xi_0^s|^2\Ln(\Om)\,.
 \end{array}
$$

If $T\in[t_0,+\infty)$  condition (ev2) is satisfied, since
$$
\begin{array}{c}
   \displaystyle
\vphantom{ \int_0^T}
 \QQ(\ee(T))=\mu t_0^2|\xi_0^s|^2\Ln(\Om)\,, \qquad 
  \D_{\!H}(\muu;0,T)=\frac{2}{\sqrt3}(T-t_0)|\xi_0^s|\Ln(\Om)\,,
 \\
   \displaystyle
\vphantom{ \int_0^T}
  \langle \{V\}(\theta,\eta), \muu_T(x, \xi, \theta, \eta)\rangle=\V(z_0)+
  \V^\infty(\zz_\infty(T))=\V(z_0)-\frac1{2\sqrt3}(T-t_0)|\xi_0^s|\Ln(\Om) \,,
  \\ 
     \displaystyle
\vphantom{ \int_0^T}
\QQ(\ee(0))=0   \,, \qquad   \langle \{V\}(\theta,\eta), \muu_0(x, \xi, \theta, \eta)\rangle=
\V(z_0)\,, 
  \\
  \displaystyle
 \int_0^T \langle\sigmaa(t), E\dot \ww(t)\rangle\, dt =  \int_0^{t_0}
 2\mu t |\xi_0^s|^2\Ln(\Om)\,dt+
 \int_{t_0}^T 2\mu t_0 |\xi_0^s|^2\Ln(\Om)\,dt =
 \\
    \displaystyle
\vphantom{ \int_0^T}
 =\mu t_0^2|\xi_0^s|^2\Ln(\Om)+ 2\mu(T-t_0)  t_0|\xi_0^s|^2\Ln(\Om)=
 \mu t_0^2|\xi_0^s|^2\Ln(\Om)+ \frac{\sqrt3}2(T-t_0)  |\xi_0^s|\Ln(\Om)\,, 
 \end{array}
$$
where in the last equality we use \eqref{t0}.

It remains to prove that $(\uu,\ee,\muu)\in AY([0,+\infty),\ww)$. Let us fix
a finite sequence $t_1,\dots,t_m$ 
with $0\le t_1<\dots<t_m$. By an algebraic property of $\MD$ there exist 
$a,b\in \Rn$, with $|b|=1$, and a skew symmetric $2{\times}2$-matrix
$q$ such that $\xi_0^s=a{\,\otimes\,} b+q$. Therefore there exists a skew symmetric 
$2{\times}2$-matrix $\omega$ such that $\xi_0=a{\,\odot\,} b+\omega$.

For every $k\in\N$ and $i\in\Z$ we set
\begin{eqnarray*}
&A^i_k:=\{x\in\Om: \tfrac{i}{k}<b{\,\cdot\,}x\le  \tfrac{i+1}{k}- \tfrac{1}{k^2}\}\,,
\\
&B^i_k:=\{x\in\Om:\tfrac{i+1}{k}- \tfrac{1}{k^2}<b{\,\cdot\,}x\le  
\tfrac{i+1}{k}- \tfrac{1}{2k^2}\}\,,
\\
&\displaystyle
C^i_k:=\{x\in\Om:\tfrac{i+1}{k}- \tfrac{1}{2k^2}<b{\,\cdot\,}x\le  \tfrac{i+1}{k}\}\,,
\\
&
A_k:= \bigcup_{i}A^i_k\,,\qquad B_k:= \bigcup_{i}B^i_k\,,
\qquad C_k:= \bigcup_{i}C^i_k\,,
\\
&\displaystyle
\qquad
v_k(x):=
\begin{cases}
\frac{i}{k}a+qx &
\hbox{if } x\in A^i_k\,,
\\
\big(k b{\,\cdot\,}x-i-1+\frac{i+1}{k}\big)a+qx
&
\hbox{if } x\in B^i_k\cup C^i_k\,,
\end{cases}
\\
&\displaystyle
\qquad
\theta_k(x):=
\begin{cases}
0&
\hbox{if } x\in A^i_k\,,
\\
\frac{k}{\sqrt3} |\xi_0^s|
&
\hbox{if } x\in B^i_k \,,
\\
-\frac{k}{\sqrt3} |\xi_0^s|
&
\hbox{if } x\in C^i_k\,,
\end{cases}
\end{eqnarray*}
so that $v_k\in W^{1,\infty}(\Om;\Rn)$, $|v_k(x)-\xi_0^sx|\le |a|/k$ on $\Om$,
$Ev_k(x)=0$ on $A_k$, and
$Ev_k(x)=k\xi_0^s$ on $B_k\cup C_k$.
We note that
\begin{equation}\label{convmeas}
1_{A_k}\wto 1\,,
\qquad  \qquad k1_{B_k}\wto \tfrac12\,,
\qquad  \qquad k1_{C_k}\wto \tfrac12
 \quad\hbox{weakly}^* \hbox{ in }M_b(\Om)\,.
\end{equation}
Let $\Om_k$ be an increasing sequence of open sets, with union equal to $\Om$,
such that 
$0<{\rm dist}(\Om_k,\Rn\setmeno\Om)<2/\sqrt k$, and let $\varphi_k\in C^\infty_c(\Om)$
be cut-off functions such that $\varphi_k=1$ on $\Om_k$, $0\le\varphi_k\le 1$ on 
$\Om\setmeno\Om_k$, and $|\nabla\varphi_k|\le \sqrt k$ on $\Om$.

Let us define
\begin{eqnarray*}
&\uu_k(t,x):=\varphi_k(x)\big(\alpha(t)\xi_0^s x+ \beta(t)v_k(x)+t \omega x\big)+
(1-\varphi_k(x)) t\xi_0 x\,,
\\
&\pp_k(t,x):=\varphi_k(x) \beta(t)Ev_k(x)+
(1-\varphi_k(x)) t\xi_0^s\,,
\\
&\ee_k(t,x):= E \uu_k(t,x) - \pp_k(t,x) = \varphi_k(x)\alpha(t)\xi_0^s +
 \beta(t)\nabla \varphi_k(x){\,\odot\,} (v_k(x)-\xi_0^sx)\,,
 \\
&\zz_k(t,x):=\theta_0+\beta(t)\theta_k(x)\,.
\end{eqnarray*}
Then $\uu_k(t,x)\to t\xi_0 x$ and $e_k(t,x)\to\alpha(t)\xi_0^s$ uniformly on $\Om$
for every $t\in[0,+\infty)$.
As for $p_k$ and $z_k$, for every $f\in C^{\hom}(\ol\Om{\times}(\MD{\times}\R)^m{\times}\R)$
we have
\begin{eqnarray*}
&\displaystyle
\int_\Om f(x,p_k(t_1,x), z_k(t_1,x),\dots, p_k(t_m,x), z_k(t_m,x),1)\,dx=
\\
&\displaystyle
=\int_{A_k\cap\Om_k} f(x,0, \theta_0,\dots, 0, \theta_0,1)\,dx+{}
\\
&\displaystyle
+\int_{B_k\cap\Om_k} f(x,\beta(t_1)k\xi_0^s, \theta_0+\beta(t_1)k\tfrac{|\xi_0^s|}{\sqrt3},\dots,\beta(t_m)k\xi_0^s, \theta_0+\beta(t_m)k\tfrac{|\xi_0^s|}{\sqrt3},1)\,dx+{}
\\
&\displaystyle
{}+\int_{C_k\cap\Om_k} f(x,\beta(t_1)k\xi_0^s, \theta_0-\beta(t_1)k\tfrac{|\xi_0^s|}{\sqrt3},\dots,\beta(t_m)k\xi_0^s, \theta_0-\beta(t_m)k\tfrac{|\xi_0^s|}{\sqrt3},1)\,dx + 
{\mathcal R}_k=
\\
&\displaystyle
=\int_{A_k\cap\Om_k} f(x,0, \theta_0,\dots, 0, \theta_0,1)\,dx+{}
\\
&\displaystyle
{}+k\int_{B_k\cap\Om_k} f(x,\beta(t_1)\xi_0^s, \tfrac{\theta_0}{k}+\beta(t_1)\tfrac{|\xi_0^s|}{\sqrt3},\dots, \beta(t_m)\xi_0^s, \tfrac{\theta_0}{k}+\beta(t_m)\tfrac{|\xi_0^s|}{\sqrt3},\tfrac1k)\,dx
\\
&\displaystyle
{}+k\int_{C_k\cap\Om_k} f(x,\beta(t_1)\xi_0^s, \tfrac{\theta_0}{k}-\beta(t_1)\tfrac{|\xi_0^s|}{\sqrt3},\dots, \beta(t_m)\xi_0^s, \tfrac{\theta_0}{k}-\beta(t_m)\tfrac{|\xi_0^s|}{\sqrt3},\tfrac1k)\,dx+ {\mathcal R}_k\,,
\end{eqnarray*}
where the remainder ${\mathcal R}_k$ satisfies the estimate
$$
|{\mathcal R}_k|\le 
c\|f\|_{\hom} \big( \Ln(\Om\setmeno\Om_k)
+k\Ln(B_k\cap(\Om\setmeno\Om_k))
+ k\Ln(C_k\cap(\Om\setmeno\Om_k))\big)\,.
$$
By \eqref{convmeas} it follows that
\begin{eqnarray*}
&\displaystyle
\lim_{k\to\infty}
\int_\Om f(x,p_k(t_1,x), z_k(t_1,x),\dots, p_k(t_m,x), z_k(t_m,x),1)\,dx=
\\
&\displaystyle
=\int_{\Om} f(x,0, \theta_0,\dots, 0, \theta_0,1)\,dx+{}
\\
&\displaystyle
{}+\tfrac12\int_{\Om} f(x,\beta(t_1)\xi_0^s, \beta(t_1)\tfrac{|\xi_0^s|}{\sqrt3},\dots, \beta(t_m)\xi_0^s, \beta(t_m)\tfrac{|\xi_0^s|}{\sqrt3},0)\,dx+{}
\\
&\displaystyle
{}+\tfrac12\int_{\Om} f(x,\beta(t_1)\xi_0^s, \beta(t_1)\tfrac{|\xi_0^s|}{\sqrt3},\dots, \beta(t_m)\xi_0^s, \beta(t_m)\tfrac{|\xi_0^s|}{\sqrt3},0)\,dx\,.
\end{eqnarray*}
This proves \eqref{apprmeas} and concludes the proof of the theorem.
\end{proof}

\begin{remark}
For $t>t_0$ the globally stable quasistatic evolution described in Theorem~\ref{thm:ex1} is completely different from the approximable quasistatic evolution presented in 
\cite[Section~7]{DM-DeS-Mor-Mor-2}. The globally stable quasistatic evolution
contains the terms
$$
\tfrac12 \omega^{\Ln}_{((\pp(t_1),\zz_\infty(t_1)),\dots, (\pp(t_m),\zz_\infty(t_m)))}+\tfrac12 \omega^{\Ln}_{((\pp(t_1),-\zz_\infty(t_1)),\dots, (\pp(t_m),-\zz_\infty(t_m)))}\,,
$$
which describe oscillations of $\pp$ and $\zz$ with infinite amplitude and frequency, while the approximable quasistatic evolution is given by ordinary functions, without concentration or oscillation effects. Moreover the stress $\sigmaa$ of  the globally stable quasistatic evolution satisfies
$$
\sigmaa(t)=\frac{\sqrt3}{2}\frac{\xi_0^s}{|\xi_0^s|}\qquad\hbox{for every }t> t_0\,,
$$
while the stress $\sigmaa$ of  the approximable quasistatic evolution satisfies
$$
\sigmaa(t)\to\frac{\sqrt3}{2}\frac{\xi_0^s}{|\xi_0^s|}\qquad\hbox{as }t\to+\infty\,,
$$
but $|\sigmaa(t)|>\frac{\sqrt3}{2}$ for every $t>t_0$, as shown in 
\cite[Remark~7.7]{DM-DeS-Mor-Mor-2}.
\end{remark}

\end{section}

\bigskip

\noindent {\bf Acknowledgments.} { This work is part of the Project ``Calculus of Variations" 2006, 
supported by the Italian Ministry of University and Research, and of the research project ``Mathematical Challenges in Nanomechanics" supported by INdAM.}

\bigskip
\bigskip

{\frenchspacing
\begin{thebibliography}{99}


\bibitem{Anz-Gia}Anzellotti G., Giaquinta M.:
On the existence of the field of stresses and displacements for an 
elasto-perfectly plastic body in static 
equilibrium. 
{\it J. Math. Pures Appl.\/} {\bf 61} (1982), 219-244.


\bibitem{Bre}Brezis H.: 
Op\'erateurs maximaux monotones et semi-groupes de contractions dans les 
espaces de Hilbert. 
North-Holland, Amsterdam-London; American Elsevier, New York, 1973.



\bibitem{DM-DeS-Mor}Dal Maso G., DeSimone A., Mora M.G.: 
Quasistatic evolution problems for linearly elastic - perfectly plastic materials.  
{\it Arch. Ration. Mech. Anal.\/} {\bf 180} (2006), 237-291.

\bibitem{DM-DeS-Mor-Mor-1}Dal Maso G., DeSimone A., Mora M.G., Morini M.: 
Time-dependent systems of generalized Young measures.
{\it Netw. Heterog. Media\/} {\bf 2} (2007), 1-36.

\bibitem{DM-DeS-Mor-Mor-2}Dal Maso G., DeSimone A., Mora M.G., Morini M.: 
A vanishing viscosity approach to quasistatic evolution in plasticity with softening.
{\it Arch. Ration. Mech. Anal.\/}, to appear.





\bibitem{Gof-Ser}Goffman C., Serrin J.: 
Sublinear functions of measures and variational integrals. 
{\it Duke Math. J.\/} {\bf 31} (1964), 159-178.

\bibitem{Han-Red}Han W., Reddy B.D.:
Plasticity. Mathematical theory and numerical analysis.
Springer Verlag, Berlin, 1999.

\bibitem{Hill}Hill R.: The mathematical theory of plasticity. Clarendon Press, Oxford, 1950.

\bibitem{Kachanov}Kachanov L.M.: Fundamentals of the theory of plasticity, Dover, 2004.



\bibitem{Lub}Lubliner J.: 
Plasticity theory. Macmillan Publishing Company, New York, 1990.

\bibitem{Luc-Mod}Luckhaus S., Modica L.:
The Gibbs-Thompson relation within the gradient theory of phase transitions.
{\it Arch. Ration. Mech. Anal.\/}{\bf 107} (1989), 71-83.


\bibitem{Mar}Martin J.B.: 
Plasticity. Fundamentals and general results. MIT Press, Cambridge, 1975.


\bibitem{MeySer}Meyers N.G., Serrin J.:
H=W. {\it Proc. Nat. Acad. Sci. U.S.A.\/} {\bf 51} (1964), 1055-1056.


%

\bibitem{Mie-review}Mielke A.:  Evolution of rate-independent systems. In: Evolutionary equations. Vol. II. Edited by C. M. Dafermos and E. Feireisl, 461-559, Handbook of Differential Equations. Elsevier/North-Holland, Amsterdam, 2005.

\bibitem{Mie-Ort}
Mielke A., Ortiz M.; A class of minimum principles for characterizing the trajectories and the relaxation of dissipative systems. 
{\it ESAIM Control Optim. Calc. Var.\/}, to appear.

\bibitem{Mie-Rou-Ste}
Mielke A., Roub\'i\v cek T., Stefanelli U.:
$\Gamma$-limits and relaxations for rate-independent evolutionary problems.
{\it Calc. Var. Partial Differential Equations\/}, to appear. 

%
%
%
%

\bibitem{Res}Reshetnyak Yu.G.:
Weak convergence of completely additive vector functions on a set.
{\it Siberian Math. J.\/} {\bf 9} (1968), 1039-1045.

\bibitem{Roc}Rockafellar R.T.: 
Convex Analysis. Princeton University 
Press, Princeton, 1970.



\bibitem{Tem}Temam R.: 
Mathematical problems in plasticity. 
Gauthier-Villars, Paris, 1985. 
Translation of Probl\`emes math\'ematiques en plasticit\'e.
Gauthier-Villars, Paris, 1983.



\end {thebibliography}
}

\end{document}